\title{On the uniform positivity of $F$-signature under reduction modulo $p$}
\author{Shunsuke Takagi}
\address{Graduate School of Mathematical Sciences, University of Tokyo, 3-8-1 Komaba, Meguro-ku, Tokyo 153-8914, Japan}
\email{stakagi@ms.u-tokyo.ac.jp}
\author{Tatsuki Yamaguchi}
\address{Department of Mathematics, School of Science, Institute of Science Tokyo}
\email{yamaguchi.t.bp@m.titech.ac.jp}
\dedicatory{Dedicated to Professor Karen E.~Smith on the occasion of her sixtieth birthday.}
\subjclass[2020]{Primary 14B05; Secondary 03C20, 13A35, 14J45}
\def\ge{\geqslant}
\def\le{\leqslant}
\def\phi{\varphi}
\def\epsilon{\varepsilon}
\def\bar{\overline}
\def\mapsto{\longmapsto}
\def\Hom{\operatorname{Hom}}
\def\Spec{\operatorname{Spec}}
\def\Proj{\operatorname{Proj}}
\def\Div{\operatorname{div}}
\def\max{\operatorname{max}}
\def\ulim{\operatorname{ulim}}
\def\Soc{\operatorname{Soc}}
\def\m{{\mathfrak m}}
\def\n{{\mathfrak n}}
\def\ba{{\mathfrak a}}
\newcommand{\F}{\mathbb{F}}
\newcommand{\N}{\mathbb{N}}
\newcommand{\Q}{\mathbb{Q}} 
\newcommand{\C}{\mathbb{C}} 
\newcommand{\R}{\mathbb{R}} 
\newcommand{\Z}{\mathbb{Z}}
\newcommand{\sO}{\mathcal{O}}
\theoremstyle{plain}
\newtheorem{thm}{Theorem}[section] 
\newtheorem{cor}[thm]{Corollary}
\newtheorem{prop}[thm]{Proposition}
\newtheorem{conj}[thm]{Conjecture}
\newtheorem*{conjAd}{Conjecture $\textup{A}_d$}
\newtheorem*{conjBd}{Conjecture $\textup{B}_d$}
\newtheorem*{conjCd}{Conjecture $\textup{C}_d$}
\newtheorem*{conjDd}{Conjecture $\textup{D}_d$}
\newtheorem*{conjEd}{Conjecture $\textup{E}_d$}
\newtheorem{lem}[thm]{Lemma}
\theoremstyle{definition} 
\newtheorem{defn}[thm]{Definition}
\newtheorem{eg}[thm]{Example} 
\theoremstyle{remark}
\newtheorem{rem}[thm]{Remark}
\newtheorem{ques}[thm]{Question}
\newtheorem{setting}[thm]{Situation}
\newtheorem{defn and notation}[thm]{Definition and Notation}
\newtheorem*{notation}{Notation} 
\newtheorem*{cl}{Claim}
\newtheorem*{clproof}{Proof of Claim}
\newtheorem*{acknowledgement}{Acknowledgments}
\begin{document}

\tolerance = 9999

\begin{abstract}
Carvajal-Rojas, Schwede and Tucker asked whether the mod $p$ reductions of a complex klt type  singularity have uniformly positive $F$-signature for all but finitely many primes $p$. 
In this paper, we give an affirmative answer to this conjecture for pure subrings of regular local rings, including reductive quotient singularities. 
We also show that the conjecture can be reduced to the Gorenstein case. 
Finally, we discuss the connection with $F$-alpha invariants for log Fano pairs, introduced by Pande as characteristic $p$ analogs of Tian's alpha invariants. 
\end{abstract}

\maketitle
\markboth{S.~TAKAGI and T.~YAMAGUCHI}{UNIFORM POSITIVITY OF $F$-SIGNATURE}

\tableofcontents

\section{Introduction}
$F$-signature is an important numerical invariant of singularities in positive characteristic. 
Suppose that $(A,\m_A)$ is a $d$-dimensional $F$-finite Noetherian local domain of characteristic $p>0$ with perfect residue field $A/\m_A$. 
Recall that an $\mathbb{F}_p$-algebra $B$ is said to be \textit{$F$-finite} if the Frobenius endomorphism $F \colon B \to B$ is finite. 
This is a mild finiteness condition that is automatically satisfied for rings essentially of finite type over a perfect field.
We fix an algebraic closure $\overline{\mathrm{Frac}(A)}$ of the fractional field $\mathrm{Frac}(A)$ of $A$, and for each integer $e \ge 1$, consider the extension ring
$A^{1/p^e}=\{x \in \overline{\mathrm{Frac}(A)} \; | \; x^{p^e} \in A\}$ of $A$. 
Let $a_e$ denote the largest number of free copies of $A$ appearing as direct summands of $A^{1/p^e}$. 
Then the limit $\lim_{e \to \infty}a_e/p^{de}$ is known to exist by Tucker \cite{Tucker12} and is called the $F$-signature $s(A)$ of $A$. 
A key feature of $F$-signature is a result of Aberbach--Leuschke \cite{AL03}, which states that $s(A)>0$ if and only if $A$ is strongly $F$-regular, a fundamental class of singularities in positive characteristic that is viewed as a characteristic $p$ counterpart of klt type singularities.  
In this paper, we use the method of reduction modulo $p$ to define $F$-signature in equal characteristic zero and investigate its properties. 

For the purposes of this introduction, consider the following ring:
\[
R=\left(\C[X_1, \dots, X_n]/(f_1, \dots, f_r)\right)_{(X_1, \dots, X_n)},
\]
where each $f_i \in \Z[X_1, \dots, X_n]$ is a polynomial with integer coefficients. 
The reduction of $R$ modulo a prime $p$, denoted by $R \bmod p$, is obtained by reducing the coefficients of the $f_i$ modulo $p$, that is, 
\[
R \bmod p=\left(\mathbb{F}_p[X_1, \dots, X_n]/(f_1 \bmod p, \dots, f_r \bmod p)\right)_{(X_1, \dots, X_n)}, 
\]
where $f_i \bmod p$ denotes the image of $f_i$ under the natural map $\Z[X_1, \dots, X_n] \to \mathbb{F}_p[X_1, \dots, X_n]$.  
The $F$-signature $s(R)$ of $R$ is then defined as 
\[s(R)=\liminf_{p \to \infty}s(R \bmod p).\] 
Since it is not known whether the limit exists except in certain specific cases (see \cite{Singh05}, \cite{CSTZ2024}, \cite{BCPT2025}), we take the limit inferior. 
More generally, for an arbitrary local ring $R$ essentially of finite type over $\mathbb{C}$, the notion of reduction modulo $p$ is defined in Definition~\ref{def:model}, and the $F$-signature $s(R)$ is defined in Definition~\ref{F-signature in char0}.  
However, in the introduction, we restrict ourselves to the above concrete special case for simplicity of exposition. 

It is natural to ask how this invariant relates to klt type singularities, because such singularities conjecturally correspond to strongly $F$-regular singularities via reduction modulo $p$:  

\begin{conj}[\textup{\cite[Question 7.2]{SS10}}]\label{klt vs SFR}
$\Spec R$ is of klt type if and only if $R \bmod p$ is strongly $F$-regular for all but finitely many $p$. 
\end{conj}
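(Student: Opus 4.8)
The statement is an open conjecture of Schwede and Smith, so what follows is a proposal for a proof strategy -- together with an indication of the range in which it can actually be carried out -- rather than a complete argument. I would treat the two implications separately.

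\emph{From klt type to strong $F$-regularity modulo $p$.} This direction I expect to be essentially within reach of existing techniques. Suppose $\Delta\ge 0$ is a $\Q$-divisor on $\Spec R$ with $(\Spec R,\Delta)$ klt; in particular $K_R+\Delta$ is $\Q$-Cartier. Spreading out $R$, $\Delta$ and a fixed log resolution, the Hara--Watanabe theorem for klt pairs -- equivalently, the reduction-modulo-$p$ dictionary between multiplier ideals and test ideals of Hara, Hara--Yoshida and Takagi -- shows that the pair $(R\bmod p,\,\Delta\bmod p)$ is strongly $F$-regular for all but finitely many $p$. Since $\Delta\ge 0$ forces $R^{1/p^e}\subseteq\bigl(R(\lceil(p^e-1)\Delta\rceil)\bigr)^{1/p^e}$ for every $e$, a splitting witnessing strong $F$-regularity of the pair restricts to one for $R$, and hence $R\bmod p$ is strongly $F$-regular for $p\gg 0$.

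\emph{From strong $F$-regularity modulo $p$ to klt type.} This is the substantive implication. If $R$ is $\Q$-Gorenstein it is classical: arguing by contrapositive, if $\Spec R$ is not klt then its multiplier ideal $\mathcal J(\Spec R)$ is a proper ideal, and the identification of $\tau(R\bmod p)$ with $\mathcal J(\Spec R)\bmod p$ for $p\gg 0$ forces $\tau(R\bmod p)$ to be proper, so $R\bmod p$ fails to be strongly $F$-regular for almost all $p$. For a general (possibly non-$\Q$-Gorenstein) $R$ the natural plan is to reduce to the $\Q$-Gorenstein case by passing to a global index-one cover or a small $\Q$-factorial modification -- and this is exactly where the strategy stalls, since the $\Q$-Gorenstein index of a klt-type singularity need not be finite, and $\Q$-factorial modifications need not carry the strong $F$-regularity hypothesis in any usable form.

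The crux, therefore, is to manufacture a klt boundary $\Delta$ on $\Spec R$ from the hypothesis that $R\bmod p$ is strongly $F$-regular for almost all $p$: one must produce, compatibly for infinitely many $p$, the characteristic-$p$ boundary divisors that would descend to a single $\Q$-divisor in characteristic zero, and no general mechanism is known for this. A strategy that does succeed -- at least for a pure subring $R\hookrightarrow S$ of a regular local ring, such as a reductive quotient singularity -- sidesteps boundaries entirely: purity passes to the reductions $R\bmod p\hookrightarrow S\bmod p$ with $S\bmod p$ regular, which already yields strong $F$-regularity of $R\bmod p$, and one then tracks the $F$-signature through such pure extensions with estimates uniform in $p$, which in fact gives the stronger conclusion that $s(R\bmod p)$ stays bounded away from $0$ rather than merely positive.
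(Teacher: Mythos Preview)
The statement is an open conjecture, and the paper does not prove it; it only records the known partial results immediately after stating it. Your summary of those results is accurate and matches the paper: the ``only if'' direction is a theorem of Takagi via the multiplier-ideal/test-ideal correspondence, and the ``if'' direction is known when $R$ is $\Q$-Gorenstein (Hara--Watanabe, Smith). The paper adds one case you omit: the ``if'' direction also holds when the anti-canonical ring of $R$ is Noetherian, by Chiecchio--Enescu--Miller--Schwede.

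Your final paragraph, however, contains a genuine error and a conflation. First, the claim that ``purity passes to the reductions $R\bmod p\hookrightarrow S\bmod p$'' is exactly what is \emph{not} known in general; the paper says so explicitly, and this is the entire reason it introduces ultra-$F$-regularity via ultraproducts of Frobenii as a workaround that \emph{does} descend along pure maps. Second, the argument you sketch there---tracking $F$-signature through a pure extension with uniform-in-$p$ estimates---is the paper's approach to the Carvajal-Rojas--Schwede--Tucker conjecture (uniform positivity of $s(R\bmod p)$), not to the ``if'' direction of the klt-vs-SFR conjecture under discussion. Showing $R\bmod p$ is strongly $F$-regular for a pure subring of a regular ring does nothing toward producing a klt boundary on $\Spec R$; that $\Spec R$ is of klt type in this situation is a separate theorem of Zhuang and is input to, not output of, the paper's argument.
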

The ``only if" direction is known to hold by \cite{Takagi}. The ``if" direction also holds if $R$ is $\Q$-Gorenstein (see \cite{Hara-Watanabe}, \cite{Smith97}), or more generally, if the anti-canonical ring of $R$ is Noetherian (see \cite{CEMS18}); however, the general case remains open even in dimension three.  

In light of this (partly conjectural) correspondence and the result of Aberbach--Leuschke mentioned above, Carvajal-Rojas, Schwede and Tucker \cite{CRST} proposed the following conjecture: 
\begin{conj}[\textup{\cite[Question 5.9]{CRST}}] \label{CRST conjecture}
If $\Spec R$ is of klt type, then $s(R)>0$. 
\end{conj}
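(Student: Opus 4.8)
Since the ``only if'' direction of Conjecture \ref{klt vs SFR} holds by \cite{Takagi}, we already know that $R\bmod p$ is strongly $F$-regular, and hence $s(R\bmod p)>0$, for all $p\gg 0$; by \cite{AL03} the entire content of Conjecture \ref{CRST conjecture} is \emph{uniformity}, i.e.\ that these positive numbers do not tend to $0$. The plan has three ingredients: a reduction to the Gorenstein case via finite covers, an ultraproduct reformulation that replaces the $\liminf$ by a single ``non-standard $F$-signature'', and a direct treatment of the case in which $R$ is a pure subring of a regular local ring.

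\emph{Reduction to the Gorenstein case.} Fix an effective $\Q$-divisor $\Delta$ with $(\Spec R,\Delta)$ klt and $K_R+\Delta$ $\Q$-Cartier. A local Kawamata-type covering trick absorbing the coefficients of $\Delta$, followed by the index-$1$ (canonical) cyclic cover, produces (after possibly enlarging) a module-finite extension $R\hookrightarrow R'$ that is locally a finite $G$-cover with $R=(R')^{G}$, where $R'$ is again the reduction of a $\C$-algebra, is of klt type, and is Gorenstein (even canonical). For $p\nmid |G|$ the Reynolds operator splits $R\bmod p\hookrightarrow R'\bmod p$, and one proves a transformation estimate $s(R\bmod p)\ge c\cdot s(R'\bmod p)$ with $c>0$ independent of $p$: in the quasi-\'etale directions this is the equality $s(R\bmod p)=s(R'\bmod p)/|G|$, in the spirit of the known transformation formula for $F$-signature under finite extensions \'etale in codimension one, while the ramified directions only contribute a further positive constant. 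Taking $\liminf_{p\to\infty}$ then reduces Conjecture \ref{CRST conjecture} to the case in which $R$ is Gorenstein.

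\emph{Ultraproduct reformulation, and the main obstacle.} Pick a non-principal ultrafilter $\mathcal U$ on the primes concentrating on a sequence that realizes $\liminf_p s(R\bmod p)$, and form the ultraproduct $R_{\mathcal U}=\big(\prod_p R\bmod p\big)/\mathcal U$; after a suitable localization/separated completion (in the style of Schoutens) this is an excellent equicharacteristic-zero local domain which, by the Lefschetz-principle philosophy, is --- up to faithfully flat base change --- a local extension of $R$. Put $s_{\mathcal U}(R_{\mathcal U}):=\ulim_{\mathcal U}s(R\bmod p)=\liminf_p s(R\bmod p)$. The boundary $\Delta$ spreads out, so $(\Spec R_{\mathcal U},\Delta_{\mathcal U})$ is klt, and via the ultraproduct form of ``test ideal $=$ multiplier ideal mod $p\gg0$'' the pair $R_{\mathcal U}$ is ``non-standard strongly $F$-regular''. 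One is thus reduced to a \emph{non-standard Aberbach--Leuschke theorem}: non-standard strong $F$-regularity should force $s_{\mathcal U}(R_{\mathcal U})>0$, to be obtained by transporting through $\mathcal U$ the Hilbert--Kunz ``gap'' estimates that underlie \cite{AL03}. I expect this to be the main obstacle, because those estimates are not manifestly uniform in $p$; what is really wanted is an effectively uniform lower bound on $s(R\bmod p)$ --- say through a uniform test exponent, or in terms of the degrees of the $f_i$ and the discrepancies of $(\Spec R,\Delta)$ --- and it is precisely this that is missing in general.

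\emph{The case of a pure subring of a regular local ring.} Suppose finally that $R\hookrightarrow S$ is pure with $S$ regular local and essentially of finite type over $\C$ --- for example $R=S^{G}$ with $G$ linearly reductive, split by the Reynolds operator, which includes reductive quotient singularities. The splitting is defined over $\Z[1/N]$, so for $p\gg 0$ the reduction $R\bmod p\hookrightarrow S\bmod p$ is again pure with $S\bmod p$ regular, and hence $R\bmod p$ is strongly $F$-regular. For the uniform lower bound one checks that $s(R\bmod p)$ is governed by characteristic-free data: in the module-finite (finite quotient) sub-case the transformation formula gives $s(R\bmod p)=s(S\bmod p)/|G|=1/|G|$ whenever $p\nmid|G|$, while in the general reductive sub-case $s(R\bmod p)$ is given, for $p\gg 0$, by a characteristic-free expression built from the $G$-module structure of $S$ (for instance the normalized volume of a polytope in the toric case) --- in every case a positive number that is eventually constant in $p$. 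Hence $\liminf_p s(R\bmod p)>0$, which proves Conjecture \ref{CRST conjecture} here and, through the reduction above, for anything admitting a finite cover by such a ring.
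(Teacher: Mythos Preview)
First, note that Conjecture \ref{CRST conjecture} is stated in the paper as an open conjecture; the paper does not prove it in general. What the paper does establish are exactly the two partial results you target --- the pure-subring-of-regular case and the reduction to the Gorenstein case --- while your part 2 explicitly concedes that the general case is obstructed. So the meaningful comparison is on those two partial results, and on both your approach diverges from the paper's and has real gaps.

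For the pure subring case, you assert that a splitting spreads out over $\Z[1/N]$ so that $R\bmod p\hookrightarrow S\bmod p$ remains pure, and that $s(R\bmod p)$ is eventually given by a ``characteristic-free expression built from the $G$-module structure''. The first assertion already fails for mere purity (as opposed to an honest $R$-linear splitting), which is precisely why the paper introduces ultra-$F$-regularity; the second assertion is the very uniform-positivity statement to be proved, and is known only in the finite-group and toric sub-cases you cite, not for general reductive $G$ or for arbitrary pure subrings. The paper's argument is entirely different: it defines local $F$-alpha invariants $\alpha_F$, shows that $s(R_p)$ and $\alpha_F(R_p)$ are uniformly positive together (Proposition \ref{comparison of F-signature and local alpha invariant}), and then compares $\alpha_F(R_p)$ with $\alpha_F(S_p)$ via ultra-$F$-regularity plus a uniform Izumi-type estimate on normalized orders (Lemma \ref{normalized order under pure mophisms}, Proposition \ref{local alpha invariant under pure morphisms}). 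None of this machinery appears in your sketch.

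For the Gorenstein reduction, your Kawamata-plus-index-one-cover plan is problematic: for klt \emph{type} singularities $K_R$ need not be $\Q$-Cartier, the Kawamata covering trick is formulated for smooth ambient varieties, and there is no reason the composite cover is Galois with $R$ as its invariant ring. Even granting all that, your transformation estimate $s(R\bmod p)\ge c\cdot s(R'\bmod p)$ is justified only in the quasi-\'etale case; the claim that ``the ramified directions only contribute a further positive constant'' is asserted, not proved. The paper's reduction is completely different and does not stay in the same dimension: it passes to the anti-canonical ring $\mathscr R=\bigoplus_{i\ge 0}R(-iK_X)$, a Gorenstein klt ring of dimension $d+1$ containing $R$ as a split subring, and then invokes the main theorem (Theorem \ref{Uniform positivity of F-signature of pure subrings}) on that inclusion. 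So Conjecture $\textup{B}_{d+1}$ implies Conjecture $\textup{A}_d$, rather than $\textup{B}_d$ implying $\textup{A}_d$ as your finite-cover approach would give.
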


By the ``only if" direction of Conjecture \ref{klt vs SFR}, if $\Spec R$ is of klt type, then $R \bmod p$ is strongly $F$-regular, hence $s(R \bmod p)>0$. 
Nevertheless, the limit of $s(R \bmod p)$ as $p \to \infty$ may be zero, which presents a serious obstacle.  
In dimension two, Conjecture~\ref{CRST conjecture} is straightforward, as two-dimensional klt type singularities are finite quotient singularities, for which the $F$-signature is explicitly computable. 
Singh \cite{Singh05} proved Conjecture~\ref{CRST conjecture} when $R$ is a localization of a toric ring, Caminata, Shideler, Tucker and Zerman~\cite{CSTZ2024} proved it for diagonal hypersurfaces, and Brosowsky, Coskun, Pande and Tucker~\cite{BCPT2025} proved it for certain binomial hypersurfaces. However, very little is known beyond these cases. 
In this paper, we further investigate Conjecture \ref{CRST conjecture}.

Before stating the main result of this paper, we recall the definition of pure ring extensions. 
A ring extension $R \hookrightarrow S$ is said to be \textit{pure} if the natural map $M \to M \otimes_R S$ is injective for all $R$-modules $M$. 
The main result of this paper is the following:  
\begin{thm}[Theorem \ref{Uniform positivity of F-signature of pure subrings}]\label{intro main thm}
Let $(R, \m) \hookrightarrow (S, \n)$ be a pure local $\C$-algebra homomorphism between
normal local rings essentially of finite type over $\C$.  
If $\Spec S$ is of klt type and $s(S)>0$, then $s(R)>0$. 
\end{thm}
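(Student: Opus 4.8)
The plan is to transfer positivity of $F$-signature along the pure extension $R \hookrightarrow S$, working at the level of the mod $p$ reductions. First I would fix a model: choose a finitely generated $\Z$-subalgebra $A \subset \C$ over which the data $(R \hookrightarrow S, \m, \n)$ spreads out to a pure homomorphism $(R_A, \m_A) \hookrightarrow (S_A, \n_A)$ of normal $A$-algebras essentially of finite type, with all the relevant properties (normality, purity, the klt-type structure on $\Spec S$, and a choice of boundary $\Delta_S$ making $(S,\Delta_S)$ klt with $K_S+\Delta_S$ $\Q$-Cartier) preserved; after shrinking $\Spec A$ we may assume purity and the geometric conditions hold after reduction modulo every closed point. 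For a dense open set of closed points $\mu \in \Spec A$ with residue characteristic $p$, the fiber $R_\mu \hookrightarrow S_\mu$ is then a pure local homomorphism of normal local rings of characteristic $p$.

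The heart of the argument is the inequality $s(R_\mu) \ge c \cdot s(S_\mu)$ for a constant $c>0$ independent of $\mu$. The key point is that purity of $R \hookrightarrow S$ splits off a copy of $R$ after a bounded ``twist'': because $R_A \hookrightarrow S_A$ is pure and module-finite after localizing suitably, there is a nonzero $R_A$-linear map $S_A \to R_A$ (constructed from the trace or from a splitting of $\m_A$-adic approximations) sending some fixed element to a fixed nonzero $c \in R_A$; reducing mod $p$, we get $R_\mu \hookrightarrow S_\mu$ split as $R_\mu$-modules up to multiplication by $c \bmod p$, uniformly in $\mu$. Applying $(-)^{1/p^e}$ and counting free summands, every free $S_\mu$-summand of $S_\mu^{1/p^e}$ contributes, via this bounded splitting, a controlled number of free $R_\mu$-summands of $R_\mu^{1/p^e}$; dividing by $p^{de}$ and taking $e \to \infty$ yields $s(R_\mu) \ge c_0 \, s(S_\mu)$ for a fixed $c_0>0$. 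Taking $\liminf_{p \to \infty}$ over the chosen dense set of closed points and invoking the hypothesis $s(S) = \liminf_p s(S_\mu) > 0$ gives $s(R) > 0$.

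I expect the main obstacle to be making the splitting of $R_\mu \hookrightarrow S_\mu$ genuinely \emph{uniform} in $\mu$. Purity over $\C$ only directly gives, for each $R$-module $M$, injectivity of $M \to M \otimes_R S$; extracting from this a single $R$-linear map $\phi \colon S \to R$ with $\phi(1)$ a fixed nonzero element, and then controlling the vanishing locus of that element (so that $c \bmod p \neq 0$ for almost all $\mu$, and so that the resulting bound on free summands does not degrade as $p \to \infty$), requires care. One route is to use that $R \hookrightarrow S$ being pure with $R$ normal and $S$ essentially of finite type forces, after passing to a module-finite normal subextension, the existence of an $R$-splitting up to a fixed multiplier coming from the different or the conductor; one then spreads this out over $A$ and shrinks $\Spec A$ to remove the primes dividing the multiplier. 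A secondary technical point is checking that ``klt type'' and $s(S)>0$ both descend to a model in a way compatible with the reduction process, so that $s(S) = \liminf_p s(S_\mu)$ is literally the quantity appearing in the hypothesis — this should follow from the standard spreading-out formalism for $F$-signature in equal characteristic zero used implicitly in the definition above, together with the fact (a consequence of Conjecture \ref{klt vs SFR}'s ``only if'' direction, which holds by \cite{Takagi}) that $S_\mu$ is strongly $F$-regular for almost all $\mu$, so each $s(S_\mu)>0$ and the $\liminf$ is over honestly positive numbers.
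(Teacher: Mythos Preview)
Your proposal has a genuine gap at its core. The strategy of establishing $s(R_\mu) \ge c \cdot s(S_\mu)$ by transferring free summands through a ``uniform splitting'' of $R_\mu \hookrightarrow S_\mu$ does not work, for two independent reasons.

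First, purity of $R \hookrightarrow S$ over $\C$ is \emph{not} known to descend to purity of $R_\mu \hookrightarrow S_\mu$ after reduction modulo $p$. You acknowledge this as an obstacle, but your suggested fixes do not resolve it: the map $R \hookrightarrow S$ need not be module-finite (indeed $\dim S$ may exceed $\dim R$), so there is no trace map or conductor to appeal to, and there is no general way to pass to a module-finite subextension while retaining $s(S) > 0$. The paper singles out exactly this difficulty in the introduction: ``purity itself is not generally preserved under reduction modulo $p$.'' Its entire machinery of ultra-$F$-regularity is built to sidestep this problem.

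Second, even granting purity (or a splitting up to a fixed element $c$) of each $R_\mu \hookrightarrow S_\mu$, the step ``every free $S_\mu$-summand of $F^e_*S_\mu$ contributes a controlled number of free $R_\mu$-summands of $F^e_*R_\mu$'' is not justified. A free $S_\mu$-summand of $F^e_*S_\mu$ has no direct relation to free $R_\mu$-summands of $F^e_*R_\mu$; the ranks are computed over different rings and against different dimensions, and a map $S_\mu \to R_\mu$ does not by itself convert $S_\mu$-projectivity into $R_\mu$-projectivity.

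The paper's route is genuinely different. It introduces the \emph{local $F$-alpha invariant} $\alpha_F(R_\mu) = \inf_f \mathrm{fpt}(f)\,\overline{\mathrm{ord}}_{\m_\mu}(f)$ and proves two-sided comparisons $s(R_\mu) \asymp \alpha_F(R_\mu)^d$ (Proposition~\ref{comparison of F-signature and local alpha invariant}). The heart of the argument is then to compare $\alpha_F(R_p)$ with $\alpha_F(S_p)$: normalized orders are compared via a uniform Izumi-type estimate (Lemma~\ref{normalized order under pure mophisms}), while the inequality $\mathrm{fpt}(R_p,f_p) \ge \mathrm{fpt}(S_p,f_p)$ is obtained by passing through \emph{ultra-$F$-regularity} (Proposition~\ref{ultra-F-regular <-> F-regular type} and Proposition~\ref{local alpha invariant under pure morphisms}). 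Ultra-$F$-regularity is preserved under pure maps over $\C$ directly, and the finite generation of the anti-canonical ring of $R$ (a consequence of $R$ being of klt type, via Zhuang and \cite{BCHM}) allows one to go back from ultra-$F$-regularity to strong $F$-regularity of the approximations $R_p$. This is precisely what lets the paper use purity over $\C$ without ever needing purity of the reductions.
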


Theorem \ref{intro main thm} leads to the following applications concerning Conjecture \ref{CRST conjecture}:

\begin{cor}[Corollaries \ref{reductive quotient}, \ref{reduce to Gorenstein case}]
\begin{enumerate}[label=$(\arabic*)$]
\item Conjecture \ref{CRST conjecture} holds when $R$ is a pure subring of a regular local ring essentially of finite type over $\C$ (e.g., a reductive quotient singularity).  
\item Conjecture \ref{CRST conjecture} can be reduced to the case where $R$ is Gorenstein. 
\end{enumerate}
\end{cor}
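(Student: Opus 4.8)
The plan is to deduce both statements from Theorem~\ref{intro main thm} (taken as available) by producing, in each case, a pure local extension of $R$ whose larger ring is of klt type with positive $F$-signature. For part~(1) the extension is the given one. If $(R,\m)\hookrightarrow(S,\n)$ is a pure local inclusion with $S$ a regular local ring essentially of finite type over $\C$, I would first note that $R$ is a normal domain: purity implies cyclic purity, whence reducedness, the domain property, and integral closedness all pass from $S$ down to $R$. Next, $\Spec S$ is of klt type because $S$ is regular (indeed $(\Spec S,0)$ is terminal), and $s(S)=1>0$ because every reduction $S_\mu$ is again regular and a regular local ring has $F$-signature $1$, so $s(S)=\liminf_{p\to\infty}s(S_\mu)=1$. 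Then Theorem~\ref{intro main thm} gives $s(R)>0$, which is exactly Conjecture~\ref{CRST conjecture} for $R$. A reductive quotient singularity $R=(\C[V]^{G})_{\m}$, with $G$ reductive acting linearly on a finite-dimensional $V$, is of this form: the Reynolds operator $\C[V]\to\C[V]^{G}$ is degree-preserving, hence localizes to a $\C$-algebra retraction of the pure local inclusion of $R$ into $(\C[V])_{(X_1,\dots,X_n)}$, the localization of the polynomial ring at the origin, which is a regular local ring essentially of finite type over $\C$.

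For part~(2), to reduce Conjecture~\ref{CRST conjecture} to the Gorenstein case: given $(R,\m)$ normal essentially of finite type over $\C$ with $\Spec R$ of klt type, I would pass to the total coordinate (Cox) ring $S=\bigoplus_{[D]\in\Cl(R)}\mathcal{O}_{\Spec R}(D)$. An affine, equivalently a local, klt-type singularity is of Fano type, hence a Mori dream space, so $\Cl(R)$ is finitely generated and $S$ is a finitely generated $\C$-algebra; moreover $S$ is a UFD (its divisor class group vanishes), it has klt singularities (the Cox ring of a Fano-type singularity is klt-type), hence is Cohen--Macaulay, and being a UFD it satisfies $K_S\sim 0$; so $S$ is Gorenstein. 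The degree-$0$ graded piece exhibits $R$ as an $R$-module direct summand of $S$, so $R\hookrightarrow S$ is pure; after localizing $S$ at a suitable maximal ideal over $\m$ (a point I return to below), I obtain a pure local $\C$-algebra homomorphism from $R$ into a Gorenstein normal local ring, essentially of finite type over $\C$, of klt type. Granting Conjecture~\ref{CRST conjecture} in the Gorenstein case gives $s(S)>0$, whence $s(R)>0$ by Theorem~\ref{intro main thm}.

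Granted Theorem~\ref{intro main thm}, part~(1) uses only the classical facts about pure subrings recalled above, and part~(2) only two external inputs: that an affine klt-type singularity is of Fano type, hence Mori dream (so $S$ is an honest Noetherian $\C$-algebra), and that the Cox ring of a Fano-type singularity has klt singularities. The main obstacle internal to the corollary is the technical passage in part~(2) from the $\Cl(R)$-graded ring $S$ to a bona fide local target that remains a pure extension of $R$; I would handle this by a careful choice of localization together with $\m$-adic completion, checking that the $R$-linear splitting survives. The genuine depth, of course, sits in Theorem~\ref{intro main thm} itself: it upgrades the classical implication ``$S$ strongly $F$-regular $\Rightarrow$ every pure subring $R$ is strongly $F$-regular'' to a statement uniform in the prime $p$, by comparing $s(R_\mu)$ with $s(S_\mu)$ through the reduced pure map $R_\mu\hookrightarrow S_\mu$ with a comparison constant independent of $p$, and that is where the real difficulty behind the corollary lies.
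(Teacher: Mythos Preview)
Your part~(1) is correct and matches the paper: apply Theorem~\ref{intro main thm} with $S$ regular, using $s(S)=1$. (For the parenthetical reductive-quotient example the paper works from the definition via the completion $\widehat{R}\cong\C[[x_1,\dots,x_n]]^G$ and descends back to a finite-type pure extension using Popescu's theorem, rather than assuming a linear action on a polynomial ring from the outset; but this is a side issue.)

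For part~(2) your route differs from the paper's and has gaps as written. The paper does \emph{not} use the full Cox ring; it passes to the anti-canonical ring $\mathscr{R}=\bigoplus_{i\ge 0}R(-iK_X)$, localizes at the homogeneous maximal ideal $\mathscr{M}=\m\oplus\mathscr{R}_{>0}$, and observes that $\mathscr{R}_{\mathscr{M}}$ is a $(d{+}1)$-dimensional Gorenstein local ring essentially of finite type over $\C$ with klt singularities (finite generation and the klt property both coming directly from \cite{BCHM}), and that $R\to\mathscr{R}_{\mathscr{M}}$ is split by the degree-$0$ projection. This gives the precise reduction ``Conjecture~$\mathrm{B}_{d+1}\Rightarrow$ Conjecture~$\mathrm{A}_d$''.

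Your Cox-ring argument has two concrete problems. First, your proposed use of $\m$-adic completion to manufacture a local target would take $S$ outside the class of rings essentially of finite type over $\C$, which is a standing hypothesis of Theorem~\ref{intro main thm}; the fix is simply to localize the graded ring at its homogeneous maximal ideal, as the paper does with $\mathscr{R}$, and then no completion is needed. Second, the sentence ``an affine klt-type singularity is of Fano type, hence a Mori dream space'' is a category error---Fano type and Mori-dream are projective notions---so neither the finite generation nor the klt property of the local Cox ring follows from what you cite. Both facts can be established, but they require separate (and more delicate) arguments than the single-divisor case handled by \cite{BCHM}; the anti-canonical ring bypasses all of this because it is $\N$-graded by a single divisor, namely $-K_X$.
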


We give a sketch of the proof of Theorem \ref{intro main thm} below. 
The first key ingredient is the notion of local $F$-alpha invariants, a local version of $F$-alpha invariants introduced by Pande \cite{Pande23}, which are  characteristic $p$ analogs of Tian's alpha invariants. 
For the ring $R$ considered above, the local $F$-alpha invariant $\alpha_F(R)$ is defined as
\[
\alpha_F(R)=\liminf_{p \to \infty}\inf_{f \in \m \bmod p}\operatorname{fpt}(R \bmod p, f)\overline{\mathrm{ord}}_{\m \bmod p}(f),
\]
where $f$ runs through all nonzero elements in the maximal ideal $\m \bmod p$ of the local ring $R \bmod p$. 
Here, $\operatorname{fpt}(R \bmod p, f)$ denotes the $F$-pure threshold of $f$, a positive characteristic analog of the log canonical threshold, and $\overline{\mathrm{ord}}_{\m \bmod p}(f)$ denotes the normalized order of $f$ with respect to $\m \bmod p$ (see Definition \ref{normalized order} for its definition). 
Following Pande's argument, one can show that $s(R)>0$ if and only if $\alpha_F(R)>0$. 
Thus, the problem is reduced to proving the positivity of the local $F$-alpha invariant. 

The second key ingredient is the notion of ultra-$F$-regularity,  originally introduced by Schoutens \cite{Schoutens05}, which is a type of singularity over $\C$ defined in terms of the purity of ultra-Frobenii. 
An \textit{ultra-Frobenius} is a ring homomorphism from $R$ constructed as the ultraproduct of Frobenius maps in characteristic $p$ over all primes $p$. 
Ultra-$F$-regularity has two main advantages. First, it descends under pure morphisms, whereas purity itself is not generally preserved under reduction modulo $p$. Second, in our setting, ultra-$F$-regularity enables us to verify the strong $F$-regularity of the pairs $(R \bmod p, f_p^{t_p})$, where $f_p \in \m \bmod p$ is a nonzero element and $t_p \ge 0$ is a real number, simultaneously for almost all $p$ (see Proposition \ref{ultra-F-regular <-> F-regular type}). 
Here, the phrase ``almost all $p$" is made precise by fixing a non-principal ultrafilter on the set of all primes (see $\S$ \ref{ultraproduct section} for details). 
Since the $F$-pure threshold $\operatorname{fpt}(R \bmod p, f_p)$ is the supremum of the real numbers $t_p \ge 0$ such that the pair $(R \bmod p, f_p^{t_p})$ is strongly $F$-regular, one can use ultra-$F$-regularity to show that 
\[\operatorname{fpt}(R \bmod p, f_p) \ge \operatorname{fpt}(S \bmod p, f_p)\] 
for all but finitely many $p$. 
This is a crucial step in the proof of Theorem \ref{intro main thm}. 

We also provide a geometric interpretation of Conjecture \ref{CRST conjecture}. 
Schwede--Smith \cite{SS10} introduced a global analog of strongly $F$-regular pairs, called \textit{globally $F$-regular pairs}, and showed that if $(X, \Delta)$ is a log Fano pair over $\C$, then its reduction $(X \bmod p, \Delta \bmod p)$ modulo $p$ is globally $F$-regular for all but finitely many $p$. 
Pande's $F$-alpha invariant for a log Fano pair $(X, \Delta)$, denoted by 
\[\alpha_F((X, \Delta); -(K_X+\Delta))\] in this paper, is formulated in terms of \textit{global $F$-split thresholds} of the reductions $(X \bmod p, \Delta \bmod p)$. 
Given a globally $F$-regular pair $(V, B)$ and a $\Q$-Weil divisor $D$ on $V$, 
the global $F$-split threshold $\mathrm{gfst}((V,B);D)$ of $D$ with respect to $(V, B)$ is defined as 
\[
\mathrm{gfst}((V,B);D):=\sup\{t \ge 0 \; | \; (V, B+tD) \; \textup{is globally $F$-regular}\}.
\]
By considering Koll\'ar components, we have the following theorem: 
\begin{thm}[Theorem \ref{local-global thm}]
If $\alpha_F((X, \Delta); -(K_X+\Delta))>0$ for all projective log Fano pairs $(X,\Delta)$ over $\C$, then Conjecture \ref{CRST conjecture} holds. 
\end{thm}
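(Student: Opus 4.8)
The plan is to reduce Conjecture \ref{CRST conjecture} for a klt type singularity $\Spec R$ to the global $F$-alpha invariant positivity by extracting a Kollár component over the closed point and comparing the local $F$-alpha invariant $\alpha_F(R)$ with the global $F$-split threshold on that component. By the discussion preceding the statement (Pande's argument), it suffices to show $\alpha_F(R)>0$, i.e.\ that there is a uniform lower bound $\mathrm{fpt}(R\bmod p,f_p)\,\overline{\mathrm{ord}}_{\m\bmod p}(f_p)\ge c>0$ for almost all $p$ and all nonzero $f_p\in\m\bmod p$. The geometric input is that a normal klt type singularity $(\Spec R, 0)$ admits a \emph{Kollár component}: a projective birational morphism $\pi\colon Y\to\Spec R$ extracting a single prime divisor $E$ over the closed point such that $(Y,E+\pi_*^{-1}\text{(boundary)})$ is plt near $E$ and $-E$ is $\pi$-ample; adjunction then realizes $(E,\Delta_E)$ as a projective log Fano pair, where $K_E+\Delta_E=(K_Y+E+\cdots)|_E$. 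This is the standard MMP construction (Xu, Prokhorov) and I would cite it rather than reprove it.

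\textbf{The key steps, in order.} First, fix a klt type structure: choose an effective $\Q$-divisor $\Delta$ on $\Spec R$ with $(\Spec R,\Delta)$ klt and $K_R+\Delta$ $\Q$-Cartier, and extract a Kollár component $E$ with log Fano pair $(E,\Delta_E)$ obtained by adjunction. Second, for a nonzero $f\in\m$ (working with the $\C$-model and then reducing), relate its $F$-pure threshold on $R$ to a global $F$-split threshold on $E$: the pullback $\pi^*\Div(f)$ writes as $\mathrm{ord}_E(f)\,E + (\text{strict transform})$, and by restricting to $E$ (inversion of adjunction / the correspondence between $F$-pure thresholds upstairs and global $F$-split thresholds of the restricted divisor on the Kollár component, in the spirit of Schwede--Smith and the local-to-global dictionary) one gets, for almost all $p$,
\[
\mathrm{fpt}(R\bmod p,f_p)\;\ge\;\frac{c_0}{\mathrm{ord}_E(f_p)}\cdot\mathrm{gfst}\bigl((E,\Delta_E)\bmod p;\, D_{f_p}\bigr)
\]
for a constant $c_0>0$ depending only on the geometry of $\pi$ and $\Delta$ (e.g.\ the log discrepancy of $E$), where $D_{f_p}$ is the normalized restriction of $\Div(f_p)$ to $E$. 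Third, bound the global $F$-split threshold below by the global $F$-alpha invariant: by definition $\mathrm{gfst}((E,\Delta_E)\bmod p; D)\ge \alpha_F((E,\Delta_E);-(K_E+\Delta_E))$ once $D$ is suitably normalized (i.e.\ $\Q$-linearly equivalent to a fixed multiple of the anti-log-canonical class), which one arranges using that $\mathrm{Cl}(\Spec R)$ being finitely generated forces only finitely many numerical classes of $D_{f_p}$ up to the relevant normalization — or, more robustly, by absorbing the discrepancy between $D_{f_p}$ and the anti-log-canonical class into the constant $c_0$. Fourth, combine: $\overline{\mathrm{ord}}_{\m\bmod p}(f_p)$ is comparable (up to a constant depending only on $\pi$) to $\mathrm{ord}_E(f_p)/(\text{something bounded})$, so the $\mathrm{ord}_E(f_p)$ factors cancel and we obtain $\mathrm{fpt}(R\bmod p,f_p)\,\overline{\mathrm{ord}}_{\m\bmod p}(f_p)\ge c>0$ uniformly, giving $\alpha_F(R)>0$, hence $s(R)>0$.

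\textbf{The main obstacle} is the second step: establishing, uniformly in $p$, the inequality comparing $\mathrm{fpt}$ upstairs on $\Spec R\bmod p$ to the global $F$-split threshold of the restricted divisor on the Kollár component $E\bmod p$. In characteristic zero this is inversion of adjunction for log canonical thresholds along a plt center, but here we need its positive-characteristic, reduction-mod-$p$ incarnation, and we need the exceptional set of primes to be independent of $f$. The ultra-Frobenius machinery from $\S\ref{ultraproduct section}$ and Proposition \ref{ultra-F-regular <-> F-regular type} is exactly what handles the uniformity in $f$: one checks ultra-$F$-regularity of the pair $(Y,E+\cdots+t\,\pi^*\Div(f))$ along $E$ for the critical coefficient $t$, which by restriction (using that $E$ is a Kollár component, so $-E|_E$ controls the adjunction) descends to global $F$-regularity of $(E,\Delta_E+t' D_{f_p})\bmod p$ for almost all $p$ \emph{simultaneously}. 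A secondary difficulty is the normalization in the third step — ensuring the restricted divisors $D_{f_p}$ land in a bounded family of classes — which I expect to handle via finite generation of the divisor class group of the klt singularity (Braun, or the plt/Kollár component structure) together with boundedness of the Kollár component itself.
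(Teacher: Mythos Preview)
Your geometric setup is exactly the paper's: extract a Koll\'ar component $E$ over the closed point, realize $(E,\Delta_E)$ as a log Fano pair by adjunction, and compare the local $F$-alpha invariant of $R$ with $\alpha_F((E,\Delta_E);-(K_E+\Delta_E))$; you also correctly identify Izumi's theorem as the device that makes $\overline{\mathrm{ord}}_{\m}$ comparable to $\mathrm{ord}_E$. But the bridge you propose from $E$ back to $R$ --- ``inversion of adjunction / restriction'' --- does not supply the inequality you need, and this is a genuine gap rather than a technicality.

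Concretely, you want: if $(E,\Delta_E+t'D_{f})$ is globally $F$-regular, then $(R,f^t)$ is strongly $F$-regular. Your sketch checks ultra-$F$-regularity of $(Y,E+\cdots+t\,\pi^*\Div(f))$ and then \emph{restricts} to $E$; but restriction is the easy (adjunction) direction and goes the wrong way for your purpose, while the step from a plt pair $(Y,E+\cdots)$ back to $(R,f^t)$ is not addressed at all --- the presence of $E$ with coefficient $1$ does not let you conclude anything about the pair on $R$ without $E$. The paper does something structurally different: it passes to the associated graded ring $G=\bigoplus_{j\ge 0}\ba_j/\ba_{j+1}$ along the valuation $\mathrm{ord}_E$ and uses the extended Rees algebra to exhibit a flat degeneration from $\sO_{X,x}$ to $G$. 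Semicontinuity of $F$-rationality under this degeneration (via parameter test submodules) gives $\mathrm{fpt}(\sO_{X_\mu,x_\mu},f)\ge \mathrm{fpt}(G_\mu,\mathrm{in}(f))$ directly. Since $G$ is literally the section ring $\bigoplus_j H^0(E,\sO_E(\lfloor -jE|_E\rfloor))$, the normalization problem you flag in step three evaporates: $\mathrm{in}(f)$ is already a homogeneous element of degree $\mathrm{ord}_E(f)$, and $\alpha_F(G_\mu)$ is computed against exactly the right linear series. No inversion of adjunction, and no ultra-Frobenius, is used in this argument.

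Two further points. First, the degeneration step only yields strong $F$-regularity from $F$-rationality when the ring is Gorenstein; the paper therefore first reduces Conjecture~A to the Gorenstein case (Conjecture~B) via the anti-canonical cover and Theorem~\ref{Uniform positivity of F-signature of pure subrings}, a reduction you omit. Second, the ultra-$F$-regularity machinery you invoke is indeed essential in the paper, but it is used precisely for that pure-subring reduction, not for the Koll\'ar-component comparison. So your proposed use of Proposition~\ref{ultra-F-regular <-> F-regular type} is misplaced: the uniformity in $p$ here comes from spreading out the Rees degeneration and the Izumi constant over a model, not from ultraproducts.
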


\begin{acknowledgement}
The authors are indebted to Kenta Sato for helpful comments and for sharing the proof of Lemma \ref{normalized order under pure mophisms}. 
They would also like to thank Yuji Odaka and Suchitra Pande for valuable discussions. 
The first author was supported by JSPS KAKENHI Grant Numbers 20H00111, 23K22383 and 25H00399, and the second author by Grant Number 24KJ1040.
This material is based upon work supported by the National Science Foundation under Grant No.~DMS-1928930 and by the Alfred P. Sloan Foundation under grant~G-2021-16778, while the first author was in residence at the Simons Laufer Mathematical Sciences Institute (formerly MSRI) in Berkeley, California, during the Spring 2024 semester. 
Additional support was provided by the Research Institute for Mathematical Sciences, an International Joint Usage/Research Center located in Kyoto University. 
\end{acknowledgement}

\section{Preliminaries}

\subsection{Birational geometry}
In this subsection, we briefly review some basic terminology from birational geometry. 

Let $X$ be a normal variety over a field $k$ of characteristic zero. 
Given a $\Q$-Weil divisor $D$ on $X$, the graded ring $\bigoplus_{i \ge 0} H^0(X, \sO_X(\lfloor iD \rfloor))$ is called the \textit{section ring} of $D$ and is denoted by $R(X, D)$. 

\begin{defn}
\begin{enumerate}[label=(\roman*)]
\item 
Let $\Delta$ be an effective $\Q$-Weil divisor on $X$ such that $K_X+\Delta$ is $\Q$-Cartier. 
Take a log resolution $\pi:Y \to X$ of the pair $(X, \Delta)$. 
Then $(X, \Delta)$ is said to be \textit{klt} if $\mathrm{ord}_E(K_Y-\pi^*(K_X+\Delta))>-1$ for all prime divisors $E$ on $Y$. This definition is independent of the choice of the log resolution $\pi$. 
\item $X$ is said to be of \textit{klt type} if there exists an effective $\Q$-Weil divisor $\Delta$ on $X$ such that $K_X+\Delta$ is $\Q$-Cartier and $(X, \Delta)$ is klt. 
\item $X$ is said to have only \textit{klt singularities} if $-K_X$ is $\Q$-Cartier and $(X,0)$ is klt. 
\end{enumerate}
\end{defn}

\begin{defn}
Suppose that $X$ is projective over $k$. 
\begin{enumerate}[label=(\roman*)]
\item 
Let $\Delta$ be an effective $\Q$-Weil divisor on $X$ such that $K_X+\Delta$ is $\Q$-Cartier. 
The pair $(X, \Delta)$ is said to be \textit{log Fano} if $-(K_X+\Delta)$ is ample and $(X, \Delta)$ is klt. 
\item 
$X$ is said to be of \textit{Fano type} if there exists an effective $\Q$-Weil divisor $\Delta$ on $X$ such that $K_X+\Delta$ is $\Q$-Cartier and $(X, \Delta)$ is log Fano. 
\item 
We say that $X$ is \textit{$\Q$-Fano} if $-K_X$ is ample $\Q$-Cartier and $X$ has only klt singularities. 
\end{enumerate}
\end{defn}

\subsection{\texorpdfstring{$F$-singularities}{F-singularities}}
In this subsection, we recall basic notions concerning $F$-singularities. The reader is referred to \cite{Takagi-Watanabe} and \cite{SS10} for a general overview. 

Given a module $M$ over a ring $R$ of prime characteristic $p$, let $F^e_*M$ denote the $R$-module corresponding to the pushforward $F^e_*\widetilde{M}$ of the quasi-coherent sheaf $\widetilde{M}$ on $X:=\Spec R$ corresponding to $M$ via the $e$-times iterated Frobenius morphism $F^e:X \to X$. 
Moreover, if $R$ is a Noetherian normal domain and $D$ is a Weil divisor on $X=\Spec R$, then the $R$-module $H^0(X, \sO_X(D))$ is denoted by $R(D)$. 

First we recall the definition of test ideals and strongly $F$-regular singularities. 
\begin{defn}[{cf.~\cite{Hara-Yoshida}, \cite{Takagi}}]
Suppose that $R$ is a Noetherian normal domain of characteristic $p>0$ and $\Delta$ is an effective $\Q$-Weil divisor on $\Spec R$. 
We assume that $R$ is \textit{$F$-finite}, that is, the Frobenius map $F:R\to R$ is finite. 
Let $\ba$ be a nonzero ideal of $R$ and $t \ge 0$ be a real number.
\begin{enumerate}[label=(\roman*)]
\item For every $R$-module $M$, the submodule $0_{M}^{*(\Delta,\ba^t)}$ of $M$ consists of all elements $z\in M$ for which there exists a nonzero element $c\in R$ such that for all large integers $e$, we have 
\[
F^e_*(c\ba^{\lceil tp^e \rceil})\otimes z=0 \in F^e_*R(\lceil p^e\Delta \rceil)\otimes_R M.
\]
\item The \textit{test ideal} $\tau(R,\Delta,\ba^t)$ associated to the triple $(R,\Delta,\ba^t)$ is defined as
\[
\tau(R,\Delta,\ba^t)=\bigcap_{M} \operatorname{Ann}_R 0_{M}^{*(\Delta,\ba^t)},
\]
where $M$ runs through all $R$-modules. 
When $\Delta=0$, the ideal $\tau(R,\Delta,\ba^t)$ is denoted by $\tau(R,\ba^t)$. 
Similarly, when $\ba=R$, the ideal $\tau(R,\Delta,\ba^t)$ is denoted by $\tau(R,\Delta)$. If both $\Delta=0$ and $\ba=R$, then $\tau(R,\Delta,\ba^t)$ is simply denoted by $\tau(R)$.
\item The triple $(R,\Delta,\ba^t)$ is said to be \textit{strongly $F$-regular} if $\tau(R,\Delta,\ba^t)=R$. The strong $F$-regularity of $(R,\Delta)$, $(R,\ba^t)$ and $R$ is defined similarly. 
Throughout this paper, all strongly $F$-regular rings are assumed to be Noetherian and $F$-finite. 
\end{enumerate}
\end{defn}

\begin{rem}\label{test ideal remark}
If $(R,\m)$ is a local ring, then $\tau(R,\Delta,\ba^t)=\operatorname{Ann}_R 0_{E}^{*(\Delta,\ba^t)}$, where $E$ is the injective hull of the residue field $R/\m$ (see \cite[Definition-Proposition 3.3]{BSTZ}; cf.\ \cite[Proposition 8.23]{Hochster-Huneke90}). 
The formation of test ideals commutes with localization and completion (see Remark 3.6 in loc. cit.; cf.\ \cite[Propositions 3.1 and 3.2]{Hara-Takagi}). 
\end{rem}

\begin{defn}[\cite{Schwede08}, \cite{TW04}]
Let $R$ be a Noetherian $F$-finite integral domain of characteristic $p>0$, $f$ be a nonzero element of $R$ and $t \ge 0$ be a real number. 
\begin{enumerate}
\item 
We say that the pair $(R, (f)^t)$ is \textit{sharply $F$-pure} if there exists an integer $e \ge 1$ such that the composition 
\[
R \to F^e_*R \xrightarrow{\cdot F^e_*f^{\lceil t(p^e-1) \rceil}} F^e_*R
\]
of the $e$-times iterated Frobenius map $R \to F^e_*R$ and the multiplication by $F^e_*f^{\lceil t(p^e-1) \rceil}$ splits as an $R$-module homomorphism. 
When $f=1$, we simply say that $R$ is $F$-pure. 
\item 
Assume that $f$ is not a unit. Then the \textit{$F$-pure threshold} $\operatorname{fpt}(f)$ of $f$ is defined by  
\[
\operatorname{fpt}(f)=\sup\{t\in \R_{\ge0} \mid  \text{$(R,(f)^t)$ is sharply $F$-pure}\}.
\]
If $R$ is not $F$-pure, then we define $\operatorname{fpt}(f)=0$ for every $f$. 
When it is necessary to specify the base ring, we write $\operatorname{fpt}(R,f)$ for $\operatorname{fpt}(f)$.
\end{enumerate}
\end{defn}

Next, we introduce a global version of strong $F$-regularity and $F$-pure thresholds. 
\begin{defn}
Let $X$ be a Noetherian $F$-finite normal integral scheme of characteristic $p>0$ and $\Delta$ be an effective $\Q$-Weil divisor on $X$. 
\begin{enumerate}[label=(\roman*)]
\item (\cite{SS10}) The pair $(X,\Delta)$ is said to be \textit{globally $F$-regular} if for every effective Weil divisor $D$ on $X$, there exists an integer $e \ge 1$ such that the composite map 
\[
\sO_X \to F^e_*\sO_X \hookrightarrow F^e_*\sO_X(\lceil (p^e-1)\Delta +D\rceil)
\]
splits as an $\sO_X$-module homomorphism, where the first map is the $e$-times iterated Frobenius map $F^e$ and the second is the pushforward of the natural inclusion $\sO_X \hookrightarrow \sO_X(\lceil (p^e-1)\Delta +D\rceil)$ by $F^e$. 
\item Suppose that $(X,\Delta)$ is globally $F$-regular. 
The \textit{global $F$-split threshold} $\mathrm{gfst}((X,\Delta);D)$ of an effective $\Q$-Weil divisor $D$ on $X$ with respect to the pair $(X, \Delta)$ is defined as
\[
\mathrm{gfst}((X,\Delta);D)=\sup\{t\in \mathbb{R}_{\ge 0} \mid \text{$(X,\Delta+tD)$ is globally $F$-regular}\}.
\]
When $\Delta=0$, we simply write $\mathrm{gfst}(X,D)$. 
If $(X,\Delta)$ is not globally $F$-regular, then we set $\mathrm{gfst}((X,\Delta);D)=0$ for any $D$. 
\end{enumerate}
\end{defn}

\begin{rem}\label{fpt vs gfst}
$F$-pure threshold can be defined for pairs $(X, D)$, where $X$ is a projective variety over an $F$-finite field and $D$ is a Cartier divisor on $X$. 
However, the global $F$-split threshold is, in general, strictly smaller than the $F$-pure threshold. 
For instance, if $k$ is a perfect field of characteristic two, $X = \mathbb{P}_k^3$, and $D \subseteq X$ is the hypersurface defined by $x^3 + y^3 + z^3 + w^3 = 0$, then $\operatorname{fpt}(X, D) = 1$ since $D$ is smooth, while $\mathrm{gfst}(X, D) = 1/2$.
\end{rem}

We now turn to the definition of the $F$-signature, the central object of study in this paper. 
\begin{defn}[\cite{HL02} cf.~\cite{Smith-vandenBergh}]\label{F-signature def}
Let $(R, \m, k)$ be a Noetherian $F$-finite local ring of characteristic $p>0$. 
For each integer $e \ge 1$, let $a_e$ denote the largest number of free copies of $R$ appearing as direct summands of $F^e_*R$, that is, $F^e_*R \cong R^{a_e} \oplus M_e$ where $M_e$ is an $R$-module with no free direct summands. 
Then $F$-signature $s(R)$ of $R$ is defined as 
\[
s(R)=\lim_{e \to \infty}\frac{a_e}{p^{e(d+[k:k^p])}}.
\]
This limit is known to exist by \cite{Tucker12}. 
\end{defn}

\begin{prop}[\cite{AL03}]\label{F-signature basic}
Let the notation be the same as in Definition \ref{F-signature def}. 
\begin{enumerate}[label=$(\arabic*)$]
\item $0 \le s(R) \le 1$. 
\item $s(R)=1$ if and only if $R$ is regular. 
\item $s(R)>0$ if and only if $R$ is strongly $F$-regular. 
\end{enumerate}
\end{prop}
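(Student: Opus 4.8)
The plan is to dispose of part (1) and the two ``easy'' implications directly, and to reduce the two ``hard'' implications to a single deep input -- the positivity theorem of Huneke--Leuschke and Aberbach--Leuschke -- together with the formalism of splitting ideals. First I would normalize: write $\alpha=d+\log_p[k:k^p]$, so that $s(R)=\lim_e a_e/p^{e\alpha}$ with $p^{e\alpha}=\rank_R F^e_*R$; passing to $\widehat R$ affects none of $s(R)$, regularity, or strong $F$-regularity, and since every length and every $a_e$ below acquires the same factor $[k:k^p]^e$, it does no harm to assume $R$ is a complete local domain with perfect residue field and to take $\alpha=d$. Part (1) is then immediate: $a_e\ge 0$, while a free direct summand $R^{a_e}\subseteq F^e_*R$ forces $a_e\le\rank_R F^e_*R=p^{ed}$. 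For the ``if'' direction of (2), if $R$ is regular then Kunz's theorem makes the Frobenius flat, so $F^e_*R$ is finite free over $R$, $a_e=p^{ed}$, and $s(R)=1$.

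For the two remaining implications I would work with the $e$-th splitting ideal $I_e=\{r\in R:\psi(F^e_*r)\in\m\text{ for all }\psi\in\Hom_R(F^e_*R,R)\}$, together with the standard facts that $a_e=\ell_R(R/I_e)$ and that $\m^{[p^e]}\subseteq I_e$ (the latter from the projection formula $F^e_*(a^{p^e})=a\cdot F^e_*1$ for $a\in\m$). For the ``only if'' half of (3): if $R$ is not strongly $F$-regular, choose $c\in R^\circ$ for which the map $R\to F^e_*R$, $1\mapsto F^e_*c$, splits for no $e$; then $c\in I_e$ for every $e$, so $a_e=\ell_R(R/I_e)\le\ell_R\big(R/((c)+\m^{[p^e]})\big)=O(p^{e(d-1)})$ because $\dim R/(c)=d-1$, whence $s(R)=0$. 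Thus $s(R)>0$ implies $R$ is strongly $F$-regular.

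The converse half of (3) -- strong $F$-regularity $\Rightarrow s(R)>0$ -- is the deep point, and here I would invoke the Huneke--Leuschke/Aberbach--Leuschke argument: fix a test element $c$ with $R_c$ regular and a splitting $\phi_0\colon F^{e_0}_*R\to R$ with $\phi_0(F^{e_0}_*c)=1$; over the regular ring $R_c$ the modules $F^{e}_*R$ are free of the expected rank by Kunz, and transporting free bases back to $F^e_*R$ via the projection formula and composing with $F^{e_0}_*(-)$ followed by $\phi_0$ produces, for every $e$, a fixed positive multiple of $p^{ed}$ free summands of $F^{e+e_0}_*R$. I expect the genuine obstacle to be precisely this last step: making the construction uniform in $e$ requires controlling the powers of $c$ that get cleared in the process, which is where $F$-finiteness -- hence excellence, hence the existence of well-behaved test elements -- is used.

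Finally, the ``only if'' half of (2). If $s(R)=1$ then $s(R)>0$, so by (3) $R$ is strongly $F$-regular, in particular a Cohen--Macaulay normal domain; after a standard faithfully flat base change I may assume $R/\m$ is infinite. Then $F^e_*R$ is a finite maximal Cohen--Macaulay module, so in $F^e_*R=R^{a_e}\oplus M_e$ the summand $M_e$ is maximal Cohen--Macaulay of rank $p^{ed}-a_e$, and any maximal Cohen--Macaulay module $M$ satisfies $\mu(M)\le e(R)\rank(M)$ (compare a minimal generating set with a generic Artinian reduction of $M$, whose length is $e(R)\rank(M)$). Since $\mu(F^e_*R)=\ell_R(R/\m^{[p^e]})$, this gives $\ell_R(R/\m^{[p^e]})-a_e\le e(R)(p^{ed}-a_e)$; dividing by $p^{ed}$ and letting $e\to\infty$ yields $e_{HK}(R)-s(R)\le e(R)(1-s(R))$, so $s(R)=1$ forces $e_{HK}(R)\le 1$, hence $e_{HK}(R)=1$, and then $R$ is regular by the theorem of Watanabe--Yoshida.
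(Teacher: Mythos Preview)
The paper does not give its own proof of this proposition: it is stated with a bare citation to \cite{AL03} (and implicitly to \cite{HL02} for part~(2)) and then used as a black box. Your sketch is correct and follows the standard route in the literature, so there is nothing to compare it against in the paper itself.

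A couple of minor comments on the sketch. In the ``only if'' direction of~(3), the bound $\ell_R\big(R/((c)+\m^{[p^e]})\big)=O(p^{e(d-1)})$ is justified by Monsky's theorem applied to the $(d-1)$-dimensional ring $R/(c)$; you might want to say this explicitly. In the ``only if'' direction of~(2), the inequality $\mu(M)\le e(R)\rank(M)$ for maximal Cohen--Macaulay $M$ indeed needs an infinite residue field (so that $\m$ admits a minimal reduction generated by a system of parameters), and you correctly flag the base change; note that the base change should be to $R[t]_{\m R[t]}$ or similar, which preserves $F$-finiteness, $s(R)$, and regularity. The reduction to a complete local domain with perfect residue field at the start is fine, though strictly speaking one should cite that $s(R)$ is unchanged under completion (this is in \cite{Tucker12} or \cite{AL03}).
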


In order to define $F$-signature in equal characteristic zero in Section \ref{F-signature char 0 section}, we recall the method of reduction modulo $p$. 
We briefly outline the construction below.
\begin{defn}\label{def:model}
Let $x\in X$ be a (not necessarily closed) point of a variety $X$ over $\C$. 
We say that $(A,x_A\in X_A)$ is a \textit{model} of $x\in X$ if the following conditions hold:
\begin{enumerate}[label=(\roman*)]
\item $A$ is a finitely generated $\Z$-subalgebra of $\C$.
\item $X_A$ is a scheme of finite type over $A$ and $x_A$ is a point of $X_A$ such that there exists a morphism $X \to X_A$ sending $x$ to $x_A$ making the following diagram commute:
\[
\xymatrix{
\overline{\{x\}} \ar@{^{(}->}[r] \ar[d] \ar@{}[dr]|\square & X \ar[r] \ar[d] \ar@{}[dr]|\square& \Spec \C \ar[d]\\
\overline{\{x_A\}} \ar@{^{(}->}[r] & X_A \ar[r]& \Spec A, 
}
\]
\end{enumerate}
where $\overline{\{x\}}$ (resp.~$\overline{\{x_A\}}$) denotes the closure of $\{x\}$ in $X$ (resp.~of $\{x_A\}$ in $X_A$) with the reduced induced scheme structure.

By generic freeness, after replacing $A$ by a localization $A[1/a]$ at some element $a \in A$, we may assume that $X_A$ and $\overline{\{x_A\}}$ are flat over $A$. 
For every closed point $\mu \in \Spec A$, we define $X_{\mu}$ as the scheme-theoretic fiber of $X_A \to \Spec A$ over $\mu$, that is, $X_{\mu}=X_A\times_{\Spec A}\Spec A/\mu$. 
We refer to $X_{\mu}$ as a \textit{reduction modulo $p$} of $X$. 
For a general closed point $\mu$, 
the fiber $\overline{\{x_A\}}\times_{\Spec A}\Spec A/\mu$ of $\overline{\{x_A\}} \to \Spec A$ over $\mu$ is an integral closed subscheme of $X_\mu$, and we define $x_\mu$ as its generic point. 
When $X$ is affine and $(R,\m)=\sO_{X,x}$, we define $R_A:=\sO_{X_A}$ and  $R_\mu:=\sO_{X_\mu,x_\mu}$, and write $\m_A$ for the prime ideal of $R_A$ corresponding to $x_A$. We refer to $(A, R_A, \m_A)$ as a \textit{model} of $R$. 

If $X$ is normal, then $X_{\mu}$ is a normal integral scheme for general closed points $\mu \in \Spec A$ by \cite[Th\'eor\`eme 12.2.4]{EGAIV3}. 
Then, for a $\mathbb{Q}$-Weil divisor $\Delta$ on $X$, an argument similar to the one above yields a reduction $\Delta_{\mu}$ of $\Delta$, which is a $\mathbb{Q}$-Weil divisor on $X_{\mu}$. 
The reader is referred to \cite[\S\S 11--12]{EGAIV3} and \cite{HH_char0} for basic properties of reduction modulo $p$.

\end{defn}
 
There is a significant correspondence between $F$-singularities and singularities in complex birational geometry via reduction modulo $p$. 
 \begin{thm}\label{klt SFR correspondence}
 Let $X$ be a normal variety over $\C$ and $\Delta$ be an effective $\Q$-Weil divisor on $X$ such that $K_X+\Delta$ is $\Q$-Cartier. Fix a model $(A,X_A,\Delta_A)$ of $(X,\Delta)$.
 \begin{enumerate}[label=$(\arabic*)$]
 \item $($\cite{Hara-Watanabe}, \cite{Takagi}$)$ $(X,\Delta)$ is klt if and only if $(X_\mu,\Delta_\mu)$ is strongly $F$-regular for general closed points $\mu\in \Spec A$.
 \item $($\cite{SS10}$)$ 
Suppose that $X$ is projective. 
If $(X, \Delta)$ is a log Fano pair, then $(X_\mu,\Delta_\mu)$ is globally $F$-regular for general closed points $\mu\in \Spec A$.
\end{enumerate}
\end{thm}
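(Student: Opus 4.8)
The two assertions are of quite different character, and (2) reduces to (1) via the anticanonical cone, so I would prove them in that order.

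\medskip
\noindent\textbf{Part (1).} Everything here is local, so I may take $X=\Spec R$ affine: both klt-ness and strong $F$-regularity are checked after localizing, and the formation of test ideals commutes with localization (Remark~\ref{test ideal remark}). On the characteristic-zero side, the very definition of klt through a log resolution $\pi\colon Y\to X$ of $(X,\Delta)$ says that $(X,\Delta)$ is klt if and only if the multiplier ideal $\mathcal J(X,\Delta)=\pi_{*}\sO_{Y}(\lceil K_{Y}-\pi^{*}(K_{X}+\Delta)\rceil)$ equals $\sO_{X}$ (all log discrepancies are positive exactly when this round-up is an effective, purely exceptional divisor, whose pushforward is $\sO_X$). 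On the characteristic-$p$ side, $(X_{\mu},\Delta_{\mu})$ is strongly $F$-regular precisely when $\tau(\sO_{X_{\mu}},\Delta_{\mu})=\sO_{X_{\mu}}$. The bridge is the comparison theorem of Hara--Yoshida and Takagi: after inverting one element of $A$ one has $\tau(\sO_{X_{\mu}},\Delta_{\mu})\subseteq \mathcal J(X,\Delta)_{\mu}$ for every closed point $\mu\in\Spec A$, with equality for all $\mu$ in a dense open subset. Granting this, (1) is formal. If $(X,\Delta)$ is klt, then $\mathcal J(X,\Delta)=\sO_{X}$, so $\tau(\sO_{X_{\mu}},\Delta_{\mu})=\sO_{X_{\mu}}$ for general $\mu$, i.e.\ $(X_{\mu},\Delta_{\mu})$ is strongly $F$-regular. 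Conversely, if $(X_{\mu},\Delta_{\mu})$ is strongly $F$-regular for general $\mu$, then $\mathcal J(X,\Delta)_{\mu}=\sO_{X_{\mu}}$ for all $\mu$ in a dense open; shrinking $\Spec A$ so that $\sO_{X_{A}}/\mathcal J(X,\Delta)_{A}$ is a free $A$-module (generic freeness), the vanishing of a single fibre forces $\sO_{X_{A}}/\mathcal J(X,\Delta)_{A}=0$, hence $\mathcal J(X,\Delta)=\sO_{X}$ and $(X,\Delta)$ is klt.

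\medskip
\noindent\textbf{Part (2).} Here I would pass to the anticanonical section ring. Since $(X,\Delta)$ is log Fano, $L:=-(K_{X}+\Delta)$ is ample and $\Q$-Cartier, so $S:=R(X,L)=\bigoplus_{m\ge0}H^{0}(X,\sO_{X}(\lfloor mL\rfloor))$ is a normal finitely generated graded $\C$-algebra, and $\Spec S$ carries a natural effective boundary $\Delta_{S}$, obtained from $\Delta$, for which $K_{\Spec S}+\Delta_{S}$ is $\Q$-Cartier (this uses $L=-(K_{X}+\Delta)$). By Koll\'ar's cone correspondence, since $(X,\Delta)$ is klt and $L=-(K_X+\Delta)$ is ample, the pair $(\Spec S,\Delta_{S})$ is klt. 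Applying part (1) to $(\Spec S,\Delta_{S})$, the reduction $(\Spec S_{\mu},(\Delta_{S})_{\mu})$ is strongly $F$-regular for general $\mu$; and after shrinking $\Spec A$ once more, $H^{0}$ commutes with reduction modulo $p$ and the multiplication maps are visibly compatible, so $S_{\mu}$ is identified with the anticanonical section ring $R(X_{\mu},-(K_{X_{\mu}}+\Delta_{\mu}))$, with $(\Delta_{S})_{\mu}$ the boundary coming from $\Delta_{\mu}$. Finally, by the Schwede--Smith correspondence between global $F$-regularity of a projective pair and strong $F$-regularity of its anticanonical section ring---here only the ``descent from the cone'' direction, proved by spreading a splitting of $S_{\mu}$ over the punctured cone and pushing it forward to $X_{\mu}$---the pair $(X_{\mu},\Delta_{\mu})$ is globally $F$-regular for general $\mu$.

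\medskip
\noindent\textbf{Main obstacle.} The real content of both parts is the Hara--Yoshida--Takagi comparison $\tau(\sO_{X_{\mu}},\Delta_{\mu})=\mathcal J(X,\Delta)_{\mu}$ for general $\mu$. The inclusion $\tau\subseteq\mathcal J\bmod p$ is the softer half: a Frobenius datum witnessing membership in the test ideal can be transported to a fixed log resolution, where discrepancies bound it from above. The reverse inclusion---equivalently, equality for $p\gg0$---is where the work lies: it rests on Hara's use of Kawamata--Viehweg/Fujita-type vanishing theorems in the relative setting over the resolution, equivalently on a delicate control of the trace of the $e$-th Frobenius against a fixed log resolution under reduction modulo $p$. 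Everything else---the reformulations of klt and strong $F$-regularity, the generic-freeness bookkeeping turning ``general $\mu$'' into honest equalities over $A$ (and the finitely many shrinkings of $\Spec A$), the klt-ness of the anticanonical cone of a log Fano (Koll\'ar), and the easy ``cone implies global'' direction of Schwede--Smith---is standard, and I would treat it as bookkeeping while spending the write-up pinning down the comparison theorem precisely.
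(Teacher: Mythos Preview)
The paper does not give its own proof of this theorem; it is recorded in the preliminaries section with citations to \cite{Hara-Watanabe}, \cite{Takagi}, and \cite{SS10} and then used as a black box throughout. Your sketch correctly outlines the standard arguments found in those references---Part~(1) via the Hara--Yoshida/Takagi comparison of test ideals with multiplier ideals under reduction modulo~$p$, and Part~(2) by passing to the anticanonical section ring, invoking Koll\'ar's cone correspondence to get klt-ness, applying Part~(1), and then descending global $F$-regularity via the Schwede--Smith dictionary.

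One minor technical point worth tightening in Part~(2): the correspondences in \cite{SS10} between global $F$-regularity of $(X,\Delta)$ and strong $F$-regularity of a section ring are formulated for an ample \emph{Cartier} divisor (see \cite[Proposition~5.3]{SS10}, also used later in this paper). Since $L=-(K_X+\Delta)$ is only $\Q$-Cartier, you should either replace $L$ by a Cartier multiple $rL$ or work with a suitable Veronese subring; this is purely cosmetic and does not affect the argument.
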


We will use Lemma \ref{graded Frational} later in the proof of one of the main results of this paper. 
Although this lemma may be well known to experts, we include a proof here due to the lack of a suitable reference. Before stating it, we recall the definition of parameter test submodules and $F$-rational singularities. 

\begin{defn}[cf.~\textup{\cite{Schwede-Takagi08}}]
Suppose that $R$ is an $F$-finite integral domain with canonical module $\omega_R$. 
Let $\ba \subseteq R$ be a nonzero ideal and $t \ge 0$ be a real number. 
\begin{enumerate}[label=(\roman*)]
\item 
The \textit{parameter test submodule} $\tau(\omega_R, \ba^t)$ of the pair $(R, \ba^t)$ is defined as the unique smallest nonzero submodule $M$ of $\omega_R$ satisfying that $\mathrm{Tr}_{F^e}(F^e_*\ba^{\lceil t(p^e-1)\rceil}M) \subseteq M$ for every integer $e \ge 1$, where $\mathrm{Tr}_{F^e}:F^e_*\omega_R \to \omega_R$ is the trace map of the $e$-times iterated Frobenius map $F^e$. 
\item 
The pair $(R, \ba^t)$ is said to be \textit{$F$-rational} if $R$ is Cohen--Macaulay and $\tau(\omega_R, \ba^t)=\omega_R$. 
\end{enumerate}
\end{defn}

\begin{rem}\label{F-rational rem}
As observed in \cite[Proposition 6.5 (1)]{Schwede-Takagi08}, if $(R, \ba^t)$ is strongly $F$-regular, then it is $F$-rational. Moreover, when $R$ is Gorenstein, the converse also holds. 
\end{rem}

\begin{lem}\label{graded Frational}
Let $R=\bigoplus_{n \in \Z}R_n$ be an $F$-finite $\Z$-graded domain with unique homogeneous maximal ideal $\m$. 
For every nonzero homogeneous ideal $\ba$ of $R$ and every real number $t>0$, the pair $(R, \ba^t)$ is $F$-rational if and only if so is the localization $(R_{\m}, (\ba R_{\m})^t)$ at $\m$. 
\end{lem}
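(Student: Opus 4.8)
The plan is to reduce the statement to the well-known localization behavior of parameter test submodules, using the $\Z$-grading to control what happens at $\m$ versus at an arbitrary homogeneous prime. First I would recall that since $R$ is Cohen-Macaulay if and only if $R_\m$ is (the unique homogeneous maximal ideal $\m$ detects depth for graded rings, via the standard fact that $\operatorname{depth} R = \operatorname{depth} R_\m$ in the $\Z$-graded setting, as all other maximal ideals contain a homogeneous system of parameters after localization), the Cohen-Macaulay halves of the two conditions match. So the content is the equality of the non-Cohen-Macaulay-aside conditions: $\tau(\omega_R, \ba^t) = \omega_R$ if and only if $\tau(\omega_{R_\m}, (\ba R_\m)^t) = \omega_{R_\m}$.

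The key step is to show $\tau(\omega_R, \ba^t)$ is a \emph{homogeneous} submodule of $\omega_R$ and that its formation commutes with localization at $\m$, i.e.\ $\tau(\omega_R, \ba^t)_\m = \tau(\omega_{R_\m}, (\ba R_\m)^t)$. For homogeneity: $\omega_R$ carries a natural $\Z$-grading, the trace map $\mathrm{Tr}_{F^e}: F^e_*\omega_R \to \omega_R$ is graded (with an appropriate shift of grading on $F^e_*$), and $\ba$ is homogeneous by hypothesis; hence the "smallest submodule stable under all $\mathrm{Tr}_{F^e}(F^e_* (\ba^{\lceil tp^e\rceil} \cdot -))$" can be computed among homogeneous submodules — one takes the sum of the homogeneous components of any candidate, which is again stable, so the minimal one is homogeneous. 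For compatibility with localization: this is the standard argument that parameter test submodules (like test ideals) commute with localization — one inclusion is formal from the universal property, and the reverse uses that $\tau(\omega_R,\ba^t)$ is finitely generated together with an element-chasing argument showing a compatible submodule over $R_\m$ pulls back, at least up to the usual subtleties; here I would either cite the localization statement for parameter test submodules from \cite{Schwede-Takagi08} (or the analogous test-ideal statement in Remark \ref{test ideal remark}) or spell out the short argument. Given homogeneity plus localization-compatibility, the equivalence falls out: $\tau(\omega_R,\ba^t) = \omega_R$ forces $\tau(\omega_{R_\m},(\ba R_\m)^t) = \omega_{R_\m}$ by localizing; conversely, if $\tau(\omega_R,\ba^t) \subsetneq \omega_R$ is a proper homogeneous submodule, then it is contained in $\m\omega_R$ (a proper homogeneous submodule of $\omega_R$ lies in the homogeneous maximal ideal times $\omega_R$, provided $\omega_R$ is generated in a way compatible with the grading — more carefully, $(\omega_R/\tau)_\m \neq 0$ by graded Nakayama since $\omega_R/\tau$ is a nonzero finitely generated graded module), so $\tau(\omega_{R_\m},(\ba R_\m)^t) = \tau(\omega_R,\ba^t)_\m \subsetneq \omega_{R_\m}$.

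The main obstacle I anticipate is the graded Nakayama step in the "only proper-at-$\m$" direction: one must ensure that a nonzero finitely generated $\Z$-graded $R$-module $N = \omega_R/\tau(\omega_R,\ba^t)$ does not vanish after localizing at $\m$. This is true because $N$ is finitely generated, so it has a nonzero homogeneous element $u$ of some degree, and $\operatorname{Ann}_R(u)$ is a homogeneous proper ideal, hence contained in some homogeneous prime, hence contained in $\m$ (as $\m$ is the unique \emph{homogeneous} maximal ideal and every homogeneous prime is contained in it in the $\Z$-graded local-type situation here — or more robustly, one localizes at the homogeneous prime $\mathfrak p \supseteq \operatorname{Ann}(u)$ and notes $N_\mathfrak p \neq 0$, then transfers to $\m$). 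I would handle this by the standard graded-Nakayama lemma (if $N$ is a finitely generated $\Z$-graded module over a Noetherian $\Z$-graded ring with unique homogeneous maximal ideal $\m$ and $N_\m = 0$, then $N = 0$), which is routine and can be cited or stated in one line. Everything else — homogeneity of $\tau(\omega_R,\ba^t)$, compatibility of Cohen-Macaulayness, and localization-compatibility of the parameter test submodule — is either standard or a direct consequence of \cite{Schwede-Takagi08}, so the proof should be short.
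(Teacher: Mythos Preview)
Your outline is essentially the same strategy as the paper's: reduce Cohen--Macaulayness to the well-known graded/local equivalence, show the parameter test submodule is homogeneous and localizes well, and conclude. Two points of comparison are worth noting.

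First, your sketch of homogeneity (``take the sum of homogeneous components of any candidate, which is again stable, so the minimal one is homogeneous'') does not work as written: passing to the homogeneous hull of a stable submodule produces a \emph{larger} stable submodule, which is the wrong direction for a minimality argument. The paper avoids this by using the explicit generator description
\[
\tau(\omega_R,\ba^t)=\sum_{e}\sum_{\phi^{(e)}}\phi^{(e)}\bigl(F^e_*(c\,\ba^{\lceil tp^e\rceil}\omega_R)\bigr),
\]
with $c$ a homogeneous test element and $\phi^{(e)}$ ranging over (the homogeneous components of) $\Hom_R(F^e_*\omega_R,\omega_R)$; this makes both homogeneity and compatibility with localization transparent in one stroke. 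You should either invoke this description or the Matlis-dual characterization via $0^{*}$ in top local cohomology.

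Second, for the final equivalence the paper passes from the submodule $\tau(\omega_R,\ba^t)\subseteq\omega_R$ to the parameter test \emph{ideal} $\tau_{\mathrm{par}}(R,\ba^t)=\tau(\omega_R,\ba^t):_R\omega_R\subseteq R$, which is then a homogeneous ideal; properness becomes simply $\tau_{\mathrm{par}}\subseteq\m$, and this localizes on the nose to $\tau_{\mathrm{par}}R_\m\subseteq\m R_\m$. This sidesteps your graded-Nakayama argument on $\omega_R/\tau(\omega_R,\ba^t)$, though your route through Nakayama is also correct.
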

\begin{proof}
Let $A$ be an $F$-finite Cohen--Macaulay local (resp.~${}^*$local) domain $A$ with canonical (resp.~${}^*$canonical) module $\omega_A$, $I \subseteq A$ be a nonzero ideal and $s>0$ be a real number.
The parameter test ideal $\tau_{\rm par}(A, I^s)$ is defined as 
\[
\tau_{\rm par}(A, I^s)=\tau(\omega_A, I^s):_A \omega_A \subseteq A, 
\]
where $\tau(\omega_A, I^s)$ is the parameter test submodule of the pair $(A, I^s)$. 
Note that $\tau_{\rm par}(A, I^s)=A$ if and only if $(A, I^s)$ is $F$-rational. 

It is well known that $R$ is Cohen--Macaulay if and only if so is $R_{\m}$. Therefore, we may assume that $R$ is Cohen--Macaulay. 
Pick a homogeneous test element $c \in R$. 
Then, by an argument similar to the proof of \cite[Lemma 2.1]{Hara-Takagi},  $\tau(\omega_R, \ba^t)$ can be written as 
\[
\tau(\omega_R, \ba^t)=\sum_{e \in \N}\sum_{\phi^{(e)}}\phi^{(e)}(F^e_*(c \ba^{\lceil t p^e \rceil}\omega_R)) \subseteq \omega_R
\]
where $\phi^{(e)}$ runs through all elements of $\Hom_R(F^e_*\omega_R, \omega_R)$. 
Considering the homogeneous decomposition of $\phi^{(e)}$, we may assume that each $\phi^{(e)}$ is a homogeneous element. 
This description shows $\tau(\omega_R, \ba^t)$ is a homogeneous ideal of $R$, and its formation commutes with localization. 
The same holds for $\tau_{\rm par}(R, \ba^t)$. 
Thus, $(R, \ba^t)$ is not $F$-rational if and only if $\tau_{\rm par}(R, \ba^t) \subseteq \m$, which is equivalent to saying that $\tau_{\rm par}(R_{\m}, (\ba R_{\m})^t) \subseteq \m R_{\m}$. 
This last condition means that the localized pair $(R_{\m}, (\ba R_{\m})^t)$ is not $F$-rational. 
\end{proof}

\subsection{Ultraproducts}\label{ultraproduct section}
In this subsection, we quickly review basic notions on ultraproducts and their applications to commutative algebra. 
We refer the reader to \cite{Schoutens03}, \cite{Schoutens08} and \cite[Section 3]{Yamaguchi25a} for details.
Throughout this subsection, let $W$ be an infinite set, $R$ be a local ring essentially of finite type over $\C$ and $\mathcal{P}$ denote the set of all prime numbers. 

\begin{defn}
A non-empty subset $\mathcal{F}$ of the power set of $W$ is said to be a {\it non-principal ultrafilter} on $W$ if $\mathcal{F}$ satisfies the following four conditions:
\begin{enumerate}[label=(\roman*)]
		\item If $A\in \mathcal{F}$ and $A\subseteq B\subseteq W$, then $B\in \mathcal{F}$.
		\item If $A, B\in \mathcal{F}$, then $A \cap B \in \mathcal{F}$.
		\item If $A$ is a finite subset of $W$, then $A \notin \mathcal{F}$.
		\item If $A\subseteq W$, then $A\in \mathcal{F}$ or $W\setminus A\in \mathcal{F}$.
\end{enumerate}
\end{defn}

\begin{rem}
Let $A$ be an infinite subset of $W$. By Zorn's Lemma, there exists a non-principal ultrafilter $\mathcal{F}$ on $W$ such that $A\in \mathcal{F}$.
\end{rem}

\begin{defn}
Let $\mathcal{F}$ be a non-principal ultrafilter on $W$ and $\phi$ be a property concerning elements of $W$. 
We say that $\phi$ holds \textit{for almost all $w$} if 
\[\{w\in W \; | \; \text{$\phi(w)$ holds}\} \in \mathcal{F}.\]
\end{defn}

We now introduce the notion of ultraproducts, a fundamental construction in model theory and non-standard analysis.
\begin{defn}
Let $(A_w)_{w\in W}$ be a family of non-empty sets indexed by $W$ and $\mathcal{F}$ be a non-principal ultrafilter on $W$. 
The \textit{ultraproduct} $A_\infty$ of $(A_w)_{w\in W}$ is defined as	
\[
		A_\infty=\ulim_w A_w:=\prod_{w}A_w/\sim,
	\]
where $\sim$ is the equivalence relation on $\prod_w A_w$ defined as follows: for any elements $(a_w), (b_w)\in \prod_w A_w$, we have $(a_w)\sim (b_w)$ if and only if $a_w=b_w$ for almost all $w$. 
For each $(a_w)\in \prod_w A_w$, the equivalence class of $(a_w)$ in $A_\infty$ is denoted by $\ulim_w a_w$.

If $A_w=\N$ (resp. $A_w=\Q$) for all $w\in W$, then we write ${^*\N}$ (resp.~${^*\Q}$) to denote $\ulim_w A_w$. 
An element of ${^*\N}$ (resp.~${^*\Q}$) is called a \textit{non-standard natural number} (resp.~\textit{non-standard rational number}).
\end{defn}

For a family $(f_w:A_w\to B_w)_{w\in W}$ of maps, we can construct the ultraproduct $f_\infty:A_\infty \to B_\infty$ in a natural way. Similarly, we can also define the ultraproduct of binary operators.

\begin{rem}
If $(A_w)_{w \in W}$ is a family of rings, then $A_\infty$ is naturally equipped with a ring structure. 
Moreover, if $(M_w)_{w\in W}$ is a family of $A_w$-modules, then $M_\infty:=\ulim_w M_w$ carries a natural $A_\infty$-module structure.
\end{rem}

The following notation will be used throughout this paper. 
\begin{notation}
Let $\mathcal{F}$ be a non-principal ultrafilter on $\mathcal{P}$.
\begin{enumerate}[label=(\roman*)]
\item The non-standard natural number $\ulim_p p$ is denoted by $\pi$. 
\item Let $\nu=\ulim_p n_p$ and $\epsilon=\ulim_p e_p$ be non-standard natural numbers. 
Then $\nu^{\epsilon}$ is defined to be $\ulim_p n_p^{e_p}$.
\item Let $\tau=\ulim_p t_p$ be a non-standard rational number. 
Then $\lfloor \tau \rfloor:=\ulim_p \lfloor t_p\rfloor$ and $\lceil \tau \rceil:=\ulim_p \lceil t_p \rceil$. 
\end{enumerate}
\end{notation}

The following is a key fact that enables us to apply the theory of ultraproducts to commutative algebra. 
\begin{prop}
Let $\mathcal{F}$ be a non-principal ultrafilter on $\mathcal{P}$ and $\overline{\mathbb{F}_p}$ be an algebraic closure of the finite field $\mathbb{F}_p$. 
Then there exists a (non-canonical) field isomorphism
	\[
		\ulim_p \overline{\F_p} \cong \C.
	\]
\end{prop}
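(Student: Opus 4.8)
The plan is to show that $K:=\ulim_p\overline{\F_p}$ is an algebraically closed field of characteristic zero with $|K|=2^{\aleph_0}$, and then to appeal to Steinitz's classification of algebraically closed fields. Throughout I use that $\mathcal P$ is countably infinite and that each $\overline{\F_p}$, being the increasing union of the finite fields $\F_{p^n}$, is countably infinite.

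First I would record the algebraic structure of $K$. It is a field: if $\ulim_p a_p\neq 0$ in $K$, then $\{p\mid a_p\neq 0\}\in\mathcal F$, and putting $b_p=a_p^{-1}$ on this set and $b_p=0$ elsewhere yields $(\ulim_p a_p)(\ulim_p b_p)=1$. It has characteristic zero: for every prime $q$ the set $\{p\mid q=0\text{ in }\overline{\F_p}\}=\{q\}$ is finite, hence not in $\mathcal F$, so $q$ is invertible in $K$. It is algebraically closed: given a monic $f\in K[x]$ of degree $n\ge 1$, lift its coefficients to families, factor off a root of the resulting monic degree-$n$ polynomial over each $\overline{\F_p}$, and take the ultraproduct of these roots; equivalently, this is the transfer principle ({\L}o\'s's theorem) applied to the first-order axiom scheme asserting, for each $n\ge 1$, that every monic polynomial of degree $n$ has a zero, each instance of which holds in every $\overline{\F_p}$.

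The main computation is $|K|=2^{\aleph_0}$. The inequality $|K|\le\bigl|\prod_p\overline{\F_p}\bigr|=\aleph_0^{\aleph_0}=2^{\aleph_0}$ is immediate. For the reverse inequality I would produce an independent family of continuum size in $K$: fix a bijection $\beta\colon\mathcal P\to\{0,1\}^{<\omega}$ onto the countable full binary tree, and for each $p$ fix an injection $\iota_p\colon\{0,1\}^{<\omega}\hookrightarrow\overline{\F_p}$. To each branch $b\in\{0,1\}^{\omega}$ associate $\phi_b:=\ulim_p\iota_p\bigl(b|_{|\beta(p)|}\bigr)\in K$, where $b|_m$ denotes the length-$m$ initial segment of $b$. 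If $b\neq b'$, then they first differ at some position $k$, whence $b|_m\neq b'|_m$ for every $m>k$; since $\{p\mid|\beta(p)|\le k\}$ is finite and therefore not in $\mathcal F$, the families $\bigl(\iota_p(b|_{|\beta(p)|})\bigr)_p$ and $\bigl(\iota_p(b'|_{|\beta(p)|})\bigr)_p$ disagree on a set belonging to $\mathcal F$, so $\phi_b\neq\phi_{b'}$ in $K$. Hence $b\mapsto\phi_b$ is injective and $|K|\ge 2^{\aleph_0}$.

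Finally, $K$ and $\C$ are both algebraically closed fields of characteristic zero and cardinality $2^{\aleph_0}$, hence both have transcendence degree $2^{\aleph_0}$ over their common prime field $\Q$; Steinitz's theorem then provides a (necessarily non-canonical) field isomorphism $K\cong\C$. I expect the only genuinely delicate step to be the lower bound $|K|\ge 2^{\aleph_0}$—that is, the construction of the independent family $\{\phi_b\}_b$—while the remaining assertions follow from the standard transfer principle for ultraproducts together with the structure theory of algebraically closed fields.
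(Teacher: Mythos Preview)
Your argument is correct. The paper does not prove this proposition; it is stated as a background fact in the preliminaries on ultraproducts, and your proof supplies exactly the standard justification: {\L}o\'s's theorem gives that $K$ is an algebraically closed field of characteristic zero, a direct cardinality count gives $|K|=2^{\aleph_0}$, and Steinitz's classification finishes.

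One small remark on presentation: your ``independent family'' construction for the lower bound is fine, but you could also cite this directly as the well-known fact that an ultraproduct of countably infinitely many sets, each of size at least $2$, over a non-principal ultrafilter on a countable index set has cardinality exactly $2^{\aleph_0}$ (this is sometimes stated as a consequence of the fact that such ultrapowers are $\aleph_1$-saturated, though your explicit tree argument is more elementary and entirely sufficient). Either way, the point you flag as delicate---the lower bound---is handled correctly by your branch construction, since distinct branches eventually diverge and $\beta$ ensures that ``eventually'' translates to ``for a cofinite and hence $\mathcal F$-large set of primes.''
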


From now on, we fix a non-principal ultrafilter $\mathcal{F}$ on $\mathcal{P}$ and a field  isomorphism $\ulim_p \overline{\mathbb{F}_p}\cong \C$. 
Let $\C[X_1,\dots,X_n]$ (resp.~$\overline{\mathbb{F}_p}[X_1,\dots,X_n]$) denote the polynomial ring in $n$ variables over $\C$ (resp.~$\overline{\mathbb{F}_p}$). 

\begin{thm}[\cite{vdD79}]
The natural ring homomorphism  
\[
\C[X_1,\dots,X_n]\to \ulim_p(\overline{\mathbb{F}_p}[X_1,\dots,X_n]); \quad X_i \mapsto \ulim_p X_i,
\] 
which depends on the choice of the ultrafilter $\mathcal{F}$ and the field isomorphism $\ulim_p \overline{\mathbb{F}_p}\cong \C$, is faithfully flat.
\end{thm}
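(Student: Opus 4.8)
The plan is to verify directly that the map $\iota\colon R\to S$, where $R=\C[X_1,\dots,X_n]$ and $S=\ulim_p\bigl(\overline{\F_p}[X_1,\dots,X_n]\bigr)$, is faithfully flat, using its concrete description: since $\C=\ulim_p\overline{\F_p}$, the map sends $\sum_\alpha c_\alpha X^\alpha$ (with $c_\alpha=\ulim_p c_{\alpha,p}$) to $\ulim_p\bigl(\sum_\alpha c_{\alpha,p}X^\alpha\bigr)$. Because a flat $R$-module $M$ is faithfully flat exactly when $\m M\neq M$ for every maximal ideal $\m$ of $R$ (and faithful flatness then forces $\iota$ injective anyway), it suffices to prove that (i) $\iota$ is flat and that (ii) $\m S\neq S$ for every maximal ideal $\m\subseteq R$.

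For (i) I would check the equational criterion for flatness. Suppose $a_1,\dots,a_m\in R$ and $s_1,\dots,s_m\in S$ satisfy $\sum_i a_is_i=0$ in $S$; the task is to factor $(s_1,\dots,s_m)$ through finitely many syzygies of $(a_1,\dots,a_m)$ that already live over $R$. Choose representatives $a_i=\ulim_p a_{i,p}$ with $\deg a_{i,p}\le D$ for a fixed $D$ (each $a_i$ has only finitely many coefficients, each an ultralimit) and $s_i=\ulim_p s_{i,p}$, so that $\sum_i a_{i,p}s_{i,p}=0$ for almost all $p$ by the definition of the ultraproduct. The essential input is a \emph{field-uniform} bound from effective polynomial ideal theory (in the tradition of Hermann, as systematized by van den Dries and Schmidt): there is an integer $\beta=\beta(n,m,D)$ such that over an arbitrary field the syzygy module of polynomials of degree $\le D$ is generated by syzygies of degree $\le\beta$, hence by at most $N:=m\binom{n+\beta}{n}$ of them. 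Consequently, for almost all $p$ there are $\sigma_{1,p},\dots,\sigma_{N,p}\in\overline{\F_p}[X_1,\dots,X_n]^m$ of degree $\le\beta$ with $\sum_i a_{i,p}(\sigma_{j,p})_i=0$, and elements $h_{1,p},\dots,h_{N,p}$ with $(s_{1,p},\dots,s_{m,p})=\sum_j h_{j,p}\sigma_{j,p}$. Passing to ultraproducts, $\sigma_j:=\ulim_p\sigma_{j,p}$ is represented by polynomials of degree $\le\beta$, so its finitely many coefficients lie in $\C$ and $\sigma_j\in R^m$; its image in $S$ satisfies $\sum_i a_i(\sigma_j)_i=\ulim_p 0=0$, whence (as $\iota$ is injective) $\sigma_j$ is a syzygy of $(a_1,\dots,a_m)$ over $R$; and $(s_1,\dots,s_m)=\sum_j h_j\sigma_j$ with $h_j:=\ulim_p h_{j,p}\in S$. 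This is precisely the factorization demanded by the equational criterion, so $\iota$ is flat.

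For (ii), the Nullstellensatz writes a maximal ideal of $R$ as $\m=(X_1-a_1,\dots,X_n-a_n)$ with $a_i\in\C$; choosing $a_i=\ulim_p a_{i,p}$ with $a_{i,p}\in\overline{\F_p}$, an identity $1=\sum_i(X_i-a_i)g_i$ in $S$ would yield $1=\sum_i(X_i-a_{i,p})g_{i,p}$ in $\overline{\F_p}[X_1,\dots,X_n]$ for almost all $p$, which is impossible (evaluate at $X_i=a_{i,p}$). Hence $\m S\neq S$, and combined with (i) this shows $\iota$ is faithfully flat.

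I expect the uniform syzygy bound invoked in (i) to be the main obstacle: because the rings $\overline{\F_p}[X_1,\dots,X_n]$ genuinely vary with $p$, the generating syzygies must have degrees bounded \emph{independently of $p$}, for otherwise their ultraproduct would only be a syzygy over $S$, not over $R$, and the argument would degenerate into a tautology. (For an ultrapower of a single Noetherian ring this difficulty evaporates, since one fixed finite generating set of the syzygy module serves every index.) Making such bounds explicit and field-independent is exactly the hard theorem underlying this statement.
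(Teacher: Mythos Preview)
The paper does not prove this theorem; it merely cites it from \cite{vdD79}. Your proposal is a correct self-contained proof and is in fact essentially the argument of van den Dries--Schmidt: the whole point is the field-uniform Hermann-type bound on syzygy degrees, which lets bounded-degree data pass through the ultraproduct back into $R$, exactly as you say. Part (ii) is routine.

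One small presentational wrinkle: you invoke injectivity of $\iota$ inside step (i) (to conclude that $\sigma_j$ is a syzygy already over $R$), but earlier you only remark that injectivity \emph{follows} from faithful flatness. Taken literally that would be circular. Of course injectivity is trivial to check directly---a polynomial $f\in R$ has only finitely many coefficients $c_\alpha=\ulim_p c_{\alpha,p}$, and $\iota(f)=0$ forces $c_{\alpha,p}=0$ for almost all $p$, hence $c_\alpha=0$---so just state and use this before (i) rather than deferring it.
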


\begin{defn}[{\cite[Section 3.2]{Schoutens03}}]
\begin{enumerate}[label=(\roman*)]
\item Let $f\in \C[X_1,\dots,X_n]$. 
A family of polynomials $(f_p)_{p\in \mathcal{P}}\in \prod_p (\overline{\mathbb{F}_p}[X_1,\dots,X_n])$ is said to be an \textit{approximation} of $f$ if $f$ is mapped to $\ulim_p f_p$ under the natural homomorphism $\C[X_1,\dots,X_n] \to \ulim_p(\overline{\mathbb{F}_p}[X_1,\dots,X_n])$. 
\item Let $I=(f_1,\dots,f_m)$ be an ideal of $\C[X_1,\dots,X_n]$. A family of ideals $(I_p)_{p\in \mathcal{P}}$ is said to be an $\textit{approximation}$ of $I$ if $I_p=(f_{1,p},\dots,f_{m,p})\subseteq \overline{\mathbb{F}_p}[X_1,\dots,X_n]$ for almost all $p$, where $(f_{i,p})_{p \in \mathcal{P}}$ is an approximation of $f_i$ for each $i=1, \dots, m$. 
Note that the definition of approximations of $I$ is independent of the choice of generators $f_1, \dots, f_m$ and of their approximations $(f_{i,p})_{p \in \mathcal{P}}$. 
More precisely, if $(I_p)_{p\in \mathcal{P}}$ and $(J_p)_{p\in \mathcal{P}}$ are two approximations of $I$, then $I_p=J_p$ for almost all $p$.
\end{enumerate}
\end{defn}

\begin{prop}[\cite{vdD79}]
Let $I=(f_1,\dots,f_m)$ be a prime ideal of $\C[X_1,\dots,X_n]$ and $(I_p)_{p\in\mathcal{P}}$ be an approximation of $I$. Then $I(\ulim_p(\overline{\mathbb{F}_p}[X_1,\dots,X_n]))$ is a prime ideal of $\ulim_p(\overline{\mathbb{F}_p}[X_1,\dots,X_n])$, and $I_p$ is prime for almost all $p$.
\end{prop}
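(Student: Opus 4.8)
The plan is to show that the two assertions of the proposition are equivalent, and then to prove one of them---namely that $I_p$ is prime for almost all $p$---using the uniform degree bounds of van den Dries \cite{vdD79}. To fix notation, write $D:=\C[X_1,\dots,X_n]$, $D_p:=\overline{\F_p}[X_1,\dots,X_n]$ and $D_\infty:=\ulim_p D_p$, let $\iota\colon D\to D_\infty$ be the faithfully flat homomorphism established above, and put $ID_\infty:=\iota(I)D_\infty$; by the definition of an approximation we have $\iota(f_j)=\ulim_p f_{j,p}$, with $I_p=(f_{1,p},\dots,f_{m,p})$ for almost all $p$, and we may take each $f_{j,p}$ of degree at most $d:=\max_j\deg f_j$. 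The first thing I would prove is the identity of ideals of $D_\infty$
\[
ID_\infty=\ulim_p I_p.
\]
The inclusion ``$\subseteq$'' is immediate, since $ID_\infty$ is generated by the elements $\iota(f_j)=\ulim_p f_{j,p}$, each of which lies in the ideal $\ulim_p I_p$. For ``$\supseteq$'', given $g=\ulim_p g_p$ with $g_p\in I_p$ for almost all $p$, I would write $g_p=\sum_{j=1}^m a_{j,p}f_{j,p}$ for those $p$ (and $a_{j,p}:=0$ otherwise) and set $\alpha_j:=\ulim_p a_{j,p}\in D_\infty$, so that $g=\sum_j\alpha_j\,\iota(f_j)\in ID_\infty$. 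The key point is that no degree bound on the cofactors $a_{j,p}$ is needed, since $\ulim_p a_{j,p}$ is simply an element of $D_\infty$; this is exactly where the ultraproduct formalism is smoother than working with reductions over $\Z$.

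Since ultraproducts commute with passage to quotient rings---the kernel of the natural map $D_\infty\to\ulim_p(D_p/I_p)$ is precisely $\ulim_p I_p$---the identity above gives a ring isomorphism
\[
D_\infty/ID_\infty\;\cong\;\ulim_p(D_p/I_p).
\]
Because ``being an integral domain'' is a first-order sentence in the language of rings, \L o\'s's theorem then shows that $D_\infty/ID_\infty$ is an integral domain if and only if $D_p/I_p$ is an integral domain for almost all $p$. Since $ID_\infty$ is proper---faithful flatness gives $ID_\infty\cap D=I\neq D$---and, likewise, $I_p$ is proper for almost all $p$ (by the effective Nullstellensatz, transferred via \L o\'s's theorem), it follows that the two assertions of the proposition are equivalent; moreover, once $I_p$ is known to be prime for almost all $p$, the ideal $ID_\infty=\ulim_p I_p$ is the asserted prime ideal. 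So it remains to prove that $I_p$ is prime for almost all $p$.

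When $I=(f)$ is principal this is elementary: irreducibility of a polynomial of degree $\le d$ in $n$ variables is a first-order condition on its finitely many coefficients (``there is no nontrivial factorization'', the factors again having degree $\le d$), and $f$ is irreducible over $\C\cong\ulim_p\overline{\F_p}$, so $f_p$ is irreducible over $\overline{\F_p}$ for almost all $p$ by \L o\'s's theorem, i.e.\ $I_p=(f_p)$ is prime for almost all $p$. For a general prime $I$ this argument breaks down, because deciding primality of a finitely generated ideal $(h_1,\dots,h_m)\subseteq k[X_1,\dots,X_n]$ a priori involves factorizations or primary decompositions of degree not obviously bounded in terms of $n$, $m$ and the degrees of the $h_j$. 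The substantive input, due to van den Dries \cite{vdD79}, is that there is a first-order formula $\psi_{n,m,d}$ in the coefficients of an $m$-tuple of polynomials of degree $\le d$ in $n$ variables such that, over \emph{every} field $k$, $\psi_{n,m,d}$ holds of $(h_1,\dots,h_m)$ if and only if the ideal $(h_1,\dots,h_m)$ is prime; this rests on the uniform bounds of \cite{vdD79}, which bound, independently of $k$, the degrees of all polynomials (ideal-membership cofactors, primary components, and so on) that enter a primality test. Granting this, one applies $\psi_{n,m,d}$ over $\C$, where $I=(f_1,\dots,f_m)$ is prime by hypothesis, and transfers the conclusion via the isomorphism $\ulim_p\overline{\F_p}\cong\C$ and the approximations $(f_{j,p})_p$ using \L o\'s's theorem, obtaining that $I_p=(f_{1,p},\dots,f_{m,p})$ is prime for almost all $p$.

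The hard part is thus the existence of this uniform formula (equivalently, the uniform degree bound): the fact that primality of an ideal with a prescribed number of generators of prescribed degrees can be certified or refuted using only polynomials of degree bounded independently of the coefficient field. This is the heart of \cite{vdD79}, proved there by a nonstandard analysis of standard bases and primary decomposition; every other step above is a formal manipulation of ultraproducts together with \L o\'s's theorem.
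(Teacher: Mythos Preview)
The paper does not supply its own proof of this proposition; it is stated with a citation to \cite{vdD79} and used as a black box. Your proposal is correct and in fact outlines the standard route to the result: the identification $ID_\infty=\ulim_p I_p$ and the passage to quotients reduce both assertions to a single \L o\'s transfer, and the only genuinely nontrivial ingredient---that primality of an ideal with $m$ generators of degree $\le d$ in $n$ variables is expressible by a first-order formula in the coefficients, uniformly over all fields---is precisely the content of the uniform bounds established in \cite{vdD79}.

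One small remark: your appeal to the effective Nullstellensatz to show $I_p\ne D_p$ for almost all $p$ is harmless but unnecessary. You have already noted that faithful flatness gives $ID_\infty\cap D=I\ne D$, hence $1\notin ID_\infty=\ulim_p I_p$, which immediately forces $1\notin I_p$ for almost all $p$.
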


As an application of this proposition, we can construct approximations for local rings essentially of finite type over $\C$ as follows.

\begin{defn}[{\cite[Section 4.3]{Schoutens03}}]
 Let $R$ be a local ring essentially of finite type over $\C$, that is, $R \cong (\C[X_1,\dots,X_n]/I)_\mathfrak{p}$, where $I \subseteq \C[X_1, \dots, X_n]$ is an ideal and $\mathfrak{p} \subseteq \C[X_1, \dots, X_n]$ is a prime ideal containing $I$. 
 Let $(I_p)_{p\in \mathcal{P}}$ be an approximation of $I$ and $(\mathfrak{p}_p)_{p\in\mathcal{P}}$ be an approximation of $\mathfrak{p}$. 
 A family $(R_p)_{p\in\mathcal{P}}$ of rings is said to be an \textit{approximation} of $R$ if $R_p=(\overline{\mathbb{F}_p}[X_1,\dots,X_n]/I_p)_{\mathfrak{p}_p}$ for almost all $p$,  and the ultraproduct $R_\infty:=\ulim_p R_p$ is called the \textit{non-standard hull} of $R$. 
The non-standard hull is independent of the choice of presentation of $R$, i.e., any two non-standard hulls are isomorphic as $R$-algebras.
\end{defn}

\begin{rem}
  The natural ring homomorphism $R\to R_\infty$ is known to be flat and local.
\end{rem}

\begin{rem}\label{approximation vs reduction}
 Approximations of $R$ are closely related to its reduction modulo $p$. Specifically, the following holds (\cite[Corollary 5.9]{Schoutens04}):
 Let $x\in X$ be a point of a variety $X$ over $\C$ and $(A, x_A\in X_A)$ be a model of $x\in X$. 
 Suppose that $(R_p)_{p\in\mathcal{P}}$ is an approximation of $R:=\sO_{X,x}$.  
 For a general closed point $\mu \in A$, the base change $\overline{\{x_\mu\}}\times_{\Spec A/\mu} \Spec \overline{A/\mu}$ is an integral closed subscheme of $X_{\overline{\mu}}:=X_\mu \times_{\Spec A/\mu}\Spec \overline{A/\mu}$, where $\overline{A/\mu}$ is an algebraic closure of the finite field $A/\mu$, and we define $x_{\overline{\mu}}$ as its generic point. Then there exists a family of closed points $\{\mu_p\}_{p\in\mathcal{P}}$ of $\Spec A$ such that $R_p\cong \sO_{X_{\overline{\mu_p}},x_{\overline{\mu_p}}}$ for almost all $p$.
\end{rem}

\begin{defn}
\begin{enumerate}
\item Let $f\in R$. A family $(f_p)_{p\in \mathcal{P}}$ of elements is said to be an \textit{approximation} of $f$ if $f_p\in R_p$ for all $p \in \mathcal{P}$ and $f$ is mapped to $\ulim_p f_p$ under the natural map $R\to R_\infty$.
\item Let $I=(f_1,\dots,f_m)$ be an ideal of $R$, and suppose that $(f_{i,p})_{p\in \mathcal{P}}$ is an approximation of $f_i$ for each $i=1, \dots, m$. 
A family $(I_p)_{p\in \mathcal{P}}$ of ideals is said to be an \textit{approximation} of $I$ if $I_p=(f_{1,p},\dots,f_{m,p})$ for almost all $p$.
\end{enumerate}
\end{defn}

Basic ring-theoretic properties of $R$ are retained by its approximations. 
\begin{prop}[{\cite[Theorems 4.5 and 4.6]{Schoutens03}, \cite[Proposition 3.9]{Yam23a}}]
Let $(R_p)_{p\in\mathcal{P}}$ be an approximation of $R$. 
\begin{enumerate}[label=$(\arabic*)$]
\item Let $d$ be a natural number. Then $R$ is of dimension $d$ if and only if so is $R_p$ for almost all $p$.
\item $R$ is regular (resp. Gorenstein, Cohen--Macaulay, normal) if and only if so is $R_p$ for almost all $p$.
\end{enumerate}
\end{prop}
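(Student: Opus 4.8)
The plan is to transfer each property along the faithfully flat local homomorphism $R\to R_\infty=\ulim_p R_p$ and then between $R_\infty$ and almost all $R_p$ by \L{}o\'s's theorem; this is standard non-standard commutative algebra, and I would follow \cite{Schoutens03,Schoutens08} (see also \cite{Yamaguchi23}). Fix a presentation $R\cong(\C[X_1,\dots,X_n]/I)_{\mathfrak p}$ with approximations $(I_p)$ and $(\mathfrak p_p)$; after removing a set of primes not in $\mathcal F$ we may assume that for almost all $p$ the ideals $I_p$ and $\mathfrak p_p$ are generated by approximations of a fixed finite generating set of $I$ and $\mathfrak p$. In particular a fixed generating set of $\mathfrak m_R=\mathfrak p R$ has approximations generating $\mathfrak m_{R_p}$ for almost all $p$, hence generating $\mathfrak m_{R_\infty}=\ulim_p\mathfrak m_{R_p}$; thus $\mathfrak m_R R_\infty=\mathfrak m_{R_\infty}$, the closed fiber of $R\to R_\infty$ is a field, and finite lengths are preserved under $-\otimes_R R_\infty$. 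The engine of the proof is that any finite list of equalities, ideal-membership relations, non-zero-divisor conditions or unit conditions among a fixed finite set of elements holds in $R_\infty$ if and only if it holds in $R_p$ for almost all $p$; so it suffices to express each property of a local ring essentially of finite type over $\C$ through finitely many such relations together with a dimension count.

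For $(1)$, the bound $\dim R_p\le d:=\dim R$ for almost all $p$ is immediate: picking $x_1,\dots,x_d\in\mathfrak m_R$ and $N$ with $\mathfrak m_R^N\subseteq(x_1,\dots,x_d)$ yields finitely many membership relations, which transfer to $\mathfrak m_{R_p}^N\subseteq(x_{1,p},\dots,x_{d,p})$. The reverse inequality is the one step that is not purely formal, and is the main obstacle: $\dim R$ is read off the whole Hilbert--Samuel function $n\mapsto\mathrm{length}(R/\mathfrak m_R^n)$, whereas \L{}o\'s's theorem only gives, for each fixed $n$, that $\mathrm{length}(R/\mathfrak m_R^n)=\mathrm{length}(R_p/\mathfrak m_{R_p}^n)$ for almost all $p$, with an exceptional set depending on $n$; since $\mathcal F$ is not countably complete one cannot simply intersect over all $n$. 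The remedy is a bound --- via Gr\"obner bases / Castelnuovo--Mumford regularity, or via uniform Noether normalization --- depending only on $n$ and the degrees of the generators of $I$ (hence applying to $I_p$ as well) on the integer past which the Hilbert function is polynomial of degree its Krull dimension; then finitely many values pin it down and transfer. This is carried out in Schoutens's work, so in practice I would cite $\dim R=\dim R_p$ for almost all $p$ from there.

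Granting $(1)$, assertion $(2)$ is routine transfer. \emph{Regular:} by the Jacobian criterion --- where perfectness of $\overline{\F_p}$ enters --- $R$ is regular if and only if some particular minor (of the size dictated by the now-transferable codimension) of the Jacobian matrix of the generators of $I$ is a unit in $R$, a unit condition on a fixed element, so it holds for $R$ iff it holds for almost all $R_p$. \emph{Cohen--Macaulay:} fix a system of parameters $\underline{x}$ of $R$ that is a regular sequence; flatness of $R\to R_\infty$ keeps $\underline{x}$ a regular sequence with $R_\infty/(\underline{x})R_\infty$ of the same finite length, and \L{}o\'s's theorem makes $\underline{x}_p$ a regular sequence of length $d=\dim R_p$ generating an $\mathfrak m_{R_p}$-primary ideal, whence $R_p$ is Cohen--Macaulay; the converse is symmetric. \emph{Gorenstein:} impose in addition that the socle of $R/(\underline{x})$ be one-dimensional, a length-one condition that transfers. \emph{Normal:} use Serre's criterion $R_1+S_2$ --- $S_2$ is a depth condition treated as for Cohen--Macaulayness, and $R_1$, i.e.\ that the singular locus (cut out by the uniformly given Jacobian ideal) has codimension $\ge 2$, follows by combining the Jacobian description with $(1)$; alternatively one invokes Schoutens's direct transfer of normality via reducedness and the conductor ideal. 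In each case both implications follow because all conditions involved are equalities, memberships, non-zero-divisor or unit conditions among finitely many fixed elements, which transfer in both directions; the sole delicate point in the whole argument is the dimension lower bound above.
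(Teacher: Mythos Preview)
The paper does not give its own proof of this proposition: it is stated in the preliminaries subsection on ultraproducts as a known fact, with the blanket reference ``We refer the reader to \cite{Schoutens03}, \cite{Schoutens08} and \cite[Section 3]{Yamaguchi23} for details'' covering all results in that subsection. Your sketch is correct and is precisely the standard transfer argument one finds in those sources, so in effect you and the paper take the same route---you have simply unpacked what the paper leaves as a citation. Your candid remark that for the dimension lower bound ``in practice I would cite $\dim R=\dim R_p$ for almost all $p$'' from Schoutens is exactly what the paper does wholesale.
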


For every integer $n \ge 1$, we have a natural homomorphism $R[X_1,\dots,X_n]\to \ulim_p (R_p[X_1,\dots,X_n])$.

\begin{defn}
Let $(R_p)_{p\in\mathcal{P}}$ be an approximation of $R$ and $R_p[X_1, \dots, X_n]$ (resp.~$R[X_1, \dots, X_n]$) denote the polynomial ring in $n$ variables over $R_p$ (resp.~$R$). 
An \textit{approximation} of a polynomial $f \in R[X_1, \dots, X_n]$ is a family of polynomials $(f_p)_{p\in \mathcal{P}}$ such that $f_p \in R_p[X_1, \dots, X_n]$ for all $p \in \mathcal{P}$ and $f$ is mapped to $\ulim_p f_p$ under the natural ring homomorphism 
\[
R[X_1,\dots,X_n]\to \ulim_p (R_p[X_1,\dots,X_n]).
\]
An approximation of an ideal in $R[X_1, \dots, X_n]$ is defined in an analogous way.
\end{defn}

\begin{defn}
Let $S$ be a ring of finite type over $R$, that is, $S\cong R[X_1,\dots,X_n]/I$ for some ideal $I \subseteq R[X_1,\dots,X_n]$. 
\begin{enumerate}[label=(\roman*)]
\item 
A family of rings $(S_p)_{p\in \mathcal{P}}$ is said to be a \textit{relative approximation} of $S$ if $S_p=R_p[X_1,\dots,X_n]/I_p$ for almost all $p$, where $(I_p)_{p\in \mathcal{P}}$ is an approximation of $I$. 
The ultraproduct $S_\infty:=\ulim_p S_p$ is called the \textit{relative hull} of $S$.
\item 
Let $M$ be a finitely generated $S$-module.
We can write $M$ as the cokernel of an $S$-linear map between finite free modules, i.e., there exists an exact sequence
\[
S^{m}\xrightarrow{A} S^{n}\to M \to 0
\]
such that $A$ is an $n\times m$-matrix with entries in $S$. 
Let $(S_p)_{p\in \mathcal{P}}$ be an approximation of $S$ and $(A_p)_{p\in \mathcal{P}}$ be an entrywise approximation of $A$. 
A family $(M_p)_{p\in \mathcal{P}}$ of $S_p$-modules is said to be an \textit{approximation} of $M$ if $M_p$ is the cokernel of the $S_p$-linear map $S_p^m\xrightarrow{A_p} S_p^n$ for almost all $p$. 
\end{enumerate}
\end{defn}

\begin{rem}
The $S_\infty$-module $\ulim_p M_p$ is isomorphic to $M\otimes_S S_\infty$. This isomorphism  does not depend on the choice of presentation of $M$. 
\end{rem}

We next introduce ultra-Frobenii, which are an ultraproduct analog of Frobenius maps.
\begin{defn}
Let $S$ be a ring of finite type over $R$.
\begin{enumerate}[label=(\roman*)]
\item Let $\epsilon=\ulim_pe_p$ be a non-standard natural number. 
An \textit{ultra-Frobenius} $F^\epsilon:S\to S_\infty$ is the ring homomorphism that sends $x$ to $\ulim_p x_p^{p^{e_p}}$, where $(x_p)_{p\in\mathcal{P}}$ is an approximation of $x$. 
We also use the same notion $F^\epsilon$ to denote the ring homomorphism $S_\infty\to S_\infty$ that sends $\ulim_p x_p$ to $\ulim_p x_p^{p^{e_p}}$.
\item For an element $f=\ulim_p f_p\in S_\infty$ and a non-standard natural number $\epsilon$, we define $f^{\epsilon}:=\ulim_p f_p^{e_p}$. 
Note that if $\epsilon$ is a standard natural number, then $f^\epsilon$ agrees with the usual power of $f$.
\item For a non-standard natural number $\epsilon$, let $F^\epsilon_*$ denote the restriction of scalars along $F^\epsilon:S_\infty\to S_\infty$. 
For an $S_\infty$-module $M$ and an element $x\in M$, we write $F^\epsilon_*x$ to denote $x$ viewed as an element of $F^\epsilon_*M$.
\end{enumerate}
\end{defn}

Suppose that $(R,\m)$ is a $d$-dimensional local ring essentially of finite type over $\C$ and $x_1,\dots,x_d$ is a system of parameters for $R$. 
Let $(R_p)_{p\in\mathcal{P}}$ and $(x_p)_{p\in \mathcal{P}}$ be approximations of $R$ and $x:=x_1 \cdots x_d$, respectively, and let $(M_p)_{p\in\mathcal{P}}$ be a family of $R_p$-modules. 
The \v{C}ech complex of $M_p$ induces a short exact sequence
\[
	\bigoplus_{i=1}^d(M_p)_{\check{x_{i,p}}} \to (M_p)_{x_{p}} \to H_{\m_p}^d(M_p)\to 0,
\]
where $(\check{x_{i,p}})_{p\in \mathcal{P}}$ is an approximation of $\check{x_{i}}:=x_1\cdots x_{i-1}x_{i+1} \cdots x_d$, for almost all $p$. 
Taking ultraproducts yields an exact sequence 
\[
\bigoplus_{i=1}^d\ulim_p (M_p)_{\check{x_{i,p}}} \to \ulim_p (M_p)_{x_{p}} \to \ulim_p H_{\m_p}^d(M_p)\to 0
\]
Then, from the commutative diagram
\[
	\xymatrix{
	\bigoplus_{i=1}^d (\ulim_p M_p)_{\check{x_i}} \ar[r] \ar[d] & (\ulim_p M_p)_{x} \ar[d] \\
	\bigoplus_{i=1}^d\ulim_p (M_p)_{\check{x_{i,p}}} \ar[r] & \ulim_p (M_p)_{x_{p}}
	},
\]
we have a canonical homomorphism
\[
	H_\m^d(\ulim_p M_p) \to \ulim_p H^d_{\m_p}(M_p).
\]
For an element 
\[
\eta=\left[\frac{y}{x^t}\right]\in H_\m^d(\ulim_p M_p) \quad (y \in \ulim_p M_p, t \in \N), 
\]
a family $(\eta_p)_{p\in\mathcal{P}}$ of elements of $H_{\m_p}^d(M_p)$ is called an \textit{approximation} of $\eta$ if 
\[
    \eta_p=\left[\frac{y_p}{x_p^t}\right] \in H^d_{\m_p}(M_p) 
\]
for almost all $p$. If $(\eta_p)_{p\in\mathcal{P}}$ is an approximation of $\eta$, then the image of $\eta$ under the above canonical homomorphism is equal to $\ulim_p \eta_p$.

\section{Local \texorpdfstring{$F$}{F}-alpha invariants}

In this section, we introduce a local version of Pande's $F$-alpha invariants (see \cite{Pande23})—a positive characteristic analog of Tian's alpha invariants—and study its basic properties.

\subsection{Definition and basic properties}


\begin{defn}\label{normalized order}
Let $(R,\m)$ be a Noetherian local domain and $x \in R$ be a nonzero element of $R$. 
The \textit{normalized order} $\overline{\mathrm{ord}}_{\m}(x)$ of $x$ with respect to the maximal ideal $\m$ is defined as 
\[
\overline{\mathrm{ord}}_{\m}(x)=\max\{n \ge 0 \; |\; x \in \overline{\m^n}\}, 
\]
where $\overline{\m^n}$ is the integral closure of the ideal $\m^n$.
\end{defn}

\begin{rem}
\begin{enumerate}
\item 
$\overline{\mathrm{ord}}_{\m}(x^{\ell})=\ell \;  \overline{\mathrm{ord}}_{\m}(x)$ for all integers $\ell \ge 1$. 
\item 
If $R$ is a regular local ring or the localization of a normal standard graded ring over a field at the unique homogeneous maximal ideal, then \cite[Proposition 2.10]{Polstra26} shows that 
\[
\overline{\mathrm{ord}}_{\m}(x)=\mathrm{ord}_{\m}(x):=\max\{n \ge 0 \mid x \in \m^n\}.
\]
\end{enumerate}
\end{rem}

We now introduce the notion of local $F$-alpha invariants, which serve as a local analog of  Pande's $F$-alpha invariants \cite{Pande23}. 
\begin{defn} \label{local F-alpha invariant}
Let $(R,\m)$ be a Noetherian $F$-finite $F$-pure local domain of characteristic $p>0$ and of dimension $d \ge 1$. The \textit{local $F$-alpha invariant} $\alpha_F(R)$ of $R$ is defined as
\[
\alpha_F(R)=\inf_{f \in \m}\operatorname{fpt}(f)\overline{\mathrm{ord}}_{\m}(f),
\]
where $f$ runs through all nonzero elements of $\m$.
\end{defn}

\begin{rem}\label{alpha=0}
    If $R$ is not strongly $F$-regular, then $\alpha_F(R)=0$. 
    Indeed, by the definition of strong $F$-regularity, there exists a nonzero element $c\in \m$ such that for every $e\ge 1$, the map $R\xrightarrow{\cdot F^e_*c} F^e_*R$ fails to split. 
    Therefore, $\operatorname{fpt}(c)=0$, which shows that $\alpha_F(R)=0$.
\end{rem}

\begin{prop}\label{local alpha invariant of RLR}\label{local alpha invariant of completion}
Let $(R,\m)$ be a strongly $F$-regular local domain of characteristic $p > 0$ and of dimension $d \ge 1$.
\begin{enumerate}[label=$(\arabic*)$]
\item $0 \le \alpha_F(R) \le 1$. 
\item If $R$ is regular, then $\alpha_F(R)=1$.  
\item Let $\widehat{R}$ denote the $\m$-adic completion of $R$. Then 
$\alpha_F(R)=\alpha_F(\widehat{R})$.
\end{enumerate}
\end{prop}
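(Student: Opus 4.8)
The plan is to prove the three statements in turn, with (3) being the substantive one. For (1) and (2), I would first reduce the bounds on $\alpha_F(R)$ to known facts about $F$-pure thresholds. For the lower bound in (1), strong $F$-regularity of $R$ means $(R,(f)^0)$ is strongly $F$-regular for every nonzero $f\in\m$, so $\mathrm{fpt}(f)>0$, and since $\overline{\mathrm{ord}}_\m(f)\ge 1$ for $f\in\m$, each term $\mathrm{fpt}(f)\overline{\mathrm{ord}}_\m(f)$ is positive; hence the infimum is $\ge 0$. For the upper bound, I would use that $\mathrm{fpt}(R,f)\le \mathrm{fpt}(R_{\mathrm{loc}},f)$-type estimates fail to help directly, so instead I would bound a single well-chosen term: picking $f\in\m\setminus\m^{[p]}$ or more simply comparing with the regular ring obtained by a Noether normalization, one has $\mathrm{fpt}(R,f)\le \mathrm{fpt}(\text{(regular)},f)\le 1/\mathrm{ord}_\m(f)\le 1/\overline{\mathrm{ord}}_\m(f)$, so that term is $\le 1$ and the infimum is $\le 1$. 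For (2), when $R$ is regular, $\mathrm{fpt}(f)=1/\mathrm{ord}_\m(f)=1/\overline{\mathrm{ord}}_\m(f)$ for every $f$ (this is the classical formula for the $F$-pure threshold of a hypersurface in a regular local ring together with the second item of the Remark after Definition \ref{normalized order}), so every term equals $1$ and $\alpha_F(R)=1$.

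For (3), the strategy is to show that each of the three ingredients entering the definition of $\alpha_F$ is unchanged under $\m$-adic completion: the set of nonzero elements of $\m$ (up to what matters), the $F$-pure threshold $\mathrm{fpt}(f)$, and the normalized order $\overline{\mathrm{ord}}_\m(f)$. First I would recall from Remark \ref{test ideal remark} that the formation of test ideals commutes with completion; consequently $(R,(f)^t)$ is strongly $F$-regular if and only if $(\widehat R,(f\widehat R)^t)$ is, and therefore $\mathrm{fpt}(R,f)=\mathrm{fpt}(\widehat R,f\widehat R)$ for every nonzero nonunit $f\in\m$ (also $\widehat R$ is strongly $F$-regular iff $R$ is, so the degenerate case is consistent). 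Next I would check that $\overline{\mathrm{ord}}_\m(f)=\overline{\mathrm{ord}}_{\m\widehat R}(f\widehat R)$: integral closure of $\m$-primary ideals is compatible with completion, i.e. $\overline{\m^n}\widehat R=\overline{(\m\widehat R)^n}$ (using that $\widehat R$ is faithfully flat over $R$ with $\m\widehat R$ the maximal ideal, and that integral closure of an ideal commutes with flat base change for $\m$-primary ideals, or by a direct valuative/Rees-algebra argument), so $f\in\overline{\m^n}$ iff $f\in\overline{(\m\widehat R)^n}$. Finally, every nonzero element of $\m\widehat R$ can be compared to elements of $\m$: since the infimum defining $\alpha_F$ is over $f\in\m$, and $R\hookrightarrow\widehat R$ identifies $\m$ with a subset of $\m\widehat R$ on which $\mathrm{fpt}$ and $\overline{\mathrm{ord}}$ agree, we get $\alpha_F(\widehat R)\le\alpha_F(R)$ immediately; for the reverse I would argue that a nonzero $\widehat f\in\m\widehat R$ can be approximated by $f\in\m$ with $f\equiv\widehat f\bmod \m^N\widehat R$ for $N$ large, and use semicontinuity/lower-bound properties of $\mathrm{fpt}$ together with $\overline{\mathrm{ord}}$ being controlled by the truncation — or, more cleanly, invoke that the infimum in the definition of $\alpha_F(\widehat R)$ is already realized (or approximated) by elements of the dense subring $R$ because both $\mathrm{fpt}$ and $\overline{\mathrm{ord}}$ only see $f$ modulo a high power of the maximal ideal.

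The main obstacle I anticipate is this last density/approximation point: proving that passing from $\widehat R$ back to $R$ does not shrink the infimum. The issue is that $\mathrm{fpt}(\widehat R,\widehat f)$ is not obviously an upper-semicontinuous function of $\widehat f$ in a way that lets one replace $\widehat f$ by a nearby $f\in\m$ without decreasing the product $\mathrm{fpt}\cdot\overline{\mathrm{ord}}$. I would handle this by working with a fixed $\widehat f$ and the ideal it generates: since $\tau(\widehat R,(\widehat f)^t)=\tau(R,(\widehat f\cap R)^t)\widehat R$ when $\widehat f$ can be taken in $R$, the real content is to reduce to the case $\widehat f\in R$, which follows because for any $\widehat f\in\m\widehat R$ and any $t$ with $(\widehat R,(\widehat f)^t)$ strongly $F$-regular, strong $F$-regularity is an open condition allowing a perturbation of $\widehat f$ within its ideal-theoretic neighborhood, and $\overline{\mathrm{ord}}$ is lower-semicontinuous under such perturbation; alternatively one cites that $\mathrm{fpt}$ depends only on finitely much jet data. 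If a fully self-contained argument proves delicate, the cleanest route is to observe that $\inf_{f\in\m}(\cdots)=\inf_{f\in\widehat\m}(\cdots)$ holds because for each $\widehat f$ there is an $f\in\m$ with $\m^{e}\subseteq (f)+\m^{e+1}$-type agreement making all three quantities literally equal for $e\gg0$ — reducing everything to the already-established identities $\mathrm{fpt}(R,f)=\mathrm{fpt}(\widehat R,f)$ and $\overline{\mathrm{ord}}_\m(f)=\overline{\mathrm{ord}}_{\widehat\m}(f)$ for $f\in\m$.
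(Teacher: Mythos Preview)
Your outline for (3) is essentially the paper's approach, but there are genuine errors in (1) and (2), and the key estimate in (3) is left too vague.

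\textbf{Part (1), upper bound.} Your chain of inequalities is not correct. First, $\m^n\subseteq\overline{\m^n}$ gives $\mathrm{ord}_\m(f)\le\overline{\mathrm{ord}}_\m(f)$, so the inequality $1/\mathrm{ord}_\m(f)\le 1/\overline{\mathrm{ord}}_\m(f)$ goes the wrong way. Second, the comparison $\mathrm{fpt}(R,f)\le\mathrm{fpt}(\text{regular},f)$ via Noether normalization is not a standard fact and is not needed. The paper simply picks $x\in\m\setminus\overline{\m^2}$, so that $\overline{\mathrm{ord}}_\m(x)=1$, and uses the elementary bound $\mathrm{fpt}(x)\le 1$ for any nonunit.

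\textbf{Part (2).} The claim $\mathrm{fpt}(f)=1/\mathrm{ord}_\m(f)$ in a regular local ring is false; for instance $f=x^2+y^3$ in $k[[x,y]]$ has $\mathrm{ord}_\m(f)=2$ but $\mathrm{fpt}(f)\ne 1/2$. What is true (and what the paper cites from \cite[Proposition 4.1]{TW04}) is the inequality $\mathrm{fpt}(f)\,\mathrm{ord}_\m(f)\ge 1$. Combined with $\overline{\mathrm{ord}}_\m=\mathrm{ord}_\m$ in the regular case and the bound $\alpha_F(R)\le 1$ from (1), this yields $\alpha_F(R)=1$. So the individual terms are $\ge 1$, not $=1$.

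\textbf{Part (3).} You correctly get $\alpha_F(\widehat R)\le\alpha_F(R)$ from the identities $\mathrm{fpt}(R,f)=\mathrm{fpt}(\widehat R,f)$ and $\overline{\mathrm{ord}}_\m(f)=\overline{\mathrm{ord}}_{\m\widehat R}(f)$ for $f\in\m$. For the reverse inequality your approximation idea is right, and it is exactly what the paper does, but your proposed justifications (``semicontinuity'', ``fpt depends only on finite jet data'', ``all three quantities literally equal'') are either imprecise or incorrect: one cannot arrange $\mathrm{fpt}(\widehat R,g)=\mathrm{fpt}(\widehat R,\widehat f)$ by truncation. The paper's precise mechanism is the subadditivity-type estimate from \cite[Proposition 4.4]{TW04}: if $g\in R$ satisfies $\widehat f-g\in\m^N\widehat R$, then
\[
\mathrm{fpt}(\widehat R,g)\;\le\;\mathrm{fpt}(\widehat R,\widehat f)+\mathrm{fpt}(\widehat R,\m^N\widehat R),
\]
and one chooses $N$ large enough that both $\overline{\mathrm{ord}}_{\m}(g)=\overline{\mathrm{ord}}_{\m\widehat R}(\widehat f)$ and $\mathrm{fpt}(\widehat R,\m^N\widehat R)\cdot\overline{\mathrm{ord}}_{\m\widehat R}(\widehat f)<\epsilon$. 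This gives $\mathrm{fpt}(R,g)\,\overline{\mathrm{ord}}_\m(g)<\mathrm{fpt}(\widehat R,\widehat f)\,\overline{\mathrm{ord}}_{\m\widehat R}(\widehat f)+\epsilon$, which is the missing quantitative step in your sketch.
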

\begin{proof}
(1) Fix an element $x \in \m \setminus \overline{\m}^2$.
Then $\alpha_F(R) \le \operatorname{fpt}(x) \le 1$. 

(2) If $R$ is regular, then for every nonzero element $f \in \m$, we have 
\[
\operatorname{fpt}(f) \overline{\mathrm{ord}}_{\m}(f)=\operatorname{fpt}(f) {\mathrm{ord}}_{\m}(f) \ge 1
\]
by \cite[Proposition 4.1]{TW04}.
Thus, $\alpha_F(R)=1$.

(3) For every nonzero element $f \in \m$, 
we have the equalities $\operatorname{fpt}(R,f)=\operatorname{fpt}(\widehat{R},f)$ (see Remark \ref{test ideal remark}) 
and $\overline{\mathrm{ord}}_{\m}(f)=\overline{\mathrm{ord}}_{\m \widehat{R}}(f)$ (see \cite[Proposition 1.6.2]{Huneke-Swanson}). Therefore, $\alpha_F(R)\ge \alpha_F(\widehat{R})$. 

Suppose that $\alpha_F(R)>\alpha_F(\widehat{R})$. Let $\epsilon>0$ be a real number such that $\epsilon<\alpha_F(R)-\alpha_F(\widehat{R})$. 
By the definition of $\alpha_F(\widehat{R})$, there exists a nonzero element $f\in \m \widehat{R}$ such that $\operatorname{fpt}(\widehat{R},f) \overline{\mathrm{ord}}_{\m \widehat{R}}(f)<\alpha_F(R)-\epsilon$. 
Fix a sufficiently large integer $N$ such that 
\[\operatorname{fpt}(\widehat{R},\m \widehat{R})\overline{\mathrm{ord}}_{\m \widehat{R}}(f)<N\epsilon \quad \text{and} \quad \overline{\mathrm{ord}}_{\m \widehat{R}}(f)<N.\] 
Take an element $g\in R$ such that $f-g\in \m^{N} \widehat{R}$. Then by \cite[Proposition 4.4]{TW04}, 
	\begin{align*}
		\operatorname{fpt}(R,g) = \operatorname{fpt}(\widehat{R},g) 
		&\le \operatorname{fpt}(\widehat{R},f)+\operatorname{fpt}(\widehat{R},\m^N \widehat{R}) \\
		& < \operatorname{fpt}(\widehat{R},f)+\frac{\epsilon}{\overline{\mathrm{ord}}_{\m \widehat{R}}(f)}. 
	\end{align*}
Since $\overline{\mathrm{ord}}_{\m}(g)=\overline{\mathrm{ord}}_{\m \widehat{R}}(f)$ by the choice of $g$, it follows that  
\[
\operatorname{fpt}(R,g)\overline{\mathrm{ord}}_{\m}(g)<\operatorname{fpt}(\widehat{R}, f)\overline{\mathrm{ord}}_{\m \widehat{R}}(f)+\epsilon<\alpha_F(R),\]
which contradicts the definition of $\alpha_F(R)$. 
\end{proof}

Next, we present a lemma on $F$-pure thresholds that will be used to give an alternative description of local $F$-alpha invariants.

\begin{lem}[cf.~\cite{MTW05}]\label{fpt lem}
Let $(R,\m)$ be a strongly $F$-regular local domain of characteristic $p > 0$, and let $\ba$ be a nonzero ideal of $R$ contained in $\m$. 
\begin{enumerate}[label=$(\arabic*)$]
\item 
For each integer $e \ge 1$, set 
\[
\nu_e(\ba)=\max\{r \ge 0 \mid F^e_* \ba^r \cdot \Hom_R(F^e_*R, R) \to R \textup{ is surjective}\}, 
\]
where the map 
\[F^e_* \ba^r \cdot \Hom_R(F^e_*R, R) \to R\]
is the evaluation map sending $F^e_*x\cdot\varphi$ to $\varphi(F^e_*x)$. 
Here we view $\Hom_R(F^e_*R,R)$ as an $F^e_*R$-module, so that $F^e_*\ba^r \cdot \Hom_R(F^e_*R,R)$ is an $F^e_*R$-submodule of $\Hom_R(F^e_*R,R)$. 
Then the sequence $(\nu_e(\ba)/p^e)_{e \ge 1}$ is monotonically increasing and 
\[
\operatorname{fpt}(\ba)=\sup_e \frac{\nu_e(\ba)}{p^e}. 
\]
Moreover, for all integers $e \ge 1$, 
\[
\operatorname{fpt}(\ba)>\frac{\nu_e(\ba)}{p^e}.  
\]
\item 
Suppose that $\ba=(f)$ is a principal ideal. For each integer $e \ge 1$, set 
\begin{align*}
\mu_e(f)&=\nu_e(f)+1\\
&=\min\{r \ge 0 \; | \; F^e_* f^r \cdot \Hom_R(F^e_*R, R) \to R \textup{ is not surjective}\}.
\end{align*}
Then the sequence $(\mu_e(f)/p^e)_{e \ge 1}$ is monotonically decreasing and 
\[
\operatorname{fpt}(f)=\inf_e \frac{\mu_e(f)}{p^e}.
\]
\end{enumerate}
\end{lem}
\begin{proof}
The proof closely follows that of \cite{MTW05}, although the ring is assumed to be regular in \emph{loc.~cit.}   
First, recall from \cite[Proposition 4.4]{TakTak08} that $\operatorname{fpt}(\ba)=\lim_{e \to \infty}\nu_e(\ba)/p^e$ and $\operatorname{fpt}(f)=\lim_{e \to \infty}\mu_e(f)/p^e$.  

For the first part of (1), it suffices to show that $p \nu_e(\ba) \le \nu_{e+1}(\ba)$ for every integer $e \ge 1$. 
For (2), it suffices to show that $p \mu_e(\ba) \ge \mu_{e+1}(\ba)$ for every integer $e \ge 1$. 
These inequalities are an immediate consequence of the following elementary fact: 
for any ideal $\mathfrak{b}$ in $R$, 
the map $F^{e+1}_*\mathfrak{b}^{[p]} \cdot \Hom_R(F^{e+1}_*R, R) \to R$ factors through $F^{e}_*\mathfrak{b} \cdot \Hom_R(F^{e}_*R, R) \to R$, and since $R$ is $F$-pure, the map 
\[F^{e+1}_*\mathfrak{b}^{[p]} \cdot \Hom_R(F^{e+1}_*R, R) \to F^e_*\mathfrak{b} \cdot \Hom_R(F^{e}_*R, R)\] 
is surjective. 

For the second part of (1), it suffices to show that there exists an integer $e_0 \ge 1$ such that $p^{e_0} \nu_e(\ba) < \nu_{e+e_0}(\ba)$ for every integer $e \ge 1$. 
Since $R$ is strongly $F$-regular, there exists an integer $e_0 \ge 1$ such that $F^{e_0}_*\mathfrak{a} \cdot \Hom_R(F^{e_0}_*R, R) \to R$ is surjective. Then for all integers $e \ge 1$ and $r \ge 0$, the map 
\[F^{e+e_0}_*\mathfrak{a}^{p^{e_0}r+1} \cdot \Hom_R(F^{e+e_0}_*R, R) \to F^e_*\mathfrak{a}^r \cdot \Hom_R(F^{e}_*R, R)\] 
is surjective. 
This implies the desired inequality. 
\end{proof}

\begin{prop}\label{alternative alpha_F}
Let $(R,\m)$ be a strongly $F$-regular local domain of characteristic $p > 0$ and of dimension $d \ge 1$.
For every integer $e \ge 1$, consider the ideal $I_e(R) \subseteq R$ defined by  
\[
I_e=I_e(R):=\{c \in R \mid F^e_*c \cdot \Hom_R(F^e_*R, R) \to R \textup{ is not surjective}\}. 
\]
Let $m_e(R)$ be the largest integer $m$ such that $I_e(R) \subseteq \overline{\m^m}$. Then 
\[
\alpha_F(R) = \lim_{e \to \infty}\frac{m_e(R)}{p^e}.
\]
\end{prop}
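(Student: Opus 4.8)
The plan is to compare $m_e(R)$ directly with the numbers $\mu_e(f)$ introduced in Lemma \ref{fpt lem}(2), exploiting that $I_e(R)$ and the $\mu_e$'s encode the same non-splitting information. Concretely, for a nonzero non-unit $f\in\m$ one has $f^{\mu_e(f)}\in I_e(R)$ by the very definition of $\mu_e(f)$ as the least exponent $r$ for which $F^e_*f^r\cdot\Hom_R(F^e_*R,R)\to R$ fails to be surjective; conversely, any nonzero $c\in I_e(R)$ is a non-unit with $\mu_e(c)=1$, which forces $\mathrm{fpt}(c)\le 1/p^e$.

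First I would check that $I_e(R)$ is a nonzero proper ideal: it contains $\m^{[p^e]}$ (if $c=\sum a_i m_i^{p^e}$ with $a_i\in R$ and $m_i\in\m$, then $F^e_*(cd)=\sum m_i\cdot F^e_*(a_id)$, so $\phi(F^e_*(cd))=\sum m_i\,\phi(F^e_*(a_id))\in\m$ for every $\phi\in\Hom_R(F^e_*R,R)$ and every $d\in R$), and it does not contain $1$ because $R$ is $F$-split. Hence $m_e(R)$ is a well-defined nonnegative integer, and by maximality of $m_e(R)$ there is an element $c_e\in I_e(R)$ with $\overline{\mathrm{ord}}_\m(c_e)=m_e(R)$; note that $c_e$ is a nonzero non-unit.

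Next I would prove $\limsup_{e}m_e(R)/p^e\le\alpha_F(R)$. Fix a nonzero $f\in\m$. Since $f^{\mu_e(f)}\in I_e(R)\subseteq\overline{\m^{m_e(R)}}$ and $\overline{\mathrm{ord}}_\m(f^{\ell})=\ell\,\overline{\mathrm{ord}}_\m(f)$, we get $\mu_e(f)\,\overline{\mathrm{ord}}_\m(f)\ge m_e(R)$; dividing by $p^e$, letting $e\to\infty$, and using that $\mu_e(f)/p^e$ decreases to $\mathrm{fpt}(f)$ by Lemma \ref{fpt lem}(2), we obtain $\mathrm{fpt}(f)\,\overline{\mathrm{ord}}_\m(f)\ge\limsup_e m_e(R)/p^e$. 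Taking the infimum over all nonzero $f\in\m$ gives the inequality. For the opposite inequality I would show $\alpha_F(R)\le m_e(R)/p^e$ for \emph{every} $e$: the element $c_e$ lies in $I_e(R)$, so $F^e_*c_e\cdot\Hom_R(F^e_*R,R)\to R$ is not surjective, i.e.\ $\mu_e(c_e)\le 1$; and $\mu_e(c_e)\ge 1$ since $R$ is $F$-split, so $\mu_e(c_e)=1$. As $(\mu_{e'}(c_e)/p^{e'})_{e'\ge 1}$ is decreasing with infimum $\mathrm{fpt}(c_e)$ by Lemma \ref{fpt lem}(2), this yields $\mathrm{fpt}(c_e)\le 1/p^e$, whence $\alpha_F(R)\le\mathrm{fpt}(c_e)\,\overline{\mathrm{ord}}_\m(c_e)\le m_e(R)/p^e$.

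Combining the two bounds gives $\alpha_F(R)\le\inf_e m_e(R)/p^e\le\liminf_e m_e(R)/p^e\le\limsup_e m_e(R)/p^e\le\alpha_F(R)$, so all of these coincide and in particular $\lim_{e\to\infty}m_e(R)/p^e$ exists and equals $\alpha_F(R)$. I do not expect a serious analytic obstacle here; the real content---and the one step worth getting right---is the observation in the previous paragraph that a single minimal-order element of $I_e(R)$ already bounds $\alpha_F(R)$ from above for \emph{each} $e$, together with the bookkeeping that membership in $\overline{\m^{m_e(R)}}$ translates into a lower bound on normalized order via multiplicativity of $\overline{\mathrm{ord}}_\m$ on powers.
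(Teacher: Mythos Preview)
Your proposal is correct and follows essentially the same approach as the paper's proof: both use an element $c_e\in I_e(R)\setminus\overline{\m^{m_e(R)+1}}$ together with $\mu_e(c_e)=1$ to get $\alpha_F(R)\le m_e(R)/p^e$, and both use $f^{\mu_e(f)}\in I_e(R)\subseteq\overline{\m^{m_e(R)}}$ together with multiplicativity of $\overline{\mathrm{ord}}_\m$ on powers for the reverse inequality. Your version is slightly more explicit in checking that $I_e(R)$ is a nonzero proper ideal (so $m_e(R)$ is well-defined) and in recording the marginally stronger conclusion $\alpha_F(R)=\inf_e m_e(R)/p^e$, but the argument is the same.
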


\begin{proof}
First, we show that $\alpha_F(R) \le \liminf_{e \to \infty}{m_e(R)}/{p^e}$. 
For every integer $e \ge 1$, by the definition of $m_e(R)$, there exists an element $c_e \in I_e$ not contained in $\overline{\m^{m_e(R)+1}}$. 
Then $\mu_e(c_e)=1$ and $\overline{\mathrm{ord}}_{\m}(c_e)=m_e(R)$. 
It follows from Lemma \ref{fpt lem} (2) that 
\[
\operatorname{fpt}(c_e)\overline{\mathrm{ord}}_{\m}(c_e) \le  \frac{m_e(R)}{p^e}, 
\]
which implies the desired inequality. 

Next, we show that $\alpha_F(R) \ge \limsup_{e \to \infty}{m_e(R)}/{p^e}$.
Fix a nonzero element $f \in \m$. 
Since $f^{\mu_e(f)} \in I_e \subseteq \overline{\m^{m_e(R)}}$ by definition, we have 
\[
\mu_e(f) \overline{\mathrm{ord}}_{\m}(f)=\overline{\mathrm{ord}}_{\m}(f^{\mu_e(f)}) \ge m_e(R).
\]
Dividing both sides by $p^e$ and taking the limit superior, we obtain  
\[
\operatorname{fpt}(f)\overline{\mathrm{ord}}_{\m}(f) \ge \limsup_{e \to \infty}\frac{m_e(R)}{p^e}.
\]
\end{proof}

\begin{rem}\label{F-signature remark}
$F$-signature can be described in terms of the ideals $I_e$. 
It follows from \cite[Proposition 4.5]{Tucker12} that $s(R)=\lim_{e \to \infty}\ell_R(R/I_e)/p^{de}$. 
We use this description in the following proposition. 
\end{rem}

We follow Pande's argument to establish basic bounds for local $F$-alpha invariants. 

\begin{prop}[\textup{cf.~\cite[Theorem 4.6]{Pande23}}]\label{comparison of F-signature and local alpha invariant}
Let $(R,\m)$ be a strongly $F$-regular local domain of characteristic $p > 0$ and of dimension $d \ge 1$.
Let $e(R)$ denote the Hilbert--Samuel multiplicity of $R$. 
\begin{enumerate}[label=$(\arabic*)$]
\item The following inequalities hold:
\[
 \frac{e(R) \alpha_F(R)^d}{d!} \le s(R) \le \frac{e(R)\operatorname{fpt}(\m)^d}{d!}.
\]
\item
Let $C:=\operatorname{edim}(R)+1$ and let $w$ be a Rees valuation centered at the maximal ideal $\m$. 
By Izumi's theorem $($see \cite[Theorem~1.2]{Hubl-Swanson}$)$, there exists an integer $N \ge 1$ such that 
\[
w(f)\le N\overline{\operatorname{ord}}_{\m}(f)\le Nw(f)
\]
for every element $f\in R$. Then  
	\[
		s(R) \le \frac{e(R)N^d}{d!}(C^d-(C-\alpha_{F}(R))^d). 
	\]
\end{enumerate}
\end{prop}
\begin{proof}
(1) First, we show the upper bound for $s(R)$. 
By Lemma \ref{fpt lem} (1), we have $\operatorname{fpt}(\m)p^e>\nu_e(\m)$, so that $\m^{\lceil \operatorname{fpt}(\m)p^e \rceil} \subseteq I_e$. Then by Remark \ref{F-signature remark}, 
\begin{align*}
s(R)=\lim_{e \to \infty} \frac{\ell_R(R/I_e)}{p^{de}} &\le \lim_{e \to \infty} \frac{\ell_R(R/\m^{\lceil \operatorname{fpt}(\m)p^e \rceil})}{p^{de}}\\
&=\lim_{e \to \infty} \frac{1}{d!}\frac{\ell_R(R/\m^{\lceil \operatorname{fpt}(\m)p^e \rceil})d!}{{\lceil \operatorname{fpt}(\m)p^e \rceil}^d}\left(\frac{{\lceil \operatorname{fpt}(\m)p^e \rceil}}{p^{e}}\right)^d\\
&=\frac{1}{d!}e(R)\operatorname{fpt}(\m)^d. 
\end{align*}

For the lower bound for $s(R)$, if $\alpha_F(R)=0$, then the inequality is clear. 
Hence, we may assume that $\alpha_F(R)>0$, and then, by Proposition \ref{alternative alpha_F}, we have $m_e(R)\to \infty$ as $e\to \infty$. 
Note that the zeroth normal Hilbert coefficient 
\[
\overline{e}_0(\m)=\lim_{n \to \infty} d! \; \frac{\ell_R(R/\overline{\m^n})}{n^d}
\] 
of $\m$ is nothing but the Hilbert--Samuel multiplicity $e(R)$ of $R$.
Then, using Remark \ref{F-signature remark} again, we have 
\begin{align*}
s(R)=\lim_{e \to \infty} \frac{\ell_R(R/I_e)}{p^{de}} &\ge \lim_{e \to \infty} \frac{\ell_R(R/\overline{\m^{m_e(R)}})}{p^{de}}\\
&=\lim_{e \to \infty} \frac{1}{d!}\frac{\ell_R(R/\overline{\m^{m_e(R)}})d!}{m_e(R)^d}\left(\frac{m_e(R)}{p^{e}}\right)^d\\
&=\frac{1}{d!}e(R)\alpha_F(R)^d. 
\end{align*}

(2)Let $\pi:Y\to \Spec R$ be the normalized blow-up of $\m$ and write
\[
    \m \sO_Y=\sO_Y(-\sum_{i=1}^r a_iE_i),
\]
where the $E_i$ are distinct prime Weil divisors on $Y$ and the $a_i$ are positive integers. We may assume that $w=\mathrm{ord}_{E_1}/a_1$. 
By Izumi's theorem \cite[Theorem 1.2]{Hubl-Swanson} and the description of integral closure in terms of Rees valuations (see the paragraph preceding \cite[Theorem 2.6]{Hubl-Swanson}; cf.~\cite[Theorem 4.3 (i)]{BFJ14}), there exists a positive integer $N$ such that for every $f\in R$, we have 
\[w(f)\le N\overline{\mathrm{ord}}_{\m}(f)\le N w(f).\] 
For each rational number $t \ge 0$, let $\ba_t:=\{f\in R\mid w(f)\ge t\}$, and define the invariant $\alpha_{F,w}(R)$ associated with $w$ by 
\[
    \alpha_{F,w}(R):=\inf_{f\in \m} \operatorname{fpt}(f)w(f),
\]
where $f$ runs through all nonzero elements of $\m$.
It is easy to see that $\ba_t$ is an ideal of $R$ and that $0 \le \alpha_{F,w}(R) \le \min\{1, N\alpha_F(R)\}$. 

Let $0<\epsilon<1$ be arbitrary.
By the definition of $\alpha_{F,w}(R)$, there exists a nonzero element $f\in \m$ such that $\operatorname{fpt}(f)w(f)<\alpha_{F,w}(R)+\epsilon$. 
Then we can choose positive integers $a$ and $e_0$ such that 
\[
    \operatorname{fpt}(f)<\frac{a}{p^{e_0}-1}<\frac{\alpha_{F,w}(R)+\epsilon}{w(f)}.
\]
Replacing $f$ by $f^a$, we may assume that $a=1$.
For each integer $s\ge w(f)$, set 
\[
v_s:=\frac{p^{se_0}-1}{p^{e_0}-1}.
\]
Since $1/(p^{e_0}-1)>\operatorname{fpt}(f)$, it follows from \cite[Lemma 3.11]{Pande23} that $f^{v_s}\in I_{se_0}(R)$. 
Moreover, for all sufficiently large $e$, the Brian\c{c}on--Skoda theorem gives
\[
\overline{\m^{Cp^e}}\subseteq \m^{\operatorname{edim}(R)p^e}\subseteq \m^{[p^e]}\subseteq I_e(R).
\]
For $s \gg 0$, we have 
\[
NCp^{s e_0} \ge 2p^{s e_0} >(\alpha_{F,w}(R)+\epsilon)p^{s e_0}>\frac{w(f)}{p^{e_0}-1}p^{s e_0}>w(f)v_s. 
\]
Setting $M(s):=NCp^{s e_0}-w(f)v_s >0$, we consider the exact sequence
\[
0 \to R/\ba_{M(s)} \xrightarrow{\cdot f^{v_s}} R/\ba_{NCp^{se_0}} \to R/(f^{v_s}R+\ba_{NCp^{se_0}}) \to 0.
\] 
By the choice of $N$ and the containments above, we have 
\[f^{v_s}R+\ba_{NCp^{se_0}}\subseteq f^{v_s}R+\overline{\m^{Cp^{se_0}}}\subseteq I_{se_0}(R).\] 
Together with the exact sequence above, this gives 
\begin{align*}
\ell_R(R/I_{se_0}(R)) & \le \ell_R(R/(f^{v_s}R+\ba_{NCp^{se_0}}))\\
& =\ell_R(R/\ba_{NCp^{se_0}})-\ell_R(R/\ba_{M(s)}).
\end{align*}
Therefore, 
	\begin{align*}
		s(R) &= \lim_{s\to \infty} \frac{1}{p^{se_0d}}\left(\ell_R(R/I_{se_0}(R))\right)\\
		&\le \lim_{s\to \infty} \frac{1}{p^{se_0d}}\left(\ell_R(R/\ba_{NCp^{se_0}})-\ell_R(R/\ba_{M(s)})\right)\\
		&=\frac{\operatorname{vol}(w)}{d!}\left(N^dC^d-\left(NC-\frac{w(f)}{p^{e_0}-1}\right)^d\right)\\
		&\le \frac{\operatorname{vol}(w)}{d!}(N^dC^d-(NC-(\alpha_{F,w}(R)+\epsilon))^d) \\
        &\le \frac{e(R)}{d!}(N^dC^d-(NC-(\alpha_{F,w}(R)+\epsilon))^d).
	\end{align*}
Here $\operatorname{vol}(w)$ denotes the volume of the valuation $w$, defined by 
\[
\operatorname{vol}(w):=\lim_{t\to \infty}\frac{d!}{t^d}\ell_R(R/\ba_t),
\]
as in \cite{ELS03}; see also \cite{Cutkosky13}. Letting $\epsilon$ tend to zero yields
\begin{align*}
    s(R) &\le \frac{e(R)}{d!}(N^dC^d-(NC-\alpha_{F,w}(R))^d) \\
    &\le \frac{e(R)N^d}{d!}(C^d-(C-\alpha_{F}(R))^d),
\end{align*}
where the last inequality follows from the inequality $\alpha_{F,w}(R)\le N\alpha_F(R)$.
\end{proof}

\begin{rem}
Suppose that $(R,\m)$ is a local ring essentially of finite type over $\C$ whose spectrum is of klt type.  
By the proof of Lemma \ref{normalized order under pure mophisms}, the integer $N$ in Proposition \ref{comparison of F-signature and local alpha invariant}~(2) for a reduction $R_{\mu}$ of $R$ modulo $p$ can be chosen uniformly for general closed points $\mu$.
\end{rem}

\begin{rem}
Let $(R,\m)$ be a Noetherian $F$-finite $F$-pure local domain of characteristic $p>0$ and of dimension $d \ge 1$. 
Remark \ref{alpha=0} and Proposition \ref{comparison of F-signature and local alpha invariant} imply that $R$ is strongly $F$-regular if and only if $\alpha_F(R)>0$.  
\end{rem}




\subsection{Local versus graded settings}
In this subsection, we compare our local invariants with the graded invariants introduced by Pande.
Throughout this subsection, we assume that $R=\bigoplus_{i \ge 0} R_i$ is a strongly $F$-regular graded domain of dimension $d \ge 1$ and of finite type over an $F$-finite field $R_0=k$ of characteristic $p>0$, and let $\m=\bigoplus_{i \ge 1} R_i$ denote the irrelevant ideal of $R$. 

Pande \cite{Pande23} defined ``$\alpha_F$-invariants" for section rings, but her definition applies more generally to graded rings over an $F$-finite field. 
\begin{defn}[cf.~\textup{\cite[Definition 3.4]{Pande23}}]
Let $\Delta$ be an effective homogeneous $\Q$-Weil divisor on $\Spec R$. Then the \textit{$F$-alpha invariant} $\alpha_F(R,\Delta)$ of the pair $(R,\Delta)$ is defined as 
    \[
        \alpha_F(R,\Delta)=\inf_{f} \operatorname{fpt}((R,\Delta);f)\deg(f),
    \]
where $f$ runs through all nonzero homogeneous elements of $\m$. 
If $\Delta=0$, then this invariant is said to be the \textit{$F$-alpha invariant} of $R$ and simply denoted by $\alpha_F(R)$.
\end{defn}


To study the properties of $F$-alpha invariants, we consider the graded version of the ideals $I_e(R)$ introduced in Proposition \ref{alternative alpha_F}.

\begin{defn}
For every integer $e\ge 1$, the ideal $I_e(R) \subseteq R$ is defined as
	\[
		I_e(R)=\{x\in R \mid \text{$\phi(x)\in \m$ for all $\phi\in \Hom_R(F^e_*R,R)$}\}.
	\]
\end{defn}

\begin{rem}\label{graded I_e remark}
\begin{enumerate}
\item (cf.~\cite[Proposition 3.7]{SB18}) $I_e(R)$ is a homogeneous ideal.
\item (cf.~\cite[Proposition 3.10]{SB18}) $F$-pure thresholds of $R$ can be described in terms of $I_e(R)$ just as in the local case (see Lemma \ref{fpt lem}). 
Let $\ba$ be a nonzero homogeneous ideal of $R$. 
For each integer $e \ge 1$, define 
\[\nu_e(f)=\max\{r \ge 0 \; | \; \ba^r \not\subseteq I_e(R)\}.\] 
Then 
\[
\operatorname{fpt}(\ba)=\lim_{e \to \infty}\frac{\nu_e(\ba)}{p^e}.
\]
\item $I_e(R)=I_e(R_\m)\cap R$, where the definition of $I_e(R_{\m})$ can be found in Proposition \ref{alternative alpha_F}. 
\end{enumerate}
\end{rem}
\begin{lem}\label{fpt of non homogeneous elements}
Let $f\in \m$ be a nonzero element and $f_0$ denote the lowest nonzero homogeneous component of $f$. Then $\operatorname{fpt}(R_\m, f)\ge \operatorname{fpt}(R_\m, f_0)$.
\end{lem}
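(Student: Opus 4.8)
The plan is to avoid any direct manipulation of splittings and instead compare the two $F$-pure thresholds through the invariants $\nu_e$ attached to the ideals $I_e$, exploiting that $I_e(R)$ is a \emph{homogeneous} ideal. First I would record that $R_\m$ is strongly $F$-regular (strong $F$-regularity of a graded domain can be tested at the homogeneous maximal ideal), so Lemma \ref{fpt lem}~(1), applied to the principal ideals $(f)$ and $(f_0)$ of $R_\m$, gives
\[
\mathrm{fpt}(R_\m,f)=\sup_{e\ge 1}\frac{\nu_e(f)}{p^e},\qquad \nu_e(f)=\max\{r\ge 0\mid f^r\notin I_e(R_\m)\},
\]
and similarly for $f_0$, where $I_e(R_\m)$ is the ideal of Proposition \ref{alternative alpha_F}. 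By Remark \ref{graded I_e remark}~(3) we have $I_e(R)=I_e(R_\m)\cap R$, so for any $g\in R$ the conditions $g\in I_e(R_\m)$ and $g\in I_e(R)$ coincide; in particular this applies to $g=f^r$ and $g=f_0^r$, since $f,f_0\in\m\subseteq R$. Thus it suffices to prove $\nu_e(f)\ge\nu_e(f_0)$ for every $e$, with ``$\notin I_e$'' now read inside the graded ring $R$.

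The key step is then a one-line homogeneity argument. Write $f=f_0+f_1+\cdots$ with $f_0,f_1,\dots$ homogeneous of strictly increasing degrees; since $R$ is a domain, $f_0^r\ne 0$, and $f_0^r$ is exactly the homogeneous component of $f^r$ of least degree. Because $I_e(R)$ is homogeneous (Remark \ref{graded I_e remark}~(1)), if $f^r\in I_e(R)$ then every homogeneous component of $f^r$ lies in $I_e(R)$, hence $f_0^r\in I_e(R)$. Contrapositively, $f_0^r\notin I_e(R)\Rightarrow f^r\notin I_e(R)$, so $\{r\mid f_0^r\notin I_e(R)\}\subseteq\{r\mid f^r\notin I_e(R)\}$ and therefore $\nu_e(f)\ge\nu_e(f_0)$. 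Taking the supremum over $e$ (equivalently, the limit) yields $\mathrm{fpt}(R_\m,f)\ge\mathrm{fpt}(R_\m,f_0)$.

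I do not expect a real obstacle here. The only points requiring care are (i) that the $I_e(R)$-description genuinely computes $\mathrm{fpt}(R_\m,-)$ for the \emph{non}-homogeneous element $f$, which is precisely what the identity $I_e(R)=I_e(R_\m)\cap R$ of Remark \ref{graded I_e remark}~(3) supplies, and (ii) that $f_0$ is still a nonunit, which holds because $f\in\m$ forces $f_0\in\m$ so that $\mathrm{fpt}(R_\m,f_0)$ is defined. If one wished to bypass the bookkeeping with $I_e$, an alternative is a $\mathbb{G}_m$-degeneration: $f$ specializes flatly to $f_0$ under the one-parameter subgroup rescaling the grading, and $F$-pure thresholds are lower semicontinuous under such specializations; but the $I_e$-based argument above is shorter and entirely self-contained given the results already established.
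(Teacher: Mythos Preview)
Your proof is correct and follows essentially the same argument as the paper: both reduce to showing $\nu_e(f)\ge\nu_e(f_0)$ by using that $I_e(R)=I_e(R_\m)\cap R$ is homogeneous and that $f_0^r$ is a homogeneous component of $f^r$. Your write-up is slightly more detailed in recording why $R_\m$ is strongly $F$-regular and why $f_0$ is a nonunit, but the core reasoning is identical.
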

\begin{proof}
For every integer $e \ge 1$ and every nonzero element $g \in \m$, set 
\[
\nu_e(R_{\m}, g):=\max\{r \ge 0 \; | \; g^r \notin I_e(R_{\m})\}.
\]
Note that $f_0^{\nu_e(R_{\m}, f_0)} \notin I_e(R_{\m}) \cap R=I_e(R)$. 
Since $I_e(R)$ is a homogeneous ideal and $f_0^{\nu_e(R_{\m}, f_0)}$ is a homogeneous component of $f^{\nu_e(R_{\m}, f_0)}$, we see that $f^{\nu_e(R_{\m}, f_0)} \notin I_e(R)$, that is, $\nu_e(R_{\m}, f) \ge \nu_e(R_{\m}, f_0)$. 
It then follows from Proposition \ref{fpt lem} that 
\[
\operatorname{fpt}(R_\m, f_0)=\lim_{e \to \infty}\frac{\nu_e(R_{\m}, f_0)}{p^e} \le \lim_{e \to \infty}\frac{\nu_e(R_{\m}, f)}{p^e}=\operatorname{fpt}(R_\m, f). 
\]
\end{proof}

\begin{prop} \label{equivalence Pande's F-alpha invariant}
Suppose that $R$ is standard graded. Then
    \[
        \alpha_F(R)= \alpha_F(R_\m). 
    \]
\end{prop}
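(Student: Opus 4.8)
The plan is to show the two inequalities $\alpha_F(R)\le\alpha_F(R_\m)$ and $\alpha_F(R)\ge\alpha_F(R_\m)$ separately, using in both directions the key fact that for a standard graded ring $\deg(f)=\mathrm{ord}_\m(f)=\overline{\mathrm{ord}}_{\m R_\m}(f)$ for any nonzero homogeneous $f$ (the first equality is trivial for standard graded rings, the second is recorded in Remark on normalized order). First I would observe that every nonzero homogeneous $f\in\m$ is also a nonzero element of $\m R_\m$, and that $\mathrm{fpt}(R,f)=\mathrm{fpt}(R_\m,fR_\m)$: this follows because the test ideal formation commutes with localization (Remark \ref{test ideal remark}), or alternatively from Remark \ref{graded I_e remark}(3) together with Lemma \ref{fpt lem} and Remark \ref{graded I_e remark}(2), both of which express $\mathrm{fpt}$ via the same numerical invariants $\nu_e$ computed from $I_e(R)$ and $I_e(R_\m)$ respectively. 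Hence for each homogeneous $f$ the product $\mathrm{fpt}(R,f)\deg(f)=\mathrm{fpt}(R_\m,f)\,\overline{\mathrm{ord}}_{\m R_\m}(f)$, and since the infimum defining $\alpha_F(R_\m)$ ranges over \emph{all} nonzero elements of $\m R_\m$ — a superset of the homogeneous ones — we immediately get $\alpha_F(R_\m)\le\alpha_F(R)$.

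For the reverse inequality $\alpha_F(R)\le\alpha_F(R_\m)$, the point is to pass from an arbitrary nonzero $f\in\m R_\m$ to a homogeneous element without increasing the product $\mathrm{fpt}\cdot\overline{\mathrm{ord}}$. Given such an $f$, let $f_0$ be its lowest nonzero homogeneous component. Then $\overline{\mathrm{ord}}_{\m R_\m}(f)\le\overline{\mathrm{ord}}_{\m R_\m}(f_0)=\deg(f_0)$, because the order of $f$ is at most the order of its lowest-degree part (in a standard graded ring, $f\in\m^n$ forces $f_0\in\m^n$ since $\m^n$ is homogeneous, and the same passes to integral closures as these are homogeneous too). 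On the other hand, Lemma \ref{fpt of non homogeneous elements} gives exactly $\mathrm{fpt}(R_\m,f)\ge\mathrm{fpt}(R_\m,f_0)$. Combining, $\mathrm{fpt}(R_\m,f_0)\deg(f_0)\le\mathrm{fpt}(R_\m,f)\,\overline{\mathrm{ord}}_{\m R_\m}(f_0)\cdot\frac{\overline{\mathrm{ord}}_{\m R_\m}(f)}{\overline{\mathrm{ord}}_{\m R_\m}(f_0)}$ — cleaner to just write: $\mathrm{fpt}(R_\m,f)\,\overline{\mathrm{ord}}_{\m R_\m}(f)\ge\mathrm{fpt}(R_\m,f_0)\,\overline{\mathrm{ord}}_{\m R_\m}(f)$... the correct chain is $\mathrm{fpt}(R_\m,f)\,\overline{\mathrm{ord}}_{\m R_\m}(f)\ge\mathrm{fpt}(R_\m,f_0)\cdot\overline{\mathrm{ord}}_{\m R_\m}(f)$ and this can be larger or smaller than $\mathrm{fpt}(R_\m,f_0)\deg(f_0)$ — so I must be careful here. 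The right move: since $\overline{\mathrm{ord}}_{\m R_\m}(f)\le\deg f_0$ but $\mathrm{fpt}(R_\m,f)\ge\mathrm{fpt}(R_\m,f_0)$, these go opposite ways, so instead I use the \emph{already-proved alternative description} $\alpha_F(R_\m)=\lim_{e\to\infty}m_e(R_\m)/p^e$ from Proposition \ref{alternative alpha_F}, together with the graded analogue, $\alpha_F(R)=\lim_{e\to\infty}m_e(R)/p^e$ where $m_e(R)$ is defined via the homogeneous ideal $I_e(R)$. By Remark \ref{graded I_e remark}(3), $I_e(R)=I_e(R_\m)\cap R$, and since $I_e(R)$ is homogeneous, the largest $m$ with $I_e(R)\subseteq\overline{\m^m}$ equals the largest $m$ with $I_e(R_\m)\subseteq\overline{\m^m R_\m}$; this gives $m_e(R)=m_e(R_\m)$ for all $e$, hence the two limits agree.

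So the cleanest route is: prove the graded analogue of Proposition \ref{alternative alpha_F} (the same proof works verbatim, using Remark \ref{graded I_e remark}(2) in place of Lemma \ref{fpt lem} and the fact $\overline{\mathrm{ord}}_\m(x)=\deg x$ on homogeneous elements via Lemma \ref{fpt of non homogeneous elements} to handle the non-homogeneous witness elements $c_e$), and then match the ideals $I_e(R)$ and $I_e(R_\m)$ using Remark \ref{graded I_e remark}(3) and homogeneity. The main obstacle I anticipate is the graded version of the $\le$ direction in Proposition \ref{alternative alpha_F}: there one needs, for each $e$, an element witnessing that $I_e(R)\not\subseteq\overline{\m^{m_e(R)+1}}$, and to conclude one needs $\mu_e$ of that element to be $1$; choosing a homogeneous such witness and controlling its $\mathrm{fpt}$-data requires invoking Lemma \ref{fpt of non homogeneous elements} and the homogeneity of $I_e(R)$ carefully. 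Once the $\alpha_F=\lim m_e/p^e$ formula holds in both settings and $m_e(R)=m_e(R_\m)$, the proposition follows immediately.
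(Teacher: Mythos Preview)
Your alternative route via the $m_e/p^e$ description is correct and will work: Remark~\ref{graded I_e remark}(3) gives $I_e(R)=I_e(R_\m)\cap R$, both $I_e(R)$ and $\overline{\m^m}$ are homogeneous $\m$-primary ideals, so $m_e(R)=m_e(R_\m)$ for every $e$, and the graded analogue of Proposition~\ref{alternative alpha_F} goes through as you outline (picking homogeneous witnesses $c_e$, since both $I_e(R)$ and $\overline{\m^{m_e+1}}$ are homogeneous). So the proof is valid.

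However, you abandoned the direct approach for the wrong reason, and the paper's proof is the direct one. You wrote that $\overline{\mathrm{ord}}_{\m R_\m}(f)\le\deg f_0$ and $\mathrm{fpt}(R_\m,f)\ge\mathrm{fpt}(R_\m,f_0)$ ``go opposite ways''. The point you missed is that for a standard graded ring, $\overline{\m^n}=\m^n$ (this is the content of the Polstra reference \cite[Proposition 2.10]{Polstra24}), so $\overline{\mathrm{ord}}_{\m R_\m}(f)=\mathrm{ord}_{\m R_\m}(f)=\deg(f_0)$ \emph{exactly}. Hence for every nonzero $f\in\m$,
\[
\mathrm{fpt}(R_\m,f)\,\overline{\mathrm{ord}}_{\m R_\m}(f)\ \ge\ \mathrm{fpt}(R_\m,f_0)\,\deg(f_0),
\]
which shows directly that the infimum over all $f$ equals the infimum over homogeneous $f$. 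This is precisely how the paper proceeds: four equalities in a row, using Lemma~\ref{fpt of non homogeneous elements} and the Polstra equality for the reduction to homogeneous elements, and then $\mathrm{fpt}(R_\m,g)=\mathrm{fpt}(R,g)$ for homogeneous $g$ via Remark~\ref{graded I_e remark}. Your approach reaches the same destination, but through a detour that re-proves Proposition~\ref{alternative alpha_F} in the graded setting; the paper's one-line chain is what that machinery was building toward.
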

\begin{proof}
It follows that 
	\begin{align*}
		\alpha_F(R_\m) &= \inf_{f \in \m} \operatorname{fpt}(R_\m, f) \overline{\mathrm{ord}}_{\m R_\m}(f) \\
		&= \inf_{g \in \m \text{:homog}} \operatorname{fpt}(R_\m, g) \overline{\mathrm{ord}}_{\m R_\m}(g) \\
		&=  \inf_{g \in \m\text{:homog}} \operatorname{fpt}(R_\m, g) \deg(g) \\
            &= \alpha_F(R), 
	\end{align*}
where $g$ runs through all nonzero homogeneous elements of $\m$. 
Here, the second and third equalities use Lemma \ref{fpt of non homogeneous elements} and the formula $\overline{\mathrm{ord}}_{\m R_\m}(g)=\mathrm{ord}_{\m R_\m}(g)=\deg(g)$ given in \cite[Proposition 2.10]{Polstra26}.
The fourth equality follows from the fact that $\operatorname{fpt}(R_{\m},g)=\operatorname{fpt}(R, g)$ for every nonzero homogeneous element $g \in \m$, which is a consequence of Lemma \ref{fpt lem} and Remark \ref{graded I_e remark}. 
\end{proof}

\begin{prop}[{cf.~\cite[Proposition 4.12]{Pande23}}] \label{F-alpha invariant Veronese}
For every integer $n \ge 1$, let $R^{(n)}=\bigoplus_{i \ge 0}R_{in}$ denote the $n$th Veronese subring of $R$. Then 
    \[
            \alpha_F(R)\le n\alpha_F(R^{(n)}).
    \]
    Moreover, if $n$ is sufficiently divisible, then $n\alpha_F(R^{(n)})$ stabilizes.
\end{prop}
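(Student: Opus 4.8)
The plan is to work with the graded invariant $\alpha_F(R)=\inf_f \mathrm{fpt}(R,f)\deg(f)$, where $f$ ranges over nonzero homogeneous elements of $\m$, and to compare this with the same invariant for the Veronese subring $R^{(n)}=\bigoplus_{i\ge 0}R_{in}$, whose degree function is the original degree divided by $n$ (i.e., an element of $R_{in}$ has degree $i$ in $R^{(n)}$). The first step is to establish the key input that $F$-pure thresholds are insensitive to passing to Veronese subrings: for a nonzero homogeneous element $g\in R^{(n)}\subseteq R$, one has $\mathrm{fpt}(R^{(n)},g)=\mathrm{fpt}(R,g)$. This is expected to follow from the splitting-principle description of $F$-pure thresholds via the ideals $I_e$ (Remark \ref{graded I_e remark}(2)) together with the fact that $R^{(n)}$ is a direct summand of $R$ as an $R^{(n)}$-module, so that a splitting of $\sO \to F^e_*\sO$ twisted by powers of $g$ on one ring can be transported to the other; alternatively, one can invoke the transformation rule for test ideals under a finite split (pure) extension. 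Granting this, for a nonzero homogeneous $g\in\m^{(n)}:=\bigoplus_{i\ge 1}R_{in}$ we get
\[
\mathrm{fpt}(R^{(n)},g)\,\deg_{R^{(n)}}(g)=\mathrm{fpt}(R,g)\cdot\frac{\deg_R(g)}{n}\ \ge\ \frac{\alpha_F(R)}{n},
\]
and taking the infimum over such $g$ yields $\alpha_F(R^{(n)})\ge \alpha_F(R)/n$, i.e. $\alpha_F(R)\le n\,\alpha_F(R^{(n)})$.

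For the stabilization statement, the plan is to show that the sequence $(n\,\alpha_F(R^{(n)}))_n$, indexed by the divisibility order on positive integers, is \emph{monotone nondecreasing} and \emph{bounded above}, and hence eventually constant along any cofinal divisibility chain. Monotonicity $n\,\alpha_F(R^{(n)})\le nm\,\alpha_F(R^{(nm)})$ follows by applying the already-proved inequality $\alpha_F(S)\le m\,\alpha_F(S^{(m)})$ to $S=R^{(n)}$, noting $(R^{(n)})^{(m)}=R^{(nm)}$. Boundedness above is the point where I expect to need Proposition \ref{comparison of F-signature and local alpha invariant}: from $\dfrac{e(R^{(n)})\,\alpha_F(R^{(n)})^d}{d!}\le s(R^{(n)})\le 1$ we get $\alpha_F(R^{(n)})\le (d!/e(R^{(n)}))^{1/d}$, and since $e(R^{(n)})=n^{d-1}e(R)$ (the Hilbert--Samuel multiplicity of the $n$th Veronese scales by $n^{d-1}$), this gives $n\,\alpha_F(R^{(n)})\le n\cdot(d!/(n^{d-1}e(R)))^{1/d}=(d!/e(R))^{1/d}\cdot n^{1/d}$ when $d=1$, which is not bounded—so one must instead bound via the \emph{upper} estimate $s(R)\le e(R)\mathrm{fpt}(\m)^d/d!$ applied after relating $\mathrm{fpt}(\m^{(n)})$ across Veroneses, or, more cleanly, bound $n\,\alpha_F(R^{(n)})\le n\,\mathrm{fpt}(R^{(n)},\m^{(n)})\deg_{R^{(n)}}(\text{a minimal generator})$ directly. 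A safer route: pick a fixed nonzero homogeneous $h\in R$ of degree $\ell$; for $n$ divisible by $\ell$, $h^{n/\ell}\in R^{(n)}_{1}$, and $n\,\alpha_F(R^{(n)})\le n\cdot\mathrm{fpt}(R^{(n)},h^{n/\ell})\cdot 1 = n\cdot\frac{\ell}{n}\mathrm{fpt}(R,h)=\ell\,\mathrm{fpt}(R,h)$, a bound independent of $n$. Together with monotonicity, this forces stabilization for sufficiently divisible $n$.

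The main obstacle I anticipate is the first step, namely proving $\mathrm{fpt}(R^{(n)},g)=\mathrm{fpt}(R,g)$ rigorously: the inequality $\mathrm{fpt}(R^{(n)},g)\le\mathrm{fpt}(R,g)$ follows from the fact that $R\hookrightarrow R^{(n)}$... rather $R^{(n)}\hookrightarrow R$ is pure (split) and test ideals only grow under pure extensions, while the reverse inequality requires that the relevant splittings on $R$ descend to $R^{(n)}$, which uses that one can choose the splittings $\phi\in\Hom(F^e_*R,R)$ to be homogeneous and compatible with the $\Z/n$-grading decomposition $F^e_*R=\bigoplus_{j} (F^e_*R)_j$; extracting the degree-$0$ (mod $n$) part of a splitting of the twist by $g^r$ (with $g$ of degree divisible by $n$) gives a splitting over $R^{(n)}$ of the corresponding twist. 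I would spell this out using the $I_e$-formulation: $g^r\in I_e(R^{(n)})$ iff $g^r\in I_e(R)$ for $g$ homogeneous of degree in $n\Z$, because $I_e(R^{(n)})=I_e(R)\cap R^{(n)}$ by the same homogeneity/summand argument used in Remark \ref{graded I_e remark}(3) and Lemma \ref{fpt of non homogeneous elements}. Everything else—the degree rescaling, the multiplicity formula $e(R^{(n)})=n^{d-1}e(R)$, and the monotonicity-plus-boundedness packaging—is routine.
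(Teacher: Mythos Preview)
Your argument for the first inequality is essentially the paper's: one shows $\mathrm{fpt}(R,g)\le \mathrm{fpt}(R^{(n)},g)$ for homogeneous $g\in\m^{(n)}$ via the map $\Hom_R(F^e_*R,R)\to \Hom_{R^{(n)}}(F^e_*R^{(n)},R^{(n)})$, $\phi\mapsto \pi\circ\phi\circ F^e_*\iota$, which yields $I_e(R^{(n)})\subseteq I_e(R)$ on elements of $R^{(n)}$. However, you repeatedly assert the \emph{equality} $\mathrm{fpt}(R^{(n)},g)=\mathrm{fpt}(R,g)$, and equivalently $I_e(R^{(n)})=I_e(R)\cap R^{(n)}$. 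This is false: in the example immediately following the proposition, $z\in R^{(2)}$ has $\mathrm{fpt}(R,z)=1/6$ but $\mathrm{fpt}(R^{(2)},z)=1$. (You also have the purity direction reversed: purity of $R^{(n)}\hookrightarrow R$ gives $\mathrm{fpt}(R^{(n)},g)\ge \mathrm{fpt}(R,g)$, not $\le$.) Since only this inequality is used for part one, that part survives.

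The stabilization argument, by contrast, has two genuine gaps. First, your upper bound $n\,\alpha_F(R^{(n)})\le n\cdot\mathrm{fpt}(R^{(n)},h^{n/\ell})\cdot 1 = \ell\,\mathrm{fpt}(R,h)$ uses precisely the false equality; with only $\mathrm{fpt}(R^{(n)},h^{n/\ell})\ge \mathrm{fpt}(R,h^{n/\ell})$ you get no upper bound at all. Second, even granting monotonicity and boundedness, a monotone bounded net need not stabilize (e.g.\ $1-1/n$), so the conclusion does not follow. The paper takes a completely different route for this part: it uses the Pinkham--Demazure construction to write $R\cong\bigoplus_{i\ge 0}H^0(X,\sO_X(\lfloor iD\rfloor))$ for an ample $\Q$-Cartier divisor $D$ on $X=\Proj R$, chooses $N$ with $ND$ Cartier, and then invokes \cite[Proposition~4.10]{Pande23} to obtain the exact equality $l\,\alpha_F(R^{(lN)})=\alpha_F(R^{(N)})$ for all $l\ge 1$. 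This gives stabilization directly, without any limiting argument.
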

\begin{proof}
Let $\m^{(n)}=\bigoplus_{i \ge 1}R_{in}$ denote the irrelevant ideal of $R^{(n)}$. 
To prove the first assertion, it suffices to show that $\operatorname{fpt}(R,f)\le \operatorname{fpt}(R^{(n)},f)$ for every nonzero homogeneous element $f \in \m^{(n)}$. 
By Remark \ref{graded I_e remark}, it is enough to verify that for any positive integers $a, e$, if $f^a\in I_e(R^{(n)})$, then $f^a\in I_{e}(R)$. 
Let $\iota:R^{(n)}\to R$ be the natural inclusion and $\pi:R\to R^{(n)}$ the projection. 
Consider the map 
\[\Phi:\Hom_R(F^e_*R,R) \to \Hom_{R^{(n)}}(F^e_*R^{(n)},R^{(n)}); \quad \phi\mapsto \pi\circ \phi\circ F^e_*\iota.\] 
If $f^a\in I_e(R^{(n)})$, then $\Phi(\phi)(F^e_*f^a)\in \m^{(n)}$ for all  $\phi\in\Hom_R(F^e_*R,R)$. 
This implies that $\phi(F^e_*f^a)\in \m$, therefore $f^a\in I_e(R)$.

Next, we prove the latter assertion. 
By the Pinkham–Demazure construction (see, for example, \cite{Watanabe81}), there exists an ample $\Q$-Cartier $\Q$-Weil divisor $D$ on $X:=\Proj R$ such that $R$ is isomorphic to the graded ring $\bigoplus_{i\ge 0}H^0(X,\sO_X(\lfloor iD\rfloor))$. 
Choose an integer $N \ge 1$ such that $ND$ is an ample Cartier divisor. 
Then, for every integer $l \ge 1$, it follows from \cite[Proposition 4.10]{Pande23} that  $l\alpha_F(R^{(lN)})=\alpha_F(R^{(N)})$, which completes the proof.
\end{proof}

In Proposition \ref{F-alpha invariant Veronese}, the equality does not hold in general.
\begin{eg}
Let $k$ be a perfect field of characteristic $p>5$ and 
\[R=k[X,Y,Z]/(X^2+Y^3+Z^5)\] 
be a graded $k$-algebra with $\deg x=15$, $\deg y=10$, $\deg z=6$, where $x, y, z$ denote the images of $X,Y, Z$, respectively. 
Since $\operatorname{fpt}(R,z)=1/6$, we have 
\[\alpha_F(R)\le \operatorname{fpt}(R,z)\deg z=\frac{1}{6} \cdot 6=1.\] 
On the other hand, the second Veronese subring $R^{(2)}$ is the polynomial ring $k[y,z]$, so $\operatorname{fpt}(R^{(2)},z)=1$. 
Therefore, $\alpha_F(R^{(2)})\le \operatorname{fpt}(R^{(2)},z)\deg(z)=1\cdot 3=3$. 

We now show the converse inequality. 
Let $f$ be a nonzero homogeneous element of $\m^{(2)}$ and $t \in (0, 3/\deg f)$ be any real number.  
For all sufficiently large $e\gg 0$, we see that 
\[
\deg f\lfloor t(p^e-1) \rfloor
\le 3(p^e-1), 
\]
which implies that $f^{\lfloor t(p^e-1) \rfloor}\notin (y,z)^{[p^e]}R^{(2)} \subseteq I_e(R^{(2)})$. 
It then follows from the Fedder type criterion (see, for example, \cite[Lemma 3.9]{Takagi_invent}) that $\operatorname{fpt}(R^{(2)},f) \ge t$. 
Since $t$ is arbitrary in the interval $(0, 3/\deg f)$, 
we conclude that $\operatorname{fpt}(R^{(2)},f) \deg f \ge 3$, and hence $\alpha_F(R^{(2)})=3$. 
Thus, $\alpha_F(R)\le 1<6=2\alpha_F(R^{(2)})$.
\end{eg}

 We also introduce $F$-alpha invariants for projective varieties.
\begin{defn}[cf.~\textup{\cite[Definition 5.2]{Pande23}}]\label{projective alpha}
Suppose that $(X, \Delta)$ is a globally $F$-regular pair, where $X$ is a normal projective variety over an $F$-finite field of characteristic $p>0$ and $\Delta$ is an effective $\Q$-Weil divisor on $X$. 
Given a big $\Q$-Weil divisor $\Gamma$ on $X$, we define the \textit{$F$-alpha invariant} $\alpha_F((X,\Delta);\Gamma)$ of $(X, \Delta)$ with respect to $\Gamma$ as
\[
\alpha_F((X,\Delta);\Gamma)=\inf_{D}\mathrm{gfst}((X,\Delta);D),
\]
where $D$ ranges over all effective $\Q$-Weil divisors $\Q$-linearly equivalent to $\Gamma$. 
When $\Delta=0$, we write $\alpha_F(X;\Gamma)$ instead. 
Moreover, $\alpha_F(X;-K_X)$ is simply denoted by $\alpha_F(X)$.
\end{defn}

\begin{rem}
We can define a positive characteristic analog of $\delta$-invariants in a way similar to Definition \ref{projective alpha}. 
However, this definition differs from that given by Hattori--Odaka \cite[Definition 2.16]{Odaka-Hattori}. 
Their definition involves $F$-pure thresholds, whereas ours uses global $F$-split thresholds, which are in general strictly smaller than $F$-pure thresholds (see Remark \ref{fpt vs gfst}). 
\end{rem}

We relate the $F$-alpha invariant for a projective variety to that for its section ring. 
\begin{prop}[\textup{cf.~\cite[Lemma 3.7]{Pande23}}]\label{characterization of alpha invariant using global fpt}
With notation as in Definition \ref{projective alpha}, assume that $\Gamma$ is an ample $\Q$-Cartier $\Q$-Weil divisor on $X$ and set $S:=\bigoplus_{i\ge 0} H^0(X,\sO_X(\lfloor i\Gamma \rfloor))$. 
Fix an integer $r \ge 1$ such that $r\Gamma$ is Cartier, and consider the $r$th Veronese subring $S^{(r)}$ of $S$.  
Let $\Delta_{S^{(r)}}$ denote the effective $\Q$-Weil divisor on $\Spec S^{(r)}$ corresponding to $\Delta$. Then 
\[
\alpha_F((X,\Delta);\Gamma)=r\alpha_F(S^{(r)},\Delta_{S^{(r)}}).
\]
\end{prop}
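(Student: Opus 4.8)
### Proof proposal

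The plan is to reduce the statement to a comparison between global $F$-split thresholds on $X$ and $F$-split thresholds (in the graded sense) of the section ring, and then to invoke the dictionary between globally $F$-regular pairs on a projective variety and strongly $F$-regular graded section rings. First I would set up notation carefully: write $D$ for an effective $\Q$-Weil divisor on $X$ that is $\Q$-linearly equivalent to $\Gamma$, and let $D_{S^{(r)}}$ denote the corresponding effective homogeneous $\Q$-Weil divisor on $\Spec S^{(r)}$. The key input is the correspondence (due to Schwede--Smith, and refined for pairs) stating that for an ample $\Q$-Cartier divisor $\Gamma$ and the section ring $S$ with Cartier-Veronese $S^{(r)}$, the pair $(X, \Delta + tD)$ is globally $F$-regular if and only if the graded pair $(S^{(r)}, \Delta_{S^{(r)}} + t D_{S^{(r)}})$ is strongly $F$-regular (equivalently globally $F$-regular as a graded ring). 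Taking suprema over $t$ on both sides, this gives
\[
\mathrm{gfst}((X,\Delta); D) = \mathrm{gfst}((S^{(r)}, \Delta_{S^{(r)}}); D_{S^{(r)}}).
\]

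Next I would address the scaling. Because $r\Gamma$ is Cartier and $S^{(r)}$ is the section ring of the very ample $\Q$-Cartier divisor $\Gamma$ viewed with the grading in which $r\Gamma$ has degree... more precisely, under the Pinkham--Demazure description $S^{(r)} = \bigoplus_{i \ge 0} H^0(X, \sO_X(\lfloor ir\Gamma \rfloor))$ — wait, one must be careful: $S^{(r)} = \bigoplus_{i\ge 0} H^0(X, \sO_X(i r \Gamma))$ with $ir\Gamma$ Cartier. An effective divisor $D \sim_{\Q} \Gamma$ corresponds to a homogeneous element of $S^{(r)}$ of degree equal to $r$ times the "$\Gamma$-degree," i.e. the homogeneous ideal cutting out $D_{S^{(r)}}$ is generated in degree matching $D \sim_\Q \Gamma$, so that $\deg(D_{S^{(r)}})$ as measured in the section ring $S^{(r)}$ (whose degree-one piece is $H^0(\sO_X(r\Gamma))$) is... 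Here the factor $r$ enters: a divisor $\Q$-linearly equivalent to $\Gamma$ pulls back to something of "degree $1/r$" relative to the generator of the $S^{(r)}$-grading, and since the graded $F$-alpha invariant measures $\mathrm{fpt} \cdot \deg$ with $\deg$ normalized so the irrelevant ideal sits in degree $\ge 1$, one gets $\mathrm{gfst}((S^{(r)},\Delta_{S^{(r)}}); D_{S^{(r)}}) = r \cdot (\text{fpt-type threshold scaled by the homogeneous degree})$. I would make this precise using Proposition \ref{F-alpha invariant Veronese}-style bookkeeping and the Fedder-type / $I_e$ description of thresholds, so that taking the infimum over all $D \sim_\Q \Gamma$ on the left and all nonzero homogeneous $f \in \m^{(r)}$ on the right yields
\[
\alpha_F((X,\Delta);\Gamma) = \inf_D \mathrm{gfst}((X,\Delta);D) = \inf_{f} r\,\mathrm{fpt}((S^{(r)},\Delta_{S^{(r)}});f)\deg(f) = r\,\alpha_F(S^{(r)}, \Delta_{S^{(r)}}).
\]
The correspondence between effective divisors $D \sim_\Q \Gamma$ and nonzero homogeneous elements of the irrelevant ideal $\m^{(r)} \subseteq S^{(r)}$ (up to the degree-$r$ normalization) is what makes the two infima match term by term.

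The main obstacle, I expect, is getting the normalization factor $r$ exactly right and justifying the equality of thresholds $\mathrm{gfst}((X,\Delta);D) = \mathrm{gfst}((S^{(r)},\Delta_{S^{(r)}}); D_{S^{(r)}})$ at the level of pairs rather than just for the ambient spaces — i.e. one needs that global $F$-regularity of $(X, \Delta+tD)$ is detected by the strong $F$-regularity of the graded section ring together with its boundary, uniformly in the real parameter $t$. This requires the pair-version of the Schwede--Smith section ring correspondence (the "anticanonical cover / section ring" translation of global $F$-regularity into strong $F$-regularity), combined with the fact that for a fixed $\Q$-Cartier $\Gamma$ the Veronese $S^{(r)}$ with $r\Gamma$ Cartier is the natural model; once that is in place, taking suprema over $t$ commutes with the equivalence because strong $F$-regularity of a pair is an open (in fact "sup-attained in the limit") condition in the coefficient, exactly as in Lemma \ref{fpt lem}. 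The remaining steps — identifying $\deg f$ with the $\Gamma$-degree times $r$, and matching the two index sets of the infima — are bookkeeping along the lines of \cite[Lemma 3.7]{Pande23}.
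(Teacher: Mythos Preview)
Your approach is essentially the same as the paper's: both hinge on the Schwede--Smith correspondence (\cite[Proposition~5.3]{SS10}) that $(X,\Delta+tD)$ is globally $F$-regular if and only if $(S^{(r)},\Delta_{S^{(r)}}+tD_{S^{(r)}})$ is strongly $F$-regular, after which the factor $r$ drops out of the bookkeeping that an $f\in H^0(X,\sO_X(nr\Gamma))$ has $\deg_{S^{(r)}}(f)=n$ while $(1/(nr))\Div f\sim_{\Q}\Gamma$. The paper packages this as a single equivalence ``$t\le\alpha_F((X,\Delta);\Gamma)\iff t\le r\alpha_F(S^{(r)},\Delta_{S^{(r)}})$'' rather than comparing individual thresholds and then taking infima, but the content is identical.
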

\begin{proof}
For a real number $t>0$, the condition $t \le \alpha_F((X,\Delta);\Gamma)$ holds if and only if the pair $(X,\Delta+(t/(nr)-\epsilon)\Div f)$ is globally $F$-regular for all integers $n \ge 1$, all sufficiently small real numbers $\epsilon>0$ and all nonzero elements $f \in H^0(X,\sO_X(rn\Gamma))$. 
By \cite[Proposition 5.3]{SS10}, the latter condition is equivalent to saying that the pair $(S^{(r)},\Delta_{S^{(r)}}+(t/(nr)-\epsilon)\Div f)$ is strongly $F$-regular for all such $n, \epsilon$ and $f$, which in turn holds if and only if $t \le r\alpha_F(S^{(r)},\Delta_S)$.
\end{proof}
\section{\texorpdfstring{$F$}{F}-signature in equal characteristic zero}\label{F-signature char 0 section}

In this section, we define the notion of $F$-signature in equal characteristic zero, and study its positivity using ultraproducts. 

\begin{defn}\label{F-signature in char0}
Let $(R,\m)$ be a local domain essentially of finite type over $\C$.  
Suppose that $(A, R_A)$ is a model of $R$. 
Then {\it $F$-signature} $s(R)$ of $R$ is defined as
	\[
		s(R)=\sup_{U}\inf_{\mu\in U} s(R_{\mu}), 
	\]
	where $U$ runs through all dense open subsets of $\Spec A$ and $\mu$ runs through all closed points of $U$. 
	This definition is independent of the choice of the model.
\end{defn}

\begin{prop}
Definition \ref{F-signature in char0} is independent of the choice of model. 
\end{prop}

\begin{proof}
For a model $(A, R_A, \m_A)$ of $(R,\m)$, we temporarily write 
\[s_A(R):=\sup_U \inf_{\mu \in U} s(R_\mu).\] 
Let $(B, R_B,\m_B)$ be another model of $(R,\m)$. 
We will show that $s_A(R)=s_B(R)$. 
The proof is similar to an argument in \cite[Remark 2.5]{MS11}. 

First note that $s_A(R)$ remains unchanged after localizing $A$ at a single element. 
Let $x \in X$ be a point of a complex algebraic variety $X$ such that $\sO_{X,x} \cong R$.
Suppose that $(x_A \in X_A)$ is a model of $(x \in X)$ over $A$. 
Thus, localizing $A$ at a single element, we may assume that $\m_A(R_A \otimes_A A/\mu)$ is a prime ideal for all closed points $\mu \in \Spec A$. 

We can take a larger model $(C, R_C, \m_C)$ satisfying the following conditions: 
\begin{enumerate}[label=(\roman*)]
    \item $A \subseteq C$, 
    \item $R_C \cong R_A \otimes_A C$, 
    \item $\m_A R_C=\m_C$,  
    \item $\m_C(R_C \otimes_C C/\nu)$ is a prime ideal for all closed points $\nu \in \Spec C$. 
\end{enumerate}
It then suffices to show that $s_A(R)=s_C(R)$. 
Let $\varphi : \Spec C \to \Spec A$ be the morphism induced by the inclusion $A \hookrightarrow C$. 
Since $C$ is of finite type over $\Z$, every closed point $\nu \in \Spec C$ is sent by $\varphi$ to a closed point $\mu:=\varphi(\nu) \in \Spec A$. 
Therefore, we have a faithfully flat ring homomorphism 
\[R_A \otimes_A A/\mu \to R_A \otimes_A C/\nu \cong R_C \otimes_C C/\nu,\] 
which induces a flat local homomorphism 
\[
R_{\mu}:=\left(R_A \otimes_A A/\mu\right)_{\m_A(R_A \otimes_A A/\mu)} \to R_{\nu}:=\left(R_C \otimes_C C/\nu\right)_{\m_C(R_C \otimes_C C/\nu)}. 
\]
The closed fiber of the map $R_{\mu} \to R_{\nu}$ is a field, so it follows from \cite[Theorem 5.6]{Yao06} that $s(R_{\mu})=s(R_{\nu})$. 

We now prove that $s_A(R)=s_C(R)$.
The inequality $s_A(R) \le s_C(R)$ can be checked easily.
Indeed, for every dense open subset $U$ of $\Spec A$, set $V:=\varphi^{-1}(U)$. 
Since every closed point in $V$ is sent to a closed point in $U$, 
\[\inf_{\mu \in U}s(R_{\mu}) \le \inf_{\nu \in V}s(R_{\nu}),\]
where $\mu$ and $\nu$ run through all closed points of $U$ and $V$, respectively. 
Therefore, $s_A(R) \le s_C(R)$. 

We show the converse inequality $s_A(R) \ge s_C(R)$. 
Let $V$ be a dense open subset of $\Spec C$, and take a dense open subset $U$ of $\Spec A$ such that $U \subseteq \varphi(V)$; such a $U$ exists because $\varphi$ is dominant. 
For every closed point $\mu$, the fiber $V \cap \varphi^{-1}(\mu)$ contains a closed point $\nu$ of $\Spec C$. Hence, 
\[
\inf_{\mu \in U}s(R_{\mu}) \ge \inf_{\nu \in V}s(R_{\nu}), 
\]
where $\mu$ and $\nu$ run thourgh all closed points of $U$ and $V$, respectively. 
Thus, $s_A(R) \ge s_C(R)$. 
\end{proof}

\begin{rem}
Suppose that $R=(\C[x_1, \dots, x_n]/(f_1, \dots, f_r))_{(x_1, \dots, x_n)}$ is the localization at the maximal ideal $(x_1, \dots, x_n)$ of a quotient ring of a polynomial ring $\C[x_1, \dots, x_n]$ over $\C$ and that all coefficients of the defining equations $f_i$ are integers. 
For each prime number $p$, let $R_p=(\F_p[x_1, \dots, x_n]/(\overline{f_1}, \dots, \overline{f_r}))_{(x_1, \dots, x_n)}$, where $\overline{f_i}$ is the image of $f_i$ in the polynomial ring $\F_p[x_1, \dots, x_n]$ over $\F_p$. Then 
	\[
		s(R)=\liminf_{p\to \infty} s(R_p).  
	\]
\end{rem}

We record some basic properties of $F$-signature in equal characteristic zero. 

\begin{prop} \label{prop F-signature in characteristic 0}
Let $(R,\m)$ be a $d$-dimensional local domain essentially of finite type over $\C$.
\begin{enumerate}[label=$(\arabic*)$]
\item If $s(R)>0$, then $R$ is of strongly $F$-regular type. 
\item $s(R)=1$ if and only if $R$ is regular. 
\item Suppose that $R$ is normal $\Q$-Gorenstein and $f\in \m$ is a nonzero element. 
Then
		\[
			s(R)\ge s(R/(f)).
		\]
	\end{enumerate}
\end{prop}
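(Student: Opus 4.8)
The plan is to prove each part by reducing modulo $p$ and invoking the corresponding positive-characteristic statement from Proposition~\ref{F-signature basic}, together with the dictionary between klt type and strongly $F$-regular type from Theorem~\ref{klt SFR correspondence}. For part (1), if $s(R)>0$ then by Definition~\ref{F-signature in char0} there is a dense open $U\subseteq\Spec A$ on which $\inf_{\mu\in U}s(R_\mu)>0$; by Proposition~\ref{F-signature basic}(3) each $R_\mu$ with $\mu\in U$ is strongly $F$-regular, which is exactly the definition of $R$ being of strongly $F$-regular type. For part (2), the ``if'' direction: if $R$ is regular then so is $R_\mu$ for general $\mu$ (by the preservation of regularity under reduction), hence $s(R_\mu)=1$ by Proposition~\ref{F-signature basic}(2), giving $s(R)=1$; conversely, if $s(R)=1$ then $s(R_\mu)$ can be made arbitrarily close to $1$, but $s(R_\mu)\le 1$ always and $s(R_\mu)=1$ forces $R_\mu$ regular, and one needs $s(R_\mu)=1$ on a dense set of $\mu$ in some open $U$ to conclude $R$ (equivalently $R_\mu$ for general $\mu$) is regular; here I would use that $s(R_\mu)<1$ as soon as $R_\mu$ is singular, together with a uniform bound like $s(R_\mu)\le 1-1/p^{d}$ or the semicontinuity/constructibility of the singular locus to ensure that $\sup_U\inf_{\mu\in U}$ cannot equal $1$ unless $R$ is regular.

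The substance is in part (3). The strategy is to spread out the hypotheses to a common model: choose a model $(A, R_A, f_A)$ of $(R,f)$ such that $R_A/(f_A)$ is a model of $R/(f)$, and such that $A$ is large enough that for general closed points $\mu\in\Spec A$ we have (i) $R_\mu$ is a $d$-dimensional normal $\mathbb{Q}$-Gorenstein local domain, (ii) $(R/(f))_\mu\cong R_\mu/(f_\mu)$ has klt singularities, hence by Theorem~\ref{klt SFR correspondence}(1) is strongly $F$-regular, and (iii) $f_\mu$ is a nonzerodivisor. Then for these $\mu$ I would like to apply the characteristic-$p$ inequality $s(R_\mu)\ge s(R_\mu/(f_\mu))$. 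This is the deformation-of-$F$-signature statement: if $R_\mu$ is Cohen–Macaulay (which follows from normal $\mathbb{Q}$-Gorenstein together with strong $F$-regularity of the quotient forcing $R_\mu$ itself to be Cohen–Macaulay, or one adds this to the spreading-out) and $f_\mu$ is a regular element with $R_\mu/(f_\mu)$ strongly $F$-regular, then $s(R_\mu)\ge s(R_\mu/(f_\mu))$; this appears in work of Aberbach–Enescu and Polstra–Smirnov type results on $F$-signature under deformation. Granting this for almost all $\mu$, take any dense open $U'$ of $\Spec A$ on which $\inf_{\mu}s((R/(f))_\mu)$ approximates $s(R/(f))$ from below; intersecting with the locus where the inequality holds gives $\inf_{\mu\in U\cap U'}s(R_\mu)\ge \inf_{\mu\in U\cap U'}s((R/(f))_\mu)$, and taking the supremum over such opens yields $s(R)\ge s(R/(f))$.

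The main obstacle I expect is justifying the positive-characteristic inequality $s(R_\mu)\ge s(R_\mu/(f_\mu))$ uniformly, i.e.\ for \emph{almost all} $\mu$ rather than just one prime. One has to be careful that the deformation result's hypotheses (Cohen–Macaulayness of $R_\mu$, $f_\mu$ a regular parameter, strong $F$-regularity of $R_\mu/(f_\mu)$) all hold simultaneously on a dense open subset of $\Spec A$; the strong $F$-regularity of the fiber quotient is the delicate point, but it is supplied by Theorem~\ref{klt SFR correspondence}(1) applied to $R/(f)$, whose klt singularities are part of the hypothesis. A secondary subtlety is that $R_\mu/(f_\mu)$ is \emph{a priori} only a reduction of $R/(f)$ in the sense that it agrees with $(R/(f))_\mu$ for general $\mu$; one invokes that taking the scheme-theoretic fiber of $R_A/(f_A)$ commutes with reduction modulo $p$ after shrinking $\Spec A$, so that the two families of $F$-signatures being compared are indeed the correct ones. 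Once these compatibilities are arranged, the sup–inf manipulation in Definition~\ref{F-signature in char0} is routine.
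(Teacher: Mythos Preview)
Your arguments for (1), the ``if'' direction of (2), and (3) are essentially the same as the paper's: spread out, apply the positive-characteristic statement fiberwise, and push through the $\sup\inf$. For (3) the paper simply cites \cite[Corollary~1.2]{Taylor20} for the fiberwise inequality $s(R_\mu)\ge s(R_\mu/(f_\mu))$; your attribution to deformation-type results of Aberbach--Enescu or Polstra--Smirnov is in the same spirit, and the uniformity concern you raise is not a real obstacle, since the inequality is applied separately at each $\mu$ on a dense open where the hypotheses hold.

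The genuine gap is in the ``only if'' direction of (2). You correctly identify that the issue is ruling out $s(R_\mu)\to 1$ while every $R_\mu$ is singular, and that some uniform bound $s(R_\mu)\le 1-\delta$ is needed. But neither of your proposed mechanisms works. The bound $s(R_\mu)\le 1-1/p^d$ is useless even if true, since $1-1/p^d\to 1$ as $p\to\infty$. Constructibility of the singular locus tells you $R_\mu$ is singular for general $\mu$, but gives no quantitative control on $s(R_\mu)$. What the paper does is invoke two genuinely quantitative inputs: the inequality $s(R_\mu)\le \dfrac{e(R_\mu)-e_{\mathrm{HK}}(R_\mu)}{e(R_\mu)-1}$ from \cite{Tucker12} (cf.\ \cite{HL02}), together with the uniform lower bound $e_{\mathrm{HK}}(R_\mu)\ge 1+\dfrac{1}{d\cdot(d!(d-1)+1)^d}$ for non-regular $R_\mu$ from \cite{AE08}. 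Since $e(R_\mu)=e(R)\ge 2$ is constant for general $\mu$, these combine to give a fixed $\delta>0$, depending only on $d$ and $e(R)$, with $s(R_\mu)\le 1-\delta$, contradicting $s(R)=1$. You should replace your tentative bounds with this argument.
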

\begin{proof}
We use the same notation as in Definition \ref{F-signature in char0}. 
\begin{enumerate}
\item 
This is immediate from Proposition \ref{F-signature basic} (3). 
\item 
If $R$ is regular, then there exists a dense open subset $U$ of $A$ such that $R_{\mu}$ is regular for every closed point $\mu \in U$. 
Therefore, by Proposition \ref{F-signature basic} (2), $s(R)=1$. 

For the converse, suppose that $s(R)=1$ and $R$ is not regular. 
Then by (1), there exists a dense open subset $U \subseteq \Spec A$ such that $R_{\mu}$ is a $d$-dimensional, $F$-finite, normal, Cohen--Macaulay, non-regular local domain for every closed point $\mu \in U$. 
It follows from \cite[Theorem 4.14]{Tucker12} (see also \cite[Proposition 14]{HL02}) that  
\[
s(R_{\mu})\le \frac{e(R_{\mu})-e_{\mathrm{HK}}(R_{\mu})}{e(R_{\mu})-1}.
\]
On the other hand, by \cite[Theorem 4.12]{AE08}, 
\[
e_{\mathrm{HK}}(R_{\mu})\ge 1+\frac{1}{d\cdot (d!(d-1)+1)^d}.
\]
This contradicts the assumption that $s(R)=1$. Thus, $R$ is regular.

\item 
Let $S:=R/(f)$. 
By (1), we may assume that $S$ is of strongly $F$-regular type. 
After enlarging $A$ if necessary, we may choose a model $S_A$ of $S$ over $A$. 
By \cite[Th\'eor\`me 12.2.4]{EGAIV3} and the argument in the proof of \cite[Proposition 4.3.11(c)]{HH_char0}, there exists a dense open subset $U \subseteq \Spec A$ such that $R_{\mu}$ and $S_{\mu}$ are both normal and $R_{\mu}$ is $\Q$-Gorenstein for all closed points $\mu \in U$. 
It then follows from \cite[Corollary 1.2]{Taylor20} that $s(R_{\mu}) \ge s(S_{\mu})$ for all closed points $\mu \in U$, which implies the assertion. 
\end{enumerate}
\end{proof}

Rings of strongly $F$-regular type conjecturally coincide with singularities of klt type (see, for example,  \cite[4.6]{Hara-Watanabe}). 
Therefore, assuming this conjecture, we see from Proposition \ref{prop F-signature in characteristic 0} (1) that if $s(R)>0$, then $\Spec R$ is of klt type. 
Carvajal-Rojas, Schwede and Tucker ask in \cite[Question 5.10]{CRST} whether the converse holds. 


\begin{conjAd}[\textup{cf.~\cite[Question 5.10]{CRST}}]
Let $(R, \m)$ be a $d$-dimensional local domain essentially of finite type over $\C$. If $\Spec R$ is of klt type, then $s(R)>0$. 
\end{conjAd}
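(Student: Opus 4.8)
The plan is to reduce the positivity of $s(R)$ to the positivity of the local $F$-alpha invariant $\alpha_F(R)$, and then to extract the latter from a Koll\'ar component, thereby reducing the whole conjecture to the positivity of the $F$-alpha invariants of log Fano pairs. For the first step, apply the multiplicity sandwich of Proposition~\ref{comparison of F-signature and local alpha invariant} to the reductions $R\bmod p$: since the Hilbert--Samuel multiplicity satisfies $e(R\bmod p)=e(R)$ for all but finitely many $p$, one has $e(R)\alpha_F(R\bmod p)^d/d!\le s(R\bmod p)\le e(R)\mathrm{fpt}(\m\bmod p)^d/d!$, and taking $\liminf_{p\to\infty}$ shows that $s(R)>0$ if and only if $\alpha_F(R)>0$. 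By Proposition~\ref{local alpha invariant of completion}(3) we may moreover replace $R$ by its completion. So the problem becomes: \emph{if $\Spec R$ is of klt type, then $\alpha_F(R)>0$}.

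Next, fix an effective $\Delta$ with $K_R+\Delta$ $\Q$-Cartier and $(R,\Delta)$ klt, and extract a Koll\'ar component: a proper birational $\pi\colon Y\to\Spec R$ with a single exceptional prime divisor $E$, with $-E$ $\pi$-ample and $(Y,E+\widetilde{\Delta})$ plt near $E$; then $(E,\operatorname{Diff}_E(\widetilde{\Delta}))$ is a projective log Fano pair over the residue field. Spreading $\pi$, $E$ and $\Delta$ out over a finitely generated $\Z$-algebra $A$ and using Theorem~\ref{klt SFR correspondence}(2), the reductions $(E_\mu,(\operatorname{Diff}_E(\widetilde{\Delta}))_\mu)$ are globally $F$-regular for general closed points $\mu$. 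Then, for a nonzero $f\in\m$, let $D$ be the divisor it cuts out on $E$; using restriction/adjunction for test ideals along $E$ --- so that a global splitting witnessing global $F$-regularity of $(E_\mu,\operatorname{Diff}_E(\widetilde{\Delta})+tD)$ propagates to strong $F$-regularity of $(R_\mu,f^{t'})$, with $t'$ differing from $t$ by a factor governed by the log discrepancy of $E$ --- together with a comparison of $\overline{\mathrm{ord}}_{\m}(f)$ with the vanishing order of $f$ along $E$, one should obtain a positive constant $c$, independent of $f$, with
\[
\alpha_F(R)\ \ge\ c\cdot\alpha_F\bigl((E,\operatorname{Diff}_E(\widetilde{\Delta}));\,-(K_E+\operatorname{Diff}_E(\widetilde{\Delta}))\bigr).
\]
This is the content packaged as Theorem~\ref{local-global thm}: it suffices to know that $\alpha_F((X,\Delta);-(K_X+\Delta))>0$ for every projective log Fano pair $(X,\Delta)$ over $\C$.

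Proving that last positivity statement is the main obstacle, and in full generality it is the heart of the conjecture. The natural approach imitates Tian's positivity of the alpha invariant: by boundedness of $\epsilon$-log canonical log Fano pairs (Birkar) one restricts to a bounded family, spreads the family out over $\Z$, and seeks a \emph{uniform} lower bound on global $F$-split thresholds valid for almost all $p$ across the family. The difficulty is that $\mathrm{gfst}$ of the reductions can a priori degenerate to $0$ as $p\to\infty$ --- unlike $\mathrm{fpt}$, which converges to the log canonical threshold by Hara--Watanabe/Takagi --- because global $F$-splitting is strictly stronger than local $F$-purity (Remark~\ref{fpt vs gfst}); an effective global $F$-regularity statement with bounds depending only on the dimension and the Cartier index would suffice, but such a statement is exactly what is missing. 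In the cases the paper does settle --- $R$ a pure subring of a regular local ring, and the reduction to the Gorenstein case --- this step is bypassed: one compares $R$ directly with a regular (or Gorenstein) ambient ring $S$ through \emph{ultra-$F$-regularity}, which descends along pure morphisms and yields $\mathrm{fpt}(R\bmod p,f_p)\ge\mathrm{fpt}(S\bmod p,f_p)$ for almost all $p$ even though ordinary $F$-purity need not survive reduction modulo $p$; combined with the comparison of normalized orders along a pure extension (Lemma~\ref{normalized order under pure mophisms}), this forces $\alpha_F(R)>0$, hence $s(R)>0$, cf.\ Theorem~\ref{intro main thm}.
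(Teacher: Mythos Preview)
The statement is Conjecture~$\textup{A}_d$, and the paper does \emph{not} prove it: it remains open. What the paper establishes are (i) the special case of pure subrings of regular local rings (Theorem~\ref{Uniform positivity of F-signature of pure subrings}, Corollary~\ref{reductive quotient}), (ii) the reduction to the Gorenstein case (Corollary~\ref{reduce to Gorenstein case}), and (iii) the conditional implication ``Conjecture~$\textup{C}$ $\Rightarrow$ Conjecture~$\textup{B}$'' (Theorem~\ref{local-global thm}). Your proposal is not a proof and you know it: you explicitly say that the positivity of $\alpha_F((X,\Delta);-(K_X+\Delta))$ for log Fano pairs ``is exactly what is missing''. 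So at best you are offering a reduction to Conjecture~$\textup{C}$, the same endpoint as the paper, together with an accurate summary (your last paragraph) of the unconditional results the paper actually proves.

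Even as a reduction, your outline has two defects relative to the paper. First, the step ``replace $R$ by its completion via Proposition~\ref{local alpha invariant of completion}(3)'' is meaningless here: that proposition is about $F$-finite local rings in positive characteristic, whereas the $\m$-adic completion of a ring essentially of finite type over $\C$ is no longer essentially of finite type, so reduction modulo $p$ in the sense of Definition~\ref{F-signature in char0} does not apply to it. (The step is also unnecessary for anything you do afterwards.) Second, and more seriously, your Koll\'ar-component argument tries to bound $\alpha_F(R)$ directly by $\alpha_F$ of the log Fano pair $(E,\operatorname{Diff}_E(\widetilde{\Delta}))$ for an arbitrary klt-type $R$, but the paper's Theorem~\ref{local-global thm} only achieves this when $R$ is \emph{Gorenstein} (it proves $\textup{C}\Rightarrow\textup{B}$, not $\textup{C}\Rightarrow\textup{A}$). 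The Gorenstein hypothesis is genuinely used in Proposition~\ref{prop1}: the degeneration to the extended Rees algebra yields $F$-rationality of $(T_\mu,\widetilde{f}^s)$, and one needs $\sO_{X_\mu,x_\mu}$ Gorenstein to upgrade this to strong $F$-regularity via Remark~\ref{F-rational rem}. The paper bridges the gap from klt type to Gorenstein separately, by passing to the anti-canonical ring and invoking the pure-descent Theorem~\ref{Uniform positivity of F-signature of pure subrings} (Corollary~\ref{reduce to Gorenstein case}); your vague appeal to ``restriction/adjunction for test ideals along $E$'' does not substitute for this step.
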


We simply say that Conjecture A holds if Conjecture $\textup{A}_d$ holds for all positive integers $d$. 

\begin{prop}
Conjecture $\textup{A}_2$ holds when $R$ has residue field isomorphic to $\C$. 
\end{prop}
\begin{proof}
First, note that two-dimensional singularities of klt type are $\Q$-Gorenstein. 
Therefore, taking the index one cover of $\Spec R$, we may assume by \cite[Theorem 3.1]{CRST} that $R$ is Gorenstein, that is, $\Spec R$ is a rational double point. 

Let $x \in X$ be a closed point of a complex algebraic variety $X$ such that $\sO_{X, x} \cong R$. 
Suppose that $(x_A \in X_A)$ is a model of $(x \in X)$ over a finitely generated $\Z$-subalgebra $A$ of $\C$.  
Let $(x_{\overline{\mu}} \in X_{\overline{\mu}})$ be the base change of $(x_{\mu} \in X_{\mu})$ to an algebraic closure $\overline{\kappa(\mu)}$ of the base field $\kappa(\mu)$. 
Then, after enlarging $A$ if necessary, we may assume that $(x_{\overline{\mu}} \in X_{\overline{\mu}})$ is a rational double point of the same type as $\Spec R$ for all closed points $\mu \in \Spec A$. 
Since the $F$-signature of $(x_{\overline{\mu}} \in X_{\overline{\mu}})$ depends only on its $ADE$ type and not on the characteristic of the residue field (if it is greater than 5) by \cite[Example 18]{HL02}, we may further assume, after possibly enlarging $A$ again, that there exists a constant $C>0$ such that $s(\sO_{X_{\overline{\mu}}, x_{\overline{\mu}}})>C$ for all $\mu$. 
By \cite[Theorem 5.6]{Yao06}, this implies that $s(\sO_{X_{\mu}, x_{\mu}})>C$ for all $\mu$,  
and therefore, $s(R)>C$. 
\end{proof}

\begin{rem}\label{toric remark}
It follows from \cite[Theorem 1.1]{Singh05} that Conjecture $A$ holds for localizations of toric rings. 
This case is also covered by our main result Theorem \ref{Uniform positivity of F-signature of pure subrings}. 
\end{rem}

The following is the main result of this section, which says that the uniform positivity of $F$-signature descends under pure morphisms. 

\begin{thm}\label{Uniform positivity of F-signature of pure subrings} 
Let $(R, \m) \to (S, \n)$ be a pure local $\C$-algebra homomorphism between
normal local rings essentially of finite type over $\C$.  
If $\Spec S$ is of klt type and $s(S)>0$, then $s(R)>0$. 
\end{thm}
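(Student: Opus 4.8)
The plan is to show $s(R) > 0$ by proving $\alpha_F(R) > 0$, using the characterization (mentioned in the introduction, following Pande) that $s(R)>0 \iff \alpha_F(R)>0$, together with the inequality in Proposition~\ref{comparison of F-signature and local alpha invariant}~(1) applied after reduction modulo $p$, i.e. $s(R_\mu) \ge e(R_\mu)\alpha_F(R_\mu)^d/d!$. So it suffices to bound $\mathrm{fpt}(R_\mu, f)\,\overline{\mathrm{ord}}_{\m_\mu}(f)$ from below, uniformly in $\mu$ and in nonzero $f \in \m_\mu$. First I would reduce to the approximation/ultraproduct setting of $\S\ref{ultraproduct section}$: fix approximations $(R_p)$, $(S_p)$ of $R, S$ and note that the pure map $R \to S$ induces a map $R_p \to S_p$ which, while not necessarily pure for each $p$, becomes pure ``in the ultraproduct'' — this is where ultra-$F$-regularity enters, as advertised in the introduction (Proposition~\ref{ultra-F-regular <-> F-regular type}): since $\Spec S$ is of klt type and $s(S)>0$, $S$ is ultra-$F$-regular, and ultra-$F$-regularity descends along the pure morphism $R \to S$, so $R$ is ultra-$F$-regular, hence $R_p$ is strongly $F$-regular for almost all $p$.

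The heart of the argument is the comparison of $F$-pure thresholds. Given nonzero $f_p \in \m_p$ with ultraproduct $f = \ulim_p f_p \in \m S_\infty$ (viewed in both $R_\infty$ and $S_\infty$), I would show
\[
\mathrm{fpt}(R_p, f_p) \ge \mathrm{fpt}(S_p, f_p)
\]
for almost all $p$. The mechanism: if $t_p \ge 0$ are rationals with $(S_p, f_p^{t_p})$ strongly $F$-regular for almost all $p$, then by ultra-$F$-regularity (Proposition~\ref{ultra-F-regular <-> F-regular type}) the pair is ``ultra-$F$-regular'', and the purity of $R_\infty \to S_\infty$ forces $(R_p, f_p^{t_p})$ to be strongly $F$-regular for almost all $p$ as well — because purity lets one split off the test element statement. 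Taking $t_p \nearrow \mathrm{fpt}(S_p, f_p)$ gives the inequality. Next I need the matching inequality on normalized orders in the other direction, $\overline{\mathrm{ord}}_{\m_p}(f_p) \ge \overline{\mathrm{ord}}_{\n_p}(f_p)$ (here using $f_p \in \m_p \subseteq \m S_p$), or more precisely the statement that $\overline{\mathrm{ord}}$ does not increase under the pure extension — this should be the content of Lemma~\ref{normalized order under pure mophisms} alluded to in the acknowledgments (learned from Kenta Sato). Combining,
\[
\mathrm{fpt}(R_p, f_p)\,\overline{\mathrm{ord}}_{\m_p}(f_p) \ge \mathrm{fpt}(S_p, f_p)\,\overline{\mathrm{ord}}_{\n_p}(f_p) \ge \alpha_F(S_p),
\]
and taking the infimum over $f_p$ and then $\liminf_p$ yields $\alpha_F(R) \ge \alpha_F(S) > 0$, where $\alpha_F(S)>0$ because $s(S)>0$ (Pande-type equivalence plus Proposition~\ref{comparison of F-signature and local alpha invariant II}~(2), which bounds $s$ below by a function vanishing only when $\alpha_F = 0$).

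Finally, from $\alpha_F(R)>0$ I would conclude $s(R)>0$: for almost all $p$, Proposition~\ref{comparison of F-signature and local alpha invariant}~(1) gives $s(R_p) \ge e(R_p)\alpha_F(R_p)^d/d!$; one needs $e(R_p)$ bounded above (automatic, since $e(R_p) = e(R)$ for almost all $p$ by generic flatness of the model) and $\alpha_F(R_p)$ bounded below by a positive constant — but the latter is exactly what $\alpha_F(R) = \liminf_p \alpha_F(R_p) > 0$ provides, together with $d = \dim R_p$ being constant. Hence $\liminf_p s(R_p) > 0$, i.e. $s(R) > 0$.

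\medskip

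The step I expect to be the main obstacle is the $F$-pure threshold comparison $\mathrm{fpt}(R_p, f_p) \ge \mathrm{fpt}(S_p, f_p)$ for almost all $p$. The subtlety is that purity of $R \to S$ does \emph{not} descend to purity of $R_p \to S_p$ for individual $p$ (this is precisely the well-known failure the paper flags), so one cannot argue fiberwise; instead one must route everything through the ultraproduct $R_\infty \to S_\infty$ and the equivalence between ultra-$F$-regularity of a pair $(R, f^t)$ and the strong $F$-regularity of almost all reductions $(R_p, f_p^{t_p})$ — and crucially this equivalence must be uniform across the continuum of choices of $f$ and $t$, which is why the formulation in terms of ultra-Frobenii and the explicit test-element bookkeeping of $\S\ref{ultraproduct section}$ is needed. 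Making the ``almost all $p$'' uniform over all nonzero $f_p \in \m_p$ simultaneously — rather than for each fixed family — is the technical crux, and I anticipate it requires either an ultraproduct of the elements $f_p$ (so that one works with a single element $f \in R_\infty$ at a time and then uses a compactness/\L o\'s-type argument) or a careful induction using the finitely many test elements. A secondary, more routine worry is handling the canonical-module and local-cohomology compatibilities needed to run the test-ideal argument in the non-regular, non-Gorenstein setting, but these are covered by the preliminaries (Remark~\ref{test ideal remark} and the \v Cech-complex discussion at the end of $\S\ref{ultraproduct section}$).
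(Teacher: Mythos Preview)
Your overall strategy matches the paper's: reduce to positivity of local $F$-alpha invariants via Proposition~\ref{comparison of F-signature and local alpha invariant}, then compare $\alpha_F(R_p)$ with $\alpha_F(S_p)$ by handling the $\mathrm{fpt}$ factor through ultra-$F$-regularity and the $\overline{\mathrm{ord}}$ factor separately. The contradiction packaging you sketch at the end (choose witnessing $f_p$, set $f=\ulim_p f_p$, and run a single ultra-$F$-regularity argument) is exactly how the paper resolves the uniformity worry you flag.

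There are, however, two genuine gaps. First, your normalized-order inequality points the wrong way. For $f\in R_p\subseteq S_p$ the trivial inclusion $\m_p S_p\subseteq \n_p$ gives $\overline{\mathrm{ord}}_{\m_p}(f)\le \overline{\mathrm{ord}}_{\n_p}(f)$, not $\ge$; e.g.\ for the split inclusion $k[x^2,xy,y^2]\hookrightarrow k[x,y]$ and $f=x^2$ one has $1<2$. What Lemma~\ref{normalized order under pure mophisms} actually provides is a \emph{uniform constant} $C>0$ with $C\cdot\overline{\mathrm{ord}}_{\m_\mu}(f)\ge \overline{\mathrm{ord}}_{\n_\mu}(f)$, obtained by extending a divisorial valuation centered on $\m$ to one centered on $\n$ and then running a version of Izumi's theorem uniformly over the model. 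With this correction your chain yields only $\alpha_F(R_p)\ge C^{-1}\alpha_F(S_p)$, which suffices but is not the clean $\alpha_F(R)\ge\alpha_F(S)$ you wrote, and it needs this nontrivial input rather than a formal purity fact.

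Second, the step ``$(R,f^t)$ ultra-$F$-regular $\Rightarrow$ $(R_p,t_p\Div(f_p))$ strongly $F$-regular for almost all $p$'' is not automatic: part~(2) of Proposition~\ref{ultra-F-regular <-> F-regular type} requires the anti-canonical ring of $\Spec R$ to be finitely generated (part~(1), used on the $S$-side, is unconditional). The paper secures this by first invoking Zhuang's theorem---purity together with $\Spec S$ of klt type forces $\Spec R$ to be of klt type---after which BCHM gives finite generation of the anti-canonical ring. Without this preliminary reduction, the descent from the ultraproduct back to almost all $R_p$ does not go through, and your $\mathrm{fpt}$ comparison collapses.
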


The proof of Theorem \ref{Uniform positivity of F-signature of pure subrings} is deferred to near the end of this section, following Proposition \ref{local alpha invariant under pure morphisms}.

We apply Theorem \ref{Uniform positivity of F-signature of pure subrings} to the case of reductive quotient maps. 
A \textit{reductive quotient singularity} over $\C$ is the spectrum $\Spec R$ of a local $\C$-algebra $(R,\m)$ 
such that its $\m$-adic completion $\widehat{R}$ is isomorphic to the ring of invariants $\C[[x_1, \dots, x_n]]^G$ where $G$ is a linearly reductive group over $\C$ acting on a formal power series ring $\C[[x_1, \dots, x_n]]$. 

The following fact is well known to experts, but we include it here due to the lack of a reference. 
\begin{lem}\label{linearization}
    Let $k$ be an algebraically closed field, $S$ be a formal power series ring $k[[x_1,\dots,x_n]]$ over $k$ and $G$ be a linearly reductive group acting on the $k$-algebra $S$. Then the action of $G$ on $S$ can be linearized, i.e., there exist a $k$-algebra automorphism $\theta$ on $S$ and a group inclusion $\sigma:G\hookrightarrow \operatorname{GL}_n(k)$ such that the diagram
    \[
        \xymatrix{
            S \ar[d]_{\theta} \ar[r]^{\sigma(g) \cdot} & S \ar[d]^{\theta} \\
            S \ar[r]_{g\cdot} & S
        } 
    \]
    commutes for all $g\in G$, where $\operatorname{GL}_n(k)$ acts naturally on $S$.
\end{lem}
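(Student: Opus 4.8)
The plan is to prove the linearization statement by the standard two-step averaging argument. First I would reduce to the case where the action preserves the maximal ideal $\m_S = (x_1, \dots, x_n)$; since any $k$-algebra automorphism of $S = k[[x_1, \dots, x_n]]$ is automatically local (it must send units to units), every $g \in G$ already satisfies $g \cdot \m_S = \m_S$ and hence $g \cdot \m_S^2 = \m_S^2$, so $G$ acts linearly on the $n$-dimensional $k$-vector space $V := \m_S/\m_S^2$. This gives a group homomorphism $\rho : G \to \operatorname{GL}(V) \cong \operatorname{GL}_n(k)$; after checking this is a morphism of algebraic groups (it is, since the action of $G$ on $S$ is by a morphism $G \times S \to S$ in the appropriate pro-algebraic sense), linear reductivity is not yet needed here.

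Next I would construct the automorphism $\theta$. Choose any $k$-linear splitting $s_0 : V \to \m_S$ of the projection $\m_S \twoheadrightarrow V$, and then average it over $G$ using the Reynolds operator: because $G$ is linearly reductive, there is a $G$-equivariant projection (Reynolds operator) $\mathcal{R}$ onto invariants, and applying a suitable averaging to $s_0$ produces a $G$-equivariant $k$-linear section $s : V \to \m_S$, i.e. a section satisfying $s(\rho(g) v) = g \cdot s(v)$ for all $g \in G$, $v \in V$. Concretely one can take $s = $ the average of $g^{-1} \circ s_0 \circ \rho(g)$ over $G$ via the Reynolds operator applied coordinatewise; the key point is that each coordinate function $V \to k$ obtained this way lies in a finite-dimensional $G$-subrepresentation of $S$, so the averaging makes sense. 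Picking a basis $v_1, \dots, v_n$ of $V$ and setting $y_i := s(v_i) \in \m_S$, the elements $y_1, \dots, y_n$ form a regular system of parameters (their images span $V$), hence there is a unique continuous $k$-algebra automorphism $\theta^{-1}$ of $S$ sending $x_i \mapsto y_i$; let $\theta$ be its inverse. Equivariance of $s$ translates exactly into the statement that, in the coordinates $y_i$, each $g \in G$ acts by the matrix $\rho(g)$, which is the commutativity of the displayed square with $\sigma := \rho$ (after identifying $\operatorname{GL}(V)$ with $\operatorname{GL}_n(k)$ via $v_1, \dots, v_n$).

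Finally I would verify that $\sigma = \rho$ is injective: if $\rho(g) = \mathrm{id}$ then $g$ fixes $y_1, \dots, y_n$, and since these generate $S$ as a complete $k$-algebra, $g$ acts trivially on $S$; as $G$ acts faithfully (this is part of the hypothesis that $G$ acts on the $k$-algebra $S$, or can be arranged by replacing $G$ by its image), $g = e$. Alternatively, if one does not assume faithfulness, one replaces $G$ throughout by its image in the automorphism group of $S$, which is again linearly reductive as a quotient of a linearly reductive group.

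The main obstacle is the averaging step: one must make precise that the "average over $G$" of the section $s_0$ — or equivalently of the finitely many coordinate functions describing it — is well-defined and $G$-equivariant when $G$ is an arbitrary linearly reductive group (not necessarily finite or a torus), where no invariant measure is available and one must instead work with the Reynolds operator on the coordinate ring of $G$ acting on finite-dimensional subrepresentations. The point is that $s_0$, viewed as an element of $\operatorname{Hom}_k(V, \m_S/\m_S^{2})$ composed with a splitting into $\m_S$, determines finitely many elements of $S$ lying in a finite-dimensional $G$-stable subspace (namely the $G$-submodule of $S$ generated by the $y_i^{0} := s_0(v_i)$ modulo higher-order corrections), and projecting onto the correct isotypic piece — the one matching $V^{*} \otimes V$ — yields the equivariant section. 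Handling this carefully, and confirming that the resulting $y_i$ still generate $\m_S$ (which holds because their images in $V$ are unchanged by the averaging, as averaging over the $G$-action on $V^{*} \otimes V$ fixes the identity element $\mathrm{id}_V$), is where the real work lies; the rest is formal.
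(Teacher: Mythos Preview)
Your overall strategy---produce a $G$-equivariant $k$-linear section $s\colon \m_S/\m_S^2 \to \m_S$ of the projection and use the resulting regular system of parameters to define $\theta$---is exactly the paper's. The difference, and the gap, is in how you obtain $s$.

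You propose to average a non-equivariant section $s_0$ via the Reynolds operator, but Reynolds is only defined on rational $G$-modules (unions of finite-dimensional subrepresentations), and $\m_S$ is in general \emph{not} rational: it is a projective, not inductive, limit of the finite-dimensional $G$-modules $\m_S/\m_S^{i}$. Your claim that the $s_0(v_i)$ lie in a finite-dimensional $G$-stable subspace of $S$ is false; for example, if $G=\mathbb{G}_m$ acts on $k[[x]]$ by $t\cdot x = tx$, the $G$-span of $\sum_{i\ge 1} x^i$ is infinite-dimensional by a Vandermonde argument. The remark about projecting onto the isotypic piece ``matching $V^*\otimes V$'' does not repair this.

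The paper (writing $\n$ for your $\m_S$) sidesteps the issue by working level by level: for each $i\ge 2$, linear reductivity splits the surjection of finite-dimensional $G$-modules $\n/\n^{i+1}\twoheadrightarrow \n/\n^i$ equivariantly, and composing these sections gives a compatible system $\n/\n^2 \to \n/\n^{i+1}$, hence a $G$-equivariant $s\colon \n/\n^2 \to \n = \varprojlim_i \n/\n^{i+1}$. Your averaging can be salvaged the same way---apply Reynolds to each composite $V \xrightarrow{s_0} \m_S \to \m_S/\m_S^{i+1}$, use naturality of Reynolds for compatibility, and pass to the inverse limit---but that passage to finite-dimensional quotients is precisely the step your write-up is missing.
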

\begin{proof}
This follows from an argument similar to that in \cite[Proof of Corollary 1.8]{Satriano}. Let $\n$ be the maximal ideal of $S$. 
Since $G$ acts on $\n/\n^2$, we have an $n$-dimensional $G$-representation $\sigma:G\to \operatorname{GL}_n(k)$. 
As $G$ is linearly reductive, for every $i \ge 2$, there exists a $G$-equivariant $k$-linear section $s_i:\n/\n^i\to \n/\n^{i+1}$ of the natural surjection $\n/\n^{i+1}\twoheadrightarrow \n/\n^{i}$. 
These maps induce a $G$-equivariant $k$-linear section $s:\n/\n^2\to \n$ of the natural surjection $\n\to \n/\n^2$. 
Define a $k$-algebra endomorphism $\theta:S\to S$ by $x_i\mapsto s(x_i+\n^2)$ for each $i$. Then $\theta$ is an automorphism because $s(x_i+\n^2)+\n^2=x_i+\n^2$.
\end{proof}

As an immediate corollary of Theorem \ref{Uniform positivity of F-signature of pure subrings}, we prove that Conjecture $A$ holds for reductive quotient singularities over $\C$. 
\begin{cor}\label{reductive quotient}
	Let $(R,\m)$ be a local domain essentially of finite type over $\C$. 
    If $\Spec R$ is a reductive quotient singularity over $\C$, then $s(R)>0$. 
\end{cor}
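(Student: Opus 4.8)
The plan is to reduce Corollary~\ref{reductive quotient} to Theorem~\ref{Uniform positivity of F-signature of pure subrings} by exhibiting $R$ as a pure subring of a regular local ring of klt type. First I would pass to the $\m$-adic completion $\widehat R$: by Proposition~\ref{local alpha invariant of completion}(3) and the equivalence of $s(R)>0$ with $\alpha_F(R)>0$ established via Pande's argument (or, more directly, by the fact recorded in the excerpt that $F$-signature in equal characteristic zero can be computed from reductions, and strong $F$-regularity type is preserved under completion), it suffices to prove $s(\widehat R)>0$. By hypothesis $\widehat R \cong \C[[x_1,\dots,x_n]]^G = S^G$ where $S=\C[[x_1,\dots,x_n]]$ and $G$ is linearly reductive acting on $S$.

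The second step is to verify that the inclusion $R \hookrightarrow S$ (equivalently, after completion, $S^G \hookrightarrow S$) is a pure ring homomorphism. For a linearly reductive group $G$, the Reynolds operator $\rho: S \to S^G$ is an $S^G$-linear splitting of the inclusion $S^G \hookrightarrow S$, and the existence of such a splitting implies purity. However, there is a subtlety: $S$ is a complete local ring, not essentially of finite type over $\C$, so Theorem~\ref{Uniform positivity of F-signature of pure subrings} does not apply verbatim. To handle this I would use Lemma~\ref{linearization} to linearize the $G$-action, so that $G \hookrightarrow \operatorname{GL}_n(\C)$ acts linearly on $S$ and hence on the polynomial ring $\C[x_1,\dots,x_n]$; then $\widehat R$ is the completion of $\left(\C[x_1,\dots,x_n]^G\right)_{\m_0}$ at its homogeneous maximal ideal $\m_0$, and this localized invariant ring $R_0 := \left(\C[x_1,\dots,x_n]^G\right)_{\m_0}$ is essentially of finite type over $\C$ with $\widehat{R_0} \cong \widehat R$. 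Likewise $S_0 := \left(\C[x_1,\dots,x_n]\right)_{\m_0}$ is regular and essentially of finite type, and the Reynolds operator descends to an $R_0$-linear splitting of $R_0 \hookrightarrow S_0$, so this inclusion is pure.

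The third step is to check the hypotheses of Theorem~\ref{Uniform positivity of F-signature of pure subrings} for $R_0 \hookrightarrow S_0$: both are normal local rings essentially of finite type over $\C$ (normality of $R_0$ follows since a ring of invariants of a normal ring under a reductive group is normal, or simply because $R_0$ is a pure, hence integrally closed, subring of the normal ring $S_0$); the map is local (the invariant maximal ideal sits inside $\m_0$) and pure by Step~2; $\Spec S_0$ is regular, hence certainly of klt type; and $s(S_0)=1>0$ by Proposition~\ref{prop F-signature in characteristic 0}(2). Theorem~\ref{Uniform positivity of F-signature of pure subrings} then yields $s(R_0)>0$, and since $s(R)=s(\widehat R)=s(\widehat{R_0})=s(R_0)>0$ by the completion-invariance of $F$-signature in equal characteristic zero, the proof is complete.

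The main obstacle I anticipate is the bookkeeping in Step~2, namely carefully arranging that $\widehat R$ really is the completion of an \emph{essentially of finite type} pure subring of a regular ring: one must ensure that linearizing the action does not change the completion and that the Reynolds operator — a priori defined on $S$ — restricts compatibly to the polynomial-ring level, or equivalently that the $G$-invariant $\C$-algebra splitting of $\C[x_1,\dots,x_n] \twoheadleftarrow \C[x_1,\dots,x_n]^G$ is the one inducing the splitting after completion. This is standard invariant-theory machinery, but it is the step where the hypotheses of the main theorem are actually being met, so it warrants care; the rest is a direct invocation of results already in the paper.
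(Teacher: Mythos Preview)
Your strategy---linearize via Lemma~\ref{linearization}, descend to the polynomial invariant ring $R_0=(\C[x_1,\dots,x_n]^G)_{\m_0}$, and apply Theorem~\ref{Uniform positivity of F-signature of pure subrings} to the split inclusion $R_0\hookrightarrow S_0$---does correctly yield $s(R_0)>0$. The gap is the last step, where you conclude $s(R)>0$ from $s(R_0)>0$ via ``completion-invariance of $F$-signature in equal characteristic zero''. No such invariance is established (or even formulated) in the paper: Definition~\ref{F-signature in char0} only applies to rings essentially of finite type over $\C$, so $s(\widehat R)$ is undefined, and Proposition~\ref{local alpha invariant of completion}(3) is a characteristic-$p$ statement about $\alpha_F$, not a characteristic-zero statement about $s$. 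Passing from $\widehat R\cong\widehat{R_0}$ to $s(R)=s(R_0)$ would require something like an Artin-approximation argument producing a common \'etale neighborhood of $R$ and $R_0$, together with \'etale invariance of the characteristic-zero $F$-signature---neither of which is available here, and the isomorphism $\widehat R\cong\widehat{R_0}$ does not obviously spread out over a model.

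The paper sidesteps this entirely. Rather than replacing $R$ by another ring with the same completion, it shows that $R$ \emph{itself} is a pure subring of a regular local ring essentially of finite type over $\C$. Once Lemma~\ref{linearization} gives a pure inclusion $\widehat R\hookrightarrow B$ with $B$ regular local, Popescu's theorem writes $B$ as a filtered colimit of smooth $\C$-algebras $B_\lambda$; writing $R=A_{\mathfrak p}$ with $A$ of finite type over $\C$, the map $A\to B$ factors through some $B_{\lambda_0}$, and localizing $B_{\lambda_0}$ at the contraction of $\m_B$ produces a regular local ring $S$ essentially of finite type with $R\to S$ pure (purity is inherited from the pure composite $R\to\widehat R\to B$). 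Theorem~\ref{Uniform positivity of F-signature of pure subrings} then applies directly to $R$. The missing ingredient in your argument is thus Popescu's theorem, which lets one descend the \emph{purity statement} to the finite-type level rather than trying to descend the numerical invariant.
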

\begin{proof}
We use the following fact:

\begin{cl}
Let $(R,\m)$ be a local domain essentially of finite type over $\C$. 
If its $\m$-adic completion $\widehat{R}$ is a pure subring of a regular local ring $(B,\m_B)$, then $R$ itself is a pure subring of some regular local ring essentially of finite type over $\C$.    
\end{cl}
\begin{proof}[Proof of Claim]
It follows from Popescu's theorem that $B$ can be written as a direct limit $\varinjlim_{\lambda \in \Lambda} B_{\lambda}$ of smooth $\C$-algebras $B_\lambda$. 
Let $A$ be a domain of finite type over $\C$ and $\mathfrak{p} \subset A$ be a prime ideal such that $R\cong A_{\mathfrak{p}}$. 
Then the natural injection $A \to B$ factors through $B_{\lambda_0}$ for some $\lambda_0 \in \Lambda$ (see \cite[Tag 07C3]{stacks-project}). 
Letting $S$ be the localization of $B_{\lambda_0}$ at the prime ideal $\m_B \cap B_{\lambda_0}$, we see that $R\to S$ is a pure local homomorphism.
\end{proof}

Suppose that $R$ is a reductive quotient singularity over $\C$. 
By Lemma \ref{linearization}, the completion $\widehat{R}$ of $R$ is a pure subring of a formal power series ring $\C[[x_1, \dots, x_n]]$. 
Applying the above claim to our $R$, we see that $R$ is a pure subring of a regular local ring essentially of finite type over $\C$.  
Therefore, it follows from Proposition \ref{prop F-signature in characteristic 0} and Theorem \ref{Uniform positivity of F-signature of pure subrings} that $s(R)>0$. 

\end{proof}

In order to show lemmas necessary for the proof of Theorem \ref{Uniform positivity of F-signature of pure subrings}, we work with the following setting. 

\begin{setting}\label{setup of pure morphisms}
	Let $(R,\m) \hookrightarrow (S,\n)$ be a pure local $\C$-algebra homomorphism between normal local rings essentially of finite type over $\C$, and let $d$ denote the dimension of $R$. 
	Fix a non-principal ultrafilter $\mathcal{F}$ on the set $\mathcal{P}$ of prime numbers and a field isomorphism $\gamma \colon \ulim_{p} \overline{\mathbb{F}_p}\xrightarrow{\sim} \C$. 
	Let $(R_p)_{p \in \mathcal{P}}$ and $(S_p)_{p \in \mathcal{P}}$ be approximations of $R$ and $S$, respectively. 
\end{setting}

As a key ingredient in the proof of Theorem \ref{Uniform positivity of F-signature of pure subrings}, we introduce a generalization of ultra-$F$-regularity, originally introduced by Schoutens \cite{Schoutens05}. 

\begin{defn}\label{ultra Freg def}
    With notation as in Situation \ref{setup of pure morphisms}, 
    let $f$ be a nonzero element of $R_{\infty}$ and $t_p>0$ be an rational number for almost all $p$, and set $t:=\ulim_p t_p$. 
    We then say that the pair $(R, f^t)$ is \textit{ultra-$F$-regular} if for every nonzero element $c \in R_{\infty}$, there exists a non-standard natural number $\epsilon\in {^*\N}$ such that the $R$-module homomorphism 
	\[
		R\to F^\epsilon_* R_\infty; \quad x\mapsto F^\epsilon_*\left(cf^{\lceil t\pi^\epsilon \rceil}F^\epsilon(x)\right)
	\]
	is pure.
\end{defn}

\begin{rem}
With notation as in Definition \ref{ultra Freg def}, suppose that $f$ is a nonzero element of $R$ and almost all $t_p$ are equal to some rational number $t_0$. Then $t$ can be identified with $t_0$, and 
$(R, f^t)$ is ultra-$F$-regular in the sense of Definition \ref{ultra Freg def} if and only if the pair $(R, f^{t_0})$ is ultra-$F$-regular in the sense of \cite[Definition 5.5]{Yam23a}. 
\end{rem}

The following proposition explains the relationship between the ultra-$F$-regularity of a local ring essentially of finite type over $\C$ and the strong $F$-regularity of its approximations. 
\begin{prop}[{cf.~\cite[Proposition 5.5]{Yam23a}}] \label{F-regular for almost all p -> ultra-F-regular}\label{ultra-F-regular <-> F-regular type}
    With notation as in Situation \ref{setup of pure morphisms}, let $f_p$ be a nonzero element of $R_p$ and $t_p$ be a positive rational number for almost all $p$, and set $f=\ulim_p f_p$ and $t=\ulim_p t_p$. 
\begin{enumerate}[label=$(\arabic*)$]
\item If the pair $(R_p, t_p \Div(f_p))$ is strongly $F$-regular for almost all $p$, then $(R, f^t)$ is ultra-$F$-regular. 
\item Suppose that the anti-canonical ring $\mathscr{R}:=\bigoplus_{i\ge 0}R(-iK_X)$ of $X:=\Spec R$ is finitely generated. 
Then the pair $(R_p, t_p \Div(f_p))$ is strongly $F$-regular for almost all $p$ if and only if $(R, f^t)$ is ultra-$F$-regular. 
\end{enumerate}  
\end{prop}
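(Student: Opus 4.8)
Proposal for the proof of Proposition \ref{ultra-F-regular <-> F-regular type}.

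\medskip

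The plan is to reduce both statements to the analysis of the splitting of a single $R$-linear map, and then pass between the characteristic-$p$ world and the ultraproduct world using faithful flatness of $R \to R_\infty$ together with the \L o\'s-type transfer principle.

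\smallskip

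\textbf{Part (1).} First I would reformulate strong $F$-regularity of $(R_p, t_p\Div(f_p))$ in the form most convenient for ultraproducts: by the characterization via single maps, $(R_p, t_p\Div(f_p))$ is strongly $F$-regular if and only if for every nonzero $c_p \in R_p$ there is an $e_p \ge 1$ such that the map
\[
R_p \to F^{e_p}_*R_p; \quad x \mapsto F^{e_p}_*\bigl(c_p f_p^{\lceil t_p p^{e_p}\rceil} x^{p^{e_p}}\bigr)
\]
splits as an $R_p$-module map. Given a nonzero $c \in R_\infty$, write $c = \ulim_p c_p$ with $c_p \in R_p$; almost all $c_p$ are nonzero, so for almost all $p$ we may choose such an $e_p$ witnessing splitting. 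Setting $\epsilon = \ulim_p e_p \in {}^*\N$, the splittings $s_p \colon F^{e_p}_*R_p \to R_p$ have an ultraproduct $s_\infty \colon F^\epsilon_*R_\infty \to R_\infty$ which splits the map $R \to F^\epsilon_*R_\infty$ of Definition \ref{ultra Freg def} after composing with $R \to R_\infty$; since a split injection of $R_\infty$-modules restricts to a pure $R$-module map (using that $R \to R_\infty$ is flat, so $R_\infty$ and $F^\epsilon_*R_\infty$ are honest $R$-modules and the retraction is $R$-linear), purity of $R \to F^\epsilon_*R_\infty$ follows. One subtlety I would be careful about: $\lceil t\pi^\epsilon\rceil = \ulim_p \lceil t_p p^{e_p}\rceil$ by the conventions fixed in the Notation, so the exponent of $f$ in Definition \ref{ultra Freg def} matches $\ulim_p f_p^{\lceil t_p p^{e_p}\rceil}$ exactly, and $F^\epsilon(x) = \ulim_p x_p^{p^{e_p}}$ for an approximation $(x_p)$ of $x$; these bookkeeping identifications need to be stated cleanly.

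\smallskip

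\textbf{Part (2).} The forward direction is (1). For the converse, assume $\mathscr{R} = \bigoplus_{i\ge 0}R(-iK_X)$ is finitely generated and that $(R, f^t)$ is ultra-$F$-regular; I want to deduce that $(R_p, t_p\Div(f_p))$ is strongly $F$-regular for almost all $p$. The role of Noetherianity of the anti-canonical ring is to guarantee the existence of a single \emph{uniform test element}: concretely, finite generation of $\mathscr{R}$ lets one choose, after reduction, an element $c \in R^\circ$ (and its approximation $c = \ulim_p c_p$ with $c_p \in R_p^\circ$) which is a test element for the pairs $(R_p, t_p\Div(f_p))$ simultaneously for almost all $p$ — this is the characteristic-$p$ avatar of the statement that $\Q$-Gorenstein / bounded-index hypotheses make strong $F$-regularity equivalent to the splitting condition \emph{for one fixed $c$}. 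Then: ultra-$F$-regularity applied to this $c$ produces a single $\epsilon = \ulim_p e_p$ with $R \to F^\epsilon_* R_\infty$, $x \mapsto F^\epsilon_*(c f^{\lceil t\pi^\epsilon\rceil}F^\epsilon(x))$ pure; by faithful flatness of $R \to R_\infty$ this map being pure forces the analogous map $R_\infty \to F^\epsilon_*R_\infty$ to be pure, hence (as $R_\infty$ is a product-of-fields-like object and the source is the \emph{ring}) split; unwinding the ultraproduct, for almost all $p$ the map $R_p \to F^{e_p}_*R_p$, $x\mapsto F^{e_p}_*(c_p f_p^{\lceil t_p p^{e_p}\rceil}x^{p^{e_p}})$ splits, and because $c_p$ was chosen as a uniform test element this single splitting upgrades to full strong $F$-regularity of $(R_p, t_p\Div(f_p))$.

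\smallskip

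\textbf{Main obstacle.} The delicate point is Part (2): making rigorous the existence of a \emph{single} element $c = \ulim_p c_p$ that works as a test element for almost all $p$ \emph{and} such that the splitting condition for that one $c$ is equivalent to full strong $F$-regularity of the pair $(R_p, t_p\Div(f_p))$. Ultra-$F$-regularity only gives us purity for each $c$ separately and with an $\epsilon$ depending on $c$, so without the uniform-test-element input (which is exactly where finite generation of the anti-canonical ring enters, via reduction mod $p$ of a log resolution or of a global boundary making $K_X$ $\Q$-Cartier) there is no way to conclude. I expect the bulk of the argument, modeled on \cite[Proposition 5.5]{Yam23a}, to be the careful construction of this uniform $c$ and the verification that it descends compatibly to almost all approximations $R_p$; the purity/splitting transfer via faithful flatness of $R \to R_\infty$ is comparatively formal.
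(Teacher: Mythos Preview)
Your argument for Part (1) is correct, though it differs from the paper's: you take ultraproducts of the actual splittings $s_p$, whereas the paper works through the socle of $H^d_\m(\omega_R)$ and shows that $\eta \otimes F^\epsilon_*(cf^{\lceil t\pi^\epsilon\rceil})$ does not vanish. Both are valid; your route is slightly more direct here.

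Part (2), however, has a genuine gap, and the gap is not the one you flag. The step ``by faithful flatness of $R \to R_\infty$ this map being pure forces the analogous map $R_\infty \to F^\epsilon_*R_\infty$ to be pure'' is simply false: faithful flatness lets you \emph{descend} purity, not ascend it. If $A \to B$ is faithfully flat and $A \to C$ is pure, there is no reason for $B \to C$ to be pure (take $A$ a field, $B = A[x]$, $C = A$ with $x \mapsto 0$). Your parenthetical that $R_\infty$ is ``product-of-fields-like'' is also incorrect---$R_\infty$ is an ultraproduct of local rings, not of fields---so there is no shortcut from purity to splitting at that stage. Consequently the whole pipeline from the single $\epsilon$ back to splittings for almost all $p$ collapses.

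The paper's route is structurally different and does not use a uniform test element at all. Finite generation of $\mathscr{R}$ is used to pass to the \emph{quasi-Gorenstein} ring $\mathscr{R}_\mathscr{M}$: one first shows that ultra-$F$-regularity of $(R,f^t)$ propagates up to $(\mathscr{R}_\mathscr{M},f^t)$ (this is a separate Claim, proved via the identification $\Soc H^d_\m(\omega_R)=\Soc H^{d+1}_\mathscr{M}(\omega_\mathscr{R})$ and a projection map on local cohomology). Then one argues by contradiction on $\mathscr{R}_\mathscr{M}$: if $((\mathscr{R}_\mathscr{M})_p,t_p\Div(f_p))$ fails to be strongly $F$-regular for almost all $p$, pick \emph{bad} witnesses $c_p$ (for which no $e$ works), set $c=\ulim_p c_p$, and show that for \emph{every} $\epsilon$ the map $1\otimes cf^{\lceil t\pi^\epsilon\rceil}F^\epsilon$ on $H^{d+1}_\mathscr{M}(\omega_\mathscr{R})$ kills the socle. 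The quasi-Gorenstein property is what makes this work: it yields $H^{d+1}_\mathscr{M}(\omega_\mathscr{R})\otimes F^\epsilon_*(\mathscr{R}_\mathscr{M})_\infty \cong H^{d+1}_\mathscr{M}(F^\epsilon_*(\mathscr{R}_\mathscr{M})_\infty)$, and then an injectivity result for the comparison map into $\ulim_p H^{d+1}_{\mathscr{M}_p}(F^{e_p}_*(\mathscr{R}_\mathscr{M})_p)$ transfers the vanishing back from the characteristic-$p$ side. Finally one descends strong $F$-regularity from $(\mathscr{R}_\mathscr{M})_p$ to $R_p$ via purity of $R_p \to (\mathscr{R}_\mathscr{M})_p$. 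So the anti-canonical ring enters not to manufacture a test element but to land in a Gorenstein setting where the local-cohomology transfer argument goes through.
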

\begin{proof}
 (1) Let $\eta$ be a socle generator for $H^d_\m(\omega_R)$ and $(\eta_p)_{p \in \mathcal{P}}$ be its approximation. Here note that a generator of the socle is unique up to multiplication by a unit. Then $\eta_p$ is a socle generator for $H^d_{\m_p}(\omega_{R_p})$ for almost all $p$,  because the natural map $H_\m^d(\omega_R)\to \ulim_pH^d_{\m_p}(\omega_{R_p})$ is injective (see Propositions 3.3, 3.5 in \cite{Yamaguchi25a} and the proof of Proposition 3.9 in loc.~cit.). 
 Fix a nonzero element $c=\ulim_p c_p$ of $R_\infty$. 
 For almost all $p$, the pair $(R_p,t_p\Div(f_p))$ is strongly $F$-regular, so there exists an integer $e_p\ge 1$ such that $\eta_p\otimes F^{e_p}_*(c_pf_p^{\lceil tp^{e_p} \rceil})\neq 0$ in $H^d_{\m_p}(\omega_{R_p})\otimes_{R_p}F^{e_p}_*R_p$.  
 Letting $\epsilon:=\ulim_p e_p$, we have the following commutative diagram: 
    \[
        \xymatrix{
            H^d_{\m}(\omega_R) \ar[d] \ar[r] & H^d_\m(\omega_R)\otimes_R F^{\epsilon}_*R_\infty \ar[d]\\
            \ulim_p H^d_{\m_p}(\omega_{R_p}) \ar[r] & \ulim_p (H^d_{\m_p}(\omega_{R_p})\otimes_{R_p} F^{e_p}_*R_p).
        }
    \]
    Here the top horizontal map is given by $\xi\mapsto \xi\otimes F^\epsilon_*(cf^{\lceil t\pi^\epsilon \rceil})$ and the bottom horizontal map is given by $\ulim_p\xi_p\mapsto \ulim_p (\xi_p\otimes F^{e_p}_*(c_pf_p^{\lceil tp^{e_p}\rceil}))$. 
    Under the right vertical map, we have   
\[
\eta\otimes F^\epsilon_*(cf^{\lceil t\pi^\epsilon \rceil}) \mapsto \ulim_p (\eta_p\otimes F^{e_p}_*(c_pf_p^{\lceil tp^{e_p} \rceil})).
\]
Since the image is nonzero, the element $\eta\otimes F^\epsilon_*(cf^{\lceil t\pi^\epsilon \rceil})$ is nonzero in $H^d_\m(\omega_R)\otimes_R F^\epsilon_*R_\infty$. 
    It follows that the map $R\to F^\epsilon_*R_{\infty}$ sending $1$ to $F^\epsilon_*(cf^{\lceil t\pi^\epsilon \rceil})$ is pure. 

(2) We prove the ``if" part. We may assume that $-K_X$ is an effective Weil divisor on $X$, so that $R(-iK_X)$ is an ideal of $R$ for all $i \ge 0$.  
Let $\mathscr{M}:=\m+\mathscr{R}_{>0}$ be the unique homogeneous maximal ideal of $\mathscr{R}$. First, we show the following claim.
    \begin{cl}
        If $(R,f^t)$ is ultra-$F$-regular, then so is $(\mathscr{R}_{\mathscr{M}},f^t)$.
    \end{cl}
    \begin{clproof}
     We identify $H^{d+1}_{\mathscr{M}}(\omega_\mathscr{R})$ with $\bigoplus_{i>0}H^d_\m(R(iK_X))$ by \cite[Theorem 2.2]{Watanabe94}.  
    Note also that 
    $\Soc_R H^d_{\m}(\omega_R)=\Soc_{\mathscr{R}}H^{d+1}_{\mathscr{M}}(\omega_{\mathscr{R}})$. 
    Let $c \in \mathscr{R}$ be a nonzero element, and fix any integer $i \ge 0$ such that the $i$-th homogeneous part $c_i$ of $c$ is nonzero. 
    Since $(R,f^t)$ is ultra-$F$-regular, there exists a non-standard natural number $\epsilon\in {^*\N}$ such that the $R$-linear map 
    \[c_if^{\lceil t\pi^\epsilon\rceil} F^{\varepsilon} \colon R\to F^\epsilon_*R_\infty; \quad  1\mapsto F^\epsilon_*(c_i f^{\lceil t\pi^\epsilon\rceil})\] 
    is pure. 
    On the other hand, it follows from \cite[Proposition 3.8]{Yamaguchi25} that there exists an $R$-linear map 
    \[\psi:H^{d+1}_{\mathscr{M}}(\omega_{\mathscr{R}})\otimes_{\mathscr{R}}F^{\epsilon}_*(\mathscr{R}_{\mathscr{M}})_{\infty}\to H^d_{\m}(\omega_R)\otimes_RF^{\epsilon}_*R_\infty\] 
    sending $\xi\otimes F^\epsilon_*(cg)$ to $\xi\otimes F^\epsilon_*(c_ig)$ for each $\xi\in H_\m^d(\omega_R)\subseteq H^{d+1}_{\mathscr{M}}(\omega_{\mathscr{R}})$ and $g\in R_\infty$. 
    Therefore, we have the following commutative diagram: 
         \[
            \xymatrix{
            H^{d+1}_{\mathscr{M}}(\omega_{\mathscr{R}}) \ar[r] \ar[d] & H^{d+1}_{\mathscr{M}}(\omega_{\mathscr{R}})\otimes_{\mathscr{R}}F^\epsilon_*(\mathscr{R}_\mathscr{M})_\infty \ar[d]^{\psi}\\
            H^d_\m(\omega_R) \ar[r]^-{1 \otimes c_if^{\lceil t\pi^\epsilon\rceil} F^{\varepsilon}} & H^d_{\m}(\omega_R)\otimes_R F^\epsilon_*R_\infty,
            }
         \]
         where the top horizontal map sends $\xi$ to $\xi\otimes F^\epsilon_*(cf^{\lceil t\pi^\epsilon \rceil})$ and the bottom horizontal map sends $\eta$ to $\eta \otimes F^\epsilon_*(c_if^{\lceil t\pi^\epsilon\rceil})$. 
         The bottom map is injective by the purity of $c_if^{\lceil t\pi^\epsilon\rceil} F^{\varepsilon}$,  and the socle of $H^{d+1}_{\mathscr{M}}(\omega_{\mathscr{R}})$ coincides with that of $H^d_{\m}(\omega_R)$, so the top map is also injective. 
         Thus, $(\mathscr{R}_{\mathscr{M}},f^t)$ is ultra-$F$-regular.
    \end{clproof}
    
    Suppose that $(R,f^t)$ is ultra-$F$-regular. 
    By the above claim, $(\mathscr{R}_\mathscr{M},f^t)$ is also ultra-$F$-regular. Suppose that $((\mathscr{R}_{\mathscr{M}})_p,t_p\Div(f_p))$ is not strongly $F$-regular for almost all $p$. Then, for almost all $p$, there exists a nonzero element $c_p\in \mathscr{R}_p$ such that for any $e\ge 0$, $c_pf_p^{\lceil tp^{e}\rceil}F^{e}:(\mathscr{R}_{\mathscr{M}})_p \to F^{e}_*(\mathscr{R}_{\mathscr{M}})_p$ is not pure. Let $c:=\ulim_p c_p$.
    Since $\mathscr{R}$ is quasi-Gorenstein by \cite[Theorem 4.5]{GHNV90}, we have
    \[
        H^{d+1}_{\mathscr{M}}(\omega_{\mathscr{R}})\otimes_{\mathscr{R}}M \cong H^{d+1}_{\mathscr{M}}(\mathscr{R})\otimes_{\mathscr{R}}M \cong H^{d+1}_{\mathscr{M}}(M)
    \]
    for every $\mathscr{R}$-module $M$, where the second isomorphism follows from the fact the $(\dim \mathscr{R})$-th local cohomology module is computed as the rightmost cohomology of the Čech complex associated to $\mathscr{R}$. 
    Thus, for each $\epsilon\in {^*\N}$, there exists the following commutative diagram:
\[
\xymatrix@C=10pt{
            H_{\mathscr{M}}^{d+1}(\omega_{\mathscr{R}}) \ar[r]^-{1\otimes cf^{\lceil t \pi^\epsilon\rceil}F^\epsilon} \ar[d]& H_{\mathscr{M}}^{d+1}(\omega_{\mathscr{R}})\otimes_{\mathscr{R}}F^\epsilon_*(\mathscr{R}_\mathscr{M})_{\infty}\ar[d] \ar[r]^-{\cong} &H_{\mathscr{M}}^{d+1}(F^\epsilon_*(\mathscr{R}_\mathscr{M})_{\infty}) \ar[d]&\\
            \ulim_{p}H_{\mathscr{M}_p}^{d+1}(\omega_{\mathscr{R}_p}) \ar[r] &\ulim_p (H_{\mathscr{M}_p}^{d+1}(\omega_{\mathscr{R}_p})\otimes_{\mathscr{R}_p} F^{e_p}_*(\mathscr{R}_{\mathscr{M}})_p) \ar[r]^-{\cong}& \ulim_pH_{\mathscr{M}_p}^{d+1}(F^{e_p}_*(\mathscr{R}_\mathscr{M})_{p}).
        }
\] 
    Since the right vertical map is injective by \cite[Proposition 3.9]{Yamaguchi25a}, $1\otimes cf^{\lceil t \pi^\epsilon\rceil}F^\epsilon$ is not injective, a contradiction. 
    Hence, $((\mathscr{R}_{\mathscr{M}})_p,t_p\Div(f_p))$ is strongly $F$-regular for almost all $p$. 
    For almost all $p$, the ring $(\mathscr{R}_{\mathscr{M}})_p$ is the localization at the unique homogeneous maximal ideal of an $\N$-graded $R_p$-algebra whose degree-zero component is isomorphic to $R_p$, so the natural homomorphism $R_p\to (\mathscr{R}_\mathscr{M})_{p}$ is pure. 
    Therefore, $(R_p,t_p\Div(f_p))$ is strongly $F$-regular for almost all $p$.
\end{proof}

The authors learned the proof of the following lemma from Kenta Sato, who they thank.

\begin{lem} \label{normalized order under pure mophisms}
With notation as in Theorem \ref{Uniform positivity of F-signature of pure subrings}, suppose that $(R_A,\m_A, S_A, \n_A)$ is a model of $(R, \m, S, \n)$ over a finitely generated $\Z$-subalgebra $A \subseteq \C$. 
Then there exists a constant $C>0$ and a dense open subset $U \subseteq \Spec A$ such that 
\[
C \cdot \overline{\mathrm{ord}}_{\m_{\mu}}(f) \ge \overline{\mathrm{ord}}_{\n_{\mu}}(f)
\]
for all closed points $\mu \in U$ and all nonzero elements $f \in R_{\mu}$. 
\end{lem}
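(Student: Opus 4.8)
The plan is to reduce the comparison of normalized orders over $R_\mu$ and $S_\mu$ to a statement about Rees valuations and integral closure, and then to spread this out over a dense open subset of $\operatorname{Spec} A$. First I would work over $\C$: since $R\hookrightarrow S$ is a pure local homomorphism of normal local rings essentially of finite type over $\C$, it is in particular injective with $\m\subseteq\n$, and by the valuative criterion for integral closure, for a nonzero $f\in R$ one has $\overline{\operatorname{ord}}_{\m}(f)=\min_{v}\{v(f)/v(\m)\}$ where $v$ ranges over the Rees valuations of $\m$ (here $v(\m)=\min_{x\in\m}v(x)$), and likewise $\overline{\operatorname{ord}}_{\n}(f)=\min_{w}\{w(f)/w(\n)\}$ over the Rees valuations $w$ of $\n$. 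The key point is that each Rees valuation $w$ of $\n$, restricted to $R$, is (a multiple of) a valuation centered on $\m$, and hence dominated in the obvious sense by the normalized order $\overline{\operatorname{ord}}_{\m}$; quantitatively, there is a constant $C_0>0$, depending only on $R\hookrightarrow S$, such that $w(f)/w(\n)\le C_0\cdot \overline{\operatorname{ord}}_{\m}(f)$ for every nonzero $f\in R$ and every Rees valuation $w$ of $\n$. Taking the minimum over $w$ gives $\overline{\operatorname{ord}}_{\n}(f)\le C_0\cdot\overline{\operatorname{ord}}_{\m}(f)$, i.e.\ the desired inequality with $C=C_0$ over $\C$. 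An even cleaner route: since $\m S\subseteq \n$, there is an integer $k\ge 1$ with $\n^{k}\subseteq \overline{\m S}$ (by Noetherianity and the fact that $\sqrt{\m S}=\n$), so $f\in\overline{\m^{n}}$ (inside $R$, hence inside $S$) combined with $\overline{\m^{n} S}\subseteq\overline{(\overline{\m S})^{n}}\subseteq\overline{\n^{?}}$ forces a linear bound $\overline{\operatorname{ord}}_{\n}(f)\le C\cdot\overline{\operatorname{ord}}_{\m}(f)$; one must be slightly careful with the direction of the inclusions, but a constant $C$ depending only on the pair $(R,S)$ drops out.

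The second half is the spreading-out argument. Fix a model $(R_A,\m_A,S_A,\n_A)$ over a finitely generated $\Z$-subalgebra $A\subseteq\C$. The integer $k$ (or the finite data encoding the Rees valuations and the constant $C_0$) is defined over $\C$ by finitely many polynomial identities and membership relations; after inverting finitely many elements of $A$, i.e.\ after shrinking to a dense open $U\subseteq\operatorname{Spec} A$, these relations persist on every fiber. Concretely, one arranges that $\n_A^{k}\subseteq \overline{\m_A S_A}$ holds, and that this containment base-changes to $\n_\mu^{k}\subseteq\overline{\m_\mu S_\mu}$ for all closed $\mu\in U$ — here one uses that formation of integral closure of an ideal commutes with the base change $A\to A/\mu$ after possibly further shrinking $U$ (generic freeness applied to the Rees algebra and its integral closure). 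One then needs the elementary fact that for a nonzero $f\in R_\mu$ and any $n\ge 0$, $f\in\overline{\m_\mu^{n}}$ implies $f\in\overline{(\m_\mu S_\mu)^{n}}\subseteq\overline{\n_\mu^{n/k}}$-type containment, yielding $C\cdot\overline{\operatorname{ord}}_{\m_\mu}(f)\ge\overline{\operatorname{ord}}_{\n_\mu}(f)$ with $C$ independent of $\mu$ and of $f$.

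The main obstacle I expect is making the constant $C$ genuinely \emph{uniform} in $f$ and $\mu$ simultaneously: the valuative description of $\overline{\operatorname{ord}}$ involves the Rees valuations, whose number and normalized ``values on $\m$'' a priori could vary in the family, so I would avoid the valuation-theoretic formulation in the relative setting and instead phrase everything via the single ideal-containment $\n^{k}\subseteq\overline{\m S}$ (and the self-improving containment $\overline{\m^{n}S}\subseteq\overline{(\overline{\m S})^{n}}$), which behaves well under the base changes $\C\leftarrow A\to A/\mu$. The one technical lemma that carries real weight is that the integral closure of $\m_A S_A$ (equivalently, the normalization of the Rees algebra) is compatible with reduction modulo $p$ for general $\mu$; this is standard generic-freeness/flatness fare but should be cited carefully (e.g.\ via finite generation of the normalized Rees algebra and the fact that normality is preserved under generic reduction, as recorded in the preliminaries).
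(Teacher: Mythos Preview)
Your ``cleaner route'' has two genuine problems. First, the claim $\sqrt{\m S}=\n$ is false in general: purity of $(R,\m)\hookrightarrow(S,\n)$ does not force $\m S$ to be $\n$-primary. Take $R=\C[x]_{(x)}\hookrightarrow S=\C[x,y]_{(x,y)}$, which is split (hence pure), with $S$ regular; here $\m S=(x)$ has radical $(x)\ne(x,y)=\n$, so no $k$ with $\n^{k}\subseteq\overline{\m S}$ exists. Second, even when such a $k$ does exist, your chain of containments runs the wrong way: from $f\in\overline{\m^{n}}$ you only get $f\in\overline{\m^{n}S}\subseteq\overline{\n^{n}}$, i.e.\ $\overline{\mathrm{ord}}_{\n}(f)\ge\overline{\mathrm{ord}}_{\m}(f)$, which is the trivial direction. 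The lemma asks for the reverse bound $\overline{\mathrm{ord}}_{\n}(f)\le C\cdot\overline{\mathrm{ord}}_{\m}(f)$, and no ideal containment of the form $\n^{k}\subseteq\overline{\m S}$ will produce it. Since your spreading-out argument is built entirely on this route, the proposal does not close.

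Your first route (restrict Rees valuations of $\n$ to $R$ and invoke Izumi) is on the right track over $\C$, but you correctly identify---and then do not solve---the real difficulty: making the Izumi constant uniform in $\mu$. The paper handles both issues at once. It invokes a result of H\"ubl to produce a single divisorial valuation $v$ on $\mathrm{Frac}(R)$, centered on $\m$, that \emph{extends} to a divisorial valuation $w$ on $\mathrm{Frac}(S)$ centered on $\n$; after spreading out, this immediately gives $v_{\mu}(f)=w_{\mu}(f)\ge\overline{\mathrm{ord}}_{\n_{\mu}}(f)$. For the other half, it fixes a log resolution $\pi:Y\to\Spec R$ of $\m$ on which $v$ is realized by an exceptional divisor, and observes that the Izumi constant in the bound $v_{\mu}(f)\le C\cdot\overline{\mathrm{ord}}_{\m_{\mu}}(f)$ (as in the proof in \cite{BFJ14}) depends only on finitely many intersection numbers on $Y$ and the coefficients of $\m\sO_{Y}$; these are integers, constant in the family after reduction, so one constant $C$ works for all general $\mu$. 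This is precisely the uniformity you were missing.
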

\begin{proof}
By \cite[Corollary 2]{Huebl01}, there exists a divisorial valuation $v$ of the fractional field $\mathrm{Frac}(R)$ of $R$, centered on $\m$, that extends to a divisorial valuation $w$ of the fractional field $\mathrm{Frac}(S)$ of $S$, centered on $\n$. 
Enlarging $A$ if necessary, we may take a model $(v_A, w_A)$ of $(v, w)$ over $A$. 
Since $w_{\mu}$ is a divisorial valuation centered on $\n_{\mu}$ for general closed points $\mu \in \Spec A$, 
we have $w_{\mu}(\overline{\n_{\mu}^n}) \ge n$ for every integer $n \ge 1$. 
In particular, 
\[v_{\mu}(f)=w_{\mu}(f) \ge \overline{\mathrm{ord}}_{\n_{\mu}}(f)\]
for all nonzero elements $f \in R_{\mu}$ and such $\mu$. 

Let $\pi:Y \to X=\Spec R$ be a log resolution of $\m$ with exceptional divisor $E=\sum_{i=1}^r E_i$ such that $v$ corresponds to $E_1$ and $\m \sO_Y=\sO_Y(-\sum_{i=1}^ra_i E_i)$. 
We also fix a $\pi$-ample Cartier divisor $H$ on $Y$. 
Enlarging $A$ again, we may take a model $(\pi_A:Y_A \to X_A, E_A=\sum_{i=1}^r E_{i,A}, H_A)$ of $(\pi:Y \to X, E=\sum_{i=1}^r E_i, H)$ over $A$. 
Then for general closed points $\mu \in \Spec A$, the morphism $\pi_{\mu}:Y_{\mu} \to X_{\mu}=\Spec R_{\mu}$ is a log resolution of $\m_{\mu}$ with exceptional divisor $E_{\mu}=\sum_{i=1}^r E_{i, \mu}$ such that $v_{\mu}$ corresponds to $E_{1, \mu}$ and $\m_{\mu} \sO_{Y,\mu}=\sO_{Y,\mu}(-\sum_{i=1}^r a_i E_{i,\mu})$. Moreover, $H_{\mu}$ is a $\pi_{\mu}$-ample Cartier divisor on $Y_{\mu}$. 

We now apply Izumi's theorem. 
Using essentially the same argument as in the proof of Izumi's theorem given in \cite{BFJ14}\footnote{The base field is assumed to have characteristic zero in \emph{loc.~cit.}, but this is only used to guarantee the existence of log resolutions. Since we already have a log resolution $\pi_{\mu}$, the argument applies in our setting as well.}, there exists a constant $C_{\mu}>0$ such that 
\[C_{\mu} \cdot \overline{\mathrm{ord}}_{\m_{\mu}}(f) \ge v_{\mu}(f)\]
for all nonzero elements $f \in R_{\mu}$. 
Note that $C_{\mu}$ is determined by the intersection numbers $(H_{\mu}^{d-2} \cdot E_{i,\mu} \cdot E_{j, \mu})$, as well as $a_1$ and $r$.
Since these intersection numbers agree with $(H^{d-2} \cdot E_i \cdot E_j)$ for general closed points $\mu \in \Spec A$, 
we can choose the same constant $C$ (independent of $\mu$) for such $\mu$. 
This completes the proof. 

\end{proof}

\begin{prop} \label{local alpha invariant under pure morphisms}
With notation as in Situation \ref{setup of pure morphisms}, suppose that the anti-canonical ring $\mathscr{R}=\bigoplus_{i\ge 0}R(-iK_X)$ of $X:=\Spec R$ is finitely generated.
Then there exists a constant $C>0$ such that $C \cdot \alpha_F(R_p)\ge \alpha_F(S_p)$ for almost all $p$.
\end{prop}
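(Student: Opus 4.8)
The plan is to argue by contradiction, reducing the statement to a comparison of $F$-pure thresholds of the approximations, which the ultra-$F$-regularity machinery then controls. First I would take the constant $C>0$ (and the dense open locus) supplied by Lemma \ref{normalized order under pure mophisms}, transported to the approximations $R_p$ and $S_p$ through Remark \ref{approximation vs reduction}, and suppose for contradiction that $C\cdot\alpha_F(R_p)<\alpha_F(S_p)$ for almost all $p$. For each such $p$ I can choose a nonzero $f_p\in\m_p$ with $C\,\mathrm{fpt}(R_p,f_p)\,\overline{\mathrm{ord}}_{\m_p}(f_p)<\alpha_F(S_p)$. Since $f_p\in\m_p\subseteq\n_p$, the definition of $\alpha_F(S_p)$ gives $\alpha_F(S_p)\le\mathrm{fpt}(S_p,f_p)\,\overline{\mathrm{ord}}_{\n_p}(f_p)$, while Izumi's inequality from Lemma \ref{normalized order under pure mophisms} gives $\overline{\mathrm{ord}}_{\n_p}(f_p)\le C\,\overline{\mathrm{ord}}_{\m_p}(f_p)$; cancelling the positive factor $C\,\overline{\mathrm{ord}}_{\m_p}(f_p)$ then forces $\mathrm{fpt}(R_p,f_p)<\mathrm{fpt}(S_p,f_p)$ for almost all $p$. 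In particular $\mathrm{fpt}(S_p,f_p)>0$, so $S_p$ is strongly $F$-regular for almost all $p$.

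Next I would exploit this strict inequality: fix a rational number $t_p$ with $\mathrm{fpt}(R_p,f_p)<t_p<\mathrm{fpt}(S_p,f_p)$ for almost all $p$, and set $f:=\ulim_p f_p$, a nonzero element of $R_\infty$, and $t:=\ulim_p t_p$. By the choice of $t_p$, the pair $(S_p,t_p\Div(f_p))$ is strongly $F$-regular for almost all $p$, whereas $(R_p,t_p\Div(f_p))$ is \emph{not} for almost all $p$; the latter assertion also covers the case in which $R_p$ itself is not strongly $F$-regular, where $\mathrm{fpt}(R_p,f_p)=0$ by convention. Applying Proposition \ref{ultra-F-regular <-> F-regular type}(1) to the normal local ring $S$ (the definition of ultra-$F$-regularity of a pair, and the proof of that proposition, apply verbatim to any normal local ring essentially of finite type over $\C$), we conclude that the pair $(S,f^t)$ is ultra-$F$-regular.

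The heart of the argument is then the descent of ultra-$F$-regularity of $(S,f^t)$ along the pure morphism $R\hookrightarrow S$, yielding ultra-$F$-regularity of $(R,f^t)$. Choosing the approximations of $R$ and $S$ compatibly, injectivity of $R\hookrightarrow S$ makes the maps $R_p\to S_p$ injective for almost all $p$, hence the induced map $R_\infty\to S_\infty$ of non-standard hulls is injective, so a nonzero $c\in R_\infty$ has nonzero image $c'\in S_\infty$. Ultra-$F$-regularity of $(S,f^t)$ applied to $c'$ produces a non-standard natural number $\epsilon$ for which $S\to F^\epsilon_*S_\infty$, $x\mapsto F^\epsilon_*\bigl(c'f^{\lceil t\pi^\epsilon\rceil}F^\epsilon(x)\bigr)$, is pure. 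Precomposing with the pure inclusion $R\hookrightarrow S$ gives a pure map $R\to F^\epsilon_*S_\infty$ which, because $R_\infty\to S_\infty$ commutes with ultra-Frobenii, factors as $R\to F^\epsilon_*R_\infty\to F^\epsilon_*S_\infty$ with first arrow $x\mapsto F^\epsilon_*\bigl(cf^{\lceil t\pi^\epsilon\rceil}F^\epsilon(x)\bigr)$. Since purity of a composite forces purity of its first map, $R\to F^\epsilon_*R_\infty$ is pure, and hence $(R,f^t)$ is ultra-$F$-regular.

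Finally, because the anti-canonical ring $\mathscr{R}=\bigoplus_{i\ge0}R(-iK_X)$ of $X=\Spec R$ is finitely generated, Proposition \ref{ultra-F-regular <-> F-regular type}(2) now applies to $(R,f^t)$ and shows that $(R_p,t_p\Div(f_p))$ is strongly $F$-regular for almost all $p$, contradicting the second paragraph and finishing the proof. I expect the main obstacle to be exactly this descent step: one has to work entirely inside the non-standard hulls $R_\infty\hookrightarrow S_\infty$ and with the ultra-Frobenius $F^\epsilon$, in particular verifying that the displayed maps compose as claimed and that a nonzero element of $R_\infty$ remains nonzero in $S_\infty$. A subsidiary technical point is that Lemma \ref{normalized order under pure mophisms}, stated for reductions modulo $p$, must be transferred to the approximations $R_p,S_p$ (including compatibility of the normalized order with the extension $\F_p\subseteq\overline{\F_p}$, and the fact that the general closed points $\mu_p$ of Remark \ref{approximation vs reduction} can be taken inside the dense open locus of that lemma for almost all $p$).
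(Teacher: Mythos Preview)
Your proposal is correct and follows the paper's proof essentially verbatim: contradiction, the Izumi constant from Lemma \ref{normalized order under pure mophisms} transferred to approximations, the resulting strict inequality of $F$-pure thresholds, an intermediate rational $t_p$, Proposition \ref{ultra-F-regular <-> F-regular type}(1) applied to $S$, descent of ultra-$F$-regularity along the pure map $R\hookrightarrow S$ via the commutative square, and finally Proposition \ref{ultra-F-regular <-> F-regular type}(2) applied to $R$ (this is where the finitely generated anti-canonical ring hypothesis enters) to obtain the contradiction. Your treatment of the descent step, taking $c\in R_\infty$ and checking that its image in $S_\infty$ is nonzero, is in fact slightly more explicit than the paper's, which writes $c\in R\subseteq S$.
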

\begin{proof}
Choose $C>0$ to be a constant satisfying the inequality in Lemma \ref{normalized order under pure mophisms}.  
By Remark \ref{approximation vs reduction}, together with the fact that normalized orders do not change under faithfully flat extensions (see, for example, \cite[Proposition 1.6.2]{Huneke-Swanson}), we have 
\[
C \cdot \overline{\mathrm{ord}}_{\m_p}(f) \ge \overline{\mathrm{ord}}_{\n_p}(f)
\]
for almost all $p$. 

Suppose to the contrary that for almost all $p$, the inequality $C \cdot \alpha_F(R_p)<\alpha_F(S_p)$ holds.
Then, for each such $p$, there exists a nonzero element $f_p\in \m_p$ such that
\[
C \cdot \operatorname{fpt}_{R_p}(f_p) \overline{\mathrm{ord}}_{\m_p}(f_p)<  \alpha_F(S_p)
\le \operatorname{fpt}_{S_p}(f_p)\overline{\mathrm{ord}}_{\n_p}(f_p). 
\] 
By the choice of $C$, we deduce that $\operatorname{fpt}_{R_p}(f_p)<\operatorname{fpt}_{S_p}(f_p)$ for almost all $p$. 
Choose a rational number $t_p$ lying in the interval $(\operatorname{fpt}_{R_p}(f_p), \operatorname{fpt}_{S_p}(f_p))$. 
Then the pair $(S_p, t_p \Div(f_p))$ is strongly $F$-regular for almost all $p$, which is equivalent, by Proposition \ref{ultra-F-regular <-> F-regular type}, to saying that $(S, f^t)$ is ultra-$F$-regular, where $f=\ulim_p f_p$ and $t=\ulim_p t_p$. 
Therefore, for every nonzero element $c \in R\subseteq S$, there exists a non-standard natural number $\varepsilon \in {}^*\N$ such that the $S$-linear map $S \to F^{\varepsilon}_*S_{\infty}$ sending $x$ to $F^{\varepsilon}_*(cf^{\lceil t\pi^{\epsilon}\rceil}F^{\epsilon}(x))$ is pure. 
Now we consider the following commutative diagram: 
\[
		\xymatrix{
		R \ar[r] \ar[d]& F^\epsilon_*R_\infty \ar[d] \\
		S \ar[r]& F^\epsilon_* S_\infty,
		}
\]
where the horizontal maps send $x$ to $F^\epsilon_*\left(cf^{\lceil t\pi^\epsilon \rceil}F^\epsilon(x)\right)$. 
Since the inclusion map $R \to S$ is pure, the top horizontal map is also pure, which means that $(R, f^t)$ is ultra-$F$-regular. 
Applying Proposition \ref{ultra-F-regular <-> F-regular type} again, 
we conclude that $(R_p,t_p \Div(f_p))$ is strongly $F$-regular for almost all $p$. 
However, this contradicts the choice of $t_p$. 
\end{proof}

We are now ready to prove the main result of this section, Theorem \ref{Uniform positivity of F-signature of pure subrings}. 
\begin{proof}[Proof of Theorem \ref{Uniform positivity of F-signature of pure subrings}]
First note that $\Spec R$ is also of klt type by a result of Zhuang \cite[Theorem 1.1]{Zhuang24}. Hence, the anti-canonical ring of $R$ is finitely generated by \cite{BCHM}. 
Suppose that $(R_A, \m_A)$ is a model of $(R,\m)$ over a finitely generated $\Z$-subalgebra $A \subseteq \C$. 
Assume to the contrary that $s(R)=0$, that is, for every constant $C>0$, there exists a dense subset $U_C$ of closed points of $\Spec A$ such that 
$s(R_{\mu}) \le C$ for all $\mu \in U_C$. 
Then, by an argument similar to the proof of \cite[Proposition 4.6]{Yamaguchi25a}, we can construct a dense subset $U=\{\mu_i\}_{i=1}^{\infty}$ of closed points of $\Spec A$, a non-principal ultrafilter $\mathcal{F}$ on $\mathcal{P}$ and a field isomorphism $\gamma: \ulim_{p}\bar{\F_p} \xrightarrow{\sim} \C$ satisfying the following 4 conditions:
\begin{enumerate}[label=(\roman*)]
\item $p_i<p_{i+1}$ for all $i \ge 1$,
\item $s(R_{\mu_i}) \le 1/i$ for all $i\ge 1$,
\item $\{p_i\}_{i=1}^{\infty}\in \mathcal{F}$,
\item a natural map $R_{\mu_i} \to R_{p_i}$ is flat local and $\m_{{\mu_i}} R_{p_i}=\m_{p_i}$, 
\end{enumerate}
where $p_i$ is the characteristic of the residue field $A/\mu_i$ and $((R_{p}, \m_p))_{p \in \mathcal{P}}$ is an approximation of the local ring $(R,\m)$ with respect to $\mathcal{F}$ and $\gamma$. 

\begin{cl}
Let $((S_{p}, \n_p))_{p \in \mathcal{P}}$ be an approximation of $(S, {\n})$ with respect to $\mathcal{F}$ and $\gamma$. 
Then there exists a constant $C_0>0$ such that $s(S_p)>C_0$ for almost all $p$. 
\end{cl}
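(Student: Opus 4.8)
The plan is to transfer the positivity of $s(S)$, which is defined via reductions modulo $p$, to the uniform positivity of $s(S_p)$, by identifying the approximations $S_p$ with reductions of $S$ and checking that these reductions lie in the good locus for almost all $p$. First I would fix a model $(B,x_B\in X_B)$ of $x\in X$, where $S=\sO_{X,x}$, over a finitely generated $\Z$-subalgebra $B\subseteq\C$ large enough that the given approximation $(S_p)_{p\in\mathcal P}$ arises from it as in Remark~\ref{approximation vs reduction}; thus there is a family $\{\mu_p\}_{p\in\mathcal P}$ of closed points of $\Spec B$ with $\overline{\{x_{\mu_p}\}}\times_{\Spec\kappa(\mu_p)}\Spec\overline{\kappa(\mu_p)}$ integral, having generic point $x_{\overline{\mu_p}}$, and $S_p\cong\sO_{X_{\overline{\mu_p}},x_{\overline{\mu_p}}}$ for almost all $p$, where $X_{\overline{\mu_p}}:=X_{\mu_p}\times_{\Spec\kappa(\mu_p)}\Spec\overline{\kappa(\mu_p)}$. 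On the other hand, since $s(S)>0$ and $s(S)$ is independent of the chosen model, Definition~\ref{F-signature in char0} provides a dense open subset $U\subseteq\Spec B$ and a real number $C_0>0$ with $s(\sO_{X_\mu,x_\mu})>C_0$ for every closed point $\mu\in U$.

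The key step is to show that $\mu_p\in U$ for almost all $p$. Put $Z:=\Spec B\setminus U$, a proper closed subset of the integral scheme $\Spec B$, and pick a nonzero element $f\in B$ with $Z\subseteq V(f)$. Let $(f_p)_{p\in\mathcal P}$ be the approximation of $f$; by the compatibility between approximations and reductions modulo $p$ (Remark~\ref{approximation vs reduction}; cf.\ \cite[Corollary 5.9]{Schoutens04}) we may take $f_p$ to be the image of $f$ under $B\to B/\mu_p\hookrightarrow\overline{\F_p}$ for almost all $p$. Since $f$ is a nonzero element of $B\subseteq\C$ and the induced map $B\hookrightarrow\C\xrightarrow{\,\gamma^{-1}\,}\ulim_p\overline{\F_p}$ is injective, $\ulim_p f_p=\gamma^{-1}(f)\neq 0$, so $f_p\neq 0$, hence $f\notin\mu_p$, for almost all $p$; equivalently $\mu_p\notin V(f)\supseteq Z$, that is $\mu_p\in U$, for almost all $p$.

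Finally I would invoke Yao's theorem: for almost all $p$ we have $s(\sO_{X_{\mu_p},x_{\mu_p}})>C_0$, while the natural map $\sO_{X_{\mu_p},x_{\mu_p}}\to S_p=\sO_{X_{\overline{\mu_p}},x_{\overline{\mu_p}}}$ is a flat local homomorphism obtained by base change to the algebraic closure of the (finite, hence perfect) residue field $\kappa(\mu_p)$, so $s(S_p)=s(\sO_{X_{\mu_p},x_{\mu_p}})>C_0$ by \cite[Theorem 5.6]{Yao06}, proving the claim with this $C_0$. Only $s(S)>0$ is used here, not the klt-type hypothesis on $\Spec S$. I expect the middle step to be the main obstacle: it is precisely the statement that the approximation $(S_p)$ records the generic, i.e.\ dense-open, behavior of the reductions of $S$, so that the closed points $\mu_p$ avoid any fixed proper closed subset of $\Spec B$ for almost all $p$; this rests on the injectivity of $B\hookrightarrow\ulim_p\overline{\F_p}$ and on the precise form of the reduction/approximation dictionary. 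The base change to $\overline{\kappa(\mu_p)}$ in the last step is harmless, as finite fields and their algebraic closures are perfect and so do not affect the normalization in Definition~\ref{F-signature def}.
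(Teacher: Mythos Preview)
Your proposal is correct and follows essentially the same route as the paper: choose a model of $S$, use $s(S)>0$ to get a dense open set $U$ with $s(S_\mu)>C_0$ for $\mu\in U$, invoke Remark~\ref{approximation vs reduction} to relate the approximations $S_p$ to reductions $S_{\mu_p}$, and then apply \cite[Theorem~5.6]{Yao06} to transfer the bound through the flat local map $S_{\mu_p}\to S_p$ with trivial closed fiber. The only difference is that you spell out explicitly why $\mu_p\in U$ for almost all $p$ (via the injectivity of $B\hookrightarrow\ulim_p\overline{\F_p}$), whereas the paper absorbs this into the citation of the remark; this is a point of exposition, not of strategy.
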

\begin{proof}[Proof of Claim]
After enlarging $A$ if necessary, choose a model $(S_A, \n_A)$ of $(S, \n)$ over $A$. 
Fix a constant $C_0$ satisfying $0<C_0<s(S)$. 
Then there exists a dense open subset $V \subseteq \Spec A$ such that $s(S_{\mu})>C_0$ for all closed points $\mu\in V$. 
For almost all $p$, by Remark \ref{approximation vs reduction}, one can find a closed point $\mu_p\in V$ and a flat local homomorphism $S_{\mu_p}\to S_p$ with $\m_{{\mu_p}} S_p=\n_p$. 
It then follows from \cite[Theorem 5.6]{Yao06} that  $s(S_p)=s(S_{\mu_p})>C_0$ for almost all $p$.
\end{proof}

Combining the above claim with Proposition \ref{comparison of F-signature and local alpha invariant}, we find a constant $C_1>0$ such that $\alpha_F(S_p)>C_1$ for almost all $p$. Note that $N$ in Proposition \ref{comparison of F-signature and local alpha invariant} (2) depends only on the constant appearing in Izumi's theorem, which, by the proof of Lemma \ref{normalized order under pure mophisms}, does not depend on $p$.
Then Proposition \ref{local alpha invariant under pure morphisms} yields a constant $C_2>0$ such that $\alpha_F(R_p)>C_2$ for almost all $p$.
Applying Proposition \ref{comparison of F-signature and local alpha invariant}, we obtain a constant $C_3>0$ such that $s(R_p)>C_3$ for almost all $p$.  
Finally, by \cite[Theorem 5.6]{Yao06} again, we have 
	\[
		0<C_3 \le s(R_{p_i}) = s(R_{\mu_i}) < \frac{1}{i}
	\]
	for infinitely many $i \ge 1$, which is a contradiction.
\end{proof}

We close this section with the following corollary concerning the Gorenstein case of Conjecture A. 
We say that Conjecture B holds if Conjecture A holds for Gorenstein rings. 
\begin{conjBd}
Let $(R, \m)$ be a $d$-dimensional Gorenstein local domain essentially of finite type over $\C$. If $\Spec R$ has klt singularities, then $s(R)>0$. 
\end{conjBd}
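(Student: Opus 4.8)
The strategy is to turn Conjecture $\mathrm{B}_d$ into a prime-uniform lower bound for local $F$-alpha invariants of the reductions, and then to use the Gorenstein hypothesis to make those invariants as tractable as possible. Fix a model $(R_A,\m_A)$ of $(R,\m)$ over a finitely generated $\Z$-subalgebra $A\subseteq\C$. Since the Hilbert--Samuel multiplicity is preserved under reduction modulo $p$, Proposition \ref{comparison of F-signature and local alpha invariant} (1) gives $s(R_\mu)\ge e(R)\,\alpha_F(R_\mu)^d/d!$ for all closed points $\mu$ in a dense open subset $U\subseteq\Spec A$, so it suffices to produce a constant $c>0$ with $\alpha_F(R_\mu)\ge c$ for general $\mu$ --- equivalently, with $\mathrm{fpt}(R_\mu,f)\,\overline{\mathrm{ord}}_{\m_\mu}(f)\ge c$ for every nonzero $f\in\m_\mu$. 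Because $\Spec R$ is klt, $R_\mu$ is strongly $F$-regular for general $\mu$ by Theorem \ref{klt SFR correspondence} (1), whence $\alpha_F(R_\mu)>0$ for each such $\mu$ by Proposition \ref{comparison of F-signature and local alpha invariant} (2); thus the entire content of the conjecture is the \emph{uniformity} of this positivity as $\mu$ varies.

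The Gorenstein hypothesis enters in two ways. First, $\omega_R\cong R$, so $K_X\sim 0$ and the anti-canonical ring $\mathscr{R}=\bigoplus_{i\ge 0}R(-iK_X)$ is just $R[t]$, in particular Noetherian; hence Proposition \ref{ultra-F-regular <-> F-regular type} (2) applies unconditionally, giving, for $f=\ulim_p f_p$ with $f_p\in\m_p$ and a non-standard rational $t=\ulim_p t_p$, the equivalence: $(R,f^t)$ is ultra-$F$-regular $\iff$ $(R_p,t_p\Div(f_p))$ is strongly $F$-regular for almost all $p$. Second, $R_p$ being Gorenstein, $\Hom_{R_p}(F^e_*R_p,R_p)$ is a cyclic $F^e_*R_p$-module generated by a single map $\Phi_{e,p}$, so the ideals $I_e(R_p)=\{c\in R_p : \Phi_{e,p}(F^e_*(cR_p))\subseteq\m_p\}$ of Proposition \ref{alternative alpha_F}, and hence $\alpha_F(R_p)=\lim_e m_e(R_p)/p^e$, are governed by $\Phi_{e,p}$ together with a Fedder-type description coming from a presentation of $R_p$ over a regular local ring. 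The plan is then: for each $e$ select $g_e\in I_e(R_p)$ with $\overline{\mathrm{ord}}_{\m_p}(g_e)=m_e(R_p)$, push the condition $g_e\in I_e(R_p)$ through this description to the regular presentation, and bound $\overline{\mathrm{ord}}_{\m_p}(g_e)/p^e$ from below --- uniformly in $p$ --- by comparing it, via a log resolution of $\m$ and a spreading-out argument as in the proof of Lemma \ref{normalized order under pure mophisms} (Izumi's theorem), with the characteristic-zero local alpha invariant $\alpha(R)=\inf_{0\ne f\in\m}\mathrm{lct}(R,f)\,\overline{\mathrm{ord}}_\m(f)$ of the klt singularity $R$ --- a local analogue of Tian's $\alpha$-invariant, which is positive for klt singularities.

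I expect the decisive difficulty --- and the reason Conjecture $\mathrm{B}_d$ remains open --- to be precisely this last comparison: one needs a bound of the form $\mathrm{fpt}(R_p,f_p)\ge c\cdot\mathrm{lct}(R,f)$ valid for \emph{almost all} $p$, with $c>0$ depending only on $d$ and the fixed data $(R,\m)$ and \emph{independent of $f$}. At present one knows only $\limsup_p\mathrm{fpt}(R_p,f_p)=\mathrm{lct}(R,f)$ for each fixed $f$, i.e.\ the inequality along infinitely many primes; a uniform control along all large primes lies at the depth of the ``dense $F$-pure type'' and weak ordinarity conjectures. An alternative, which trades this local obstacle for a global one, is to invoke Theorem \ref{local-global thm}: passing to a Koll\'ar component of $(\Spec R,0)$ replaces Conjecture $\mathrm{B}_d$ by the positivity of $\alpha_F((Y,\Delta_Y);-(K_Y+\Delta_Y))$ for a projective log Fano pair $(Y,\Delta_Y)$ extracted from the Gorenstein singularity (so that $\Delta_Y$ has constrained coefficients), which one would hope to deduce from a characteristic-$p$ form of boundedness of log Fano pairs together with the positivity of Tian's $\alpha$-invariant. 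In either approach the reductions above are formal consequences of the machinery already assembled in the paper, and essentially all of the work is the prime-uniform transfer of a threshold inequality from characteristic zero.
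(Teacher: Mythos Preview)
The statement you were asked to prove is Conjecture~$\textup{B}_d$, and the paper does \emph{not} prove it: it is stated as an open conjecture. What the paper establishes about Conjecture~$\textup{B}$ is only relational --- Corollary~\ref{reduce to Gorenstein case} shows that Conjecture~$\textup{B}_{d+1}$ implies Conjecture~$\textup{A}_d$, and Theorem~\ref{local-global thm} shows that Conjecture~$\textup{C}$ implies Conjecture~$\textup{B}$ --- so there is no ``paper's own proof'' to compare against.

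Your proposal is, accordingly, not a proof either, and to your credit you say so explicitly: you reduce the question to a prime-uniform lower bound on $\alpha_F(R_\mu)$ via Proposition~\ref{comparison of F-signature and local alpha invariant}, observe that the Gorenstein hypothesis makes Proposition~\ref{ultra-F-regular <-> F-regular type}~(2) unconditional and simplifies the $I_e$ description, and then correctly isolate the genuine obstruction --- a bound $\mathrm{fpt}(R_p,f_p)\ge c\cdot\mathrm{lct}(R,f)$ uniform in both $p$ and $f$ --- as lying at the depth of the weak ordinarity circle of conjectures. That diagnosis is accurate and matches the paper's implicit stance. Your alternative route through Koll\'ar components is exactly the content of Theorem~\ref{local-global thm}, which trades Conjecture~$\textup{B}$ for Conjecture~$\textup{C}$; but the paper does not prove Conjecture~$\textup{C}$ either, so this is again a reduction, not a proof. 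In short: your analysis of the difficulties is sound, but there is no proof here, and none was expected --- the statement is open.
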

\begin{cor}\label{reduce to Gorenstein case}
For each integer $d \ge 2$, Conjecture $\textup{B}_{d+1}$ implies Conjecture $\textup{A}_{d}$. 
\end{cor}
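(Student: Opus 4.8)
The plan is to derive Conjecture $\textup{A}_d$ from Conjecture $\textup{B}_{d+1}$ by realizing an arbitrary $d$-dimensional klt type singularity as a pure subring of a $(d+1)$-dimensional Gorenstein klt singularity and then invoking Theorem \ref{Uniform positivity of F-signature of pure subrings}. So let $(R,\m)$ be a $d$-dimensional local domain essentially of finite type over $\C$ with $\Spec R$ of klt type, and fix an effective $\Q$-Weil divisor $\Delta$ on $X:=\Spec R$ such that $K_X+\Delta$ is $\Q$-Cartier and $(X,\Delta)$ is klt. The key step is to construct a pure local $\C$-algebra homomorphism $(R,\m)\to(S,\n)$ with $(S,\n)$ a $(d+1)$-dimensional Gorenstein local domain essentially of finite type over $\C$ whose spectrum has only klt singularities. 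Granting this, $\Spec S$ is of klt type, so Conjecture $\textup{B}_{d+1}$ gives $s(S)>0$, and since $R$ and $S$ are both normal and $R\to S$ is pure and local, Theorem \ref{Uniform positivity of F-signature of pure subrings} yields $s(R)>0$, which is exactly Conjecture $\textup{A}_d$.

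To build $S$ I would first form the generalized affine cone over the pair $(X,\Delta)$ with respect to $-(K_X+\Delta)$, that is, the graded ring $\bigoplus_{i\ge 0}\sO_X(\lfloor -i(K_X+\Delta)\rfloor)$, localized at its homogeneous maximal ideal; call its spectrum $C$. Since $-(K_X+\Delta)$ is $\Q$-Cartier, $C$ is a normal domain of dimension $d+1$, and $R$ appears as the degree-zero piece, which is an $R$-module direct summand, so $R\to \sO_C$ is split and in particular pure and local. By the standard criterion for cone singularities, $(C,\Delta_C)$ is klt, where $\Delta_C$ is the cone over $\Delta$, and $K_C+\Delta_C\sim_{\Q}0$; hence $\Spec C$ is of klt type. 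Next I would pass to a finite cyclic cover on which the canonical sheaf becomes trivial: the index-one cover of the log Calabi--Yau pair $(C,\Delta_C)$, composed if needed with the canonical cover over the non-Gorenstein locus, produces a ring $S_0$ with $\omega_{S_0}\cong\sO_{S_0}$, still of klt type, and realized again as a split inclusion of graded summands, so $R\to S_0$ remains pure. A klt type ring with $\omega\cong\sO$ has only klt singularities, since its canonical class is then $\sim 0$ and $\Q$-Cartier and the klt type hypothesis upgrades this to $(\Spec S_0,0)$ being klt. Finally, if $\dim S_0<d+1$ (which happens, e.g., when $R$ is already $\Q$-Gorenstein and one replaces the cone step by the ordinary index-one cover of $X$), I would let $S$ be the localization of the polynomial ring $S_0[t]$ at the maximal ideal generated by $t$ together with the maximal ideal of $S_0$; this is Gorenstein with only klt singularities and of dimension $d+1$, and $S_0\to S$ is flat and local, hence pure, so composing with $R\to S_0$ gives the required homomorphism.

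I expect the main obstacle to be ensuring that the target is genuinely \emph{Gorenstein} rather than merely $\Q$-Gorenstein. The cone construction reliably produces a $(d+1)$-dimensional klt type pure extension with $R$ as a graded summand, and the passage from $\Q$-Gorenstein to Gorenstein via index-one covers is routine; the difficulty is that $R$ itself need not be $\Q$-Gorenstein, and the coefficients of $\Delta$ (hence of $\Delta_C$) need not be of the form $1-\tfrac1n$, so a naive cyclic cover of $(C,\Delta_C)$ may leave a residual boundary and fail to trivialize the canonical module. Overcoming this — for instance by choosing $\Delta$ with standard coefficients from the outset, or by an iterated (Kummer-type) cover, followed by a discrepancy computation on a log resolution of the resulting cone-and-cover singularity to confirm that it is canonical — is where the substantive work of the proof is concentrated; the dimension bookkeeping at the end is then immediate.
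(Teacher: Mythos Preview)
Your overall strategy is exactly right: build a pure local extension $R\hookrightarrow S$ with $S$ a $(d+1)$-dimensional Gorenstein klt singularity, apply Conjecture $\textup{B}_{d+1}$ to get $s(S)>0$, and then descend via Theorem \ref{Uniform positivity of F-signature of pure subrings}. The gap is precisely where you locate it, but it is avoidable rather than something to be overcome by iterated covers.

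The paper bypasses the boundary $\Delta$ entirely. It takes $S=\mathscr{R}_{\mathscr{M}}$, where $\mathscr{R}=\bigoplus_{i\ge 0}R(-iK_X)$ is the anti-canonical ring of $X=\Spec R$ and $\mathscr{M}$ is its homogeneous maximal ideal. Two points make this work. First, since $X$ is of klt type, \cite{BCHM} guarantees that $\mathscr{R}$ is a finitely generated $R$-algebra; this is the nontrivial input, and it is exactly what your construction with $-(K_X+\Delta)$ was designed to sidestep (since $-(K_X+\Delta)$ is already $\Q$-Cartier, finite generation there is automatic). Second --- and this is the observation you are missing --- the anti-canonical ring is \emph{automatically Gorenstein}: the graded canonical module of a section ring $\bigoplus_{i\ge 0}R(iD)$ is $\bigoplus_i R(K_X+iD)$, and with $D=-K_X$ this is just $\mathscr{R}(-1)$. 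So no index-one cover, no standard-coefficient hypothesis, and no iterated Kummer construction is needed. Moreover $\Spec\mathscr{R}_{\mathscr{M}}$ has klt singularities, and the inclusion $R\hookrightarrow\mathscr{R}_{\mathscr{M}}$ of the degree-zero piece is split, hence pure and local. The proof is then one line.

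Your route via $-(K_X+\Delta)$ trades one difficulty (finite generation) for another (Gorensteinness), and the second is genuinely hard in the generality you need: without control on the coefficients of $\Delta$ there is no reason a cyclic cover of $(C,\Delta_C)$ should absorb the boundary and trivialize $\omega$. The anti-canonical ring avoids this exchange altogether.
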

\begin{proof}
Let $R$ be as in Conjecture $\textup{A}_{d}$.  
Let $\mathscr{R}:=\bigoplus_{i\ge 0} R(-iK_X)$ denote the anti-canonical ring of $X:=\Spec R$ and $\mathscr{M}$ denote the unique homogeneous maximal ideal of $\mathscr{R}$. 
Since $X$ is of klt type, it follows from a result of \cite{BCHM} that $\mathscr{R}$ is a finitely generated $R$-algebra and $\Spec \mathscr{R}_{\mathscr{M}}$ is a Gorenstein klt singularity of dimension $d+1$. 
Therefore, by Conjecture $\textup{B}_{d+1}$, 
we have $s(\mathscr{R}_{\mathscr{M}})>0$ . 
Noting that the natural map $R\to \mathscr{R}_{\mathscr{M}}$ is a split local homomorphism, we see from Theorem \ref{Uniform positivity of F-signature of pure subrings} that $s(R)>0$. 
\end{proof}

\section{Local to global}
In this section, we discuss a global interpretation of Conjecture A introduced in the previous section. 

First, we introduce the notion of $F$-alpha invariants in equal characteristic zero, in a way similar to Definition \ref{F-signature in char0}, which defines $F$-signature in equal characteristic zero. 
\begin{defn}\label{global alpha in char 0}
Let $X$ be a projective variety over $\C$, $\Delta$ be an effective $\Q$-Weil divisor on $X$ and $\Gamma$ be a big $\Q$-Weil divisor on $X$. 
Suppose that $(X_A, \Delta_A, \Gamma_A)$ is a model of $(X, \Delta, \Gamma)$ over a finitely generated $\Z$-subalgebra $A$ of $\C$. Then $F$-alpha invariant $\alpha_F((X,\Delta);\Gamma)$ is defined as 
\[
\alpha_F((X,\Delta);\Gamma)=\sup_U \inf_{\mu \in U} \alpha_F((X_{\mu},\Delta_{\mu});\Gamma_{\mu}),
\]
where $U$ runs through all dense open subsets of $\Spec A$ and $\mu$ runs through all closed points of $U$. 
\end{defn}

\begin{conjCd}
Let $(X, \Delta)$ be a $d$-dimensional projective log Fano pair over $\C$. Then 
\[\alpha_F((X,\Delta);-(K_X+\Delta))>0.\] 
\end{conjCd}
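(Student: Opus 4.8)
\medskip

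The plan is to transfer the problem to the local setting via the anti--log--canonical affine cone and then feed it into the machinery of \S\ref{F-signature char 0 section}. Write $\Gamma:=-(K_X+\Delta)$, which is ample $\Q$-Cartier since $(X,\Delta)$ is log Fano, and fix an integer $r\ge1$ such that $r\Gamma$ is very ample with projectively normal section ring. Put $S:=\bigoplus_{i\ge0}H^0\!\big(X,\sO_X(\lfloor ir\Gamma\rfloor)\big)$, let $\m$ be its irrelevant ideal, $B:=S_\m$ the cone at the vertex, and $\Delta_B$ the boundary on $\Spec B$ induced by $\Delta$. Since $(X,\Delta)$ is of Fano type, $\Spec B$ is a klt--type singularity of dimension $d+1$, and since $K_X+\Delta\sim_\Q-\tfrac1r(r\Gamma)$, the divisor $K_B+\Delta_B$ is $\Q$-Cartier. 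First I would establish the equal--characteristic--zero analogue of Proposition \ref{characterization of alpha invariant using global fpt},
\[
\alpha_F\big((X,\Delta);\Gamma\big)\;=\;r\cdot\alpha_F\big(B,\Delta_B\big),
\]
by choosing compatible models, using that a reduction modulo $p$ of a section ring is the section ring of the corresponding reduction, and passing to $\liminf_p$ in the characteristic $p$ identity; here the $\mathrm{gfst}$--versus--$\mathrm{fpt}$ dictionary of \cite{SS10} and Proposition \ref{equivalence Pande's F-alpha invariant} must be checked to be compatible with reduction modulo $p$ uniformly in $p$. This reduces Conjecture $\mathrm{C}_d$ to the \emph{local} positivity $\alpha_F(B,\Delta_B)>0$.

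Next I would convert this into positivity of an $F$-signature. Passing to $\liminf_p$ in the two--sided estimate of Proposition \ref{comparison of F-signature and local alpha invariant} --- legitimate because the Hilbert--Samuel multiplicity and the Brian\c{c}on--Skoda constant $C$ occurring there stabilize for $p\gg0$, by constructibility of the relevant loci on $\Spec A$ --- gives the equivalence $s(T)>0\iff\alpha_F(T)>0$ recorded in the introduction; the same argument, run for the pair versions of Propositions \ref{comparison of F-signature and local alpha invariant} and \ref{local alpha invariant under pure morphisms} (equivalently, extending the ultra--$F$-regularity formalism of \S\ref{F-signature char 0 section} to triples $(R,\Delta,f^t)$), reduces Conjecture $\mathrm{C}_d$ to the positivity of the $\Delta_B$-twisted $F$-signature of the cone $B$. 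When $\Delta=0$, one can dispose of the boundary at once: $B$ is then $\Q$-Gorenstein of klt type, its index--one cover $B'$ (via \cite[Theorem 3.1]{CRST}) is a Gorenstein klt cone of dimension $d+1$, and $B\to B'$ is split --- hence pure --- so by Theorem \ref{Uniform positivity of F-signature of pure subrings} Conjecture $\mathrm{C}_d$ for $(X,0)$ follows from the Gorenstein Conjecture $\mathrm{B}_{d+1}$; in the general (boundary) case the reduction is to the pair analogue of Conjecture $\mathrm{A}_{d+1}$.

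Unconditionally, Corollary \ref{reductive quotient} then yields $\alpha_F((X,\Delta);-(K_X+\Delta))>0$ whenever $B$ --- or, via the anti--canonical ring, its Gorenstein model, or the Cox ring of $X$ --- admits a pure local embedding into a regular local ring essentially of finite type over $\C$: this covers, for example, toric log Fano pairs, and more generally log Fano pairs $(X,\Delta)$ for which $X$ is a GIT quotient of affine space by a linearly reductive group, since the attached ring is then a reductive quotient singularity. So the scheme is: $\mathrm{C}_d$ holds outright in these cases, and follows in general from the Gorenstein, resp.\ pair, positivity of $F$-signature in dimension $\le d+2$.

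The main obstacle is exactly that final positivity in full generality: that the $\Delta_B$-twisted $F$-signature of the anti--log--canonical cone is strictly positive is equivalent to the local Conjecture $\mathrm{A}_{d+1}$ (for a pair), and even its boundary--free Gorenstein shadow $\mathrm{B}_{d+1}$ (or $\mathrm{B}_{d+2}$ after the anti--canonical reduction) is open; the pure--subring method of this paper settles it only when an auxiliary regular, or already $F$-signature--positive, cover of the cone can be produced. A secondary, more technical obstacle is the bookkeeping needed to run the characteristic--zero dictionaries above --- the $\mathrm{gfst}/\mathrm{fpt}$ correspondence of \cite{SS10}, the graded/local comparison of Proposition \ref{equivalence Pande's F-alpha invariant}, and the $s/\alpha_F$ comparison of Proposition \ref{comparison of F-signature and local alpha invariant} --- for pairs and compatibly with reduction modulo $p$, which rests on the stabilization of multiplicities and of the constant $C$ as $p\to\infty$ but is otherwise routine.
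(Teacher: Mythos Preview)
The statement is a \emph{conjecture}, not a theorem: the paper does not prove it, and neither do you --- as you yourself concede when you write that ``the main obstacle is exactly that final positivity in full generality'' and that it ``is open.'' What you have written is a reduction scheme, not a proof, so there is no ``paper's own proof'' to compare against.

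That said, your reduction strategy and the paper's are mirror images of each other. You pass from the global pair $(X,\Delta)$ to the local cone $B=S_\m$ via Proposition~\ref{characterization of alpha invariant using global fpt}, and then hope to conclude by a (pair version of the) local Conjecture~$\mathrm{A}_{d+1}$; the paper instead runs the implication in the \emph{opposite} direction in Theorem~\ref{local-global thm}, going from the local Conjecture~$\mathrm{B}_d$ to the global Conjecture~$\mathrm{C}_{d-1}$ via a Koll\'ar component. Your boundary-free case ($\Delta=0$) is not new: it is exactly the paper's one-line argument that Conjecture~$\mathrm{B}_{d+1}$ implies Conjecture~$\mathrm{E}_d$ (which is $\mathrm{C}_d$ with $\Delta=0$), obtained by applying $\mathrm{B}_{d+1}$ to the localized anti-canonical ring. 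For general $\Delta$, your scheme reduces $\mathrm{C}_d$ to a pair-analogue of $\mathrm{A}_{d+1}$ that the paper never formulates and that is at least as hard as $\mathrm{C}_d$ itself; in particular, your ``unconditional'' paragraph only recovers special cases already covered by Corollary~\ref{reductive quotient} and Remark~\ref{toric remark}. So the proposal is honest about its limitations, but it does not advance the conjecture beyond what the paper already contains.
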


We simply say that Conjecture C holds if Conjecture $\textup{C}_d$ holds for all positive
integers $d$.

\begin{thm}\label{local-global thm}
Conjecture C implies Conjecture B. 
Also, for each integer $d \ge 2$, Conjecture $\textup{C}_{d-1}$ implies Conjecture $\textup{B}_d$ for local rings $R$ with residue field $\C$. 
\end{thm}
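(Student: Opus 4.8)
The plan is to deduce Conjecture~B from Conjecture~C by passing from a Gorenstein klt singularity to a projective log Fano pair via a Koll\'ar component, exactly as advertised in the introductory Theorem~\ref{local-global thm}. So let $(R,\m)$ be as in Conjecture~$\textup{B}_d$, i.e., a $d$-dimensional Gorenstein local domain essentially of finite type over $\C$ such that $X:=\Spec R$ has klt singularities. First I would invoke the results of \cite{BCHM} (as already used in Corollary~\ref{reduce to Gorenstein case}) to produce a plt blow-up $\pi\colon Y\to X$ extracting a single exceptional divisor $E$ which is a Koll\'ar component over the closed point, so that $(Y,E)$ is plt and $-E$ is $\pi$-ample; by adjunction $(E,\mathrm{Diff}_E(0))$ is a projective klt pair, and since $K_R$ is $\Q$-Cartier (indeed Cartier) and $X$ has klt singularities, the different is log Fano: $-(K_E+\mathrm{Diff}_E(0))$ is ample. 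Thus $(E,\Delta_E):=(E,\mathrm{Diff}_E(0))$ is a $(d-1)$-dimensional projective log Fano pair, and $-(K_E+\Delta_E)$ is an ample $\Q$-Cartier $\Q$-Weil divisor on $E$.

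The second step is to identify $F$-signature positivity of $R$ with positivity of a global $F$-alpha-type invariant of $(E,\Delta_E)$. Here I would run the reduction-modulo-$p$ machinery in parallel on both sides: fix a common model $A$ over which $\pi$, $E$, the different, and the anticanonical polarization all spread out, so that for general closed $\mu\in\Spec A$ we get a plt blow-up $\pi_\mu\colon Y_\mu\to X_\mu$ with Koll\'ar component $E_\mu$ and $(E_\mu,\Delta_{E_\mu})$ a log Fano pair in characteristic $p(\mu)$. The key local-to-global input is a comparison, for general $\mu$, between the local $F$-alpha invariant $\alpha_F(R_\mu)$ (equivalently the $F$-signature $s(R_\mu)$, via Proposition~\ref{comparison of F-signature and local alpha invariant}) and the global $F$-alpha invariant $\alpha_F((E_\mu,\Delta_{E_\mu});-(K_{E_\mu}+\Delta_{E_\mu}))$. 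Concretely: given a nonzero $f\in\m_\mu$, its vanishing along the Koll\'ar component $E_\mu$ produces, after an appropriate normalization, an effective $\Q$-divisor on $E_\mu$ $\Q$-linearly equivalent to a multiple of $-(K_{E_\mu}+\Delta_{E_\mu})$, and by inversion of adjunction / restriction estimates for test ideals one bounds $\mathrm{fpt}_{R_\mu}(f)\cdot\overline{\mathrm{ord}}_{\m_\mu}(f)$ below by a constant times $\mathrm{gfst}((E_\mu,\Delta_{E_\mu});D)$ for the corresponding $D$; taking the infimum over $f$ (resp.\ over $D$) gives $\alpha_F(R_\mu)\ge c\cdot\alpha_F((E_\mu,\Delta_{E_\mu});-(K_{E_\mu}+\Delta_{E_\mu}))$ with $c>0$ independent of $\mu$. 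Now if Conjecture~$\textup{C}_{d-1}$ holds, then $\alpha_F((E,\Delta_E);-(K_E+\Delta_E))>0$ in equal characteristic zero, which by Definition~\ref{global alpha in char 0} means $\alpha_F((E_\mu,\Delta_{E_\mu});-(K_{E_\mu}+\Delta_{E_\mu}))>C_1>0$ for all $\mu$ in a dense open set; feeding this through the comparison and then through Proposition~\ref{comparison of F-signature and local alpha invariant II} yields $s(R_\mu)>C_3>0$ uniformly, hence $s(R)>0$, proving Conjecture~$\textup{B}_d$. The variant ``Conjecture~$\textup{C}_{d-1}$ implies Conjecture~$\textup{B}_d$ for residue field $\C$'' is the same argument, the hypothesis on the residue field being what guarantees (via the usual spreading-out) that the reductions $E_\mu$ are geometrically connected projective varieties over $\F_{p(\mu)}$ with the expected fibers, so that Theorem~\ref{klt SFR correspondence}(2) and the global $F$-split threshold theory apply cleanly.

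For the first sentence, ``Conjecture~C implies Conjecture~B,'' I would simply observe that it is the union over $d$ of the second statement together with the reduction in Corollary~\ref{reduce to Gorenstein case}: Conjecture~C gives Conjecture~$\textup{C}_{d-1}$ for every $d$, hence Conjecture~$\textup{B}_d$ for local rings with residue field $\C$ for every $d$, and then a standard descent-of-the-residue-field argument (base change to the algebraic closure and \cite[Theorem 5.6]{Yao06}-type invariance of $F$-signature under such faithfully flat maps, exactly as used repeatedly in the proof of Theorem~\ref{Uniform positivity of F-signature of pure subrings}) removes the residue-field hypothesis.

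I expect the main obstacle to be the second step: making the comparison between the local invariant of $R_\mu$ and the global $F$-split threshold on the Koll\'ar component $E_\mu$ \emph{uniform in $\mu$}. The existence of the Koll\'ar component and the fact that $(E_\mu,\Delta_{E_\mu})$ is log Fano in large characteristic are routine spreading-out, and the pointwise inequality relating $\mathrm{fpt}$ on $R_\mu$ to $\mathrm{gfst}$ on $E_\mu$ is essentially Pande's argument (\cite[Lemma~3.7, Theorem~5.2]{Pande23}) combined with the section-ring description in Proposition~\ref{characterization of alpha invariant using global fpt}; but one must check that the normalizing constant $c$ (coming from the log discrepancy of $E$, the index of $-(K_E+\Delta_E)$, and the Izumi-type comparison of $\overline{\mathrm{ord}}_{\m_\mu}$ with $\mathrm{ord}_{E_\mu}$, à la Lemma~\ref{normalized order under pure mophisms}) can be taken independent of $\mu$. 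This is plausible because all of these quantities are computed from intersection numbers and discrepancies that are constant for general $\mu$ on a fixed model, but it is the part that requires care rather than citation.
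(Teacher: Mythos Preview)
Your strategy matches the paper's: pass to a Koll\'ar component $F$, obtain the log Fano pair $(F,\Delta_F)$ by adjunction, and bound $\alpha_F(R_\mu)$ below by a uniform constant times $\alpha_F((F_\mu,\Delta_{F,\mu});-(K_{F_\mu}+\Delta_{F,\mu}))$, with an Izumi-type estimate controlling the constant. You also correctly flag the uniformity in $\mu$ as the crux.

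Where you diverge is in the mechanism for the fpt comparison. You invoke ``inversion of adjunction / restriction estimates for test ideals,'' but the paper instead routes through the associated graded ring $G=\bigoplus_j\ba_j/\ba_{j+1}$ of the valuation $\mathrm{ord}_F$ and a degeneration argument: the extended Rees algebra $\bigoplus_j\ba_j t^{-j}$ is flat over $\C[t]$ with special fiber $G$ (isomorphic to the section ring $\bigoplus_j H^0(F,\sO_F(\lfloor -jF|_F\rfloor))$, which has klt singularities) and generic fiber $\sO_X[t,t^{-1}]$, and deformation of $F$-rationality (\cite[Propositions~6.15(1), 7.3]{Schwede-Takagi08} together with Lemma~\ref{graded Frational}), plus the Gorenstein hypothesis so that $F$-rational coincides with strongly $F$-regular, yields $\mathrm{fpt}(R_\mu,f)\ge\mathrm{fpt}(G_\mu,\mathrm{in}(f))$. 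Proposition~\ref{prop2} then identifies $\alpha_F(G_\mu)$ \emph{exactly} with $(1+a)\,\alpha_F((F_\mu,\Delta_{F,\mu});-(K_{F_\mu}+\Delta_{F,\mu}))$ via the Schwede--Tucker trace formula for test ideals under quasi-\'etale covers. Your ``divisor on $E_\mu$ produced by the vanishing of $f$'' is precisely $\mathrm{in}(f)$, so at that level the constructions agree; but the fpt inequality you need is not an adjunction statement on $Y$---one is comparing an invariant on $X$ downstairs with one on $F\subset Y$ upstairs---and the degeneration-to-the-cone is what bridges this gap cleanly.

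For the first sentence the paper does something simpler than your descent-of-residue-field plan: given $R=\sO_{X,x}$ with $x$ not closed, it replaces $x$ by a closed point $x'\in\overline{\{x\}}$ (increasing the dimension, which is harmless under the full Conjecture~C), and the passage back to $R$ is implicit in the fact that $R_\mu$ is a localization of $\sO_{X_\mu,x'_\mu}$. Your ``base change to the algebraic closure'' suggestion does not apply directly when the residue field has positive transcendence degree over $\C$. Also, the role of the residue-field hypothesis in the second sentence is to make $x$ a closed point so that a Koll\'ar component over $x$ exists at all, not (as you write) to ensure geometric connectedness of the reductions $E_\mu$.
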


In order to prove Theorem \ref{local-global thm}, we work with the following setting. 
\begin{setting}\label{set up}
Let $x$ be a closed point of an affine Gorenstein klt variety $X$ over $\C$.
Let $\sigma: Y \to X$ be a projective birational morphism that provides a Koll\'ar component $F$, that is, $\sigma$ is an isomorphism over $X\setminus\{x\}$ and $\sigma^{-1}(\{x\})$ is a prime divisor $F$ on $Y$ such that the pair $(Y,F)$ is plt and $-F$ is $\Q$-Cartier $\sigma$-ample. 
For each integer $j \ge 0$, we define the ideal $\ba_j \subseteq \sO_X$ as 
\[
\ba_j=\{f \in \sO_{X} \; | \; \mathrm{ord}_{F}(f)\ge j\}=H^0(Y,\sO_{Y}(-jF)) \subseteq \sO_X, 
\] 
and let $G$ denote the $\N$-graded $\C$-algebra $\bigoplus_{j \ge 0} \ba_j/\ba_{j+1}$. 
Since $-F$ is $\sigma$-ample, $G$ is finitely generated.  
\end{setting}

\begin{prop}\label{prop1}
With notation as in Situation \ref{set up}, 
suppose that $(x_A \in X_A, G_A)$ is a model of $(x \in X, G)$ over a finitely generated $\Z$-subalgebra $A \subseteq \C$. 
Then there exists a constant $C>0$ such that 
\[
\alpha_F(\sO_{X_{\mu},x_{\mu}}) \ge C \cdot \alpha_F\left(G_{\mu} \right)
\]
for general closed points $\mu \in \Spec A$. 
\end{prop}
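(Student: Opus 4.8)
The plan is to relate the local $F$-alpha invariant of $\sO_{X_\mu,x_\mu}$ to that of the graded ring $G_\mu$ by producing, from a bad element of $G_\mu$, a bad element of the local ring, via a valuation-theoretic comparison of $\overline{\mathrm{ord}}$ with $\mathrm{ord}_F$, together with a comparison of $F$-pure thresholds. Recall that $G = \bigoplus_{j\ge 0}\ba_j/\ba_{j+1}$ is the associated graded ring of the $F$-valuation filtration $\ba_\bullet$ and that, since $F$ is a Koll\'ar component, $\Spec G$ carries a natural structure identifying $\alpha_F(G)$ with a normalized $F$-split/$F$-pure threshold problem on $(Y,F)$ (Proposition~\ref{characterization of alpha invariant using global fpt}, applied to the section ring of $-F|_F$ after passing to a suitable Veronese). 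The first step is therefore to fix a model over $A$ of the whole package: $X$, $x$, the extraction $\sigma\colon Y\to X$ with Koll\'ar component $F$, the ideals $\ba_j$, the Rees-type algebra and the graded ring $G$, so that for general closed $\mu$ the fiber $\sigma_\mu\colon Y_\mu\to X_\mu$ still provides a plt Koll\'ar component $F_\mu$, $(\ba_j)_\mu$ is the valuation ideal of $\mathrm{ord}_{F_\mu}$, and $G_\mu = \bigoplus_j (\ba_j)_\mu/(\ba_{j+1})_\mu$; openness of plt, ampleness, and the required cohomology vanishing on the fibers are all standard spreading-out statements.

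The second step is the key numerical input: an Izumi-type inequality on the fiber, namely a constant $C_1>0$, independent of $\mu$, with $C_1\cdot\overline{\mathrm{ord}}_{\m_\mu}(f)\ge \mathrm{ord}_{F_\mu}(f)$ for all nonzero $f\in\sO_{X_\mu,x_\mu}$. This is proved exactly as in Lemma~\ref{normalized order under pure mophisms}: run the effective Izumi argument of \cite{BFJ14} on a log resolution of $\m_\mu$ dominating $Y_\mu$, noting that the constant depends only on intersection numbers of exceptional divisors against a fixed relatively ample Cartier divisor and on the coefficient data, all of which are constant for general $\mu$ by the choice of model. One also has the trivial reverse bound $\mathrm{ord}_{F_\mu}(f)\ge \overline{\mathrm{ord}}_{\m_\mu}(f)$ when $\mathrm{ord}_{F_\mu}$ is centered at $\m_\mu$, but only the displayed direction is needed. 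Simultaneously, using Proposition~\ref{alternative alpha_F} and the Fedder-type description of the ideals $I_e$, I would show $\mathrm{fpt}(\sO_{X_\mu,x_\mu},f)\ge \mathrm{fpt}(G_\mu, \mathrm{in}_F(f))$ for the initial form $\mathrm{in}_F(f)\in G_\mu$; this is the analog of Lemma~\ref{fpt of non homogeneous elements} with the $\ba_\bullet$-adic filtration in place of the grading, and it uses that passing to the associated graded degenerates $I_e$ of the local ring onto (a subideal of) $I_e$ of $G_\mu$, so a non-vanishing $\phi(F^e_*f^r)$ survives in $G_\mu$.

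Granting these two comparisons, the third step is bookkeeping. Given $\epsilon>0$, choose a nonzero homogeneous $g\in G_\mu$ with $\mathrm{fpt}(G_\mu,g)\deg g < \alpha_F(G_\mu)+\epsilon$; lift $g$ to $f\in\sO_{X_\mu,x_\mu}$ with $\mathrm{in}_F(f)=g$, so $\mathrm{ord}_{F_\mu}(f)=\deg g$. Then
\[
\alpha_F(\sO_{X_\mu,x_\mu})\le \mathrm{fpt}(\sO_{X_\mu,x_\mu},f)\,\overline{\mathrm{ord}}_{\m_\mu}(f)
\le \mathrm{fpt}(\sO_{X_\mu,x_\mu},f)\,\mathrm{ord}_{F_\mu}(f),
\]
which is the wrong direction; so instead I go the other way. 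Fix a nonzero $f\in\m_\mu$ realizing (to within $\epsilon$) the infimum defining $\alpha_F(\sO_{X_\mu,x_\mu})$; then with $g=\mathrm{in}_F(f)$,
\[
\alpha_F(G_\mu)\le \mathrm{fpt}(G_\mu,g)\deg g \le \mathrm{fpt}(\sO_{X_\mu,x_\mu},f)\,\mathrm{ord}_{F_\mu}(f)
\le C_1\,\mathrm{fpt}(\sO_{X_\mu,x_\mu},f)\,\overline{\mathrm{ord}}_{\m_\mu}(f)
\le C_1\bigl(\alpha_F(\sO_{X_\mu,x_\mu})+\epsilon\bigr),
\]
using $\mathrm{fpt}(G_\mu,g)\le \mathrm{fpt}(\sO_{X_\mu,x_\mu},f)$. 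Letting $\epsilon\to 0$ gives $\alpha_F(\sO_{X_\mu,x_\mu})\ge C\cdot\alpha_F(G_\mu)$ with $C=1/C_1$. I expect the main obstacle to be the $F$-pure threshold comparison $\mathrm{fpt}(\sO_{X_\mu,x_\mu},f)\ge \mathrm{fpt}(G_\mu,\mathrm{in}_F(f))$: one must control how the maps in $\Hom(F^e_*R,R)$ behave under the $\ba_\bullet$-adic degeneration to $G_\mu$, and in particular check that $I_e(G_\mu)$ is genuinely governed by the initial forms of elements of $I_e(\sO_{X_\mu,x_\mu})$ (the filtration here is not the order filtration of a regular system of parameters, so the graded/local transfer of Fedder-type criteria needs the Koll\'ar-component hypothesis, i.e.\ that $G_\mu$ is again strongly $F$-regular—which follows from Theorem~\ref{klt SFR correspondence} applied to $(Y_\mu,F_\mu)$ and adjunction).
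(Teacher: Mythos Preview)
Your overall architecture matches the paper's: spread out the Koll\'ar component data, prove a uniform Izumi inequality $C_1\cdot\overline{\mathrm{ord}}_{\m_\mu}(f)\ge\mathrm{ord}_{F_\mu}(f)$, establish $\mathrm{fpt}(\sO_{X_\mu,x_\mu},f)\ge \mathrm{fpt}(G_\mu,\mathrm{in}_F(f))$, and combine. The Izumi step and the final bookkeeping are essentially identical to what the paper does.

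The gap is exactly where you flag it. Your proposed mechanism for the fpt comparison---that $I_e(\sO_{X_\mu,x_\mu})$ ``degenerates onto a subideal of $I_e(G_\mu)$''---is not substantiated, and Lemma~\ref{fpt of non homogeneous elements} is not a true analogue: there $I_e$ is already a homogeneous ideal of the \emph{same} graded ring, whereas here you must transport $p^{-e}$-linear maps across an associated-graded construction, i.e.\ lift a splitting $\psi\in\Hom_{G_\mu}(F^e_*G_\mu,G_\mu)$ witnessing $\mathrm{in}(f)^r\notin I_e(G_\mu)$ back to $\sO_{X_\mu,x_\mu}$. No Fedder-type criterion does this for an arbitrary valuative filtration, and knowing that $G_\mu$ is strongly $F$-regular does not by itself supply the lift.

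The paper resolves this step by a different device, and the Gorenstein hypothesis on $X$ in Setting~\ref{set up}---which you never invoke---is essential. One forms the extended Rees algebra $T_\mu=\bigoplus_{j\in\Z}\ba_{j,\mu}\sO_{X_\mu,x_\mu}\,t^{-j}$, flat over $\kappa(\mu)[t]$ with special fiber $G_{\mu,x_\mu}$ and localization $\sO_{X_\mu,x_\mu}[t,t^{-1}]$. If $(G_{\mu,x_\mu},\mathrm{in}(f)^s)$ is strongly $F$-regular it is in particular $F$-rational; the deformation result for $F$-rationality of Schwede--Takagi, combined with Lemma~\ref{graded Frational}, then makes $(T_\mu,\widetilde{f}^{\,s})$ $F$-rational, hence so is its localization $(\sO_{X_\mu,x_\mu}[t,t^{-1}],f^s)$. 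Because this ring is Gorenstein, Remark~\ref{F-rational rem} upgrades $F$-rational to strongly $F$-regular, and faithful flatness descends this to $(\sO_{X_\mu,x_\mu},f^s)$. This detour through $F$-rationality is what makes the degeneration argument go; a direct $I_e$ comparison along the lines you sketch is not known to work.
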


\begin{proof}
We define the $\Z$-graded $\sO_X$-algebra $\mathcal{R}'$ by 
\[\mathcal{R}'=\bigoplus_{j\in \Z} \ba_jt^{-j} \subseteq \sO_X[t, t^{-1}],\] 
where $\ba_{j}=H^0(Y, \sO_Y(-jF)) \subseteq \sO_X$. 
Since $F$ is $\sigma$-anti-ample, $\mathcal{R}'$ is finitely generated. 
The inclusion map $\C[t] \to \mathcal{R}'$ is flat, and we have the following isomorphisms. 
\begin{align*}
 \mathcal{R}'\otimes_{\C[t]}\C[t]/(t) & \cong G, \\ 
 \mathcal{R}'\otimes_{\C[t]}\C[t,t^{-1}] & \cong \sO_{X}[t,t^{-1}]. 
 \end{align*}
It follows from \cite[Section 2.4]{Li--Xu} (see also \cite[Proposition 2.10]{Liu--Zhuang}) that $G$ 
is isomorphic, as an $\N$-graded $\C$-algebra, to $\bigoplus_{j \ge 0} H^0(F, \sO_F(\lfloor -jF|_F \rfloor))$. 
Moreover, by \cite[Proposition 2.9]{Liu--Zhuang}, the spectrum of this graded ring has klt singularities.

Now, after enlarging $A$ if necessary, we may choose a model $\mathcal{R}'_A$ of $\mathcal{R}'$ over $A$. 
We may further assume that for every closed point $\mu \in \Spec A$, the ring $\mathcal{R}'_{\mu}$ is a finitely generated $\Z$-graded $\sO_{X_\mu}$-subalgebra of $\sO_{X_\mu}[t, t^{-1}]$\footnote{The fact that $\mathcal{R}'_{\mu}$ is a subalgebra of $\sO_{X_\mu}[t, t^{-1}]$ follows from \cite[Theorem 2.3.5(b)]{HH_char0}.}, and the inclusion map $\kappa(\mu)[t] \to \mathcal{R}'_{\mu}$ is flat.   
For each integer $j \in \Z$, let $\mathrm{pr}_j \colon \sO_{X_\mu}[t, t^{-1}] \to \sO_{X_\mu}$ denote the natural projection onto the degree $j$ component, and define $\ba_{j, \mu}$ to be the ideal of $\sO_{X_\mu}$ making the following commutative diagram commute: 
\[
\xymatrix{\mathcal{R}'_{\mu} \ar@{^{(}->}[r] \ar@{->>}[d]^{\mathrm{pr}_j} & \sO_{X_\mu}[t, t^{-1}] \ar@{->>}[d]^{\mathrm{pr}_j} \\
\ba_{j,\mu}t^{-j} \ar@{^{(}->}[r] & \sO_{X_\mu}t^{-j}.
}
\]
In particular, $\mathcal{R}'_\mu=\bigoplus_{j \in \Z} \ba_{j,\mu}t^{-j}$ and $G_{\mu}=\bigoplus_{j \ge 0} \ba_{j,\mu}/\ba_{j+1,\mu}$. 
Let 
\[T_{\mu}:=\mathcal{R}'_{\mu} \otimes_{\sO_{X_{\mu}}}  \sO_{X_{\mu}, x_\mu}=\bigoplus_{j \in \Z} \ba_{j,\mu} \sO_{X_{\mu}, x_\mu} t^{-j} \subseteq \sO_{X_{\mu}, x_\mu}[t, t^{-1}].\] 
Then $T_{\mu}$ is a ${}^*$local $\Z$-graded $\sO_{X_{\mu}, x_\mu}$-algebra. 
Note that for general closed points $\mu \in \Spec A$, we have the following isomorphisms: 
\begin{align*}
T_{\mu} \otimes_{\kappa(\mu)[t]}\kappa(\mu)[t]/(t) 
& \cong \left(\mathcal{R}'_{\mu}\otimes_{\kappa(\mu)[t]}\kappa(\mu)[t]/(t)\right) \otimes_{\sO_{X_{\mu}}} \sO_{X_{\mu}, x_\mu} \\
& \cong \left(\mathcal{R}'\otimes_{\C[t]}\C[t]/(t)\right)_{\mu} \otimes_{\sO_{X_{\mu}}} \sO_{X_{\mu}, x_\mu} \\
& \cong G_{\mu, x_{\mu}}, \\
T_{\mu} \otimes_{\kappa(\mu)[t]} \kappa(\mu)[t,t^{-1}] 
& \cong (\mathcal{R}'_{\mu} \otimes_{\kappa(\mu)[t]} \kappa(\mu)[t, t^{-1}])\otimes_{\sO_{X_{\mu}}} \sO_{X_{\mu}, x_\mu} \\
& \cong (\mathcal{R}' \otimes_{\C[t]} \C[t, t^{-1}])_{\mu} \otimes_{\sO_{X_{\mu}}} \sO_{X_{\mu}, x_\mu} \\
& \cong \sO_{X_{\mu}, x_\mu}[t,t^{-1}]. 
\end{align*}
Since $\Spec \left(\mathcal{R}'\otimes_{\C[t]}\C[t]/(t)\right)$ has klt singularities, 
we deduce from Theorem \ref{klt SFR correspondence} that there exists a dense open subset $U \subseteq \Spec A$ such that $G_{\mu, x_{\mu}}$
is strongly $F$-regular for all closed points $\mu \in U$. 

Fix a closed point $\mu \in U$. 
For a nonzero element $f\in \sO_{X_\mu, x_{\mu}}$, set
 \begin{align*}
 	\widetilde{f}&:=t^{-n}f\in \ba_{n,\mu} \sO_{X_{\mu}, x_\mu}t^{-n}\subseteq T_\mu, \\
 	\mathrm{in}(f)&:=[f]\in \ba_{n,\mu} \sO_{X_{\mu}, x_\mu}/\ba_{n+1,\mu} \sO_{X_{\mu},x_\mu} \subseteq G_{\mu, x_{\mu}}, 
\end{align*}
where $n$ is the largest integer $j$ with $f \in \ba_{j,\mu} \sO_{X_{\mu}, x_\mu}$. 
Let $s>0$ be a real number such that the pair $(G_{\mu, x_{\mu}}, \mathrm{in}(f)^s)$
is strongly $F$-regular, and in particular, $F$-rational.
It then follows from a combination of \cite[Propositions 6.15(1), 7.3]{Schwede-Takagi08} and Lemma \ref{graded Frational} that $(T_{\mu}, \widetilde{f}^s)$ is also $F$-rational. 
Since $\sO_{X_{\mu}, x_\mu}[t,t^{-1}]$ is a localization of $T_{\mu}$, the pair $(\sO_{X_{\mu}, x_\mu}[t,t^{-1}],f^s)$ is likewise $F$-rational. 
By Remark \ref{F-rational rem} and the fact that $\sO_{X_{\mu}, x_\mu}[t,t^{-1}]$ is Gorenstein, this implies that $(\sO_{X_{\mu}, x_\mu}[t,t^{-1}],f^s)$ is strongly $F$-regular. 
Finally, using the faithful flatness of the natural map $\sO_{X_{\mu}, x_\mu} \to \sO_{X_{\mu}, x_\mu}[t,t^{-1}]$, we conclude that $(\sO_{X_{\mu}, x_\mu},f^s)$ is strongly $F$-regular as well. 
Putting everything together, we obtain the inequality
 \[
 \operatorname{fpt}(\sO_{X_{\mu}, x_\mu},f)\ge \operatorname{fpt}\left(G_{\mu, x_{\mu}}, 
 \mathrm{in}(f)\right).
 \]

Meanwhile, let $\m_x$ be the maximal ideal of $\sO_X$ corresponding to the closed point $x$. 
By the uniform version of Izumi's theorem (see the proof of Lemma \ref{normalized order under pure mophisms}), there exists a constant $C>0$ such that for general closed points $\mu \in \Spec A$, we have $\ba_{n, \mu} \subseteq \m_{x_{\mu}}^{\lceil C n \rceil}$ for all integers $n \ge 1$. 
As a consequence, every nonzero element $f \in \sO_{X_{\mu}, x_{\mu}}$ satisfies that $\mathrm{ord}_{\m_{x_{\mu}}}(f) \ge C \cdot \deg(\mathrm{in}(f))$, which leads to the inequality 
\[
\operatorname{fpt}(\sO_{X_{\mu}, x_{\mu}},f) \mathrm{ord}_{\m_{x_{\mu}}}(f) 
\ge C \cdot \operatorname{fpt}\left(G_{\mu, x_{\mu}}, 
\mathrm{in}(f)\right) \deg(\mathrm{in}(f)).
\]
Thus, $\alpha_F(\sO_{X_\mu, x_\mu}) \ge C \cdot \alpha_F(G_\mu)$ for general closed points $\mu \in \Spec A$.
\end{proof}

\begin{prop}\label{prop2}
With notation as in Situation \ref{set up}, let $a\ge 0$ be an integer such that $K_Y=\sigma^*K_X+aF$ and $\Delta_F$ denote the $\Q$-Weil divisor on $F$ known as the different on $F$ $($see \cite[Definition 2.34]{Kol13} for its definition$)$. 
Suppose that $(\sigma_A\colon Y_A \to X_A, F_A, \Delta_{F,A}, G_A)$ is a model of $(\sigma \colon Y \to X, F, \Delta_F, G)$ over a finitely generated $\Z$-subalgebra $A \subseteq \C$. 
Then 
\[
\alpha_F\left(G_{\mu} \right)=(1+a) \cdot \alpha_F((F_\mu, \Delta_{F, \mu});-(K_{F_\mu}+\Delta_{F,\mu}))
\]
for general closed points $\mu \in \Spec A$. 
\end{prop}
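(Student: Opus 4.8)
The plan is to translate the statement into a comparison between the $F$-alpha invariant of a section ring and that of its underlying log Fano pair, and then feed this into Proposition~\ref{characterization of alpha invariant using global fpt}. First I would recall, from the proof of Proposition~\ref{prop1}, that $G$ is isomorphic as an $\N$-graded $\C$-algebra to the section ring $R(F,\Gamma)$ of the ample $\Q$-Cartier $\Q$-Weil divisor $\Gamma:=-F|_F$ on $F$. Adjunction for the plt pair $(Y,F)$ gives $(K_Y+F)|_F=K_F+\Delta_F$, and since $K_Y=\sigma^{\ast}K_X+aF$ with $K_X$ Cartier, choosing a representative of $K_X$ avoiding $x$ makes $\sigma^{\ast}K_X|_F\sim 0$, so $K_F+\Delta_F=(a+1)(F|_F)$, i.e.\ $-(K_F+\Delta_F)=(a+1)\Gamma$. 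Enlarging $A$ if necessary, all of this spreads out: for general closed $\mu\in\Spec A$ one has $G_\mu\cong R(F_\mu,\Gamma_\mu)$ with $\Gamma_\mu:=-F_\mu|_{F_\mu}$ ample $\Q$-Cartier, the pair $(F_\mu,\Delta_{F,\mu})$ is log Fano (Theorem~\ref{klt SFR correspondence}), and $-(K_{F_\mu}+\Delta_{F,\mu})=(a+1)\Gamma_\mu$. I would also fix once and for all an integer $r\ge 1$ with $r\Gamma$, hence $r\Gamma_\mu$, Cartier; note $(G_\mu)^{(r)}=R(F_\mu,r\Gamma_\mu)$.

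Next I would strip off the anticanonical normalization. From $\mathrm{gfst}((V,B);cD)=c^{-1}\mathrm{gfst}((V,B);D)$ for $c\in\Q_{>0}$ and the fact that $D\sim_{\Q}c\Gamma'$ precisely when $c^{-1}D\sim_{\Q}\Gamma'$, one gets $\alpha_F((V,B);c\Gamma')=c^{-1}\alpha_F((V,B);\Gamma')$, so that
\[
(a+1)\,\alpha_F\bigl((F_\mu,\Delta_{F,\mu});-(K_{F_\mu}+\Delta_{F,\mu})\bigr)=(a+1)\,\alpha_F\bigl((F_\mu,\Delta_{F,\mu});(a+1)\Gamma_\mu\bigr)=\alpha_F\bigl((F_\mu,\Delta_{F,\mu});\Gamma_\mu\bigr).
\]
Applying Proposition~\ref{characterization of alpha invariant using global fpt} to $F_\mu$, $\Delta_{F,\mu}$, $\Gamma_\mu$ and the section ring $G_\mu=R(F_\mu,\Gamma_\mu)$, the right-hand side equals $r\,\alpha_F\bigl((G_\mu)^{(r)},\Delta_{(G_\mu)^{(r)}}\bigr)$, where $\Delta_{(G_\mu)^{(r)}}$ is the divisor on $\Spec (G_\mu)^{(r)}$ corresponding to $\Delta_{F,\mu}$ (this pair is strongly $F$-regular because $(F_\mu,\Delta_{F,\mu})$ is globally $F$-regular, by \cite[Proposition 5.3]{SS10}). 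Thus the proposition is reduced to the identity
\[
\alpha_F(G_\mu)=r\,\alpha_F\bigl((G_\mu)^{(r)},\Delta_{(G_\mu)^{(r)}}\bigr).
\]

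To establish this I would use the cyclic cover $(G_\mu)^{(r)}\hookrightarrow G_\mu$. The inclusion realizes $\pi\colon\Spec G_\mu\to\Spec (G_\mu)^{(r)}$ as the quotient by the $\mu_r$-action scaling the degree-$i$ piece by $\zeta^i$; it is a degree-$r$ cyclic cover, an isomorphism in codimension one away from its branch locus, with ramification indices dividing $r$, and—by the cone description of Kollár components (cf.\ \cite{Li--Xu}, \cite{Liu--Zhuang}) together with the fact that the different of a plt pair with reduced boundary has standard coefficients $1-\tfrac{1}{m}$—its log branch divisor is exactly $\Delta_{(G_\mu)^{(r)}}$, so $K_{G_\mu}=\pi^{\ast}(K_{(G_\mu)^{(r)}}+\Delta_{(G_\mu)^{(r)}})$. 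For general $\mu$ we have $p\nmid r$, hence the cover is tame and strong $F$-regularity—and therefore $F$-pure thresholds—transfer across it: for a nonzero homogeneous $f\in(G_\mu)^{(r)}$ one has $\Div_{G_\mu}(f)=\pi^{\ast}\Div_{(G_\mu)^{(r)}}(f)$, whence $\mathrm{fpt}(G_\mu,f)=\mathrm{fpt}\bigl((G_\mu)^{(r)},\Delta_{(G_\mu)^{(r)}};f\bigr)$, while $\deg_{G_\mu}(f)=r\deg_{(G_\mu)^{(r)}}(f)$. Finally, the infimum defining $\alpha_F(G_\mu)$ may be taken over homogeneous $f\in(G_\mu)^{(r)}$ alone: for arbitrary homogeneous $f\in G_\mu$ one has $f^r\in(G_\mu)^{(r)}$ and $\mathrm{fpt}(G_\mu,f^r)\deg_{G_\mu}(f^r)=\mathrm{fpt}(G_\mu,f)\deg_{G_\mu}(f)$. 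Combining these three observations yields the displayed identity, and tracing back through the preceding paragraph completes the proof.

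I expect the main obstacle to be the step, in the last paragraph, identifying the log branch divisor of $\pi$ with the boundary $\Delta_{(G_\mu)^{(r)}}$ of Proposition~\ref{characterization of alpha invariant using global fpt}: this is precisely the assertion that the different $\Delta_F$ of $(Y,F)$ along the Kollár component coincides with the orbifold boundary produced on the Cartier Veronese of the cone, which must be extracted carefully from the cone constructions of \cite{Li--Xu} and \cite{Liu--Zhuang}, and the transfer of $F$-pure thresholds across $\pi$ relies on the compatibility of test ideals with tame log cyclic covers. A secondary, routine point is to verify that all the characteristic-zero inputs—the isomorphism $G\cong R(F,-F|_F)$, the adjunction equality, the log Fano property of $(F_\mu,\Delta_{F,\mu})$, and the common Cartier index $r$—persist for general closed points $\mu$, which may additionally be chosen of residue characteristic prime to $r$.
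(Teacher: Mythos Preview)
Your proposal is correct and follows essentially the same route as the paper: identify $G$ with the Demazure ring $R(F,-F|_F)$, use adjunction to get $-(K_F+\Delta_F)=(1+a)(-F|_F)$, apply Proposition~\ref{characterization of alpha invariant using global fpt}, and then compare $F$-pure thresholds across the Veronese cover via the test ideal transformation rule (the paper cites \cite[Main Theorem]{Schwede-Tucker14}); the only cosmetic difference is that the paper passes to the Veronese at the Cartier index of $K_F+\Delta_F$ rather than of $-F|_F$. For the step you correctly flag as the main obstacle---identifying the log branch divisor of the Veronese cover with the cone over the different $\Delta_F$---the paper makes this precise by first extracting the explicit structure $\Delta_F=\sum_i\frac{m_i-1}{m_i}D_i$, $F|_F=\sum_i\frac{n_i}{m_i}D_i$ with $\gcd(m_i,n_i)=1$ from \cite[Theorem~A.1]{HLS19} and \cite[Proposition~16.6]{Kollaretal}, and then invoking \cite[(1.3), Theorem~2.8]{Watanabe81}.
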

\begin{proof}
 Given a $\Q$-Cartier $\Q$-Weil divisor $B$ on a normal variety $Z$, the Cartier index of $B$ at a point $z \in Z$ is denoted by $\mathrm{ind}(z \in Z, B)$. 
 Let $y$ be a codimension one point of $F$ and $D$ be the prime divisor on $F$ corresponding to $y$. 
It follows from \cite[Theorem A.1]{HLS26} that 
\[
    \mathrm{ind}(y\in Y,F)=\mathrm{ind}(y\in F, F|_F).
\]
Therefore, if we set $m:=\mathrm{ind}(y\in Y,F)$, then there exists an integer $n \ge 1$ such that $\mathrm{ord}_D(F|_F)=n/m$ and $\gcd(m,n)=1$. 
Applying this argument together with \cite[Proposition 16.6]{Kollaretal} we have prime divisors $D_i$ on $F$, positive integers $m_i$ and integers $n_i$ such that
\[
    \Delta_F=\sum_{i}\frac{m_i-1}{m_i}D_i, \quad F|_F=\sum_i \frac{n_i}{m_i}D_i,
\]
and $\gcd(m_i,n_i)=1$ for each $i$. 

We consider the following two graded rings: 
\begin{align*}
    S&:=\bigoplus_{i\ge 0}H^0(F,\sO_F(-ir(K_F+\Delta_F))),\\
    S'&:=\bigoplus_{i\ge 0} H^0(F,\sO_F(\lfloor -iF|_F \rfloor)), 
\end{align*}
where $r=\mathrm{ind}(K_F+\Delta_F)$. 
By \cite[Proposition 2.10]{Liu--Zhuang}, $S'$ is isomorphic to $G$ as a graded $\C$-algebra. 
Also, by the definition of $\Delta_F$, 
\[
K_F+\Delta_F=(K_Y+F)|_F=(\sigma^* K_X+(1+a)F)|_F=(1+a)F|_F.
\]
Therefore, $S$ is a Veronese subring of $S'$. 
Let 
$\pi:\Spec S'\to \Spec S$ denote the morphism induced by the inclusion and $\Delta_S$ denote the $\Q$-Weil divisor on $S$ corresponding to $\Delta_F$.
It follows from \cite[(1.3) and Theorem 2.8]{Watanabe81} that $K_{S'}=(\pi')^*(K_S+\Delta_S)$. 

Now we reduce the setting modulo $p$. 
We may assume that the degree of the finite map $\pi_{\mu}: \Spec S'_{\mu} \to \Spec S_{\mu}$ is not divisible by the characteristic of $\kappa(\mu)$, so that the trace map $\mathrm{Tr}_{\pi_\mu}:\mathrm{Frac}(S'_{\mu}) \to \mathrm{Frac}(S_{\mu})$ is surjective, where $\mathrm{Frac}(S'_{\mu})$ (resp. $\mathrm{Frac}(S_{\mu})$) denotes the quotient field of $S'_{\mu}$ (resp. $S_{\mu}$).  
Since
\[\alpha_F((F_\mu, \Delta_{F, \mu});-(K_{F_\mu}+\Delta_{F,\mu}))=r \cdot \alpha_F(S_{\mu},\Delta_{S_{\mu}})\]
by Proposition \ref{characterization of alpha invariant using global fpt}, 
it suffices to prove that 
\[
\alpha_F(G_{\mu})=\alpha_F(S'_{\mu})=(1+a)r \cdot \alpha_F(S_{\mu},\Delta_{S_{\mu}})
\]
for general closed points $\mu \in \Spec A$. 
To prove this, it is enough to show that $\operatorname{fpt}(S'_{\mu},f)=\operatorname{fpt}((S_{\mu},\Delta_{S_{\mu}});f)$ for each nonzero nonunit homogeneous element $f \in S_{\mu}$.  
For every real number $t\ge 0$, \cite[Main Theorem]{Schwede-Tucker14} gives the equality 
\[ 
\mathrm{Tr}_{\pi_{\mu}}(\tau(S'_{\mu},f^t))=\tau((S_{\mu},\Delta_{S_{\mu}});f^t).
\]
By the surjectivity of the trace map $\mathrm{Tr}_{\pi_{\mu}}$, we have $\operatorname{fpt}(S'_{\mu},f)=\operatorname{fpt}((S_{\mu},\Delta_{S_{\mu}});f)$. 
\end{proof}

\begin{proof}[Proof of Theorem \ref{local-global thm}]
Let $(R, \m)$ be a Gorenstein local domain essentially of finite type over $\C$. 
Take a point $x$ on a Gorenstein algebraic variety $X$ over $\C$ such that $\sO_{X, x} \cong R$. 
By replacing $x$ with a closed point of the closure $\overline{\{x\}}$, we may assume that $x$ is a closed point of $X$. 
We now use the same notation as in Situation \ref{set up}. 
Then the assertion follows immediately from Propositions \ref{prop1} and \ref{prop2}, since 
\[
\alpha_F(R) \ge C \cdot \alpha_F(G)  \ge C(1+a) \cdot \alpha_F((F, \Delta_{F});-(K_{F}+\Delta_{F})),
\]
where $C>0$ is the constant from Proposition \ref{prop1} and $a \ge 0$ is the integer such that $K_Y=\sigma^*K_X+a F$. 
\end{proof}

In the latter half of this section, we consider two weaker versions of Conjecture $\mathrm{C}_d$: the first concerns varieties of Fano type, and the second concerns $\Q$-Fano varieties.

\begin{conjDd}\label{Fano type conjecture}
If $X$ is a $d$-dimensional complex variety of Fano type, then $\alpha_F(X)>0$. 
\end{conjDd}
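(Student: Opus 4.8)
The plan is to derive Conjecture $\textup{D}_d$ from Conjecture $\textup{C}_d$ — in fact from its special case for $\Q$-Fano varieties — by passing to the anticanonical model. Let $X$ be a $d$-dimensional complex projective variety of Fano type and fix an effective $\Q$-Weil divisor $\Delta$ with $(X,\Delta)$ log Fano; then $X$ is of klt type, $-K_X=-(K_X+\Delta)+\Delta$ is big, and by Theorem \ref{klt SFR correspondence} the reduction $(X_\mu,0)$ is globally $F$-regular for general $\mu$, so $\alpha_F(X;-K_X)$ is well defined. By \cite{BCHM} the anticanonical ring $\mathscr{R}:=\bigoplus_{i\ge 0}H^0(X,\sO_X(-iK_X))$ is a finitely generated $\C$-algebra, so $X':=\Proj\mathscr{R}$ is a normal projective variety with $-K_{X'}$ ample $\Q$-Cartier and $R(X',-K_{X'})=\mathscr{R}$. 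As $X'$ is again of Fano type (being a birational contraction of a Fano type variety) and $K_{X'}$ is now $\Q$-Cartier, $(X',0)$ is klt; that is, $X'$ is $\Q$-Fano, $(X',0)$ is a $d$-dimensional log Fano pair, and Conjecture $\textup{C}_d$ gives $\alpha_F(X';-K_{X'})>0$.

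It then remains to bound $\alpha_F(X;-K_X)$ below by a positive multiple of $\alpha_F(X';-K_{X'})$, and since both are defined via reduction this should be carried out on a common model $(X_A,X'_A,\mathscr{R}_A)$ over a finitely generated $\Z$-subalgebra $A\subseteq\C$. Fix $r\ge1$ with $-rK_{X'}$ Cartier. On the $X'$ side, Proposition \ref{characterization of alpha invariant using global fpt} gives $\alpha_F((X'_\mu,0);-K_{X'_\mu})=r\,\alpha_F\big((\mathscr{R}_\mu)^{(r)}\big)$ for general $\mu$, and Propositions \ref{equivalence Pande's F-alpha invariant} and \ref{F-alpha invariant Veronese} identify the right-hand side, up to a fixed positive factor, with the local $F$-alpha invariant of the anticanonical cone $\big((\mathscr{R}_\mu)^{(r)}\big)_{\mathscr{M}}$. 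On the $X$ side, given a general $\mu$ and an effective $D\sim_\Q-K_{X_\mu}$, one writes $D=\tfrac{1}{m}\Div(f)$ with $f$ a nonzero homogeneous element of $(\mathscr{R}_\mu)_m$ and uses \cite[Proposition 5.3]{SS10} to bound the global $F$-split threshold $\mathrm{gfst}(X_\mu;D)$ below in terms of an $F$-pure threshold of $f$ in the cone $\big((\mathscr{R}_\mu)^{(r)}\big)_{\mathscr{M}}$, a uniform Izumi-type estimate in the spirit of Lemma \ref{normalized order under pure mophisms} being used to control the passage to the $r$th Veronese and to compare $m$ with the degree of $f$ in $\mathscr{R}^{(r)}$. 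Taking the infimum over $D$ produces a constant $C>0$, independent of $\mu$, with $\alpha_F((X_\mu,0);-K_{X_\mu})\ge C\,\alpha_F((X'_\mu,0);-K_{X'_\mu})$; passing to $\liminf_\mu$ then yields $\alpha_F(X;-K_X)\ge C\,\alpha_F(X';-K_{X'})>0$.

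The step I expect to be the main obstacle is precisely this comparison — the passage from the merely \emph{big} divisor $-K_X$ on $X$ to the \emph{ample} divisor $-K_{X'}$ on the anticanonical model. The infimum in Definition \ref{projective alpha} ranges over \emph{all} effective $\Q$-divisors $\Q$-linearly equivalent to $-K_X$, and because $-K_X$ has a nontrivial stable base locus some of these divisors may be badly singular along the locus contracted by $X_\mu\dashrightarrow X'_\mu$, so that the naive identity $\mathrm{gfst}(X_\mu;D)=\mathrm{gfst}(X'_\mu;D')$ fails there; controlling this discrepancy, with a constant uniform in $p$, is the technical heart. Finally, even granting this reduction, Conjecture $\textup{D}_d$ remains open in general, since what the argument proves is only the implication ``Conjecture $\textup{C}_d$ (equivalently, its $\Q$-Fano case) $\Rightarrow$ Conjecture $\textup{D}_d$,'' and Conjecture $\textup{C}$ is itself open.
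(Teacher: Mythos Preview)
Your overall plan --- pass to the anticanonical model, which is $\Q$-Fano, and invoke the $\Q$-Fano case of Conjecture~$\textup{C}_d$ (that is, Conjecture~$\textup{E}_d$) --- coincides with the paper's strategy in Theorem~\ref{two conjectures are equivalent}. The gap is precisely the comparison step you yourself flag as ``the main obstacle'': your second paragraph does not prove it. Proposition~\ref{characterization of alpha invariant using global fpt} and \cite[Proposition~5.3]{SS10} identify global $F$-regularity of a pair with strong $F$-regularity of the section ring \emph{of an ample divisor}; since $-K_X$ is only big on $X$, the ring $\mathscr{R}$ is the cone over the anticanonical model (your $X'$), not over $X$, and there is no dictionary translating $\mathrm{gfst}(X_\mu;D)$ into a threshold inside $\mathscr{R}_\mu$. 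The ``Izumi-type estimate'' you invoke is about comparing orders and degrees; it does not address this structural mismatch. The inequality $\alpha_F(X_\mu)\ge C\,\alpha_F(X'_\mu)$ is asserted rather than established.

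The paper resolves this by a different, purely geometric route that bypasses section rings altogether. After a small $\Q$-factorialization (Lemma~\ref{F-alpha invariant under small birational map} preserves $\alpha_F$), one runs a $(-K_X)$-MMP, which terminates because $X$ is a Mori dream space; this factors the rational map $X\dashrightarrow Z=\Proj\mathscr{R}$ into flips, divisorial contractions, and a final crepant morphism $X'\to Z$. The comparison is then done step by step in characteristic $p$: flips and small maps preserve $\alpha_F$ (Lemma~\ref{F-alpha invariant under small birational map}), while for a birational morphism $\pi\colon V\to W$ with prime exceptional divisor and $K_V\le\pi^*K_W$ (guaranteed here by the Negativity Lemma), Lemma~\ref{F-alpha invariant under divisorial contraction} proves $\alpha_F(V)\ge\min\{\alpha_F(W),1\}$ by \emph{lifting} Frobenius splittings from $W$ to $V$ through the identification of their pluri-anticanonical spaces. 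Composing along the MMP yields $\alpha_F(X_\mu)\ge\min\{\alpha_F(Z_\mu),1\}$ for general $\mu$, and applying $\textup{E}_d$ to $Z$ finishes. (If one is willing to use the full Conjecture~$\textup{C}_d$ with boundary, rather than only its $\Q$-Fano case, the paper also records a much shorter argument via Lemma~\ref{effective divisor inequality}.)
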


\begin{conjEd}\label{Q-Fano conjecture}
If $X$ is a $d$-dimensional $\Q$-Fano variety over $\C$, then $\alpha_F(X)>0$. 
\end{conjEd}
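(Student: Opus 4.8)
The plan is to express $\alpha_F(X)$ through a local invariant of the anticanonical cone and then feed it into the machinery developed above. Write $S=\bigoplus_{i\ge 0}H^0(X,\sO_X(-iK_X))$ for the anticanonical section ring of $X$ (recall $-K_X$ is ample and $\Q$-Cartier), and choose $r\ge 1$ sufficiently divisible so that $-rK_X$ is very ample and the $r$th Veronese subring $S^{(r)}=\bigoplus_{i\ge 0}H^0(X,\sO_X(-irK_X))$ is standard graded; let $\mathscr{M}$ be its unique homogeneous maximal ideal. Since $-rK_X=r\cdot\bigl(-(K_X+0)\bigr)$ with $(X,0)$ klt, the standard cone construction (see, e.g., \cite{Kol13}) shows that $\Spec S^{(r)}_{\mathscr{M}}$ is a $(d+1)$-dimensional $\Q$-Gorenstein klt-type singularity. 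Fix a model of everything over a finitely generated $\Z$-subalgebra $A\subseteq\C$. Since $(X,0)$ is log Fano, Theorem \ref{klt SFR correspondence} shows $(X_\mu,0)$ is globally $F$-regular for general closed points $\mu\in\Spec A$, and since $\Spec S^{(r)}_{\mathscr{M}}$ is of klt type the reductions $(S^{(r)})_\mu$ and $(S^{(r)}_{\mathscr{M}})_\mu$ are strongly $F$-regular for general $\mu$; hence Propositions \ref{characterization of alpha invariant using global fpt} and \ref{equivalence Pande's F-alpha invariant} yield
\[
\alpha_F(X_\mu;-K_{X_\mu})=r\,\alpha_F\bigl((S^{(r)})_\mu\bigr)=r\,\alpha_F\bigl((S^{(r)}_{\mathscr{M}})_\mu\bigr).
\]
Thus it is enough to bound $\alpha_F\bigl((S^{(r)}_{\mathscr{M}})_\mu\bigr)$ below by a positive constant independent of (general) $\mu$.

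The second step is to trade this for uniform positivity of $F$-signature. For general $\mu$ the local ring $(S^{(r)}_{\mathscr{M}})_\mu$ is strongly $F$-regular of dimension $d+1$, so Proposition \ref{comparison of F-signature and local alpha invariant} applies: part (1) gives $s\ge \tfrac{1}{(d+1)!}\,e\,\alpha_F^{\,d+1}$, while part (2), together with the Briançon--Skoda bound $\overline{\m^{(\operatorname{edim}+1)p^e}}\subseteq I_e$, bounds $\alpha_F$ from below in terms of $s$, the Hilbert--Samuel multiplicity $e$, and the constant $C=\operatorname{edim}(S^{(r)}_{\mathscr{M}})+1$. Both $e$ and $C$ are invariants of the fixed characteristic-zero ring $S^{(r)}_{\mathscr{M}}$ and therefore take the same finite value for all general $\mu$; consequently $\inf_\mu\alpha_F\bigl((S^{(r)}_{\mathscr{M}})_\mu\bigr)>0$ holds if and only if $\inf_\mu s\bigl((S^{(r)}_{\mathscr{M}})_\mu\bigr)>0$, i.e. if and only if $s(S^{(r)}_{\mathscr{M}})>0$ in the sense of Definition \ref{F-signature in char0}. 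So $\alpha_F(X)>0$ is reduced to the positivity of the $F$-signature of the $(d+1)$-dimensional $\Q$-Gorenstein klt local ring $S^{(r)}_{\mathscr{M}}$.

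The last step passes to the Gorenstein case. As $\Spec S^{(r)}_{\mathscr{M}}$ is of klt type, its anticanonical ring is finitely generated by \cite{BCHM}, and Corollary \ref{reduce to Gorenstein case} reduces $s(S^{(r)}_{\mathscr{M}})>0$ to the Gorenstein statement in dimension $d+2$ (alternatively, since the cone is already $\Q$-Gorenstein, a single index-one cover produces a Gorenstein klt singularity directly). This is where the argument bottoms out: every step above is a routine application of the apparatus already developed, and the only genuinely hard remaining input is the uniform positivity of $F$-signature under reduction modulo $p$ for a Gorenstein klt singularity---i.e.\ Conjecture $\textup{B}$ in dimension $d+2$. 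I therefore expect the \emph{main obstacle} to be precisely this Gorenstein uniform-positivity statement. It is available in the cases where $S^{(r)}_{\mathscr{M}}$ (or its index-one cover) is a pure subring of a regular local ring---for instance when $X$ is toric, or more generally a reductive-quotient Fano, by Theorem \ref{Uniform positivity of F-signature of pure subrings} and Corollary \ref{reductive quotient}---but no argument covering all $\Q$-Fano $X$ is presently known, consistent with the statement being recorded here as a conjecture.
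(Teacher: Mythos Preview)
The statement you were asked to prove is a \emph{conjecture}, and the paper does not claim to prove it. What the paper does establish, in the Proposition immediately following Conjectures $\textup{D}_d$ and $\textup{E}_d$, is the implication $\textup{B}_{d+1}\Rightarrow\textup{E}_d$; it also shows $\textup{D}_d\Leftrightarrow\textup{E}_d$ in Theorem \ref{two conjectures are equivalent}. Your write-up correctly arrives at the same bottom line: the question reduces to Conjecture $\textup{B}$ and no unconditional argument is available, so there is no error to flag.

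Your reduction is, however, less sharp than the paper's. You pass to a sufficiently divisible Veronese $S^{(r)}$, obtaining a $\Q$-Gorenstein (not Gorenstein) $(d+1)$-dimensional cone, and then invoke Corollary \ref{reduce to Gorenstein case} to land in Conjecture $\textup{B}_{d+2}$; you mention an index-one cover as an alternative. The paper instead works with the full anticanonical ring $R(X,-K_X)=\bigoplus_{i\ge 0}H^0(X,\sO_X(-iK_X))$, which is already Gorenstein (its graded canonical module is $R(X,-K_X)$ shifted by one), so Conjecture $\textup{B}_{d+1}$ applies directly to its localization at the homogeneous maximal ideal, and Lemma \ref{fpt of non homogeneous elements} together with Proposition \ref{characterization of alpha invariant using global fpt} transfers the positivity back to $\alpha_F(X)$. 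This saves one dimension in the hypothesis and avoids both the Veronese detour and the appeal to Corollary \ref{reduce to Gorenstein case}. Either route, of course, leaves the genuine obstruction---Conjecture $\textup{B}$---untouched.
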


We investigate the relationship between the above conjectures and those discussed in the previous sections.

\begin{prop}\label{easy implications}
\begin{enumerate}[label=$(\arabic*)$]
\item 
Conjecture $\mathrm{B}_{d+1}$ for local rings $R$ with residue field $\C$ implies that Conjecture $\mathrm{E}_d$. 
\item 
Conjecture $\mathrm{C}_d$ implies Conjecture $\mathrm{D}_d$. 
\end{enumerate}
\end{prop}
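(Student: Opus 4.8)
The two parts call for different auxiliary constructions, in each case used to transport positivity through the comparison results of Sections~3 and~4.

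\smallskip
\noindent\emph{Part~(1).} Given a $d$-dimensional $\Q$-Fano variety $X$ over $\C$, I would pass to the anti-canonical section ring $\mathscr{R}=\bigoplus_{m\ge 0}H^{0}(X,\sO_X(\lfloor -mK_X\rfloor))$, finitely generated because $-K_X$ is ample $\Q$-Cartier, and to its localization $R:=\mathscr{R}_{\mathscr{M}}$ at the irrelevant maximal ideal $\mathscr{M}$. The one nontrivial input is that $R$ is a $(d+1)$-dimensional \emph{Gorenstein} klt local domain with residue field $\C$: that the cone over a $\Q$-Fano is klt is well known (see, e.g., \cite{Kol13}), and since $K_X$ is an honest Weil divisor one has $K_X+\lfloor -mK_X\rfloor=\lfloor -(m-1)K_X\rfloor$, so the graded canonical module of $\mathscr{R}$ is isomorphic to $\mathscr{R}$ up to a degree shift; hence $\omega_R\cong R$, and $R$ is Cohen--Macaulay because klt singularities are rational in characteristic zero, so $R$ is Gorenstein. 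Conjecture~$\textup{B}_{d+1}$ for residue field $\C$ then gives $s(R)>0$, i.e., $s(R_\mu)>c_0$ for all closed $\mu$ in a dense open $U$ of a model $\Spec A$. Because $e(R_\mu)$ and $\operatorname{edim}(R_\mu)$ are constant for general $\mu$, Proposition~\ref{comparison of F-signature and local alpha invariant}(2) (with $C=\operatorname{edim}(R)+1$) upgrades this to $\alpha_F(R_\mu)>c_1$ for a uniform $c_1>0$ on $U$. Finally, for general $\mu$ one has $R_\mu\cong(\mathscr{R}_\mu)_{\mathscr{M}}$, where $\mathscr{R}_\mu$ is the anti-canonical ring of $X_\mu$; the identities $\mathrm{fpt}((\mathscr{R}_\mu)_{\mathscr{M}},g)=\mathrm{fpt}(\mathscr{R}_\mu,g)$ and $\overline{\mathrm{ord}}_{\mathscr{M}}(g)\le\deg g$ for homogeneous $g$ (Lemma~\ref{fpt of non homogeneous elements}, Remark~\ref{graded I_e remark}) give $\alpha_F(R_\mu)=\alpha_F((\mathscr{R}_\mu)_{\mathscr{M}})\le\alpha_F(\mathscr{R}_\mu)$, and Proposition~\ref{F-alpha invariant Veronese} together with Proposition~\ref{characterization of alpha invariant using global fpt} applied to $\Gamma=-K_{X_\mu}$ (whose section ring is exactly $\mathscr{R}_\mu$) gives $\alpha_F(\mathscr{R}_\mu)\le r\,\alpha_F(\mathscr{R}_\mu^{(r)})=\alpha_F(X_\mu)$ for $r$ with $-rK_{X_\mu}$ Cartier. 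Thus $\alpha_F(X_\mu)\ge\alpha_F(R_\mu)>c_1$ on $U$, whence $\alpha_F(X)\ge c_1>0$.

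\smallskip
\noindent\emph{Part~(2).} Let $X$ be of Fano type, fix $\Delta\ge 0$ with $(X,\Delta)$ log Fano, and put $A:=-(K_X+\Delta)$, ample $\Q$-Cartier, so $-K_X=A+\Delta$ is big; here $\alpha_F(X)=\alpha_F(X;-K_X)$ is defined even when $-K_X$ is not $\Q$-Cartier, since neither global $F$-regularity nor the global $F$-split threshold requires $\Q$-Cartierness. Choose once and for all a large integer $c$ and, by Serre vanishing, an effective divisor $\Delta'\sim_\Q (c-1)A-\Delta$ (possible since $(c-1)A$ is very ample Cartier and $\Delta$, being a log-Fano boundary, has coefficients in $(0,1)$). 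By Definitions~\ref{projective alpha} and~\ref{global alpha in char 0} it suffices to bound $\alpha_F(X_\mu;-K_{X_\mu})$ below by a positive constant for general $\mu$; for such $\mu$, $\Delta'$ reduces to an effective $\Delta'_\mu\sim_\Q (c-1)A_\mu-\Delta_\mu$, so that for every effective $D\sim_\Q -K_{X_\mu}$, first passing to the smaller boundary $0$ and then to the larger divisor $D+\Delta'_\mu$ yields
\[
\mathrm{gfst}(X_\mu;D)\ \ge\ \mathrm{gfst}((X_\mu,\Delta_\mu);D)\ \ge\ \mathrm{gfst}((X_\mu,\Delta_\mu);D+\Delta'_\mu),
\]
and since $D+\Delta'_\mu\sim_\Q cA_\mu=c\bigl(-(K_{X_\mu}+\Delta_\mu)\bigr)$ this is at least $\alpha_F\bigl((X_\mu,\Delta_\mu);cA_\mu\bigr)=\tfrac{1}{c}\,\alpha_F\bigl((X_\mu,\Delta_\mu);-(K_{X_\mu}+\Delta_\mu)\bigr)$. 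As $c$ and $\Delta'$ descend from $\C$ and are independent of $D$, taking infima over $D$, then over $\mu$, and the supremum over models gives $\alpha_F(X;-K_X)\ge\tfrac{1}{c}\,\alpha_F\bigl((X,\Delta);-(K_X+\Delta)\bigr)$, which is positive by Conjecture~$\textup{C}_d$.

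\smallskip
\noindent The obstacle I expect is not conceptual but one of uniformity: one must arrange that \emph{every} constant that appears ($c_0$ and $c_1$, the invariants $e(R)$ and $\operatorname{edim}(R)$, the Cartier index $r$, the integer $c$, and the divisor $\Delta'$) is uniform over a single dense open subset of the model, so that the $\sup_U\inf_\mu$ definitions of $s(\cdot)$ and of the various $\alpha_F(\cdot)$ in equal characteristic zero genuinely propagate the inequalities above; this is what is encoded in the phrase ``for general $\mu$'' throughout. In part~(2) the one real geometric point is the mismatch between the polarization $-K_X$ built into $\alpha_F(X)$ and the polarization $-(K_X+\Delta)$ of the log Fano pair, which the auxiliary effective divisor $\Delta'$ is designed precisely to absorb; in part~(1) the only genuinely nontrivial fact used is the Gorenstein-ness of the anti-canonical cone.
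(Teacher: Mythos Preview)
Your proof is correct and follows essentially the same route as the paper: for (1), pass to the anti-canonical cone, apply Conjecture~$\textup{B}_{d+1}$, and translate $s(R)>0$ into a uniform lower bound on $\alpha_F(X_\mu)$ via the comparison results of \S3; for (2), absorb the discrepancy between $-K_X$ and $-(K_X+\Delta)$ using an auxiliary effective divisor $\Q$-linearly equivalent to $(c-1)A-\Delta$, which is exactly the content of the paper's Lemma~\ref{effective divisor inequality}. Your Part~(1) supplies considerably more detail than the paper's one-sentence argument (in particular the inequality $\overline{\mathrm{ord}}_{\mathscr{M}}(g)\le\deg g$, which you state without proof, does hold since $R_{\ge n}$ is integrally closed), and your Part~(2) simply unpacks Lemma~\ref{effective divisor inequality} inline rather than citing it.
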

\begin{proof}
(1) 
Let $X$ be a $d$-dimensional $\Q$-Fano variety over $\C$. 
By Lemma \ref{fpt of non homogeneous elements} and Proposition \ref{characterization of alpha invariant using global fpt}, applying Conjecture $\mathrm{B}_{d+1}$ to the localization of the anti-canonical ring $R(X, -K_X)$ of $X$ at its homogeneous maximal ideal yields $\alpha_F(X)>0$. 

(2) Let $X$ be a complex projective variety of Fano type. 
It suffices to show that if $\alpha_F(X; -(K_X+\Delta))>0$, then $\alpha_F(X)>0$. 
Since $-(n-1)K_X-n\Delta=-n(K_X+\Delta)-(-K_X)$ is ample for sufficiently large integers $n$, by Lemma \ref{effective divisor inequality}, we obtain 
\[
\alpha_F(X) \ge \alpha_F(X; -n(K_X+\Delta))=\frac{1}{n} \alpha_F(X; -(K_X+\Delta))>0.
\]
\end{proof}

\begin{lem}\label{effective divisor inequality}
Let $X$ be a globally $F$-regular projective variety over an $F$-finite field of characteristic $p>0$, $\Gamma$ be a big $\Q$-Weil divisor on $X$ and $E$ be a $\Q$-effective $\Q$-Weil divisor on $X$. Then 
	\[
		\alpha_F(X;\Gamma)\ge \alpha_F(X;\Gamma+E).
	\]
\end{lem}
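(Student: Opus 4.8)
The plan is to reduce the statement to a pointwise comparison of global $F$-split thresholds, using the monotonicity of global $F$-regularity under shrinking the boundary.

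\textbf{Step 1 (monotonicity).} First I would record the standard fact that if $\Delta\le\Delta'$ are effective $\Q$-Weil divisors on $X$ and $(X,\Delta')$ is globally $F$-regular, then $(X,\Delta)$ is globally $F$-regular. This follows directly from the definition of global $F$-regularity: for any effective Weil divisor $D$ on $X$ one has $\lceil (p^e-1)\Delta+D\rceil\le\lceil (p^e-1)\Delta'+D\rceil$, so $F^e_*\sO_X(\lceil (p^e-1)\Delta+D\rceil)$ is a submodule of $F^e_*\sO_X(\lceil (p^e-1)\Delta'+D\rceil)$, and restricting a splitting of $\sO_X\to F^e_*\sO_X(\lceil (p^e-1)\Delta'+D\rceil)$ to this submodule produces a splitting of $\sO_X\to F^e_*\sO_X(\lceil (p^e-1)\Delta+D\rceil)$ (cf.\ \cite{SS10}).

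\textbf{Step 2 (pointwise estimate).} Next I would deduce that for any two effective $\Q$-Weil divisors $D$ and $B$ on $X$ one has $\mathrm{gfst}(X;D+B)\le \mathrm{gfst}(X;D)$. Indeed, if $t\ge 0$ is such that $(X,t(D+B))$ is globally $F$-regular, then $0\le tD\le t(D+B)$, so Step 1 gives that $(X,tD)$ is globally $F$-regular; hence the set of admissible $t$ for $D+B$ is contained in that for $D$, and taking suprema yields the claim.

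\textbf{Step 3 (conclusion).} Since $E$ is $\Q$-effective, fix an effective $\Q$-Weil divisor $E_0$ with $E_0\sim_\Q E$; note that $\Gamma+E$ is big, so $\alpha_F(X;\Gamma+E)$ is defined. For every effective $\Q$-Weil divisor $D$ with $D\sim_\Q\Gamma$, the divisor $D+E_0$ is an effective $\Q$-Weil divisor $\Q$-linearly equivalent to $\Gamma+E$, hence lies among those over which the infimum defining $\alpha_F(X;\Gamma+E)$ is taken. Therefore
\[
\alpha_F(X;\Gamma+E)\ \le\ \inf_{\substack{D\sim_\Q\Gamma\\ D\ge 0}}\mathrm{gfst}(X;D+E_0)\ \le\ \inf_{\substack{D\sim_\Q\Gamma\\ D\ge 0}}\mathrm{gfst}(X;D)\ =\ \alpha_F(X;\Gamma),
\]
where the middle inequality is the pointwise estimate of Step 2. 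This is exactly the asserted inequality.

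I do not expect a genuine obstacle here. The only points requiring a little care are the monotonicity of global $F$-regularity under decreasing the boundary in Step 1 (routine, but worth citing explicitly) and the bookkeeping in Step 3 that $D\mapsto D+E_0$ sends effective representatives of $\Gamma$ to effective representatives of $\Gamma+E$ while being compatible with $\Q$-linear equivalence.
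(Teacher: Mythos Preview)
Your proof is correct and follows essentially the same approach as the paper: both arguments fix an effective representative $E_0\sim_\Q E$, use the monotonicity $\mathrm{gfst}(X;D)\ge\mathrm{gfst}(X;D+E_0)$, and observe that $D\mapsto D+E_0$ carries effective representatives of $\Gamma$ to effective representatives of $\Gamma+E$. The paper phrases this via ``take $t>\alpha_F(X;\Gamma)$ and show $t\ge\alpha_F(X;\Gamma+E)$,'' whereas you compare the infima directly, but the content is identical.
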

\begin{proof}
Let $t$ be a real number greater than $\alpha_F(X;\Gamma)$. 
Then there exist effective $\Q$-Weil divisors $\Delta$ and $D$ on $X$ such that $\Delta \sim_\Q E$, $D\sim_\Q \Gamma$ and $t\ge \mathrm{gfst}(X,D)$. 
Then $D+\Delta \sim_\Q \Gamma+E$ and $t\ge \mathrm{gfst}(X,D)\ge \mathrm{gfst}(X,D+\Delta)$, 
which implies that $t\ge \alpha_F(X;\Gamma+E)$.
Therefore, $\alpha_F(X;\Gamma) \ge \alpha_F(X;\Gamma+E)$.
\end{proof}


The following theorem is the main result of the latter half of this section.

\begin{thm}\label{two conjectures are equivalent}
Conjecture $\mathrm{D}_d$ is equivalent to Conjecture $\mathrm{E}_d$. 
\end{thm}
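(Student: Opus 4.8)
The plan is to prove the two implications of the equivalence separately; the substance lies entirely in $\mathrm{E}_d\Rightarrow\mathrm{D}_d$, which I would obtain by passing to the anti-canonical model. The implication $\mathrm{D}_d\Rightarrow\mathrm{E}_d$ is immediate: if $X$ is a $d$-dimensional $\Q$-Fano variety, then $-K_X$ is ample $\Q$-Cartier and $(X,0)$ is klt, so $(X,0)$ is a projective log Fano pair and $X$ is of Fano type; hence $\mathrm{D}_d$ gives $\alpha_F(X)=\alpha_F(X;-K_X)>0$.

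For $\mathrm{E}_d\Rightarrow\mathrm{D}_d$, let $X$ be a $d$-dimensional complex projective variety of Fano type, say with $(X,\Delta)$ log Fano. Then $-K_X=-(K_X+\Delta)+\Delta$ is big, so by \cite{BCHM} the anti-canonical ring $\mathscr{S}:=\bigoplus_{m\ge 0}H^0(X,\sO_X(\lfloor -mK_X\rfloor))$ is a finitely generated $\C$-algebra; set $X':=\Proj\mathscr{S}$ and let $\phi\colon X\dashrightarrow X'$ be the induced birational contraction. I would first record the standard structure of the anti-canonical model coming from \cite{BCHM}: $\dim X'=d$; $X'$ is $\Q$-Fano (so $-K_{X'}$ is ample $\Q$-Cartier) with $R(X',-K_{X'})=\mathscr{S}$; and the ``negative part'' $N:=-K_X-\phi^{*}(-K_{X'})$ is an effective, $\phi$-exceptional $\Q$-divisor satisfying $K_X+N=\phi^{*}K_{X'}$, whose coefficients lie in $[0,1)$ — the bound $<1$ being exactly the klt-ness of $X'$. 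Thus $\phi\colon(X,N)\dashrightarrow X'$ is crepant and $(X,N)$ is klt. After replacing $X$ by a small $\Q$-factorialization — which affects neither $\alpha_F(X;-K_X)$ nor $X'$, being crepant and an isomorphism in codimension one — I may assume $X$ is $\Q$-factorial; then every effective $\Q$-divisor $D\sim_\Q-K_X$ satisfies $D\ge N$, and $D-N=\phi^{*}(\phi_{*}D)$ with $\phi_{*}D\sim_\Q-K_{X'}$ effective. Since $\alpha_F(X')>0$ by $\mathrm{E}_d$, it now suffices to prove $\alpha_F(X;-K_X)\ge\alpha_F(X';-K_{X'})$.

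For this comparison I would fix a common model of all the data over a finitely generated $\Z$-subalgebra $A\subseteq\C$ and argue fiberwise. For a general closed point $\mu\in\Spec A$ the reduction $\phi_\mu\colon X_\mu\dashrightarrow X'_\mu$ has the same shape: $X_\mu$ and $X'_\mu$ are globally $F$-regular by Theorem \ref{klt SFR correspondence}(2) (applied to $(X,\Delta)$ and to $X'$), $K_{X_\mu}+N_\mu=\phi_\mu^{*}K_{X'_\mu}$, and every effective $D_\mu\sim_\Q-K_{X_\mu}$ equals $N_\mu+\phi_\mu^{*}D'_\mu$ with $D'_\mu:=\phi_{\mu*}D_\mu\sim_\Q-K_{X'_\mu}$ effective. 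Fixing such a $D_\mu$ and a rational $s$ with $0<s<\min\{1,\mathrm{gfst}(X'_\mu;D'_\mu)\}$, the pair $(X'_\mu,sD'_\mu)$ is globally $F$-regular; its crepant pullback along $\phi_\mu$ is $(X_\mu,\,N_\mu+s\phi_\mu^{*}D'_\mu)$, which has effective boundary and satisfies $K_{X_\mu}+N_\mu+s\phi_\mu^{*}D'_\mu=\phi_\mu^{*}(K_{X'_\mu}+sD'_\mu)$, so by the preservation of global $F$-regularity under crepant birational contractions (cf.\ \cite{SS10}) it is again globally $F$-regular. Since $s\le 1$ gives $sD_\mu=sN_\mu+s\phi_\mu^{*}D'_\mu\le N_\mu+s\phi_\mu^{*}D'_\mu$, the smaller pair $(X_\mu,sD_\mu)$ is globally $F$-regular too, hence $\mathrm{gfst}(X_\mu;D_\mu)\ge s$. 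Letting $s$ tend to $\min\{1,\mathrm{gfst}(X'_\mu;D'_\mu)\}$ and using $\alpha_F\le 1$ yields $\mathrm{gfst}(X_\mu;D_\mu)\ge\alpha_F(X'_\mu;-K_{X'_\mu})$; taking the infimum over $D_\mu$, and then $\sup_U\inf_\mu$ as in Definition \ref{global alpha in char 0}, gives $\alpha_F(X;-K_X)\ge\alpha_F(X';-K_{X'})>0$, which proves $\mathrm{D}_d$.

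The main obstacle is the fiberwise comparison, and within it the input that global $F$-regularity passes from $(X'_\mu,sD'_\mu)$ to its crepant birational model $(X_\mu,N_\mu+s\phi_\mu^{*}D'_\mu)$: I expect one needs the positive-characteristic statement that global $F$-regularity is preserved under crepant birational contractions, applied uniformly for general $\mu$. One must also check that the structure of the anti-canonical model — effectivity of $N$, its coefficients lying in $[0,1)$, and $D-N$ being a pullback after the $\Q$-factorialization — descends to $X_\mu$ for general $\mu$; by contrast, finite generation of $\mathscr{S}$ and the identification of $X'$ as a $\Q$-Fano variety are routine consequences of \cite{BCHM}.
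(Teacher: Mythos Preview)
Your overall strategy—passing to the anti-canonical model $X'$ and bounding $\alpha_F(X)$ below by $\alpha_F(X')$—is the same as the paper's, and the divisor bookkeeping (the decomposition $D=N+\phi^*(\phi_*D)$, the inequality $sD\le N+s\phi^*D'$ for $s\le 1$, the use of $\alpha_F\le 1$) is correct. The gap is exactly where you place it: the assertion that global $F$-regularity of $(X'_\mu,sD'_\mu)$ lifts to its crepant model $(X_\mu,N_\mu+s\phi_\mu^*D'_\mu)$ along the birational contraction $\phi_\mu$. This is not in \cite{SS10}. The splitting-transport argument one has in mind—identifying $\Hom_{\sO_{X'}}(F^e_*\sO_{X'}(\ldots),\sO_{X'})$ with a subspace of the corresponding $\Hom$ on $X$—requires $\phi_\mu$ to be a \emph{morphism}, and in general it is not: whenever $-K_X$ fails to be nef, the passage to $X'$ involves flips.

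The paper closes this gap by first running the $(-K_X)$-MMP in characteristic zero (available since $X$ is a Mori dream space), thereby factoring $\phi$ as $X=X_0\dashrightarrow\cdots\dashrightarrow X_l=X'\to Z$ with each step either a flip or a divisorial contraction, and then reducing each step modulo $p$. Flips are small, so Lemma \ref{F-alpha invariant under small birational map} gives equality of $\alpha_F$; divisorial contractions and the final morphism $X'\to Z$ are honest morphisms satisfying $K_{X_j}\le f_j^*K_{X_{j+1}}$ by the negativity lemma, and Lemma \ref{F-alpha invariant under divisorial contraction}—whose proof is precisely the splitting-transport argument, carried out for a morphism—yields $\alpha_F(X_{j,\mu})\ge\min\{\alpha_F(X_{j+1,\mu}),1\}$. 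So your crepant-invariance input is in fact true, but establishing it amounts to exactly this MMP factorization; without it, the direct jump from $X$ to $X'$ remains unjustified.
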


To prove Theorem \ref{two conjectures are equivalent}, we first establish a couple of lemmas. 

\begin{lem}\label{F-alpha invariant under small birational map}
Suppose that $\pi:X \dashrightarrow Y$ is a small birational map between globally $F$-regular projective varieties over an $F$-finite field of characteristic $p>0$ and $\Gamma$ is a big $\Q$-Weil divisor on $X$. Then
	\[
		\alpha_F(X;\Gamma)=\alpha_F(Y;\pi_*\Gamma).
	\]
\end{lem}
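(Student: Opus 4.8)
The plan is to exploit the fact that a small birational map is an isomorphism in codimension one, so that all the data entering the definitions of global $F$-regularity and of the global $F$-split threshold---all of which is reflexive in nature---transfers faithfully between $X$ and $Y$.

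First I would set up the correspondence. Choose open subsets $U\subseteq X$ and $V\subseteq Y$ with $\codim_X(X\setminus U)\ge 2$ and $\codim_Y(Y\setminus V)\ge 2$ on which $\pi$ restricts to an isomorphism $U\xrightarrow{\sim}V$. Since $X$ and $Y$ are normal, the birational transform $D\mapsto \pi^*D$ (the closure in $Y$ of $D\cap U$, transported via $U\cong V$) is a bijection from Weil divisors on $X$ to Weil divisors on $Y$; it extends $\Q$-linearly and commutes with $\lceil\,\cdot\,\rceil$, with finite sums, and with scaling by positive rationals. Because reflexive sheaves on a normal variety satisfy Hartogs extension, for every Weil divisor $D$ on $X$ we get $H^0(X,\sO_X(D))=H^0(U,\sO_U(D|_U))=H^0(V,\sO_V((\pi^*D)|_V))=H^0(Y,\sO_Y(\pi^*D))$. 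In particular $D\sim_\Q D'$ on $X$ iff $\pi^*D\sim_\Q \pi^*D'$ on $Y$, so $D\mapsto\pi^*D$ identifies the effective $\Q$-Weil divisors $\Q$-linearly equivalent to $\Gamma$ with those $\Q$-linearly equivalent to $\pi^*\Gamma$; the same computation of global sections shows $\pi^*\Gamma$ is again big, so $\alpha_F(Y;\pi^*\Gamma)$ is defined.

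Next I would prove that for every effective $\Q$-Weil divisor $B$ on $X$, the pair $(X,B)$ is globally $F$-regular iff $(Y,\pi^*B)$ is. Fix an effective Weil divisor $D$ and an integer $e\ge1$, and put $D'=\lceil(p^e-1)B+D\rceil$. The map $\sO_X\to F^e_*\sO_X(D')$ (Frobenius followed by the natural inclusion) splits iff there is $\psi\in\Hom_{\sO_X}(F^e_*\sO_X(D'),\sO_X)$ carrying the canonical section $F^e_*1$ to a nonzero element of $H^0(X,\sO_X)$. Now $\sHom_{\sO_X}(F^e_*\sO_X(D'),\sO_X)$ is the dual of a coherent sheaf on a normal variety, hence reflexive, so its global sections are computed on $U$; under $U\cong V$ this $\Hom$-module is identified with $\Hom_{\sO_Y}(F^e_*\sO_Y(\pi^*D'),\sO_Y)$ compatibly with the canonical sections, while $H^0(X,\sO_X)=H^0(Y,\sO_Y)$ likewise. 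Since $\pi^*D'=\lceil(p^e-1)\pi^*B+\pi^*D\rceil$ and $D\mapsto\pi^*D$ is a bijection on effective Weil divisors, the splitting conditions---hence the global $F$-regularity conditions---for $(X,B)$ and $(Y,\pi^*B)$ coincide. Applying this with $B=\Delta+tD$ and using $\pi^*(\Delta+tD)=\pi^*\Delta+t\,\pi^*D$ gives $\mathrm{gfst}((X,\Delta);D)=\mathrm{gfst}((Y,\pi^*\Delta);\pi^*D)$ for every effective $\Q$-Weil divisor $D$; taking the infimum over $D\sim_\Q\Gamma$ (equivalently $\pi^*D\sim_\Q\pi^*\Gamma$) and setting $\Delta=0$ yields $\alpha_F(X;\Gamma)=\alpha_F(Y;\pi^*\Gamma)$.

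The argument is essentially bookkeeping, and I do not expect a serious obstacle. The only point needing care is the sheaf-theoretic claim that split-surjectivity of these Frobenius-twisted maps, as well as $\Q$-linear equivalence and bigness, are genuinely codimension-one conditions; this rests on the Hartogs property for reflexive sheaves on normal varieties (applied to $\sO$, to $\sO_X(D)$, and to $\sHom(F^e_*\sO_X(D'),\sO_X)$) together with the description of splitting via the $\Hom$-module, and that is the main---still routine---technical input.
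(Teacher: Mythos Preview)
Your argument is correct and is essentially the one the paper has in mind: the paper's proof consists only of the sentence ``This follows from an argument similar to \cite[Lemma 2.12]{GOST15},'' and what you have written out---that divisorial sheaves, the maps $\sO\to F^e_*\sO(D')$, and the $\sHom$-sheaves governing their splittings are all reflexive and hence determined in codimension one, so that global $F$-regularity, $\Q$-linear equivalence, bigness, and $\mathrm{gfst}$ all transfer along a small birational map---is exactly the content of that reference.
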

\begin{proof}
	This follows from an argument similar to \cite[Lemma 2.12]{GOST15}.
\end{proof}

\begin{lem}\label{F-alpha invariant under divisorial contraction}
Let $\pi:X\to Y$ be a birational morphism between $\Q$-Gorenstein globally $F$-regular projective varieties over an $F$-finite field of characteristic $p>0$. 
Suppose that the exceptional locus $E$ of $\pi$ is a prime divisor and $K_X-\pi^*K_Y\le 0$. Then 
\[
\alpha_F(X)\ge \min\{\alpha_F(Y),1\}.
\]
\end{lem}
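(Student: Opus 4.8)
The plan is to compare global $F$-split thresholds on $X$ and $Y$ by pushing divisors down along $\pi$ and pulling them back. Let $t > \alpha_F(Y)$ be arbitrary (we may also assume $t<1$, otherwise there is nothing to prove). By definition of $\alpha_F(Y;-K_Y)$ there is an effective $\Q$-Weil divisor $D_Y \sim_{\Q} -K_Y$ with $\mathrm{gfst}(Y, D_Y) \le t$, i.e.\ the pair $(Y, t'D_Y)$ fails to be globally $F$-regular for some $t' \le t$; concretely one can fix $s$ slightly below $\mathrm{gfst}(Y,D_Y)$ and $s'$ slightly above it with $s' \le t$ so that $(Y, s'D_Y)$ is not globally $F$-regular. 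Pull $D_Y$ back to $X$: write $D_X := \pi^*D_Y$ as a $\Q$-Weil divisor (using that $Y$ is $\Q$-Gorenstein, hence $D_Y$ is $\Q$-Cartier up to the chosen linear equivalence, or more carefully using that $\pi$ is birational and taking proper transform plus a multiple of $E$). Since $K_X - \pi^*K_Y \le 0$ and $D_Y \sim_\Q -K_Y$, we get $\pi^*D_Y + (K_X - \pi^*K_Y) \sim_\Q -K_X$, so after adding the effective divisor $-(K_X-\pi^*K_Y) = \mathrm{ord}_E(\pi^*K_Y - K_X)\,E \ge 0$ to $\pi^*D_Y$ we obtain an effective divisor $D_X' \sim_\Q -K_X$.

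The key step is then to show $\mathrm{gfst}(X, D_X') \le t$, which would give $\alpha_F(X;-K_X) \le t$ and hence, letting $t \downarrow \alpha_F(Y)$, the inequality $\alpha_F(X) \le \alpha_F(Y)$ — wait, we want the reverse. So instead I would argue contrapositively: suppose $(X, t''D_X')$ is globally $F$-regular for all $t'' < \min\{\alpha_F(Y),1\}$; I want to deduce $(Y, t''D_Y)$ is globally $F$-regular for the same range, contradicting the choice of $D_Y$ when $t'' > \mathrm{gfst}(Y,D_Y)$. Global $F$-regularity descends under pushforward along birational morphisms between $\Q$-Gorenstein normal varieties when the discrepancy condition $K_X \le \pi^*K_Y$ holds: concretely, if $(X,\Delta_X)$ is globally $F$-regular and $K_X + \Delta_X \sim_\Q \pi^*(K_Y+\Delta_Y)$ with $K_X - \pi^*K_Y \le 0$, then $\Delta_Y = \pi_*\Delta_X$ (plus the contribution from the discrepancy) and $(Y,\Delta_Y)$ is globally $F$-regular — this is the global analogue of the fact that strong $F$-regularity (equivalently the test ideal being the whole ring) is preserved under pushforward along such "crepant or sub-crepant" contractions, and follows from the transformation rule for (non-sharply-$F$-split) test ideals under birational maps, e.g.\ as in the trace/pushforward formalism used in Proposition \ref{prop2}. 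Taking $\Delta_X = t''D_X'$ and $\Delta_Y = t''D_Y$, the relation $K_X+t''D_X' \sim_\Q \pi^*(K_Y + t''D_Y)$ holds by construction (this is where $D_X' = \pi^*D_Y - (K_X-\pi^*K_Y)$ and $D_Y \sim_\Q -K_Y$ feed in, after absorbing the $\Q$-linear equivalence carefully), so global $F$-regularity of $(X, t''D_X')$ forces that of $(Y, t''D_Y)$.

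Assembling: for every $t'' < \min\{\alpha_F(Y),1\}$ and every effective $D_Y \sim_\Q -K_Y$, the pair $(Y, t''D_Y)$ is globally $F$-regular (take $D_X'$ as above, note $(X,t''D_X')$ is globally $F$-regular since $t'' < \alpha_F(X)$... no — this is circular). Let me instead run it the only way that is not circular: start from $D_Y \sim_\Q -K_Y$ realizing (nearly) the infimum, produce $D_X' \sim_\Q -K_X$, and show $\mathrm{gfst}(X, D_X') \le \mathrm{gfst}(Y, D_Y)$ by the pushforward principle applied at the threshold value; then $\alpha_F(X) \le \mathrm{gfst}(X,D_X') \le \mathrm{gfst}(Y,D_Y)$, and infimizing over $D_Y$ gives $\alpha_F(X) \le \alpha_F(Y)$. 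That is the wrong direction again, so the genuine content must be the \emph{lower} bound, which comes from: every effective $D_X \sim_\Q -K_X$ on $X$ satisfies $\mathrm{gfst}(X, D_X) \ge \min\{\mathrm{gfst}(Y, \pi_*D_X),\, 1\}$, because along $K_X \le \pi^*K_Y$ global $F$-regularity of $(Y, t\,\pi_*D_X)$ \emph{ascends} to $(X, t\,D_X)$ for $t<1$ — here one uses that $\pi^*(K_Y + t\pi_*D_X) = K_X + tD_X + (\text{effective, since } K_X\le\pi^*K_Y \text{ and } t<1 \text{ keeps the } E\text{-coefficient}\ge 0)$, and global $F$-regularity pulls back along $\pi$ with an extra effective boundary by Lemma \ref{effective divisor inequality}-type reasoning combined with the birational transformation rule. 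Since $\pi_*D_X \sim_\Q -K_Y$, we get $\mathrm{gfst}(X,D_X) \ge \min\{\alpha_F(Y),1\}$ for every such $D_X$, hence $\alpha_F(X) \ge \min\{\alpha_F(Y),1\}$.

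\textbf{The main obstacle} will be making the birational ascent precise: one must justify that global $F$-regularity of $(Y, B_Y)$ implies global $F$-regularity of $(X, B_X)$ whenever $\pi^*(K_Y+B_Y) = K_X + B_X + G$ with $G \ge 0$ — i.e.\ the global analogue of "strong $F$-regularity is inherited by sub-boundaries under crepant-or-worse pullback." This should follow from the correspondence between global $F$-regularity of $(Y,B_Y)$ and strong $F$-regularity of a section ring (Proposition \ref{characterization of alpha invariant using global fpt}, \cite[Proposition 5.3]{SS10}) together with the behavior of test ideals under the induced map of section rings, or directly from the characterization via splitting of Frobenius composed with twists by the relevant divisors; care is needed because $X$ and $Y$ are only $\Q$-Gorenstein, so one works with a common sufficiently divisible Cartier index and with $\lceil(p^e-1)B\rceil$-type round-ups, and the inequality $t<1$ is exactly what is needed to keep all coefficients in the effective range after pullback.
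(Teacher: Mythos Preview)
Your proposal eventually lands on the right strategy: for each effective $D\sim_{\Q}-K_X$ on $X$, push forward to $D':=\pi_*D\sim_{\Q}-K_Y$, and show that global $F$-regularity of $(Y,tD')$ ascends to $(X,tD)$ whenever $t<1$, using the crepant-type relation $\pi^*(K_Y+tD')=K_X+tD+(1-t)aE$ with $a\ge 0$. This is exactly the paper's approach. The first half of your write-up (the two circular attempts) should simply be deleted; they contribute nothing.

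The issue is that what you label ``the main obstacle'' is in fact the entire content of the lemma, and you leave it as a black box. Neither the section-ring route nor an appeal to Lemma~\ref{effective divisor inequality} is how the paper closes this gap; it argues directly at the level of splittings via Grothendieck duality, and the computation is short enough that it should be written out. Given an effective Cartier divisor $C$ on $X$, choose $C'$ on $Y$ with $\pi^*C'\ge C$. For $t<\alpha_F(Y)$ (and $t\in n\Z_{(p)}$ so that the relevant divisors are integral), pick $e$ with a splitting $\phi$ of $\sO_Y\to F^e_*\sO_Y((p^e-1)tD'+C')$. By duality,
\[
\phi\in H^0\bigl(Y,\,F^e_*\sO_Y((1-p^e)K_Y-(p^e-1)tD'-C')\bigr)
= H^0\bigl(X,\,F^e_*\sO_X(\pi^*((1-p^e)K_Y-(p^e-1)tD'-C'))\bigr).
\]
Now substitute $\pi^*K_Y=K_X+aE$ and $\pi^*D'=D-aE$ (the latter is an equality of divisors, not just a $\Q$-linear equivalence, once one tracks the rational function witnessing $D\sim_{\Q}-K_X$); the pullback becomes
\[
(1-p^e)K_X-(p^e-1)tD-(p^e-1)(1-t)aE-\pi^*C'
\;\le\;(1-p^e)K_X-(p^e-1)tD-C,
\]
and the inequality uses $t<1$ and $a\ge 0$. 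Hence $\phi$ lies in $\Hom_{\sO_X}(F^e_*\sO_X((p^e-1)tD+C),\sO_X)$ and splits $\sO_X\to F^e_*\sO_X((p^e-1)tD+C)$. Since $C$ was arbitrary, $(X,tD)$ is globally $F$-regular. This is the step you should supply instead of gesturing at it.
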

\begin{proof}
Let $D$ be an effective $\Q$-Cartier $\Q$-Weil divisor on $X$ that is $\Q$-linearly equivalent to $-K_X$. 
Choose an integer $n \ge 1$ such that $nD$ is Cartier and $nD$ is linearly equivalent to $-nK_X$. 
It then suffices to show that $\mathrm{gfst}(X,D)\ge t$ for every rational number $t \in (0,\min\{\alpha_F(Y),1\})\cap n\Z_{(p)}$. 
By assumption, 
\[
\pi^*K_Y=K_X+aE,
\]
where $a$ is a nonnegative rational number.
Set $D'=\pi_*D$. 
Then $D' \sim_\Q -K_Y$, so $D'$ is $\Q$-Cartier and we have $\pi^*D'=D-aE$. 

Let $C$ be any effective Cartier divisor on $X$, and take an effective Cartier divisor $C'$ on $Y$ such that $\pi^*C' \ge C$. 
Since $t<\alpha_F(Y)$ and $t\in n\Z_{(p)}$, there exists an integer $e \ge 1$ such that $(p^e-1)t/n\in \Z$ and the map 
\[
\sO_Y\to F^e_*\sO_Y((p^e-1)tD'+C')
\]
splits as an $\sO_Y$-module homomorphism. 
The splitting homomorphism $\phi$ lies in 
\begin{align*}
&\Hom_{\sO_Y}(F^e_*\sO_Y((p^e-1)tD'+C'),\sO_Y)\\
\cong &H^0(Y,F^e_*\sO_Y((1-p^e)K_Y-(p^e-1)tD'-C'))\\
= &H^0(Y,\pi_*F^e_*\sO_X(\pi^*((1-p^e)K_Y-(p^e-1)tD'-C')))\\
\subseteq &  H^0(X, F^e_*\sO_X((1-p^e)K_X-(p^e-1)tD-C))\\
\cong & \Hom_{\sO_X}(F^e_*\sO_X((p^e-1)tD+C),\sO_X).
\end{align*}
Here we use the fact that for any Weil divisor $G$ on an $F$-finite normal variety $Z$, the adjunction formula (cf.~the proof of \cite[Theorem 3.3]{Hara-Watanabe}) gives 
\[
    {\mathcal{H}om}_{\sO_Z}(F^e_*\sO_Z(G),\sO_Z)\cong F^e_* \sO_Z((1-p^e)K_Z-G)). 
\]
Therefore, $\phi$ also gives a splitting of the map $\sO_X\to F^e_*\sO_X((p^e-1)tD+C)$, which completes the proof. 
\end{proof}

\begin{proof}[Proof of Theorem \ref{two conjectures are equivalent}]
It is clear that Conjecture $\mathrm{D}_{d}$ implies Conjecture $\mathrm{E}_d$, so we will prove the converse implication. 
Let $X$ be a $d$-dimensional complex projective variety of Fano type, that is, there exists an effective $\Q$-Weil divisor $\Delta$ on $X$ such that $(X, \Delta)$ is klt and $-(K_X+\Delta)$ is ample. 
By Lemma \ref{F-alpha invariant under small birational map}, after taking a $\Q$-factorization, we may assume that $X$ is $\Q$-factorial. 
Since $X$ is a Mori dream space by \cite{BCHM}, we can run a $(-K_X)$-minimal model program and obtain a sequence of divisorial contractions and flips: 
\[
\xymatrix{
X=X_0 \ar@{-->}[r]^{f_0} & X_1 \ar@{-->}[r]^{f_1} & \cdots \ar@{-->}[r]^{f_{l-2}} & X_{l-1} \ar@{-->}[r]^{f_{l-1}} & X_l=X',
}
\]
where each $X_i$ is of Fano type and $-K_{X'}$ is semi-ample. 
$-K_{X'}$ induces a birational contraction $g: X' \to Z$ such that $g^*K_Z=K_{X'}$ and $Z=\Proj R(X,-K_X)$ is the anti-canonical model of $X$. 

Now we reduce the situation to characteristic $p>0$. 
Suppose that $(X_A, f_{j,A}: X_{j,A} \dashrightarrow X_{j+1,A}, Z_A)$ is a model of $(X,f_{j}: X_{j} \dashrightarrow X_{j+1}, Z)$ over a finitely generated $\Z$-subalgebra $A \subseteq \C$. 
If $f_{j}$ is a divisorial contraction, then $K_{X_{j}}$ is $f_j$-ample, so the Negativity Lemma implies that $f_j^*K_{X_{j+1}}-K_{X_{j}}$ is effective. 
Applying Lemma \ref{F-alpha invariant under divisorial contraction}, we then have $\alpha_F(X_{j,\mu})\ge \min\{\alpha_F(X_{j+1,\mu}),1\}$ for general closed points $\mu \in \Spec A$. 
If $f_{j}$ is a flip, then by Lemma \ref{F-alpha invariant under small birational map}, we have $\alpha_F(X_{j,\mu})=\alpha_F(X_{j+1,\mu})$ for general closed points $\mu \in \Spec A$.  
Therefore, 
\[
\alpha_F(X_{\mu})\ge \min\{\alpha_F(X'_{\mu}),1\} \ge \min\{\alpha_F(Z_{\mu}),1\}
\]
for general closed points $\mu \in \Spec A$, where the second inequality again follows from Lemma \ref{F-alpha invariant under divisorial contraction}. 
Finally, by applying Conjecture $\mathrm{E}_d$ to $Z$, we conclude that $\alpha_F(X)>0$. 

\end{proof}



Summing up, we have seen that the following implications hold among the conjectures discussed in this paper: 
\[
\xymatrix{
& \textup{Conjecture~$\mathrm{A}_{d+1}$} \ar@{=>}[d] & \\ 
\textup{Conjecture~$\mathrm{C}_d$} \ar@{=>}[r]^{\textup{\ref{local-global thm}}\footnotemark} \ar@{=>}[d]^{\ref{easy implications} (2)} & \textup{Conjecture~$\mathrm{B}_{d+1}$} \ar@{=>}[r]^{\textup{\ref{reduce to Gorenstein case}}} \ar@{=>}[d]^{\ref{easy implications} (1)} & \textup{Conjecture~$\mathrm{A}_d$}  \\
\textup{Conjecture~$\mathrm{D}_d$} \ar@{<=>}[r]^{\textup{\ref{two conjectures are equivalent}}} & \textup{Conjecture~$\mathrm{E}_d$}.  
}
\]
\footnotetext{More precisely, this implication holds when the ring has residue field isomorphic to $\C$.}

We conclude this section with the following question. 
\begin{ques}
Does Conjecture $\mathrm{D}_d$ imply Conjecture $\mathrm{C}_d$? 
If so, Conjecture $A$ would be equivalent to Conjecture~$E$ and could be reduced to the case where $R$ is a Gorenstein graded ring. 
\end{ques}

\begin{rem}
A key difficulty in the above question is that the global $F$-split region 
\[
\mathrm{cl}(\{(s,t) \in \R^2_{\ge 0} \; | \; (V, sB+tD) \; \textup{is globally $F$-regular}\}), 
\]
where $\mathrm{cl}(-)$ denotes closure in the Euclidean topology, $V$ is a globally $F$-regular variety and $B, D$ are effective $\Q$-Weil divisors on $V$, is not a polytope unlike log canonical regions. 
\end{rem}


\bibliography{TY.bib}
\bibliographystyle{alpha}

\bigskip

\end{document}